\numberwithin{equation}{section}
\numberwithin{subsection}{section}
\newenvironment{enumeratea}
{\begin{enumerate}[\upshape (a)]}
{\end{enumerate}}
\newenvironment{enumeratei}
{\begin{enumerate}[\upshape (i)]}
{\end{enumerate}}
\newtheorem{theorem}{Theorem}[section]
\newtheorem{proposition}[theorem]{Proposition}
\newtheorem{proposition-definition}[theorem]
{Proposition-Definition}
\newtheorem{corollary}[theorem]{Corollary}
\newtheorem{lemma}[theorem]{Lemma}
\theoremstyle{definition}
\newtheorem{definition}[theorem]{Definition}
\newtheorem{notation}[theorem]{Notation}
\newtheorem{example}[theorem]{Example}
\newtheorem{remark}[theorem]{Remark}
\theoremstyle{remark}
\newcommand\nome{testing}
\newcommand\call[1]{\label{#1}\renewcommand\nome{#1}}
\newcommand\itemref[1]{\item\label{\nome;#1}}
\newcommand\refall[2]{\ref{#1}(\ref{#1;#2})}
\newcommand\refpart[2]{(\ref{#1;#2})}
\newcommand{\proofofpart}[2]{\smallskip\noindent\emph{Proof of {\rm\refpart{#1}{#2}}}.}
\newcommand{\step}[1]{\smallskip\emph{Step #1.}}
\newcommand\cA{\mathcal{A}} \newcommand\cB{\mathcal{B}}
\newcommand\cC{\mathcal{C}} \newcommand\cD{\mathcal{D}}
 \newcommand\cF{\mathcal{F}}
\newcommand\cG{\mathcal{G}}
\newcommand\cM{\mathcal{M}} 
\newcommand\cO{\mathcal{O}} 
 \newcommand\cR{\mathcal{R}}
\newcommand\cS{\mathcal{S}} 
 \newcommand\cX{\mathcal{X}}
\newcommand\cY{\mathcal{Y}} 
\renewcommand\AA{\mathbb{A}} 
\newcommand\CC{\mathbb{C}} 
\newcommand\GG{\mathbb{G}}
 \newcommand\NN{\mathbb{N}}
\newcommand\QQ{\mathbb{Q}} \newcommand\RR{\mathbb{R}}
\renewcommand\SS{\mathbb{S}}
 \newcommand\ZZ{\mathbb{Z}}
\newcommand\rmm{\mathrm{m}} 
 \newcommand\rmp{\mathrm{p}}
 \newcommand\bfx{\mathbf{x}}
\renewcommand{\cong}{\simeq}
\newcommand\arr{\ifinner\to\else\longrightarrow\fi}
\newcommand\arrto{\ifinner\mapsto\else\longmapsto\fi}
\newcommand{\xarr}{\xrightarrow}
\renewcommand\H{\operatorname{H}}
\newcommand\op{^{\mathrm{op}}}
\newcommand{\eqdef}{\mathrel{\smash{\overset{\mathrm{\scriptscriptstyle def}} =}}}
\renewcommand\th{^\text{th}}
\def\displaytimes_#1{\mathrel{\mathop{\times}\limits_{#1}}}
\def\displayotimes_#1{\mathrel{\mathop{\bigotimes}\limits_{#1}}}
\renewcommand\hom{\operatorname{Hom}}
\newcommand\pic{\operatorname{Pic}}
\newcommand\spec{\operatorname{Spec}}
\newcommand\id{\mathrm{id}}
\newcommand\pr{\operatorname{pr}}
\newcommand\indlim{\varinjlim}
\renewcommand\projlim{\varprojlim}
\newcommand{\cat}[1]{(\mathrm{#1})}
\newcommand\double{\mathbin{\rightrightarrows}}
\newcommand{\underhom}
{\mathop{\underline{\mathrm{Hom}}}\nolimits}
\newlength{\ignora}
\newcommand{\hsmash}[1]{\settowidth{\ignora}{#1}#1\hspace{-\ignora}}
\renewcommand{\setminus}{\smallsetminus}
\newcommand{\rest}[1]{|_{#1}}
\renewcommand\projlim{\varprojlim}
\newcommand{\mmu}{\boldsymbol{\mu}}
\newcommand{\gm}[1][\relax]{\GG_{\rmm #1}}
\newcommand{\colim}{\operatorname{colim}}
\newcommand\radice[2]{\hspace{-1.5pt}\sqrt[\uproot{2}#1]{#2}}
\newcommand{\abs}[1]{\left|#1\right|}
\DeclareFontFamily{U}{mathx}{\hyphenchar\font45}
\DeclareFontShape{U}{mathx}{m}{n}{
      <5> <6> <7> <8> <9> <10>
      <10.95> <12> <14.4> <17.28> <20.74> <24.88>
      mathx10
      }{}
\DeclareSymbolFont{mathx}{U}{mathx}{m}{n}
\DeclareMathAccent{\widecheck}{0}{mathx}{"71}
\DeclareMathAccent{\wideparen}{0}{mathx}{"75}
\newcommand{\infroot}[1]{\radice{\infty}{#1}}
\newcommand{\infquot}[1]{#1/\infty}
\newcommand{\gr}{^{\mathrm{gp}}}
\newcommand{\grq}{\gr_{\QQ}}
\newcommand{\catmon}{(\mathrm{ComMon})}
\newcommand{\et}{_\text{\rm\'et}}
\renewcommand{\div}{\operatorname{Div}}
\newcommand{\Div}{\div}
\renewcommand{\mod}{\operatorname{Mod}}
\newcommand{\wt}{^{\mathrm{wt}}}
\newcommand{\p}{\rmp}
\newcommand{\sz}[1]{\spec\ZZ[#1]}
\newcommand{\fs}{fine saturated\xspace}
\newcommand{\df}{Deligne--\hspace{0pt}Faltings\xspace}
\newcommand{\irs}{infinite root stack\xspace}
\newcommand{\irss}{infinite root stacks\xspace}
\newcommand{\sm}{symmetric monoidal\xspace}
\newcommand{\fp}{finitely presented\xspace}
\newcommand{\red}{_{\mathrm{red}}}
\newcommand{\aff}{\cat{Aff}}
\newcommand{\cataff}[1]{(\mathrm{Aff}/#1)}
\newcommand{\catring}{\cat{Ring}}
\newcommand{\catab}{\cat{Ab}}
\newcommand{\qcoh}{\cat{QCoh}}
\newcommand{\FP}{\operatorname{FP}}
\newcommand{\catqcoh}{\operatorname{QCoh}}
\newcommand{\catmod}{\operatorname{Mod}}
\newcommand{\ob}{\operatorname{Ob}}
\newcommand{\qc}{quasi-coherent\xspace}
\newcommand{\catpar}{\operatorname{Par}}
\newcommand{\fslogsch}{\cat{FSLogSch}}
\newcommand{\rootstack}{\cat{RootStack}}
\newcommand{\fppf}{_{\mathrm{fppf}}}
\newcommand{\sfppf}{_{\mathrm{s.fppf}}}
\newcommand{\sh}{\operatorname{Sh}}
\newcommand{\sch}{_{\mathrm{sch}}}
\newcommand{\rep}{_{\mathrm{rep}}}
\newcommand{\ket}{_{\text{\rm K\'{e}t}}}
\newcommand{\kfl}{_{\mathrm{Kfl}}}
\DeclareMathOperator{\fsa}{fs}
\newcommand{\Res}{\operatorname{Res}}
\newcommand{\Ind}{\operatorname{Ind}}
\renewcommand{\mathcal}{\mathscr}
\begin{document}

\title[Infinite root stacks]{Infinite root stacks and quasi-coherent sheaves\\on logarithmic schemes}

\author{Mattia Talpo}

\author{Angelo Vistoli}

\address[Talpo]{Department of Mathematics\\
Simon Fraser University\\
8888 University Drive\\
Burnaby BC\\
V5A 1S6 Canada, and Pacific Institute for the Mathematical Sciences\\ 4176-2207 Main Mall \\ Vancouver BC\\ V6T 1Z4 Canada}

\email{mtalpo@sfu.ca}

\address[Vistoli]{Scuola Normale Superiore\\Piazza dei Cavalieri 7\\
56126 Pisa\\ Italy}

\email{angelo.vistoli@sns.it}

\subjclass[2010]{14A99, 14F05, 14A20}

\thanks{Both authors were supported in part by the PRIN project ``Geometria delle varietà algebriche e dei loro spazi di moduli'' from MIUR, and by research funds from the Scuola Normale Superiore}


\begin{abstract}
We define and study infinite root stacks of fine and saturated logarithmic schemes, a limit version of the root stacks introduced by Niels Borne and the second author in \cite{borne-vistoli1}. We show in particular that the infinite root stack determines the logarithmic structure, and recovers the Kummer-flat topos of the logarithmic scheme. We also extend the correspondence between parabolic sheaves and quasi-coherent sheaves on root stacks to this new setting.
\end{abstract}

\maketitle

\tableofcontents

\section{Introduction}

\subsubsection*{Logarithmic geometry}
Logarithmic structures were introduced in the late '80s, in the work of Fontaine, Illusie, Deligne and Faltings, and studied systematically starting with the work of K. Kato \cite{kato}. The motivating philosophy was that, sometimes, a degenerate object behaves like a smooth one, when equipped with the correct logarithmic structure. 

For a quick introduction to the theory one can profitably consult \cite{olssonetal} and references therein. The flavor of the idea is the following. A logarithmic stucture on a scheme $X$ is a sheaf of commutative monoids $M_{X}$ on the small étale site $X\et$ of $X$, with a homomorphism of sheaves $\alpha\colon M_{X} \arr \cO_{X}$ (here the structure sheaf is considered as a sheaf of monoids via multiplication), such that the induced homomorphism $\alpha^{-1}\cO_{X}^{\times} \arr \cO^{\times}_X$ is an isomorphism. We think of $\alpha$ as an exponential map; a section $s$ of $M_{X}$ is thought of as a logarithm of $\alpha(s)$, defined ``up to period''.

Suppose one has a proper morphism $f\colon X \arr S$, where $S$ is a Dedekind scheme, with a closed point $s_{0} \in S$. In a geometric context $S$ would be a smooth curve, while in an arithmetic context it could be the spectrum of a discrete valuation ring, typically in mixed characteristic. Assume that $f$ is smooth outside of $s_{0}$, while the fiber $X_{0}$ over $s_{0}$ is a divisor with normal crossings in $X$. Then one can put logarithmic structures $M_{X}$ and $M_{S}$ on $X$ and $S$, so that the morphism $X \arr S$ extends to a morphism of logarithmic schemes $(X, M_{X}) \arr (X, M_{S})$, which is smooth, in the sense of logarithmic geometry. 

Now, logarithmic schemes have well-behaved cohomology theories: de Rham, étale, crystalline, Hodge, and so on (see for example \cite{kato-nakayama}). This makes logarithmic geometry a powerful tool for studying degenerations of cohomology theories, and cohomologies of varieties with bad reduction over valued fields.

Logarithmic geometry is intimately connected with toric geometry and toroidal embeddings (smooth logarithmic varieties are locally modeled on toric varieties). Also, it has been successfully applied to moduli theory: adding a logarithmic structure often isolates the ``main component'' of a moduli space, i.e. the closure of the locus of smooth objects (or its normalization); a few examples are \cite{katof, olsson3, olsson1, olsson2} (see also \cite[Sections 4 and 10]{olssonetal}).

Recently, logarithmic geometry was also employed to define Gromov-Witten invariants for singular targets \cite{gross-siebert-GW, abramovichgw, chen} and to formulate a version of mirror symmetry, in the form of the ``Gross--Siebert program'' \cite{gross-siebert, gross-siebert-II, gross-siebert-III}.

\subsubsection*{Geometric incarnations of logarithmic schemes}
Suppose that $X$ is a fine and saturated logarithmic scheme, in the sense of Kato. 
There have been at least two attempts to produce a topological space, or a Grothendieck topology, that captures the ``logarithmic geometry'' of $X$.

The first is for schemes of finite type over $\CC$ (or more generally log-analytic spaces), the so called Kato--Nakayama space $X^{\log}$, introduced in \cite{kato-nakayama} (see also \cite{ogus}). In simple cases, this is obtained as a ``real oriented blowup'' of $X$, along the locus where the log structure is concentrated.

In a different direction, one would like to have analogues of the étale and fppf site for a logarithmic scheme. There are natural notions of étale and flat map between logarithmic schemes; however, they turn out to be too inclusive, and yield sites that are much too large. {For example, the log geometric version of blowups are \'etale in the logarithmic sense, and although the topologies obtained by allowing such maps to be coverings have been used for some purposes (see for example \cite{niziol-k-theory}), it is often better to restrict the allowed coverings to maps that do not change the geometry of the space in such a drastic way}.

The correct notions are those of Kummer-étale and Kummer-flat maps, introduced by Kato. They allow to associate with $X$ two sites $X\ket$ and $X\kfl$, called respectively the Kummer-étale and the Kummer-flat site; the first works well in characteristic~$0$ and for studying $l$-adic cohomologies, while the second is more suited for positive characteristic. Coherent sheaves on  Kato's Kummer flat site $X\kfl$ have been used to define the $K$-theory of $X$ in \cite{hagihara,niziol-k-theory}.

These two constructions are related: for example, if $X$ is of finite type over $\CC$ and $F$ is a constructible sheaf in the Kummer-étale topology, the cohomology of $F$ equals the cohomology of its pullback to $X^{\log}$ (\cite[Theorem~0.2(1)]{kato-nakayama}).

\subsubsection*{The infinite root stack}
In this paper we construct an algebraic version of the Kato--Nakayama space, the \irs $\infroot{X}$ of $X$, which is a proalgebraic stack over $X$. For this we build on the construction of the root stacks $\radice{B}X$ in \cite{borne-vistoli1} (denoted by $X_{B/\overline{M}_{X}}$ there), which in turn is based on several particular cases constructed by Olsson \cite{matsuki-olsson,olsson-log-twisted}. Denote by $\alpha_{X}\colon M_{X} \arr \cO_{X}$ the logarithmic structure on $X$, {a fine saturated logarithmic scheme}, and set $\overline{M}_{X} \eqdef M_{X}/\cO^{*}_{X}$; then $\overline{M}_{X}$ is a sheaf of monoids on the small étale site $X\et$, whose geometric fibers are sharp \fs monoids (these are the monoids that appear in toric geometry, consisting of the integral points in a strictly convex rational polyhedral cone in some $\RR^{n}$). A Kummer extension $B$ of $\overline{M}_{X}$ is, roughly speaking, a sheaf of monoids containing $\overline{M}_{X}$, such that every section of $B$ has locally a positive multiple in $\overline{M}_{X}$ (see \cite[Definition~4.2]{borne-vistoli1}); the typical example is the sheaf $\frac{1}{d}\overline{M}_{X}$ of fractions of sections of $\overline{M}_{X}$ with some fixed denominator $d$.
Loosely speaking, $\radice{B}X$ parametrizes extensions $\beta\colon N \arr \cO_X$ of $\alpha_{X}\colon M_{X} \arr \cO_X$ with $\overline{N}= B$. When the logarithmic structure is generated by a single effective Cartier divisor $D \subseteq X$, so that $\overline{M}$ is the constant sheaf $\NN_{D}$ on $D$, and we take $B$ to be $\frac{1}{d}\NN_{D}$, then $\radice{B}{X}$ is the root stack $\radice{d}{(X,D)}$ introduced in \cite{dan-tom-angelo2008,cadman}.

The \irs $\infroot{X}$ can be thought of as the limit of the root stacks $\radice{B}X$ as the sheaf $B$ becomes larger. It is not an algebraic stack, as its diagonal is not of finite type. If $p$ is a geometric point of $X$ and $r$ is the rank of the group $\overline{M}_{X,p}\gr$, then the reduced fiber of $\infroot{X}$ over $p$ is the classifying stack $\cB\mmu_{\infty}^{r}$, where $\mmu_{\infty} \eqdef \projlim \mmu_{n}$, where $\mmu_{n}$ is the group scheme of $n\th$ roots of $1$. Over $\CC$ of course $\mmu_{\infty}$ is isomorphic to the profinite completion $\widehat{\ZZ}$; here we can see the similarity with the Kato--Nakayama space, in which the fiber over a point $p$ as above is $\SS^{r}$, which is the classifying space $B{\ZZ}^{r}$ of the group ${\ZZ}^{r}$.

The \irs of a \fs logarithmic scheme is very large, it is not even an algebraic stack, and {it} may look discouragingly complicated. However, it has a very explicit local description (Proposition~\ref{prop:local-model}); and the fact that it is locally a quotient by the action of a diagonalizable group scheme, albeit not of finite type, makes it very amenable to study. For example, one can apply to it results of Toën--Riemann--Roch type \cite{toen-rr,vezzosi-vistoli02} to get very explicit formulas for its K-theory (we plan to go back on this point in a future paper).

\smallskip

Our main results are as follows.

\subsubsection*{Reconstruction results}
First of all, we show how the logarithmic structure can be reconstructed from the \irs (Corollary~\ref{cor:isomorphisms}). In fact, we construct a faithful functor from the category of \fs logarithmic schemes into the $2$-category of proalgebraic stacks. This is not fully faithful; however, it induces a bijection on isomorphisms. In particular if $X$ and $Y$ are \fs logarithmic schemes, and if $\infroot{X}$ is equivalent to $\infroot{Y}$ as fibered categories, then $X$ and $Y$ are isomorphic as logarithmic schemes. We describe the essential image of this functor, and characterize the morphisms of proalgebraic stacks that come from morphisms of \fs logarithmic schemes (Theorem~\ref{thm:equivalence2}).

In principle one could develop the theory of logarithmic structures entirely from the point of view of \irss (not that we advocate doing this); a more interesting point is that in this way we are enlarging the category of \fs logarithmic schemes, and the morphisms of \irss that do not come from morphisms of logarithmic schemes (Example \ref{ex:not.full}) could have interesting applications.

Here is an example. Suppose that $X \arr S$ is a morphism to a Dedekind scheme of the type mentioned above {(either a smooth curve, or the spectrum of a discrete valuation ring)}. Then the special fiber $X_{0}$ over the point $s_{0}$ inherits a logarithmic structure $M_{X_{0}}$; however, $(X_{0}, M_{X_{0}})$ is not logarithmically smooth in the absolute sense, {but} it is only log smooth over the point $s_{0}$ equipped with the logarithmic structure induced by $M_{S}$. {Moreover, the logarithmic central fiber $(X_{0}, M_{X_{0}})$} has a nontrivial logarithmic structure even at the points where $X_{0}$ is smooth {in the classical sense}. By passing to the induced morphism of \irss $\infroot X \arr \infroot S$, the embedding of the point $s_{0} \subseteq S$ lifts to a morphism of stacks $s_{0} \arr \infroot S$, which does not come from a morphism of logarithmic structures. Then one can take the pullback $s_{0}\times_{\infroot S} \infroot X$; this is a pro-algebraic stack that can be described very explicitly, but is not an \irs. It has some advantages over $(X_{0}, M_{X_{0}})$; for example, the projection $s_{0}\times_{\infroot S} \infroot X \arr X_{0}$ is an isomorphism where $X_{0}$ is smooth. It would be interesting to investigate  to what extent $s_{0}\times_{\infroot S} \infroot X \arr X_{0}$ can be used as a substitute for $(X_{0}, M_{X_{0}})$ in problems such as degenerations of Gromov--Witten invariants.

We plan to go back to this in a later paper. This ``central fiber'' of the infinite root stack, and much of the basic formalism developed in the present article, are crucial in the recent work \cite{logmckay} about a logarithmic version of the derived McKay correspondence, of S. Scherotzke, N. Sibilla and the first author.

\subsubsection*{The connection with the Kummer-étale and Kummer-flat topologies}
The \irs is closely related to the Kummer-étale and Kummer-flat topologies.

For example, we show that a locally finitely presented morphism  of \fs logarithmic schemes is Kummer-flat (respectively Kummer-étale) if and only if the corresponding morphism of \irss is representable, flat and finitely presented (respectively representable and étale) (Theorems \ref{thm:flat<->Kummer-flat} and \ref{thm:etale<->Kummer-etale}).

We also explain how to recover the Kummer-flat topos of $X$ from $\infroot{X}$. We define the small fppf site $\infroot{X}\fppf$ of $\infroot{X}$, whose objects are representable, finitely presented flat maps $\cA \arr \infroot{X}$; substituting flat with étale we obtain the definition of the small étale site $\infroot{X}\et$. Sending a Kummer-flat (respectively Kummer-étale) map $Y \arr X$ into the induced morphism $\infroot{Y} \arr \infroot{X}$ defines a morphism of sites $X\kfl \arr\infroot{X}\fppf$ (respectively $X\ket \arr\infroot{X}\et$). We are not able to show that these are equivalences; however we show that they induce equivalences of topoi (Theorems \ref{equiv.topoi} and \ref{thm:equivalence-ket}).


\subsubsection*{Quasi-coherent sheaves}
Another major theme of this paper is \qc sheaves on \fs logarithmic schemes. We argue that these should be defined as \qc sheaves on the \irs. If $\overline{M}_{X} \subseteq B$ is a fine Kummer extension, then one finds in \cite{borne-vistoli1} a definition of \qc parabolic sheaf that generalizes those given in many particular cases in \cite{metha-seshadri,maruyama-yokogawa,biswas,iyer-simpson,borne}; the main result is \cite[Theorem 6.1]{borne-vistoli1}: the category of parabolic sheaves with coefficients in $B$ is equivalent as an abelian
category to the category of \qc sheaves on the root stack $\radice{B}X$. The definition of a parabolic sheaf extends immediately to the present case, giving a definition of a parabolic sheaf with arbitrary rational weights on a \fs logarithmic scheme. The proof of \cite[Theorem 6.1]{borne-vistoli1} extends to the present setup, yielding an equivalence of the category of parabolic sheaves on $X$ and that of \qc sheaves on $\infroot{X}$ (Theorem~\ref{BV.rational}).

This point of view is exploited in \cite{parabolic.moduli} to construct moduli spaces of parabolic sheaves with arbitrary rational weights. Furthermore, the correspondence was recently extended, in the complex analytic case, to parabolic sheaves with real weights and sheaves of modules on the Kato--Nakayama space \cite{real.parabolic}.

We are aware of another possible definition of a quasi-coherent sheaf on a (derived) logarithmic scheme, introduced by Steffen Sagave, Timo Schürg and Gabriele Vezzosi in \cite{sagave-schurg-vezzosi}. We are not sure whether this is connected with ours.

We conclude by characterizing \fp sheaves on $\infroot{X}$ in purely parabolic terms (Theorem~\ref{thm:char-fp-parabolic}). By putting this together with Corollary~\ref{thm:equivalence2}, we obtain a parabolic interpretation of \fp sheaves on the Kummer-flat site of $X$.

There is an issue with the present construction: even when $X$ is the spectrum of a field, the structure sheaf of $\infroot{X}$ is in general not coherent (Example~\ref{ex:non-coherent}); see \cite{niziol-k-theory} for a discussion of this issue for the Kummer-flat site. This does not happen when the logarithmic structure is simplicial, in the sense that the geometric stalks of $\overline{M}_{X}$ are isomorphic to the monoid of integral elements of a simplicial rational polyhedral cone. This means that, even when $X$ is noetherian, we can't expect \fp parabolic sheaves on $X$ to form an abelian category, in general. We don't know whether this is inevitable, or it can be fixed with a different construction.

\subsubsection*{The connection with the Kato--Nakayama space}
The relation between the \irs and the Kato--Nakayama space of a fine saturated logarithmic scheme $X$ locally of finite type over $\CC$ is more than an analogy. This has been clarified in later work.

In fact, if $X$ is a fine saturated logarithmic scheme, locally of finite type over $\CC$, there exists a map from the the Kato--Nakayama space $X^{\log}$ of $X$ to the topological stack $(\hspace{-1pt}\infroot X)_{\rm top}$ associated with $\infroot X$, that induces an equivalence of profinite homotopy types \cite{knvsroot}. This map, constructed in \cite{knvsroot} by gluing together locally defined maps, has in fact a natural interpretation, due to the fact that $X^{\log}$ can be defined as a ``transcendental infinite root stack'', in which instead of adding roots of all orders, one adds logarithms \cite{TVnew}.

\subsection*{Description of content}

Section~\ref{sec:preliminaries} contains a list of conventions and preliminary definitions and results that will be used in the rest of the paper; in \ref{subsec:projlim} we review the notion of projective limit of fibered categories that will be used, and \ref{sec:log-schemes} contains a review of the nonstandard point of view on logarithmic structures introduced in \cite{borne-vistoli1}, which is the one most suitable for the purposes of this paper. In particular the notion of parabolic bundles is stated in this language; and the construction of the logarithmic structure starting from an \irs, which plays a fundamental role, is most naturally carried out with this formalism.

The definition of the \irs $\infroot{X}$ of a \fs logarithmic scheme $X$ appears in Section~\ref{sec:irss-log-schemes}; we also show that $\infroot{X}$ is a limit of finite root stacks $\radice{n}X$. The next Section~\ref{sec:local-models} contains a local description of \irss (Proposition~\ref{prop:local-model}), which is fundamental for the rest of the paper. In \ref{sec:abstract-irs} we take a somewhat different point of view, and define an abstract \irs over a scheme, by assuming the local description of Proposition~\ref{prop:local-model} as an axiom. This gives the correct framework for describing how to recover the logarithmic structure of $X$ from $\infroot{X}$.

Section~\ref{realsec:qc} contains a discussion of \qc sheaves on an infinite root stack. We begin by considering in \ref{sec:qc} the case of \qc sheaves on a more general fibered category. In one definition given for example in \cite{rosenberg-kontsevich}, they are cartesian functors into the category of modules. On the other hand, \qc sheaves on a ringed site are defined as sheaves of $\cO$-modules that locally have a presentation. Both points of view are useful to us, so we show that for an \irs, or, more generally, for a fibered category with an fpqc cover by a scheme the categories of \qc sheaves in the first sense is equivalent to the category of \qc sheaves on three different associated sites with the fpqc topology (Proposition~\ref{prop:equivalence-qc}). Subsection~\ref{sec:qc-on-irs} specializes the discussion to \irss and is dedicated to the proof of some technical results on \qc sheaves on them.

The heart of the paper is Section~\ref{sec:reconstruction}, in which we show how to produce a logarithmic scheme from an \irs; this cost us fairly intense suffering. It requires some very technical work on \fs monoids, and an analysis of the Picard group of \irss over algebraically closed fields. This work pays off in \ref{sec:morphisms-irss}, where we show (Theorem~\ref{thm:equivalence2}) that the category of \fs logarithmic schemes is equivalent to the category of abstract \irss; the morphisms of \irss are defined as base-preserving functors satisfying a technical condition.

Section~\ref{realsec:fp-sheaves-fppf} is about the comparison between the Kummer-flat site of a logarithmic scheme and the fppf site of its \irs. In ~\ref{sec:fp-sheaves-fppf} we introduce the small fppf site of an \irs, and we show that the categories on  \fp sheaves on the root stack and on this fppf site coincide. The next subsection contains our results (Theorems \ref{thm:flat<->Kummer-flat} and \ref{equiv.topoi}, and Corollary~\ref{cor:equiv-fp}) on the connection between the \irs and the Kummer-flat site of a \fp logarithmic scheme. 

Finally, Section~\ref{sec:parabolic} contains the parabolic description of \qc and \fp sheaves on an \irs.

\subsection*{Acknowledgments}
This paper builds on approximately half of the PhD thesis of the first author, which was developed under the supervision of the second author at the Scuola Normale Superiore, in Pisa. During the latest phase of writing, the first author was supported by the Max Planck Institute for Mathematics of Bonn.

We would like to thank Johan De Jong for a very useful conversation. Furthermore, the second author would like to express his debt toward his collaborator Niels Borne; this paper builds on ideas in their previous joint paper, and owes much to discussions that they have had over the years.

Finally, we are grateful to the anonymous referee for a thorough reading, a long list of useful comments, and, overall, an outstanding job. In particular, he or she suggested several simplifications and corrections.


\section{Preliminaries}\label{sec:preliminaries}
 
In this preliminary section we fix some notations, recall the notion of projective and inductive limits of fibered categories and the formalism of \df structures in logarithmic geometry.
 
\subsection{Notations and conventions}
 
All monoids will be commutative. Often they will be fine, saturated and sharp, in which case they are also torsion-free (the geometric stalks of the sheaf $\overline{M}_{X}$ of a fine saturated logarithmic scheme $X$ are of this type). If $P$ is a monoid, $X_{P}$ will always denote the spectrum $\sz P$ of the monoid algebra $\ZZ[P]$, which will also usually be equipped with the natural logarithmic structure. For $p \in P$, the corresponding element of $\ZZ[P]$ will be denoted by $x^{p}$, and the same notation will be used for $k[P]$ where $k$ is a field.

All symmetric monoidal functors will be strong, meaning that the morphisms $FX\otimes FY\to F(X\otimes Y)$ will be isomorphisms.

All fibered categories will be fibered over the category $\aff$ of affine schemes, which can also be seen as the dual of the category of commutative rings. If $\cX$ is a fibered category over $\aff$, we will always denote by $\p_{\cX}\colon \cX \arr \aff$ the structure functor.

We will identify as usual the fibered category $\cataff X \arr \aff$ of maps into a scheme $X$ with $X$ itself; thus, a fibered category over $X$ will be a fibered category over $\aff$ with a cartesian functor to $\cataff X$, or, equivalently, a fibered category over $\cataff X$.

We will usually refer to categories fibered in groupoids over $\aff$ as ``stacks''. The fibered categories we will deal with are indeed stacks in the fpqc topology, but this fact will not play a major role in our treatment. The point is that ``stack'' is short, and saying, for example, ``open substack'' is more convenient that ``open fibered subcategory''.

A morphism of categories fibered in groupoids on $\aff$ will be called ``representable'' if it is represented by schemes. Algebraic spaces will not play any role in this paper.

If $X = \spec A$ is an affine scheme over a ring $R$, and $G$ is an affine group scheme over $R$ acting on $X$, we denote by $X/G$ the spectrum of the ring of invariants $A^{G}$.
 
We will use $*$ to denote the Godement product (or ``horizontal composite'') of natural transformations, see for example \cite[II.5]{maclane}.

If $X$ is a scheme, we denote by $X\et$ the small étale site of $X$, whose objects are étale maps $U \arr X$, where $U$ is an affine scheme. Of course one could extend this to all schemes étale over $X$, but the resulting topos would be equivalent.

We will use the following notation: if $X$ is a logarithmic scheme, we denote the logarithmic structure by $\alpha_{X}\colon M_{X} \arr \cO_{X}$, and the corresponding \df structure by $(A_{X}, L_{X})$ (see \ref{sec:log-schemes}). Also, since sometimes we will have to distinguish between fibered products in the category of \fs logarithmic schemes, and fibered products of the underlying schemes, which do not coincide, we will adopt a standard notation and denote by $\underline X$ the underlying scheme to a logarithmic scheme $X$.

If $\cC$ is a site, we denote by $\sh\cC$ the topos of sheaves of sets on $\cC$.

The symbol $\spadesuit$ will denote the end of a proof or the absence of one.

\subsection{Projective and inductive limits of fibered categories}\label{subsec:projlim}

Let $\cC$ be a category. Suppose that $I$ is a filtered partially ordered set, considered as a category. A projective system $(\cM_{i}, F_{ij})$of categories fibered over $\cC$ consists of a strict $2$-functor from $I\op$ to the $2$-category of categories fibered over $\cC$. Concretely, for each $i \in I$ we have a fibered category $\cM_{i} \arr \cC$, with a cartesian functor $F_{ij}\colon \cM_{j} \arr \cM_{i}$, such that $F_{ij}F_{jk} = F_{ik}$ for every triple $(i, j, k)$ such that $i \leq j \leq k$ and $F_{ii}=\id_{\cM_{i}}$ for all $i$ (where equality should be interpreted as strict equality).

\begin{remark}
Of course to give a definition of projective limit  one should take a lax $2$-functor, or, equivalently, assume that $I$ is a filtered $2$-category, i.e., a $2$-category that is equivalent to a filtered partially ordered set. However, for the purposes of this paper the present context is sufficient.
\end{remark}

We define the projective limit $\projlim_{i}(\cM_{i}, F_{ij})$ as in \cite[Definition~3.5]{borne-vistoli2}.

\begin{definition}
An object $(T, \{\xi_{i}\}, \{\phi_{ij}\})$ of $\projlim_{i}\cM_{i} = \projlim_{i}(\cM_{i}, F_{ij})$ consists of the following data.

\begin{enumeratea}

\item An object $T$ of $\cC$, and an object $\xi_{i}$ of $\cC(T)$ for all $i \in I$.

\item For each pair $(i, j)$ with $i \leq j$, an isomorphism $\phi_{ij}\colon F_{ij}\xi_{j} \arr \xi_{i}$ in $\cM_{i}(T)$.

\end{enumeratea}
These are required to satisfy the following condition: if $i \leq j \leq k$, then 
   \[
   \phi_{ik} = \phi_{ij}\circ(F_{ij}\phi_{jk})\colon F_{ik}\xi_{k} \arr \xi_{i}\,.
   \]

An arrow $(\phi, \{f_{i}\})\colon (T', \{\xi'_{i}\}, \{\phi'_{ij}\}) \arr (T, \{\xi_{i}\}, \{\phi_{ij}\})$ consists of an arrow  $\phi\colon T' \arr T$ in $\cC$ and an arrow $f_{i}\colon \xi'_{i} \arr \xi_{i}$ in $\cM_{i}$ for each $i$, satisfying the following conditions.

\begin{enumeratei}

\item The image of $f_{i}$ in $\cC$ is $\phi$ for all $i$.

\item For each pair $(i, j)$ with $i \leq j$, the diagram
   \[
   \begin{tikzcd}[column sep = large]
   F_{ij}\xi'_{j} \ar{r}{F_{ij}f_{j}}\ar{d}{\phi'_{ij}} & F_{ij}\xi_{j}\ar{d}{\phi_{ij}}\\
   \xi'_{i} \ar{r}{f_{i}} & \xi_{i}
   \end{tikzcd}
   \]
commutes.
\end{enumeratei}

\end{definition}

It is easily seen that $\projlim_{i}\cM_{i}$ is a fibered category over $\cC$. The functor $\projlim_{i}\cM_{i} \arr \cC$ is the obvious one, sending $(T, \{\xi_{i}\}, \{\phi_{ij}\})$ to $T$. An arrow $(\phi, \{f_{i}\})$ is cartesian if and only if each $f_{i}$ is cartesian. In particular, if each $\cM_{i}$ is fibered in groupoids, then $\projlim_{i}\cM_{i}$ is also fibered in groupoids.

\begin{remark}
As pointed out by the referee, one can also see a projective system of fibered categories over $\cC$ indexed by $I$ as a fibered category over $I\times \cC$. The limit can then be seen as the pushforward of this fibered category to the category $\cC$.
\end{remark}

We are also going to use filtered inductive limits, in a slightly more general situation. Suppose that $I$ is a filtered partially ordered set as above. A lax inductive system $(\cC_{i}, F_{ij})$ of categories is a lax $2$-functor from $I$ to the category of categories. We define the colimit $\indlim_{I}(\cC_{i}, F_{ij})$ in the obvious way: the class of objects is the disjoint union $\bigsqcup_{I}\ob\cC_{i}$, while if $\xi_{i} \in \ob\cC_{i}$ and $\xi_{j}\in \ob\cC_{j}$ we define $\hom(\xi_{i}, \xi_{j})$ as the colimit $\indlim_{k}\hom_{\cC_{k}}(F_{ki}\xi_{i}, F_{kj}\xi_{j})$ over the set $\{k\in I \mid k \geq i, \ k \geq j\}$.


\subsection{Logarithmic schemes, Deligne-Faltings structures and charts}\label{sec:log-schemes}

In this section we outline the theory of logarithmic schemes, using the nonstandard approach of \cite{borne-vistoli1}.

Assume that $X$ is a scheme, and denote by $\Div_{X\et}$ the fibered category over $X{\et}$ consisting of pairs $(L,s)$ where $L$ is an invertible sheaf and $s$ is a global section. 

\begin{definition}
A \emph{Deligne-Faltings structure} on $X$ is a symmetric monoidal functor $L\colon A\to \Div_{X\et}$ with trivial kernel, where $A$ is a sheaf of monoids on the small \'{e}tale site $X{\et}$.

A \emph{logarithmic scheme} is a scheme $X$ equipped with a \df structure.
\end{definition}

We refer the reader to Sections 2.4 and 2.5 of \cite{borne-vistoli1} for background about symmetric monoidal categories and functors, and for a full discussion of the above definition. 

The phrase ``with trivial kernel''  means that for $U\to X$ \'{e}tale, the only section of $A(U)$ with image isomorphic to the object $(\cO_{U},1)$ is the zero section.

We will denote a logarithmic scheme by $(X,A,L)$ or just $X$, when the \df structure is understood.

Recall that the standard definition of a logarithmic scheme (\cite{kato}) is that of a scheme $X$ with a sheaf of monoids $M$ on $X{\et}$, with a morphism $\alpha\colon M\to \cO_{X}$ (where $\cO_{X}$ is a sheaf of monoids with the multiplication) such that the restriction of $\alpha$ to $\alpha^{-1}(\cO_{X}^{\times})$ induces an isomorphism $\alpha|_{\alpha^{-1}(\cO_{X}^{\times})}\colon \alpha^{-1}(\cO_{X}^{\times})\to \cO_{X}^{\times}$. A logarithmic scheme in this sense is \emph{quasi-integral} if the natural resulting action of $\cO_{X}^{\times}$ on $M$ is free.

The link between our definition and the standard notion of a quasi-integral logarithmic scheme is the following: given a morphism of sheaves of monoids $\alpha\colon M\to \cO_{X}$, one takes the stacky quotient by $\cO_{X}^{\times}$ to obtain  a symmetric monoidal functor $L\colon \overline{M}\to [\cO_{X}/\cO_{X}^{\times}]\cong \Div_{X\et}$, and sets $A=\overline{M}$. In other words, a section of $A$ is sent by $L$ to the dual $L_{a}$ of the invertible sheaf $N_a$, associated to the $\GG_{\rmm}$-torsor given by the fiber $M_{a}$ of $M\to \overline{M}=A$ over $a$ (meaning that $N_a$ is the sheaf of sections of the line bundle over $X$ associated to the $\GG_{\rmm}$-torsor $M_a$), and the restriction of $\alpha$ to $M_{a}\to \cO_{X}$ gives the section of $L_{a}$.

In the other direction, starting with a \df structure $L\colon A\to \Div_{X\et}$ we can take the fibered product $A\times_{\Div_{X\et}}\cO_{X}\to \cO_{X}$, and verify that $M=A\times_{\Div_{X\et}}\cO_{X}$ is equivalent to a sheaf.

\begin{remark}
The fact that we are taking a dual in the identification $[\cO_{X}/\cO_{X}^{\times}]\cong \Div_{X\et}$ might look unnatural. In fact, we could have worked equally well with the stack $\Div'_{X\et}$ of line bundles with a map to $\cO_X$, instead of a global section, and the natural isomorphism $[\cO_{X}/\cO_{X}^{\times}]\cong \Div'_{X\et}$ does not involve taking any duals.

This choice would for example have the following effect on the treatment of parabolic sheaves (see \cite{borne-vistoli1}, or Section \ref{sec:parabolic}): adding an element $p$ to the index of the sheaf would correspond to twisting by the line bundle $L_p^\vee$. If the log structure is induced by a single effective Cartier divisor $D$, this would make adding $1$ to the index correspond to twisting by $\cO_X(-D)$, which also looks unpleasant.

We prefer to perform this change of sign right away in the definition of a \df structure, and have translation by $1$ correspond to twisting by the line bundle $\cO_X(D)$.
\end{remark}

We will denote the logarithmic structure of $X$ by $\alpha_{X}\colon M_{X} \arr \cO_{X}$, and the corresponding \df structure by $(A_{X}, L_{X})$. When necessary we will denote by $\underline{X}$ the bare scheme underlying the logarithmic scheme $X$.

\begin{definition}
A morphism $(A,L)\to (B,N)$ of \df structures on a scheme $X$ is a pair $(f,\alpha)$ where $f\colon A\to B$ is a morphism of sheaves of monoids, and $\alpha\colon N\circ f \cong M$ is a monoidal isomorphism of symmetric monoidal functors $A\to \Div_{X\et}$.
\end{definition}

\begin{remark}
If $f\colon Y\to X$ is a morphism of schemes and $(A,L)$ is a \df structure on $X$, one defines a \emph{pullback} \df structure $f^{*}(A,L)=(f^{*}A,f^{*}L)$ (see \cite[Proposition 3.9]{borne-vistoli1}). The sheaf of monoids $f^{*}A$ is, like the notation suggests, the usual pullback as a sheaf of sets to $Y{\et}$, and if $g\colon Z\to Y$ is another morphism of schemes, there is a canonical isomorphism $g^{*}f^{*}(A,L)\cong (fg)^{*}(A,L)$.
\end{remark}

\begin{definition}
A morphism of logarithmic schemes $(X,A,L)\to (Y,B,N)$ is a pair $(f,f^{\flat})$, where $f\colon X\to Y$ is a morphism of schemes, and $f^{\flat}\colon f^{*}(B,N)\to (A,L)$ is a morphism of \df structures on $X$.
\end{definition}

Morphisms can be composed in the evident way, and logarithmic schemes form a category.

\begin{definition}
A morphism of logarithmic schemes $(f,f^{\flat})\colon (X,A,L)\to (Y,B,N)$ is \emph{strict} if $f^{\flat}$ is an isomorphism.
\end{definition}

Strict morphisms are morphisms that do not change the logarithmic structure.

We will be interested only in \df structures that arise from local models. Recall that a homomorphism of monoids $\phi\colon P\to Q$ is a \emph{cokernel} if the induced homomorphism $P/\phi^{-1}(0) \to Q$ is an isomorphism. A morphism $A\to B$ of sheaves of monoids on $X{\et}$ is a cokernel if the homomorphism induced on every stalk is a cokernel.

\begin{definition}\label{def:chart}

A \emph{chart} for a sheaf of monoids $A$ on $X{\et}$ is a homomorphism of monoids $P\to A(X)$ such that the induced map of sheaves $P_{X}\to A$ is a cokernel.

A sheaf of monoids $A$ on $X{\et}$ is \emph{coherent} if $A$ has charts with finitely generated monoids locally for the \'{e}tale topology of $X$.

A logarithmic scheme $(X,A,L)$ is \emph{coherent} if the sheaf $A$ is coherent.
\end{definition}

\begin{remark}
Equivalently, a \emph{chart} for a \df structure $(A,L)$ on $X$ can be seen as a symmetric monoidal functor $P\to \Div(X)$ for a monoid $P$, that induces the functor $L\colon A\to \Div_{X\et}$ (basically by ``sheafifying and trivializing the kernel'', see \cite[Proposition 3.3]{borne-vistoli1} for the precise construction).
\end{remark}

This differs from the standard notion of chart for a logarithmic scheme, which is a morphism of monoids $P\to \cO_{X}(X)$ that induces the logarithmic structure $\alpha\colon M\to \cO_{X}$ (see \cite[Section 1]{kato} for details). We will distinguish the two notions by calling the standard charts \emph{Kato charts}. Every Kato chart $P\to \cO_{X}(X)$ induces a chart by composing with $\cO_{X}(X)\to \Div(X)$.

Moreover one can show that having finitely generated charts \'{e}tale locally is equivalent to having finitely generated Kato charts \'{e}tale locally \cite[Proposition $3.28$]{borne-vistoli1}.

\begin{remark}
Note that for a monoid $P$, the scheme $\spec  \ZZ[P]$ has a natural \df structure induced by the composition $P\to \ZZ[P]\to \Div(\spec \ZZ[P])$. 

Giving a Kato chart on $X$ with monoid $P$ is the same as giving a strict morphism of logarithmic schemes $X\to \spec  \ZZ[P]$, and giving a chart is the same as giving a strict morphism $X\to [\spec  \ZZ[P]/\widehat{P}]$, where $\widehat{P}$ is the Cartier dual of $P\gr$, the action is the natural one, and the \df structure of the quotient stack is defined by descent from the one of $\spec  \ZZ[P]$.

From now on $\spec \ZZ[P]$ and $ [\spec \ZZ[P]/\widehat{P}]$ will be equipped with these \df structures without further mention.
\end{remark}

One can show that on a coherent logarithmic scheme, charts can be constructed by taking as monoid $P$ the stalk of the sheaf $A$ over a geometric point of $X$. More precisely, any geometric point ${x}$ of $X$ has an \'{e}tale neighborhood where we have a chart with monoid $A_{x}$ (\cite[Proposition 3.15]{borne-vistoli1}).

Using charts one can also describe the logarithmic part of morphisms between coherent logarithmic schemes by using homomorphisms of monoids.

\begin{definition}
A \emph{chart} for a morphism of sheaves of monoids $A\to B$ on $X{\et}$ is given by two charts $P\to A(X)$ and $Q\to B(X)$ for $A$ and $B$, together with a homomorphism of monoids $P\to Q$ making the diagram
   \[
   \begin{tikzcd}
   P\rar\dar & Q\dar\\
   A(X)\rar & B(X)
   \end{tikzcd}
   \]
commutative.

A \emph{chart} for a morphism of logarithmic schemes  $(f,f^{\flat})\colon (X,A,L)\to (Y,B,N)$ is a chart for the morphism of sheaves of monoids $f^{*}B\to A$ given by $f^{\flat}$.
\end{definition}

In other words, a chart for a morphism of logarithmic schemes $(f,f^{\flat})\colon (X,A,L)\to (Y,B,N)$ can be seen as two symmetric monoidal functors $P\to \Div(Y)$ and $Q\to \Div(X)$ that are charts for $(B,N)$ and $(A,L)$ respectively, and a morphism of monoids $P\to Q$ inducing $f^{\flat}\colon f^{*}(B,N)\to (A,L)$.

\begin{definition}
A \emph{Kato chart} for a morphism $(f,f^{\flat})\colon (X,A,L)\to (Y,B,N)$ of logarithmic schemes is a chart such that the functors $P\to \Div(Y)$ and $Q\to \Div(X)$ lift to $P\to \cO_{Y}(Y)$ and $Q\to \cO_{X}(X)$.
\end{definition}

Equivalently a Kato chart can be seen as a commutative diagram of logarithmic schemes
   \[
   \begin{tikzcd}
   (X,A,L)\rar\dar & \spec \ZZ[Q] \dar\\
   (Y,B,N)\rar & \spec \ZZ[P]
   \end{tikzcd}
   \]
with strict horizontal arrows, and analogously a chart can be seen as such a commutative diagram, with the quotient stacks $[\spec \ZZ[P]/\widehat{P}]$ and $[\spec \ZZ[Q] /\widehat{Q}]$ in place of  $\spec \ZZ[P]$ and $\spec \ZZ[Q]$ respectively.

One can show \cite[Proposition $3.17$]{borne-vistoli1} that it is always possible to find local charts for morphisms $(X,A,L)\to (Y,B,N)$ between coherent logarithmic schemes, and moreover one can take the monoids of the charts to be stalks of the sheaves $A$ and $B$.

We recall the definition of fine and saturated logarithmic schemes. These are the kind of logarithmic schemes we will be interested in.

Recall that a monoid $P$ is integral if the natural homomorphism $P\to P\gr$ is injective, or equivalently if $p+r=q+r$ in $P$ implies $p=q$. An integral monoid $P$ is saturated if whenever for $p \in P\gr$ we have a positive integer $n$ such that $np \in P$, then $p \in P$.

\begin{definition}
A logarithmic scheme $(X,A,L)$ is \emph{fine} if it is coherent and the stalks of $A$ are fine monoids (i.e. integral and finitely generated).

A logarithmic scheme $(X,A,L)$ is \emph{fine and saturated} if it is fine, and the stalks of $A$ are (fine and) saturated monoids.
\end{definition}

\begin{remark}
Equivalently one can check these conditions on charts, i.e. a logarithmic scheme $(X,A,L)$ is fine (resp. \fs) if and only if it is coherent and it admits local charts by fine (resp. fine and saturated) monoids.
\end{remark}

From now on all our logarithmic schemes will be assumed to be \fs.

\begin{example}[Log points]\label{ex:log-points}
Let $k$ be a field and $P$ a sharp monoid. Then we have a \df structure on $\spec k$ given by the functor $P\to \Div({\spec k})$ sending $0$ to $(k,1)$ and everything else to $(k,0)$. This is the \emph{logarithmic point} on $\spec k$ with monoid $P$. If $k$ is algebraically closed, one can show that every fine saturated \df structure on $\spec k$ is of this form.

In particular if $P=\NN$ we obtain what is usually called the \emph{standard logarithmic point} (over $k$). 
\end{example}


\section{Infinite root stacks of logarithmic schemes}\label{sec:irss-log-schemes}

Let $X = (X, A, L)$ be a \fs logarithmic scheme, $j\colon A \arr B$ an injective homomorphism of sheaves of monoids on $X\et$. With this we associate a category $\radice{B}X = \radice{B}{(X, A, L)}$ fibered in groupoids over $\cataff X$, as in \cite[Definition~4.16]{borne-vistoli1}, with the difference that here we do not assume that $B$ has local models given by fine monoids.

Given a morphism $t\colon T \arr X$, where $T$ is an affine scheme, an object $(M, \alpha)$ of $\radice{B}{X}(T)$ is a pair $(M, \alpha)$, where $M\colon t^{*}B \arr \div_{X\et}$ is a \sm functor, and $\alpha\colon t^{*}L \arr M \circ t^{*}j$ is an isomorphism of \sm functors from $t^{*}L$ to the composite $M \circ t^{*}j\colon t^{*}A \arr \div_{T\et}$.

An arrow $h$ from $(M, \alpha)$ to $(M', \alpha')$ is an isomorphism $h\colon M \arr M'$ of \sm functors $B \arr \div_{T\et}$, such that the diagram 
   \[
   \begin{tikzcd}
   {}&L\ar{rd}{\alpha'}\ar{ld}[swap]{\alpha}\\
   M \circ t^{*}j\ar{rr}{h \circ j} && M'\circ t^{*}j
   \end{tikzcd}
   \]
commutes.

Given a morphism $T' \arr T$ of affine $X$-schemes there is an obvious pullback functor $\radice{B}{X}(T) \arr \radice{B}{X}(T')$, which gives a structure of pseudo-functor from $\cataff X$ to groupoids. The associated fibered category is $\radice{B}{X}$.

\begin{remark}
Since logarithmic structures satisfy descent conditions for the Zariski topology, it is easy to check that for any morphism of schemes $T \arr X$ (where $T$ is not necessarily affine), there is an equivalence between the category of morphisms $T \arr \radice{B}X$ and pairs $(M , \alpha)$ defined as above.
\end{remark}

If the morphism $j\colon A\to B$ is Kummer and $B$ is coherent, then the root stack $\radice{B}{X}$ is algebraic \cite[Proposition 4.19]{borne-vistoli1}. This follows from Corollary \ref{cor:local-model-chart} below. Recall that $j$ is \emph{Kummer} if for every geometric point $x$ of $X$, the map $j_x\colon A_x\to B_x$ is a Kummer homomorphism of monoids, i.e. it is injective, and every element of $B_x$ has a multiple in the image. Moreover if the sheaf $B$ is saturated as well, one easily checks that $\radice{B} X\to X$ is the coarse moduli space.

There is an obvious variant of this construction, when $P\to A(X)$ is a chart for the logarithmic structure of $X$ and $P\to Q$ is a Kummer homomorphism, that we denote by $\radice{Q}{X}$ (see also \cite[Definition 4.12]{borne-vistoli1}). If $P\to Q$ is a chart for $B\to A$, we have a canonical equivalence $\radice{Q}{X}\cong \radice{B}{X}$ of fibered categories over $\cataff X$ \cite[Proposition 4.18]{borne-vistoli1}.

If $A \arr B$ and $B \arr B'$ are injective homomorphisms of sheaves of monoids on $X\et$, there is a base-preserving functor $\radice {B'} X \arr \radice B X$, defined as follows. If $(M',\alpha)$ is an object of $\radice {B'}X(T)$, call $M$ the composite $B \arr B' \xarr{M'} \div_{X\et}$. Then the restriction of $M$ to $A$ equals the restriction of $M'$, so $(M, \alpha)$ is an object of $\radice{B}X(T)$. This, together with the obvious map on arrows, defines a base-preserving functor $\radice{B'}X \arr \radice{B}X$.

\begin{proposition}
Let $A \arr B$, $B \arr B'$ and $B' \arr B''$ be homomorphisms of sheaves on monoids on $\cataff X$. The composite of the induced base-preserving functors $\radice{B''}X \arr \radice{B'}X$ and $\radice{B'}X \arr \radice{B}X$ equals the base-preserving functor $\radice{B''}X \arr \radice{B}X$ induced by the composite $B \arr B' \arr B''$. \qed
\end{proposition}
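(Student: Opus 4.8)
The plan is to unwind the definition of the induced base-preserving functor and check that both sides act identically on objects and on arrows. Write the three homomorphisms as $u\colon A \arr B$, $v\colon B \arr B'$ and $w\colon B' \arr B''$, so that the injection used to define $\radice{B''}X$ is $w\circ v\circ u$, the one for $\radice{B'}X$ is $v\circ u$, and the one for $\radice{B}X$ is $u$. Recall that for a homomorphism $f\colon C \arr C'$ with $A \arr C \arr C'$, the induced functor $\radice{C'}X \arr \radice CX$ sends an object $(M', \alpha)$ over $t\colon T \arr X$ to $(M'\circ t^{*}f, \alpha)$, where one uses that the defining isomorphism $\alpha\colon t^{*}L \arr M'\circ t^{*}(f\circ(A\arr C))$ is literally the same isomorphism needed for $M'\circ t^{*}f$; on arrows it sends $h$ to the whiskered isomorphism $h\circ t^{*}f$.

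Next I would evaluate the composite $\radice{B''}X \arr \radice{B'}X \arr \radice{B}X$ on an object $(M'', \alpha'')$ of $\radice{B''}X(T)$. The first functor (induced by $w$) produces $(M''\circ t^{*}w, \alpha'')$, and the second (induced by $v$) produces $\bigl((M''\circ t^{*}w)\circ t^{*}v,\ \alpha''\bigr)$. On the other hand, the functor $\radice{B''}X \arr \radice{B}X$ induced by the composite $v\circ w\colon B \arr B''$ sends $(M'', \alpha'')$ to $\bigl(M''\circ t^{*}(v\circ w),\ \alpha''\bigr)$. The two outputs agree because pullback of sheaves of monoids is functorial, giving $t^{*}(v\circ w) = t^{*}w\circ t^{*}v$, and because composition of functors is associative, so $(M''\circ t^{*}w)\circ t^{*}v = M''\circ(t^{*}w\circ t^{*}v)$. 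The isomorphism component $\alpha''$ is carried through unchanged in every case, and the identical computation with the whiskered arrows $h\circ t^{*}w$ and $h\circ t^{*}(v\circ w)$ handles morphisms. Since both functors are base-preserving and agree on objects and arrows, they are equal.

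The only point requiring care is that all of this holds as a \emph{strict} equality of functors rather than merely up to canonical isomorphism, as the statement demands. This reduces to the strict functoriality of the inverse-image operation $t^{*}$ on sheaves of monoids, i.e.\ that $t^{*}(v\circ w)$ and $t^{*}w\circ t^{*}v$ coincide on the nose, together with strict associativity of composition of \sm functors into $\div_{T\et}$; with the pullback construction fixed as in the preliminaries, both hold literally, so no coherence isomorphisms intervene. Thus the verification is entirely formal, and this strictness bookkeeping is the only thing one must be slightly attentive to.
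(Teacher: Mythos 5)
Your proof is correct and is exactly the ``straightforward check'' the paper leaves to the reader: both functors act by precomposing with the pullback of the monoid homomorphism while carrying $\alpha$ along unchanged, and strict functoriality of $t^{*}$ on sheaf homomorphisms together with strict associativity of composition gives equality on the nose, which is the point the paper stresses. The only blemish is a composition-order slip: with $v\colon B\arr B'$ and $w\colon B'\arr B''$, the composite is $w\circ v\colon B\arr B''$ (not $v\circ w$), and correspondingly $t^{*}(w\circ v)=t^{*}w\circ t^{*}v$; with that corrected, the computation reads exactly as intended.
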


Here ``equals'' really implies equality, not just the existence of an isomorphism. The proof is a straightforward check.

If $n$ is a positive integer, we set $\frac{1}{n}A \eqdef A$, while the morphism $A \arr \frac{1}{n}A$ is multiplication by $n$. We have a maximal Kummer extension of $A$ given by $A\to A_{\QQ}$, where $A_{\QQ}$ is the sheaf of monoids $\varinjlim_{n}\frac{1}{n}A$. Note that the formation of $\frac{1}{n}A$ and $A_{\QQ}$ is compatible with pullback, i.e. if $f\colon Y\to X$ is a morphism of schemes, then we have natural isomorphisms $f^{*}\frac{1}{n}A\cong \frac{1}{n}f^{*}A$ and $f^{*}A_{\QQ}\cong (f^{*}A)_{\QQ}$.

We define the infinite root stack as the root stack corresponding to this maximal Kummer extension.

\begin{definition}
The \emph{\irs} of $X$ is $\infroot{X} \eqdef \radice {A_{\QQ}}{X}$. 
\end{definition}

We also define $\radice n {X} \eqdef \radice{\frac{1}{n}A}{X}$. The section $a\in A(T)$ will be denoted by $a/n$ when it is considered as a section of $\frac{1}{n}A$.

As a matter of notation, when we need to specify the logarithmic structure of the logarithmic scheme $X$ we will use the notations $\infroot{(X, A, L)}$ and $\radice n {(X, A, L)}$, or simply $\infroot{(A, L)}$ and $\radice n {(A, L)}$ when the scheme $X$ is fixed throughout the discussion.

Formation of root stacks and infinite root stacks commutes with base change.

\begin{proposition}\label{prop:base-change}
Let $(A, L)$ be a \df structure on a scheme $X$, and let $f\colon Y \arr X$ be a morphism of schemes. Then we have canonical equivalences of fibered categories
   \[
   \infroot{f^{*}(A, L)} \simeq Y\times_{X} \infroot{(A, L)}
   \]
and
   \[
   \radice n{f^{*}(A, L)} \simeq Y\times_{X} \radice n{(A, L)}\, 
    \]
over $Y$. \qed
\end{proposition}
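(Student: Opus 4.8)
The plan is to prove the equivalence for the infinite root stack; the statement for $\radice{n}{(A,L)}$ is identical with $A_\QQ$ replaced throughout by $\frac{1}{n}A$, so I would state once that the same argument applies verbatim. The strategy is to unwind both sides into their defining data as fibered categories over $\aff$ and exhibit a natural equivalence between the groupoids of sections over each affine test scheme, compatible with pullback. Everything hinges on the compatibility of the construction of $A_\QQ$ with pullback that was noted just before the statement, namely $f^*(A_\QQ) \simeq (f^*A)_\QQ$, together with the transitivity of pullback of \df structures, $g^*f^*(A,L) \simeq (fg)^*(A,L)$, from the earlier remark.

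First I would fix an affine scheme $T$ and describe the two fiber groupoids. An object of $\bigl(Y\times_X \infroot{(A,L)}\bigr)(T)$ is a pair consisting of a morphism $t\colon T\arr Y$ and an object of $\infroot{(A,L)}(T)$ lying over the composite $ft\colon T\arr X$; by the very definition of $\radice{A_\QQ}{X}$, the latter is a pair $(M,\alpha)$ where $M\colon (ft)^*A_\QQ \arr \div_{T\et}$ is a \sm functor and $\alpha\colon (ft)^*L \arr M\circ (ft)^*j$ is an isomorphism. On the other side, an object of $\infroot{f^*(A,L)}(T)$ is a morphism $t\colon T\arr Y$ together with a \sm functor $M'\colon t^*(f^*A)_\QQ \arr \div_{T\et}$ and an isomorphism $\alpha'\colon t^*(f^*L)\arr M'\circ t^*(f^*j)$. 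The canonical isomorphisms $t^*(f^*A)_\QQ \simeq t^*f^*(A_\QQ) \simeq (ft)^*A_\QQ$ and $t^*f^*L \simeq (ft)^*L$ then let me transport $(M',\alpha')$ to $(M,\alpha)$ and back, and I would check that this assignment is functorial in the arrows of the groupoids (an arrow being an isomorphism $h$ of \sm functors making the evident triangle commute), giving an equivalence of groupoids $\infroot{f^*(A,L)}(T) \simeq \bigl(Y\times_X\infroot{(A,L)}\bigr)(T)$.

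To assemble these fiberwise equivalences into an equivalence of fibered categories over $\aff$, I would verify that they commute with the pullback functors along an arbitrary morphism $T'\arr T$ of affine schemes. This is exactly where the coherence isomorphisms must be pinned down: the canonical isomorphisms relating the three pullbacks $t^*f^*(A_\QQ)$, $(ft)^*A_\QQ$ and $(A_\QQ)$ restricted, and similarly for $L$, have to satisfy the obvious cocycle compatibility so that the comparison functors are natural transformations of pseudo-functors from $\cataff Y$ to groupoids. I expect this bookkeeping with the coherence data to be the only real obstacle; it is not conceptually difficult, but it requires care to ensure that the identifications of the $\QQ$-ified sheaves with their pullbacks are applied consistently on objects and arrows. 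Once naturality is checked, the induced functor on the associated fibered categories is an equivalence because it is an equivalence on every fiber, and the result over a general (not necessarily affine) base $Y$ follows from the \'etale-local nature of the construction together with the descent property recorded in the Remark preceding the statement. The final line notes that the identical argument, with $\frac{1}{n}A$ in place of $A_\QQ$, yields the second equivalence.
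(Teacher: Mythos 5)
Your proposal is correct and is exactly the definitional unwinding the paper has in mind when it says ``The proof is immediate'': both sides over an affine test scheme $T$ are identified via the canonical isomorphisms $t^{*}(f^{*}A)_{\QQ} \simeq (ft)^{*}A_{\QQ}$ and $t^{*}f^{*}L \simeq (ft)^{*}L$, with the coherence of pullbacks guaranteeing compatibility with the fibered structure. The only superfluous step is your final appeal to \'etale descent for non-affine $Y$: since both fibered categories live over $\cataff Y$ (so all test objects are affine), the fiberwise equivalences compatible with pullback already constitute the desired equivalence.
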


The proof is immediate.

If $m$ and $n$ are positive integers and $m \mid n$, we can factor $A \arr \frac{1}{n}A$ as $A \arr \frac{1}{m}A \arr \frac{1}{n}A$, where the homomorphism $\frac{1}{m}A \arr \frac{1}{n}A$ is multiplication by $n/m$, sending $a/m$ into $\frac{(n/m)a}{n}$. This gives a base-preserving functor $\radice n {X} \arr \radice m {X}$. 

Let $\ZZ^{+}$ be the set of positive integer ordered by divisibility. The construction above gives a strict $2$-functor from $\ZZ^{+}$ to categories fibered in groupoids over $\cataff X$.  

The factorizations $A \arr \frac{1}{n}A \arr A_{\QQ}$ induce base-preserving functors $\infroot X \arr \radice n X$, and a morphism $\infroot X \arr \projlim_n\radice n X$.

\begin{proposition}\label{prop:projective-limit}
The morphism $\infroot X \arr \projlim_n\radice n X$ described above is an equivalence of fibered categories.
\end{proposition}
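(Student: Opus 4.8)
The plan is to verify that the base-preserving functor $\Phi\colon \infroot X \arr \projlim_n \radice n X$ is an equivalence fiber by fiber. Since a base-preserving functor between categories fibered in groupoids over $\cataff X$ is an equivalence exactly when it induces an equivalence of groupoids on every fiber, I fix an affine scheme $T$ with a map $t\colon T \arr X$ and study $\Phi_T\colon \infroot X(T) \arr (\projlim_n \radice n X)(T)$. The essential input is that pullback commutes with the filtered colimit defining $A_\QQ$: from the compatibility of $\frac1n A$ and $A_\QQ$ with pullback recorded above, together with $A_\QQ = \varinjlim_n \frac1n A$, one gets a canonical identification $t^*A_\QQ \simeq \varinjlim_n t^*\frac1n A$ of sheaves of monoids on $T\et$, a filtered colimit taken along divisibility. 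Unwinding the definition of $\projlim$, an object of $(\projlim_n \radice n X)(T)$ is a system $(\{(M_n,\alpha_n)\},\{\phi_{mn}\})$ with $(M_n,\alpha_n) \in \radice n X(T)$ and isomorphisms $\phi_{mn}\colon M_n|_{t^*\frac1m A} \arr M_m$ of \sm functors, compatible with the $\alpha_n$ and satisfying the cocycle condition of the definition; the functor $\Phi_T$ sends $(M,\alpha)$ to the system of restrictions $M|_{t^*\frac1n A}$ with all transition isomorphisms equal to the identity.

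Full faithfulness of $\Phi_T$ is the one-dimensional shadow of the colimit principle. An isomorphism $h\colon (M,\alpha) \arr (M',\alpha')$ in $\infroot X(T)$ is an isomorphism of \sm functors on $t^*A_\QQ = \varinjlim_n t^*\frac1n A$ compatible with $\alpha,\alpha'$; since every section of the colimit lifts, étale-locally, to a finite stage, such an $h$ is the same datum as a compatible family of isomorphisms $h_n\colon M|_{t^*\frac1n A} \arr M'|_{t^*\frac1n A}$, which is precisely an arrow between the images under $\Phi_T$. Thus $h \mapsto \{h_n\}$ is a bijection and $\Phi_T$ is fully faithful.

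For essential surjectivity I would construct the relevant object directly. Given a system $(\{(M_n,\alpha_n)\},\{\phi_{mn}\})$, I define a \sm functor $M\colon t^*A_\QQ \arr \div_{T\et}$ by the universal property of the filtered colimit: any section $a$ of $t^*A_\QQ$ over an étale $U \arr T$ lifts, locally on $U$, to a section of some $t^*\frac1n A$, and I set $M(a)$ to be the value of $M_n$ on that lift, using the $\phi_{mn}$ to identify the values attached to different lifts and the cocycle condition to guarantee that these identifications are coherent; descent for the stack $\div_{T\et}$ lets these local definitions glue. Filteredness of $\ZZ^+$ guarantees that the finite diagrams involved in checking functoriality, the monoidal structure, and the associativity and symmetry constraints all live at a single stage $n$, where they hold because $M_n$ is \sm. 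The isomorphisms $\alpha_n$, compatible under the $\phi_{mn}$, assemble into a single isomorphism $\alpha\colon t^*L \arr M|_{t^*A}$, producing an object $(M,\alpha) \in \infroot X(T)$ whose image under $\Phi_T$ is isomorphic to the given system. Combined with full faithfulness, this shows $\Phi_T$ is an equivalence.

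The main obstacle is exactly this gluing step: because the transition maps $\phi_{mn}$ are genuine isomorphisms rather than identities, one must check that $M$ is well defined independently of the chosen lifts (passing to common multiples and invoking the cocycle identity) and that the monoidal coherences descend along the colimit. In essence the content of the proposition is that the groupoid-valued assignment $B \mapsto \radice B X(T)$ carries the filtered colimit $A_\QQ = \varinjlim_n \frac1n A$ to the filtered $2$-limit of the groupoids $\radice n X(T)$, which is the precise sense in which $\infroot X$ is the limit of the finite root stacks.
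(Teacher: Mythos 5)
Your proof is correct and follows essentially the same route as the paper: both rest on the identification $t^{*}A_{\QQ} \simeq \varinjlim_{n} t^{*}\tfrac{1}{n}A$ and recover an object of $\infroot X(T)$ from a compatible system $(\{(M_{n},\alpha_{n})\},\{\phi_{mn}\})$ by lifting sections of $A_{\QQ}$ to finite stages and reassembling via the coherence data. The only cosmetic difference is that the paper uses the convention that objects of $T\et$ are affine (hence quasi-compact) to lift each section globally and define $M_{\infty}(a)$ canonically as a projective limit of the values $M_{n}$, whereas you lift only locally and glue by descent for $\div_{T\et}$ --- the same idea with slightly heavier bookkeeping.
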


\begin{proof}
Given a morphism $t\colon T\to X$, where $T$ is an affine scheme, note preliminarily that there is a natural isomorphism $\varinjlim_n t^*\frac{1}{n}A\cong t^*A_\QQ$ of sheaves on $T\et$. Consequently, we have
\begin{align*}
\hom_X(T, \varprojlim_n \radice{n}{X}) & \cong \varprojlim_n \hom_X(T,\radice{n}{X})\\
& \cong \varprojlim_n \hom(t^*\tfrac{1}{n}A,\Div_{T\et})\\
& \cong \hom(\varinjlim_n t^*\tfrac{1}{n}A,\Div_{T\et})\\
& \cong \hom(t^*A_\QQ,\Div_{T\et})\\
& \cong \hom_X(T,\infroot{X})
\end{align*}
where in the middle lines we are considering the categories of symmetric monoidal functors. This proves the statement.
\end{proof}

\begin{remark}
Suppose that $\{B_{i}\}$ is a directed system of subsheaves of monoids of $A_{\QQ}$. We say that $\{B_{i}\}$ is \emph{cofinal} if $A_{\QQ}$ is the sheaf-theoretic union of the $B_{i}$; for example if $A\to B$ is a Kummer extension, the system $\{\frac{1}{n}B\}_{n \in \NN}$ is cofinal.

The same proof as in Proposition~\ref{prop:projective-limit} shows that $\infroot X = \projlim_i\radice{B_{i}}X$.
\end{remark}


\subsection{Local models for infinite root stacks}\label{sec:local-models}

Let $P$ be a \fs monoid, $n$ a positive integer. Set $X_{P}\eqdef \sz P$, where $\ZZ[P]$ is the monoid algebra of $P$. Then, as mentioned in \ref{sec:log-schemes}, $X_{P}$ has a canonical \fs \df structure, that we denote by $(A_{P}, L_{P})$. By definition, any \fs logarithmic structure on a scheme $X$ is étale-locally the pullback of $(A_{P}, L_{P})$ along a morphism $X \arr X_{P}$, for some $P$.

We aim at describing the infinite root stack $\infroot{X_{P}}$. For each positive integer $n$ set
   \[
   X_{p}^{[n]} \eqdef \spec\ZZ[\tfrac{1}{n}P];
   \]
we will consider $X_{p}^{[n]}$ as a scheme over $X_{P}$, via the ring homomorphism $\ZZ[P] \arr \ZZ[\frac{1}{n}P]$ induced by the embedding $P \subseteq \frac{1}{n}P$. We also set
   \[
   X_{P}^{[\infty]} \eqdef \sz{P_{\QQ}}\,.
   \]
Notice that $X_{P}^{[\infty]} = \projlim_{n} X_{P}^{[n]}$, where the projective limit is taken on the set of positive integers ordered by divisibility. Call $r$ the rank of the free abelian group $P\gr$, and
   \[
   \mmu_{n}(P) \eqdef \underhom_{\ZZ}\bigl(\tfrac{1}{n}P\gr/P\gr, \gm\bigr)
   \]
 the Cartier dual over $\spec \ZZ$ of the finite group $\tfrac{1}{n}P\gr/P\gr \simeq (\ZZ/n\ZZ)^{r}$. Clearly $\mmu_{n}(P)$ is isomorphic to $\mmu_{n}^{r}$ as a group scheme over $\ZZ$.

We also call
   \[
   \mmu_{\infty}(P) \eqdef \underhom_{\ZZ}\bigl(P_{\QQ}\gr/P\gr, \gm\bigr)
   \]
the Cartier dual of $P_{\QQ}\gr/P\gr \simeq (\QQ/\ZZ)^{r}$; we have a natural isomorphism $\mmu_{\infty}(P) \cong \projlim_{n}\mmu_{n}(P)$.

The $(\tfrac{1}{n}P\gr/P\gr)$-grading of $\ZZ[\tfrac{1}{n}P]$ induced by the tautological $\tfrac{1}{n}P$-grading induces an action of $\mmu_{n}(P)$ on $X_{P}^{[n]}$. Similarly, $\mmu_{\infty}(P)$ acts on $X_{P}^{[\infty]}$; the natural morphism $X_{P}^{[\infty]} \arr X_{P}^{[n]}$ is $\mmu_{\infty}(P)$-equivariant, when we let $\mmu_{\infty}(P)$ act on $X_{P}^{[n]}$ via the projection $\mmu_{\infty}(P) \arr \mmu_{n}(P)$.

Notice that the kernel of the projection $\mmu_{\infty}(P) \arr \mmu_{n}(P)$ equals $\mmu_{\infty}(\tfrac{1}{n}P)$.

\begin{lemma}\label{lem:right-quotients}
The natural homomorphism $X_{P}^{[\infty]} \arr X_{P}^{[n]}$ induces an isomorphism $X_{P}^{[\infty]}/\mmu_{\infty}(\tfrac{1}{n}P) \simeq X_{P}^{[n]}$.

In particular, $X_{P}^{[\infty]}/\mmu_{\infty}(P) = X_{P}$.
\end{lemma}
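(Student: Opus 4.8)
The plan is to reduce the statement to a computation of invariant subrings, via the standard dictionary between actions of a diagonalizable group scheme and gradings: for an abelian group $M$ and an affine scheme $\spec R$ over $\ZZ$, an action of $D = \underhom_\ZZ(M,\gm)$ is the same datum as an $M$-grading of $R$, and by the convention fixed in the notations the quotient $\spec R/D = \spec R^{D}$ is the spectrum of the degree-$0$ summand $R_{0}$. Since $\mmu_\infty(\tfrac11 P) = \mmu_\infty(P)$ and $X_P^{[1]} = X_P$, the second assertion is exactly the case $n = 1$ of the first, so it suffices to treat the first for all $n$ uniformly.

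Next I would make the relevant grading explicit. As recalled just before the lemma, $\mmu_\infty(P)$ acts on $X_P^{[\infty]} = \spec\ZZ[P_\QQ]$ through the grading of $\ZZ[P_\QQ]$ by $P_\QQ\gr/P\gr$ in which $x^{q}$ sits in degree $[q]$. The subgroup $\mmu_\infty(\tfrac1n P) = \underhom_\ZZ(P_\QQ\gr/\tfrac1n P\gr,\gm)$, being the kernel of $\mmu_\infty(P)\arr\mmu_n(P)$, acts by restriction, which corresponds to the coarser grading by $P_\QQ\gr/\tfrac1n P\gr$: here the basis element $x^{q}$, for $q\in P_\QQ$, has degree the class $\overline q$ of $q$ modulo $\tfrac1n P\gr$. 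Hence the invariant subring is the degree-$0$ summand, i.e.
\[
\ZZ[P_\QQ]^{\mmu_\infty(\tfrac1n P)} = \ZZ\bigl[\,P_\QQ \cap \tfrac1n P\gr\,\bigr],
\]
the intersection being formed inside $P_\QQ\gr$.

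The crux is then the purely monoid-theoretic identity $P_\QQ \cap \tfrac1n P\gr = \tfrac1n P$. Here I would use that $\tfrac1n P$ is again fine and saturated, being isomorphic to $P$ via multiplication by $n$, and that $(\tfrac1n P)\gr = \tfrac1n P\gr$ while $(\tfrac1n P)_\QQ = P_\QQ$. The inclusion $\supseteq$ is immediate; for $\subseteq$, an element $q \in P_\QQ \cap \tfrac1n P\gr$ satisfies $mq \in \tfrac1n P$ for some positive integer $m$ and lies in $\tfrac1n P\gr$, so saturation of $\tfrac1n P$ forces $q \in \tfrac1n P$. This is the one place where the saturation hypothesis on $P$ is genuinely used, and although it is short it is the step to watch, since without saturation one would only recover the saturation of $\tfrac1n P$ rather than $\tfrac1n P$ itself.

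Finally I would observe that the inclusion $\tfrac1n P \subseteq P_\QQ$ induces a map $\ZZ[\tfrac1n P] \into \ZZ[P_\QQ]$ whose image is exactly the degree-$0$ summand computed above; by the identity just proved this image equals $\ZZ[P_\QQ]^{\mmu_\infty(\tfrac1n P)}$. Therefore the natural morphism $X_P^{[\infty]}\arr X_P^{[n]}$ factors as
\[
X_P^{[\infty]} \arr X_P^{[\infty]}/\mmu_\infty(\tfrac1n P) \arr X_P^{[n]},
\]
where the second arrow is $\spec$ of the equality $\ZZ[\tfrac1n P] = \ZZ[P_\QQ]^{\mmu_\infty(\tfrac1n P)}$ and hence an isomorphism. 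Specializing to $n=1$ yields $X_P^{[\infty]}/\mmu_\infty(P) = X_P$, which is the final assertion.
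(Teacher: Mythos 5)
Your proof is correct and takes essentially the same approach as the paper: both identify the invariant subring with the degree-zero summand of the $P\gr_{\QQ}/\tfrac{1}{n}P\gr$-grading of $\ZZ[P_{\QQ}]$ and reduce everything to the monoid identity $P_{\QQ} \cap \tfrac{1}{n}P\gr = \tfrac{1}{n}P$, which holds because $P$ is integral and saturated. The only difference is that you spell out the saturation argument (via the isomorphism $\tfrac{1}{n}P \simeq P$) in slightly more detail than the paper does.
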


\begin{proof}
We have $\mmu_{\infty}(\tfrac{1}{n}P) = \spec\bigl(\ZZ[P\gr_{\QQ}/\tfrac{1}{n}P\gr]\bigr)$; the statement above is equivalent to the statement that the part of degree~$0$ with respect to the $P\gr_{\QQ}/\tfrac{1}{n}P\gr$-grading in $\ZZ[P_{\QQ}]$ is $\ZZ[\tfrac{1}{n}P]$. This follows from the equality $P_{\QQ} \cap \tfrac{1}{n}P\gr = \tfrac{1}{n}P$, which holds because $P$ is integral and saturated.
\end{proof}

\begin{definition}\label{def:local-irs}
The \emph{\irs of a \fs monoid $P$} is the fpqc quotient $\cR_{P}\eqdef [X_{P}^{[\infty]}/\mmu_{\infty}(P)]$.
\end{definition}

In other words, $\cR_{P}$ is the fibered category over $\aff$ defined as follows. An object $(T, E, f)$ of $\cR_{P}$, consist of an affine scheme $T$, a $\mmu_{\infty}(P)$-torsor $E \arr T$ and an equivariant map $f\colon E \arr X_{P}^{[\infty]}$. An arrow $(\phi, \Phi)\colon (T', E', f') \arr (T, E, f)$ consists of a morphisms of schemes $\phi\colon T' \arr T$ and $\Phi\colon E' \arr E$, such that $\Phi$ is $\mmu_{\infty}(P)$-equivariant, and the diagram
   \[
   \begin{tikzcd}[row sep = small]
   E' \rar{\Phi} \ar[bend left = 30]{rr}{f'} \dar & E \rar{f} \dar &X_{P}^{[\infty]}\\
   T' \rar{\phi} & T
   \end{tikzcd}
   \]
commutes.

Given an object $(T, E, f)$ of $\cR_{P}$, because of Lemma \ref{lem:right-quotients} the $\mmu_{\infty}(P)$-equivariant map $E\to X_{P}^{[\infty]}$ induces a map $T \arr X_{P}$; this gives a base-preserving functor $\cR_{P} \arr \cataff {X_{P}}$. So $\cR_{P}$ is a fibered category over $X_{P}$.

\begin{remark}
{Note that if $\{G_{i} \to S\}_{i \in I}$ is a projective system of affine group schemes of finite type and $G := \varprojlim G_{i}$, then the isomorphism classes of $G$-torsors on $S$ form a set. Here is a sketch of proof: if $T \arr S$ is a $G$-torsor, we obtain a projective system of $G_{i}$-torsors $T \times^{G}G_{i} \to S$, and $T = \varprojlim_{i}(T \times^{G}G_{i})$. Hence $T$ is a projective limit of schemes of finite type over $S$ indexed by $I$, and this limits its size.}
\end{remark}

We are aiming at the following result.

\begin{proposition}\label{prop:local-model}
We have an equivalence $\infroot{X_{P}} \simeq \cR_{P}$ of fibered categories over $X_{P}$.
\end{proposition}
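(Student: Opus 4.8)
The plan is to construct an explicit equivalence in both directions between $\infroot{X_P} = \radice{A_{P,\QQ}}{X_P}$ and $\cR_P = [X_P^{[\infty]}/\mmu_\infty(P)]$, both viewed as fibered categories over $X_P$. Recall that an object of $\infroot{X_P}$ over $t\colon T\to X_P$ is a pair $(M,\alpha)$ with $M\colon t^*A_{P,\QQ}\to \div_{T\et}$ a symmetric monoidal functor and $\alpha$ an isomorphism identifying its restriction to $t^*A_P$ with $t^*L_P$, while an object of $\cR_P$ is a $\mmu_\infty(P)$-torsor $E\to T$ together with an equivariant map $f\colon E\to X_P^{[\infty]}$. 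The key observation is that $X_P^{[\infty]}=\sz{P_\QQ}$ carries a tautological symmetric monoidal functor $P_\QQ \to \Div(X_P^{[\infty]})$ sending $p$ to $(\cO, x^p)$, and the grading by $P_\QQ\gr/P\gr$ under the $\mmu_\infty(P)$-action is exactly what is needed to descend data along a torsor.

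First I would produce a functor $\cR_P \to \infroot{X_P}$. Given $(T,E,f)$, pull back the tautological functor $P_\QQ\to\Div(X_P^{[\infty]})$ along $f$ to get an equivariant symmetric monoidal functor on $E$; because the $\mmu_\infty(P)$-weights are recorded precisely by the grading (and $\mmu_\infty(P)=\underhom_\ZZ(P_\QQ\gr/P\gr,\gm)$ is the Cartier dual computing exactly these characters), this descends to a symmetric monoidal functor $M\colon t^*A_{P,\QQ}\to\div_{T\et}$. The restriction to $t^*A_P=t^*\overline M_{X_P}$ recovers $t^*L_P$ via Lemma~\ref{lem:right-quotients}, which identifies the degree-$0$ part and hence the induced map $T\to X_P$; this supplies the isomorphism $\alpha$. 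In the other direction, given $(M,\alpha)$ over $T$, I would build $E$ as the scheme of symmetric-monoidal liftings/splittings: concretely, $E$ is the $\mmu_\infty(P)$-torsor of isomorphisms trivializing the grading, equivalently the relative spectrum over $T$ of the quasi-coherent algebra assembled from the invertible sheaves $M(p)$ for $p\in P_\QQ$, with multiplication given by the monoidal structure. The section data of $M$ give the equivariant map $E\to X_P^{[\infty]}=\sz{P_\QQ}$.

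The cleanest way to organize this, and the step I expect to carry the real content, is to establish the torsor–grading dictionary: a symmetric monoidal functor $t^*A_{P,\QQ}\to\div_{T\et}$ whose restriction to $t^*A_P$ is $t^*L_P$ is the same datum as a $P_\QQ\gr/P\gr$-graded quasi-coherent $\cO_T$-algebra (with the degree-$0$ part being $\cO_T$ and generators the $M(p)$), and such a graded algebra is precisely an affine $T$-scheme with $\mmu_\infty(P)$-action, i.e. an object of $\cR_P$ after taking the associated torsor. This is where one must check that the ``trivial kernel'' / degree-$0$ condition matches exactly the quotient computation $X_P^{[\infty]}/\mmu_\infty(P)=X_P$ of Lemma~\ref{lem:right-quotients}; the identity $P_\QQ\cap\tfrac1n P\gr=\tfrac1n P$ used there is what guarantees the two descriptions have the same fibers at every level $n$.

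Finally I would verify that these two constructions are quasi-inverse, which amounts to a bookkeeping check that descent along the torsor and reconstruction of the torsor from the graded algebra are mutually inverse, and that everything is compatible with pullback in $T$ so as to define functors of fibered categories over $X_P$ (not merely fiberwise equivalences). I would also note for cleanliness that by Proposition~\ref{prop:projective-limit} it suffices, if one prefers, to match each finite level $\radice{n}{X_P}\simeq[X_P^{[n]}/\mmu_n(P)]$ and pass to the limit, using $X_P^{[\infty]}=\projlim_n X_P^{[n]}$ and $\mmu_\infty(P)=\projlim_n\mmu_n(P)$ together with the kernel computation $\ker(\mmu_\infty(P)\to\mmu_n(P))=\mmu_\infty(\tfrac1n P)$; this reduces the main statement to the known finite root stack case from \cite{borne-vistoli1}, with the limit being formal. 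The genuine obstacle throughout is the careful translation between the monoidal-functor language and the graded-algebra/torsor language, making sure the symmetric monoidal (strong) structure corresponds to the algebra multiplication and that the isomorphism $\alpha$ corresponds exactly to the map to the base $X_P$.
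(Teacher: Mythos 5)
Your proposal is correct, and its primary route is genuinely different from the paper's. The paper does not construct the equivalence directly: it invokes Proposition~\ref{prop:projective-limit} to write $\infroot{X_{P}} \simeq \projlim_{n}\radice{n}{X_{P}}$, quotes the finite-level equivalence $\radice{n}{X_{P}} \simeq [X_{P}^{[n]}/\mmu_{n}(P)]$ from \cite{borne-vistoli1} (Proposition~\ref{prop:finite-local-models}), and then identifies $\projlim_{n}[X_{P}^{[n]}/\mmu_{n}(P)]$ with $[X_{P}^{[\infty]}/\mmu_{\infty}(P)]$. The entire substance of the paper's proof lies in this last identification: given a compatible tower of $\mmu_{n}(P)$-torsors $E_{n} \arr T$ with equivariant maps to $X_{P}^{[n]}$, one sets $E \eqdef \projlim_{n}E_{n}$ and verifies it is a $\mmu_{\infty}(P)$-torsor --- faithful flatness because a filtered colimit of faithfully flat algebras is faithfully flat, and the torsor condition because projective limits commute with products --- while the inverse direction passes from $E$ to $E_{n} = E/\mmu_{\infty}(\tfrac{1}{n}P)$ via Lemma~\ref{lem:right-quotients}. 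So the closing remark in your proposal that this reduction makes ``the limit formal'' understates the step the paper actually has to prove; your fallback route is the paper's route, but with its main verification elided. Your primary route --- the torsor/graded-algebra dictionary, realizing $E$ as the relative spectrum of the $P\gr_{\QQ}/P\gr$-graded algebra built from the $M(a)$ and their trivializations over $P$, and descending the tautological functor $P_{\QQ} \to \Div(X_{P}^{[\infty]})$ along the torsor in the other direction --- is instead a direct extension to the rational level of the argument by which \cite{borne-vistoli1} proves the finite case, rather than a citation of it. What this buys: the torsor property of $E$ is manifest (graded pieces invertible, multiplication maps isomorphisms), and no limit over $n$ is needed anywhere. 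What it costs: you must redo at level $P_{\QQ}$ the chart bookkeeping of \cite[Proposition~3.3]{borne-vistoli1}, passing between symmetric monoidal functors out of the sheaf $t^{*}A_{P,\QQ}$ and out of the monoid $P_{\QQ}$, and you must check that identifying $M(a)$ with $M(a')$ when $[a]=[a']$ in $P\gr_{\QQ}/P\gr$ --- by writing $a - a' = p - q$ with $p, q \in P$ and using the trivializations supplied by $\alpha$ --- is independent of choices; these are exactly the details the paper's route inherits for free from the finite case.
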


\begin{remark}
This clarifies the nature of \irss considerably. On the one hand, we see that they are not algebraic stacks in the usual sense. On the other hand, étale-locally on $X$ they are projective limits of tame algebraic stacks, in the sense of \cite{dan-olsson-vistoli1}. This makes them very concrete and possible to study.
\end{remark}

Let us start from the following lemma, which is an analogue of Proposition 4.18 of \cite{borne-vistoli1}.

\begin{lemma}
Let $(X,A,L)$ be a logarithmic scheme with a chart $P\to A(X)$. Then the fibered category on $X\et$ of liftings of $P\to \Div(X)$ to a symmetric monoidal functor $P_\QQ\to \Div(-)$ is equivalent to the infinite root stack $\infroot{(X,A,L)}$.
\end{lemma}

\begin{proof}
The category of liftings in the statement can be seen as the inverse limit of the categories of liftings of $P\to \Div(X)$ to a symmetric monoidal functor $\frac{1}{n}P\to \Div(-)$. By \cite[Proposition 4.18]{borne-vistoli1}, this latter category of liftings is equivalent to the $n$-th root stack $\radice{n}{(X,A,L)}$. These equivalences are compatible with the structure maps of the two inverse systems, and hence the statement follows.
\end{proof}

{
We thank the referee for suggesting the following short proof of Proposition \ref{prop:local-model}.
}

\begin{proof}[\hbox{Proof of Proposition~\ref{prop:local-model}}]
By the previous lemma, we can identify $\infroot{X_P}$ with the fibered category on $(X_P)\et$ of liftings of $P\to \Div(X_P)$ to a symmetric monoidal functor $P_\QQ\to \Div(-)$.

Note that there is a morphism $X_P^{[\infty]}\to \infroot{X_P}$, given by the natural symmetric monoidal functor $P_\QQ\to \cO_{X_P^{[\infty]}}\to \Div({X_P^{[\infty]}})$ lifting $P\to \Div({X_P})$. Then, if $T\to \infroot{X_P}$ is a morphism corresponding to a lifting $P_\QQ\to \Div(T)$ of $P\to \Div(T)$, then the lifts to a map $T\to X_P^{[\infty]}$ correspond to choices of a trivialization of $P_\QQ\to \Div(T)\to (\cB{\gm})(T)$, lifting the natural trivialization of $P\to \Div({X_P})\to (\cB{\gm})({X_P})$. These liftings exist locally in $T$, and any two of them differ by a homomorphism $P\gr_\QQ/P\gr\to \gm$, so they form a torsor under $\mmu_\infty(P)$. Hence we have an equivalence $\infroot{X_P}\cong [X_P^{[\infty]}/\mmu_{\infty}(P)]$.
\end{proof}

From this description and the fact that root stacks are compatible with strict base-change (Proposition~\ref{prop:base-change}) we get the following corollary, which gives a local description of root stacks of a general \fs logarithmic scheme.

\begin{corollary}\label{cor:local-model-chart}
Let $X$ be a \fs logarithmic scheme with a Kato chart $P\to \cO_{X}(X)$, corresponding to a strict morphism $X\to X_{P}$. Then we have isomorphisms
   \begin{align*}
   \radice n {X} &\simeq [ ( X\times_{X_{P}}X_{P}^{[n]})/\mmu_{n}(P)  ]\\
   \intertext{and}
   \infroot{X} &\simeq [(X\times_{X_{P}}X_{P}^{[\infty]})/\mmu_{\infty}(P)]
   \end{align*}
of stacks over $X$.\qed
\end{corollary}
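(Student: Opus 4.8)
The plan is to combine three ingredients already at hand: the base-change property of (infinite) root stacks (Proposition~\ref{prop:base-change}), the explicit local models over $X_{P}$ (Propositions~\ref{prop:finite-local-models} and~\ref{prop:local-model}), and the standard fact that the formation of a quotient stack commutes with base change. There is no serious obstacle here; the entire content of the corollary is packaged into these three facts, which is why the statement carries a $\spadesuit$.

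First I would unwind what the Kato chart gives us. The chart $P\to \cO_{X}(X)$ corresponds to a strict morphism $f\colon X\to X_{P}$, and strictness means exactly that the structural map $f^{\flat}\colon f^{*}(A_{P},L_{P})\to (A_{X},L_{X})$ is an isomorphism; that is, $(A_{X},L_{X})$ is canonically the pullback $f^{*}(A_{P},L_{P})$. Applying Proposition~\ref{prop:base-change} to the \df structure $(A_{P},L_{P})$ on $X_{P}$ and the morphism $f$ then yields canonical equivalences
   \[
   \radice n X \simeq X\times_{X_{P}}\radice n {X_{P}}
   \qquad\text{and}\qquad
   \infroot X \simeq X\times_{X_{P}}\infroot{X_{P}}
   \]
of fibered categories over $X$.

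Next I would substitute the local descriptions of the root stacks of the model $X_{P}$. By Proposition~\ref{prop:finite-local-models} we have $\radice n {X_{P}}\simeq [X_{P}^{[n]}/\mmu_{n}(P)]$, and by Proposition~\ref{prop:local-model} we have $\infroot{X_{P}}\simeq \cR_{P}=[X_{P}^{[\infty]}/\mmu_{\infty}(P)]$. Feeding these into the equivalences above reduces the corollary to identifying $X\times_{X_{P}}[X_{P}^{[n]}/\mmu_{n}(P)]$ with $[(X\times_{X_{P}}X_{P}^{[n]})/\mmu_{n}(P)]$, and likewise in the infinite case with $\mmu_{\infty}(P)$ and $X_{P}^{[\infty]}$.

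This final identification is the one point that deserves an explicit word, and it is the standard compatibility of quotient stacks with base change. Here $\mmu_{n}(P)$ and $\mmu_{\infty}(P)$ are group schemes defined over $\spec\ZZ$, acting on $X_{P}^{[n]}$ (resp.\ $X_{P}^{[\infty]}$) over $X_{P}$ — the action being over $X_{P}$ precisely because of Lemma~\ref{lem:right-quotients}, which gives $X_{P}^{[\infty]}/\mmu_{\infty}(P)=X_{P}$ and $X_{P}^{[n]}/\mmu_{n}(P)=X_{P}$. For such a situation the equivalence $X\times_{X_{P}}[Z/G]\simeq [(X\times_{X_{P}}Z)/G]$ is immediate from the torsor description of the quotient stack: an object over an affine $U\to X$ on the left is a $\mmu_{\infty}(P)$-torsor $E\to U$ together with an equivariant map $E\to X_{P}^{[\infty]}$ whose composite $E\to X_{P}^{[\infty]}\to X_{P}$ factors through $U\to X\to X_{P}$, and by the universal property of the fiber product this is exactly an equivariant map $E\to X\times_{X_{P}}X_{P}^{[\infty]}$, i.e.\ an object of the right-hand side; the prescription on arrows matches in the same way, and since the acting groups are pulled back trivially from $\spec\ZZ$ the whole comparison is the evident one. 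Assembling the three steps gives the asserted isomorphisms of stacks over $X$.
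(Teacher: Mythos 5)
Your proposal is correct and follows exactly the route the paper intends: the corollary is stated with a $\spadesuit$ immediately after Proposition~\ref{prop:local-model}, with the remark that it follows ``from this description and the fact that root stacks are compatible with strict base-change (Proposition~\ref{prop:base-change}),'' which is precisely your combination of strictness, the local models of Propositions~\ref{prop:finite-local-models} and~\ref{prop:local-model}, and base change of quotient stacks. Your explicit verification via the torsor description that $X\times_{X_{P}}[X_{P}^{[\infty]}/\mmu_{\infty}(P)]\simeq[(X\times_{X_{P}}X_{P}^{[\infty]})/\mmu_{\infty}(P)]$ is the one step the paper leaves tacit, and it is carried out correctly.
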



\subsection{Abstract infinite root stacks}\label{sec:abstract-irs}

Now we introduce an ``abstract'' definition for an infinite root stack over a scheme $X$. We will ultimately prove that these are the same objects as infinite root stacks of \df structures over $X$, but this alternative definition will give a convenient framework for describing the procedure that recovers the logarithmic structure from the infinite root stack.

\begin{definition}\label{def:airs}
If $X$ is a scheme, an \emph{infinite root stack} on $X$ is a stack $\cR$ on $\cataff{X}$ with the étale topology, such that there exists an étale covering $\{U_{i}\arr X\}$, and for each $i$ a morphism $f_{i}\colon U_{i} \arr \sz {P_{i}}$, where $P_{i}$ is a \fs monoid, and an equivalence $U_{i} \times_{X} \cR \simeq U_{i} \times_{\sz {P_{i}}} \cR_{P_{i}}$ of stacks over $U_{i}$.

If $f\colon \cR \arr \cX$ is a base-preserving functor of stacks in the étale topology on $\aff$, we say that $\cR$ is an \irs over $\cX$ if for any affine scheme $T$ and any object of $\cX(T)$, the fibered product $T\times_{\cX}\cR$ is an \irs over $T$.
\end{definition}

Note that in the definition above $U_{i} \times_{\sz {P_{i}}} \cR_{P_{i}}$ is the infinite root stack of the logarithmic structure on $U_{i}$ given by pullback from $\sz {P_{i}}$ (by \ref{prop:base-change}). An explicit quotient presentation for this stack was just described in Corollary~\ref{cor:local-model-chart}.

\begin{remark}
It is obvious from the definition that if $\cR$ is an \irs over a scheme $X$ and $Y \arr X$ is a morphism of schemes, the pullback $Y \times_{X} \cR$ is an \irs over $Y$. Since the property of being an \irs is clearly local in the étale topology, we see that $\cR$ is an \irs over a scheme $X$ if an only if it is an \irs over $\cataff X$. In other words, the  second part of the definition generalizes the first.
\end{remark}

\begin{proposition}
Let $(A, L)$ be a \fs \df structure on a scheme $X$. Then $\infroot{(A, L)}$ is an \irs over $X$. More generally, the morphism $\infroot{(A, L)} \arr \radice n {(A,L)}$ makes $\infroot{(A,L)}$ into an \irs over $\radice n {(A,L)}$.
\end{proposition}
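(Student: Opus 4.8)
The plan is to reduce everything to the local model established in Corollary~\ref{cor:local-model-chart}. The statement that $\infroot{(A,L)}$ is an \irs over $X$ is essentially unwinding definitions: by definition an \irs over $X$ requires an \'etale cover $\{U_i \arr X\}$ with charts $U_i \arr \sz{P_i}$ and equivalences $U_i \times_X \infroot{(A,L)} \simeq U_i \times_{\sz{P_i}} \cR_{P_i}$. Since $(A,L)$ is \fs, by the discussion in \ref{sec:log-schemes} (specifically Proposition~3.15 of \cite{borne-vistoli1}) the scheme $X$ admits \'etale-locally Kato charts $U_i \arr \sz{P_i}$ with $P_i$ \fs; and on each such $U_i$ the logarithmic structure is the pullback of $(A_{P_i}, L_{P_i})$. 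Proposition~\ref{prop:base-change} then gives $U_i \times_X \infroot{(A,L)} \simeq \infroot{U_i \times_{\sz{P_i}}(A_{P_i},L_{P_i})} \simeq U_i \times_{\sz{P_i}} \infroot{X_{P_i}}$, and Proposition~\ref{prop:local-model} identifies $\infroot{X_{P_i}}$ with $\cR_{P_i}$. This exhibits the required local form, so the first assertion follows.

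For the relative statement, I would fix an affine scheme $T$ with a morphism $T \arr \radice n{(A,L)}$ and show that $T \times_{\radice n{(A,L)}} \infroot{(A,L)}$ is an \irs over $T$. The idea is to work \'etale-locally on $X$ using a Kato chart $P \arr \cO_X(X)$ and the explicit quotient presentations of Corollary~\ref{cor:local-model-chart}: there $\radice n{X} \simeq [(X\times_{X_P}X_P^{[n]})/\mmu_n(P)]$ and $\infroot{X} \simeq [(X\times_{X_P}X_P^{[\infty]})/\mmu_\infty(P)]$, and the projection corresponds to the map of quotient stacks induced by $X_P^{[\infty]} \arr X_P^{[n]}$. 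A morphism $T \arr \radice n{(A,L)}$ is then, \'etale-locally, a $\mmu_n(P)$-torsor $E_n \arr T$ with an equivariant map $E_n \arr X \times_{X_P} X_P^{[n]}$. The key computation is to identify the fiber product $T \times_{\radice n{X}} \infroot{X}$: since $\infroot{X} \arr \radice n{X}$ is the quotient by the residual group $\mmu_\infty(\tfrac{1}{n}P)$ (recall from the discussion preceding Lemma~\ref{lem:right-quotients} that this is the kernel of $\mmu_\infty(P)\arr\mmu_n(P)$, and $X_P^{[\infty]}/\mmu_\infty(\tfrac{1}{n}P) \simeq X_P^{[n]}$ by Lemma~\ref{lem:right-quotients}), the pullback should be a $\mmu_\infty(\tfrac{1}{n}P)$-gerbe-type quotient presenting the \irs associated to the \fs monoid $\tfrac{1}{n}P$ pulled back to $T$.

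Concretely, after pulling back the torsor $E_n$ I expect $T \times_{\radice n{X}} \infroot{X} \simeq [(T\times_{X_P^{[n]}} X_P^{[\infty]})/\mmu_\infty(\tfrac{1}{n}P)]$, where $T$ maps to $X_P^{[n]}$ via the chart of the torsor. Because $X_P^{[\infty]} = \sz{P_\QQ} = \sz{(\tfrac{1}{n}P)_\QQ}$ carries the $\mmu_\infty(\tfrac{1}{n}P)$-action coming from the $(\tfrac{1}{n}P)_\QQ\gr/\tfrac{1}{n}P\gr$-grading, and $X_P^{[n]} = \sz{\tfrac{1}{n}P}$, this quotient is precisely $\cR_{\frac{1}{n}P}$ base-changed to $T$ along $T \arr X_P^{[n]} = \sz{\tfrac{1}{n}P}$. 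Thus $T \times_{\radice n{X}} \infroot{X}$ is locally of the form $T \times_{\sz{Q}} \cR_Q$ with $Q = \tfrac{1}{n}P$ a \fs monoid, which is exactly the defining local shape of an \irs over $T$.

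The main obstacle I anticipate is the bookkeeping in the relative case: correctly identifying the residual structure group as $\mmu_\infty(\tfrac{1}{n}P)$ and verifying that the grading making $X_P^{[\infty]}$ into the local model $\cR_{\frac{1}{n}P}$ is compatible with the one inherited from $\cR_P$. This amounts to checking that the presentations of Corollary~\ref{cor:local-model-chart} for $n$ and for $\infty$ are related by the quotient-by-$\mmu_\infty(\tfrac{1}{n}P)$ map, which follows from Lemma~\ref{lem:right-quotients} together with the compatibility of charts under \'etale base change from Proposition~\ref{prop:base-change}; the rest is a routine verification that the local equivalences glue and that the fiber products commute with the quotient constructions, which I would leave largely to the reader.
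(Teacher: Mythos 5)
Your treatment of the first statement is fine and is essentially the paper's argument (\'etale-local Kato charts, Proposition~\ref{prop:base-change}, Proposition~\ref{prop:local-model}); you omit only the remark that $\infroot{(A,L)}$ and $\radice{n}{(A,L)}$ are stacks in the \'etale topology, which the paper gets from \'etale descent of logarithmic structures. The second statement, however, has a genuine gap. Your identification
$T \times_{\radice{n}{X}} \infroot{X} \simeq \bigl[\bigl(T\times_{X_{P}^{[n]}} X_{P}^{[\infty]}\bigr)/\mmu_{\infty}(\tfrac{1}{n}P)\bigr]$
presupposes a morphism $T \arr X_{P}^{[n]}$, which you say comes ``via the chart of the torsor''. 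But the datum of a morphism $T \arr \radice{n}{X}$ is a $\mmu_{n}(P)$-torsor $E_{n} \arr T$ together with an equivariant map $E_{n} \arr X\times_{X_{P}}X_{P}^{[n]}$: this gives a map to $X_{P}^{[n]}$ from $E_{n}$, not from $T$. A lift $T \arr X_{P}^{[n]}$ exists precisely when $E_{n}$ is trivial, and since $\mmu_{n}(P)$ is not smooth when the residue characteristic divides $n$, the torsor $E_{n}$ is only fppf-locally trivial, not \'etale-locally trivial (already for $X = \spec k$ with $\cha\, k = p$, $P=\NN$, $n=p$: $\mmu_{p}$-torsors are classified by $\cO^{\times}/(\cO^{\times})^{p}$, which does not die on \'etale covers). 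So your computation verifies the required local model only after an fppf cover of $T$, whereas the definition of an abstract \irs demands an \emph{\'etale} covering of $T$, and the paper establishes locality of the \irs property only in the \'etale topology; fppf descent of this property is nowhere available.

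The paper sidesteps this with one observation that your proposal is missing, and which is the actual content of the reduction: an object of $\radice{n}{(A,L)}(T)$ is, by definition, a morphism $f\colon T \arr X$ together with a \fs \df structure $(\tfrac{1}{n}f^{*}A, M)$ on $T$ extending $f^{*}L$, and directly from the definitions one has
$T\times_{\radice{n}{(A,L)}} \infroot{(A,L)} \simeq \infroot{(\tfrac{1}{n}f^{*}A, M)}$,
because an object of either side over $T' \arr T$ is just an extension of $M$ to $(\tfrac{1}{n}f^{*}A)_{\QQ} = (f^{*}A)_{\QQ}$. This reduces the second statement to the first, applied to the logarithmic scheme $(T, \tfrac{1}{n}f^{*}A, M)$; the first statement then produces \'etale-local models because \fs logarithmic structures admit Kato charts \'etale-locally --- a fact about logarithmic structures, not about trivializing $\mmu_{n}$-torsors. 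If you want to keep your torsor-theoretic computation, it is correct over the atlas (your formula for $U_{n}\times_{\radice{n}{X}}\infroot{X}$ does hold), but you must then route the argument through the observation above rather than through a nonexistent lift of $T$.
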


\begin{proof}
It follows easily from étale descent of logarithmic structures that $\infroot{(A, L)}$ and $\radice n {(A,L)}$ are stacks in the étale topology.

The fact that $\infroot{(A, L)} \arr X$ is an \irs follows immediately from Proposition~\ref{prop:local-model}.

Let us check that $\infroot{(A,L)}$ is an \irs over $\radice n {(A,L)}$. Let $T$ be an affine scheme, $T \arr \radice n {(A,L)}$ a morphism, corresponding to a morphism $f\colon T \arr X$ and a \df structure $(\tfrac{1}{n}f^{*}A, M)$ on $T$. It is clear from the definition that the fibered product $T\times_{\radice n {(A, L)}} \infroot{(A, L)}$ is equivalent to $\infroot{(\tfrac{1}{n}f^{*}A, M)}$, reducing the second statement to the first.
\end{proof}

We will need a description of the geometric fibers of an \irs, or, equivalently, a description of an \irs over an algebraically closed field.

Let $P$ a sharp \fs monoid, and $k$ a field. 
We will denote by $\infroot{P/k}$ the infinite root stack of the logarithmic point on $k$ with monoid $P$ (Example \ref{ex:log-points}), i.e. the \df structure $(A, L)$ on $\spec k$, where $A$ is the constant sheaf of monoids on $(\spec k)\et$ corresponding to $P$, and $L\colon A \arr \div_{(\spec k)\et}$ corresponds to the homomorphism $\Lambda\colon P \arr k$ that sends $0$ to $1$ and everything else to $0$.

In other words $\infroot{P/k}$ is the fibered product $\spec k \times_{\sz P} \cR_{P}$, where $\spec k \arr \sz{P}$ corresponds to the ring homomorphism $\ZZ[P] \arr k$ determined by $\Lambda$. Or, again, by Corollary~\ref{cor:local-model-chart} we have
   \[
   \infroot{P/k} = \bigl[\spec(k[P_{\QQ}]/(P^{+}))/\mmu_{\infty}(P)\bigr]
   \]
where $P^{+} \eqdef P\setminus \{0\}$, and the action of $\mmu_{\infty}(P)$ on $\spec(k[P_{\QQ}]/(P^{+}))$ is determined by the natural $P\grq/P\gr$-grading on $k[P_{\QQ}]/(P^{+})$.

The reduced substack $(\infroot{P/k})\red$ is the classifying stack
   \[
   \cB_{k} \mmu_{\infty}(P) = \bigl[\spec k/ \mmu_{\infty}(P)\bigr]\,.
   \]

If $k$ is algebraically closed, every logarithmic structure on $\spec k$ is of the form above. Since every \irs comes étale-locally from a logarithmic structure, and the étale topology on $\spec k$ is trivial, we obtain the following.

\begin{proposition}\label{prop:geometric-fibers}
If $k$ is an algebraically closed field, an \irs over $\spec k$ is of the form $\infroot{P/k}$ for some sharp \fs monoid $P$.
\end{proposition}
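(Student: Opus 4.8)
The plan is to exploit the fact that over an algebraically closed field the étale topology is trivial, so that the merely étale-local description built into the definition of an abstract \irs becomes a global one, and then to appeal to the classification of \fs \df structures on $\spec k$ recorded in Example~\ref{ex:log-points}.

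First I would unwind the definition. An \irs $\cR$ over $\spec k$ comes equipped with an étale covering $\{U_{i}\arr \spec k\}$, with morphisms $f_{i}\colon U_{i}\arr \sz{P_{i}}$ for \fs monoids $P_{i}$, and with equivalences $U_{i}\times_{\spec k}\cR \simeq U_{i}\times_{\sz{P_{i}}}\cR_{P_{i}}$ over $U_{i}$. Since $k$ is algebraically closed, every affine scheme étale over $\spec k$ is a finite disjoint union of copies of $\spec k$; choosing a nonempty member $U_{i}$ of the cover and one of its connected components yields a section $v\colon \spec k \arr U_{i}$ of the structure map. Pulling the equivalence back along $v$ collapses the left-hand side $\spec k \times_{U_{i}}(U_{i}\times_{\spec k}\cR)$ to $\cR$ itself, and the right-hand side to $\spec k \times_{\sz P}\cR_{P}$, where $P \eqdef P_{i}$ and the map $g\colon \spec k \arr \sz P$ is the composite $f_{i}\circ v$. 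Thus $\cR \simeq \spec k \times_{\sz P}\cR_{P}$ globally.

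Next I would identify this fibered product as the \irs of a \df structure. By Proposition~\ref{prop:base-change} (compatibility of root stacks with base change), together with the local model of Proposition~\ref{prop:local-model}, the fibered product $\spec k \times_{\sz P}\cR_{P}$ is equivalent to $\infroot{(A,L)}$, where $(A,L)$ is the pullback along $g$ of the canonical \df structure $(A_{P},L_{P})$ on $\sz P$; this pullback is again \fs. Finally, Example~\ref{ex:log-points} asserts that over an algebraically closed field every \fs \df structure on $\spec k$ is the log point of a sharp \fs monoid. Applying this to $(A,L)$ produces a sharp \fs monoid $P'$ with $(A,L)$ the corresponding log point, so that $\cR \simeq \infroot{(A,L)} = \infroot{P'/k}$, as desired.

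The only delicate point is the globalization step, that is, checking that the local equivalence furnished by the definition really descends to an equivalence over the base point $\spec k$ rather than merely over an étale neighborhood. This is exactly where algebraic closedness of $k$ is used: the triviality of the étale site reduces the matter to pulling the given equivalence back along the section $v$, which is routine. Everything after that is a direct invocation of Propositions~\ref{prop:base-change} and~\ref{prop:local-model} and of Example~\ref{ex:log-points}.
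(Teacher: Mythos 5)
Your proposal is correct and takes essentially the same route as the paper, whose (one-sentence) proof is exactly this observation: since the \'etale topology of $\spec k$ is trivial for $k$ algebraically closed, the \'etale-local description in the definition of an abstract \irs becomes global, and then Example~\ref{ex:log-points} identifies the resulting \fs \df structure on $\spec k$ as a log point for a sharp \fs monoid. You have merely spelled out the details (choosing a section of a member of the cover, pulling back the equivalence, and invoking Propositions~\ref{prop:base-change} and~\ref{prop:local-model}), all of which are carried out correctly.
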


Let $\pi\colon \cX \arr X$ be a category fibered in groupoids on a scheme $X$. If $U \subseteq X$ is an open substack, the inverse image $\pi^{-1}(U) \eqdef U \times_{X} \cX$ is an open substack of $\cX$. We say that $\pi$ is \emph{a homeomorphism} if this gives a bijective correspondence between open subschemes of $X$ and open substacks of $\cX$. We say that $\pi$ is \emph{a universal homeomorphism} if for any morphism $Y \arr X$, the projection $Y\times_{X}\cX \arr Y$ is a homeomorphism.

\begin{definition}\label{def:fpqc-atlas}
Let $\cX$ be a category fibered in groupoids over $\aff$. An \emph{fpqc atlas $U \arr \cX$} is a map from a scheme $U$ that is representable by fpqc maps (in the sense of \cite[Definition~2.34]{vistoli-descent}).
\end{definition}

Here are some important properties of \irss, which we collect in the following Proposition.

\begin{proposition}\call{prop:properties-irs} Let $\pi\colon \cR \arr X$ be an \irs.
\begin{enumeratea}

\itemref{1} The morphism $\pi$ is a universal homeomorphism {for the Zariski topology}.

\itemref{2} If $\Omega$ is an algebraically closed field, $\pi$ induces a bijection between isomorphism classes in $\cR(\Omega)$ and $X(\Omega)$.

\itemref{3} If $T$ is a scheme, any morphism $\cR \arr T$ factors through a unique morphism $X \arr T$.

\itemref{4} The diagonal $\cR \arr \cR\times_{X}\cR$ is affine.

\itemref{6} The fibered product of two schemes over $\cR$ is a scheme.

\itemref{5} The stack $\cR$ has an fpqc atlas.

\end{enumeratea}
\end{proposition}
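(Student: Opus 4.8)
The plan is to deduce everything from the local presentation of Corollary~\ref{cor:local-model-chart}. \'Etale-locally on $X$ one has a chart $X\to\sz P$ and an equivalence $\cR\simeq[Z/\mmu_{\infty}(P)]$, where $Z\eqdef X\times_{\sz P}X_{P}^{[\infty]}$; thus $Z\to\cR$ is a $\mmu_{\infty}(P)$-torsor (hence an fpqc atlas), the morphism $Z\to X$ is integral and surjective (it is the base change of $\sz{P_{\QQ}}\to\sz P$, which is integral by construction and surjective by lying over), and by Lemma~\ref{lem:right-quotients} the sheaf $\cO_{X}$ is \'etale-locally the ring of $\mmu_{\infty}(P)$-invariants in $\cO_{Z}$, i.e. $\pi_{*}\cO_{\cR}=\cO_{X}$. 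Since \irss are stable under base change (Proposition~\ref{prop:base-change}), each property may be checked \'etale-locally, after which it becomes a statement about the quotient $[Z/\mmu_{\infty}(P)]$.

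For \refpart{prop:properties-irs}{1} I would first reduce the universal statement to showing that every \irs is a homeomorphism, which is \'etale-local. Open substacks of $\cR$ correspond to $\mmu_{\infty}(P)$-invariant opens of $Z$; the integral surjection $Z\to X$ is universally closed, hence a topological quotient map, and its fibres are exactly the $\mmu_{\infty}(P)$-orbits, so invariant opens are saturated and match opens of $X$. For \refpart{prop:properties-irs}{2} one uses that $\spec\Omega\times_{X}\cR$ is an \irs over $\spec\Omega$, hence equals $\infroot{P/\Omega}=[\spec(\Omega[P_{\QQ}]/(P^{+}))/\mmu_{\infty}(P)]$ by Proposition~\ref{prop:geometric-fibers}; every $\mmu_{\infty}(P)$-torsor over $\spec\Omega$ is trivial because $\H^{1}\fppf(\Omega,\mmu_{n})=\Omega^{*}/(\Omega^{*})^{n}=0$ and one passes to the limit, while $\spec(\Omega[P_{\QQ}]/(P^{+}))$ has $\Omega$ as its reduced ring (each $x^{q}$ with $q\neq 0$ is nilpotent, a positive multiple of $q$ lying in $P^{+}$). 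Hence there is exactly one isomorphism class of $\Omega$-point in each fibre, giving the asserted bijection.

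Parts \refpart{prop:properties-irs}{4} and \refpart{prop:properties-irs}{6} are the most mechanical. For \refpart{prop:properties-irs}{4}, affineness of the diagonal can be tested after the fpqc base change along $Z\times_{X}Z\to\cR\times_{X}\cR$, over which the diagonal becomes $Z\times_{X}\mmu_{\infty}(P)\to Z\times_{X}Z$, $(z,g)\mapsto(z,gz)$; this is affine since $\mmu_{\infty}(P)=\spec\ZZ[P_{\QQ}\gr/P\gr]$ is affine, and affineness descends along fpqc covers. Part \refpart{prop:properties-irs}{6} then follows, since for schemes $U,V$ over $\cR$ one writes $U\times_{\cR}V=(U\times_{X}V)\times_{\cR\times_{X}\cR}\cR$ with the diagonal on the right; by \refpart{prop:properties-irs}{4} the projection to the scheme $U\times_{X}V$ is affine, so $U\times_{\cR}V$ is a scheme. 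For \refpart{prop:properties-irs}{5} I would glue the local torsor-atlases: over an \'etale cover $\{X_{i}\to X\}$ carrying charts, the atlases $Z_{i}\to X_{i}\times_{X}\cR$ give maps $Z_{i}\to\cR$ that are representable by \refpart{prop:properties-irs}{6} and faithfully flat, and $\coprod_{i}Z_{i}\to\cR$ is the desired fpqc atlas in the sense of Definition~\ref{def:fpqc-atlas}.

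The delicate part is \refpart{prop:properties-irs}{3}, together with the topological input it borrows from \refpart{prop:properties-irs}{1}. A morphism $\cR\to T$ pulls back to a $\mmu_{\infty}(P)$-invariant morphism $Z\to T$; by \refpart{prop:properties-irs}{1} it is constant on the fibres of $Z\to X$, hence factors through $|X|$ continuously, and the identity $\pi_{*}\cO_{\cR}=\cO_{X}$ upgrades this to a morphism of schemes $X\to T$, which is unique since $\pi$ is an epimorphism and therefore glues over the cover. The main obstacles I anticipate are precisely the two points that make this honest rather than formal: identifying the fibres of $Z\to X$ with the $\mmu_{\infty}(P)$-orbits in \refpart{prop:properties-irs}{1}, and promoting the set-theoretic factorization of \refpart{prop:properties-irs}{3} to a genuine morphism of schemes for arbitrary (non-affine) $T$, handling the gluing over the \'etale cover via uniqueness.
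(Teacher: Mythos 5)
Your architecture is the same as the paper's: \'etale-local reduction to the chart presentation $\cR\simeq[Z/\mmu_{\infty}(P)]$, part \refpart{prop:properties-irs}{2} via triviality of torsors under the profinite group $\mmu_{\infty}(P)$ together with the nilpotence of $x^{q}$ for $q\neq 0$, part \refpart{prop:properties-irs}{4} by fpqc descent of affineness along $Z\times_{X}Z\to\cR\times_{X}\cR$ (the paper routes this through Lemma~\ref{lem:affine-map}, but your direct computation $Z\times_{\cR}Z\cong Z\times\mmu_{\infty}(P)$ is equivalent), \refpart{prop:properties-irs}{6} formally from \refpart{prop:properties-irs}{4}, and \refpart{prop:properties-irs}{5} by composing the local torsor atlases with the \'etale cover. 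Those parts are correct. However, what you set aside as an ``anticipated obstacle'' in \refpart{prop:properties-irs}{1} is a genuine gap, not a routine verification: the assertion that the fibres of $Z\to X$ are exactly the $\mmu_{\infty}(P)$-orbits (equivalently, that invariant opens of $Z$ are saturated) is not a formal property of quotients by diagonalizable group schemes. For $\gm$ acting on $\AA^{1}$ by scaling, the invariant ring is $\ZZ$, the map $\AA^{1}\to\spec\ZZ$ is an integral surjection and hence a topological quotient map, yet each geometric fibre consists of two orbits, and the invariant open $\gm\subseteq\AA^{1}$ is not saturated. So closedness of the integral map $Z\to X$ gives the quotient topology but does not give saturation; some special feature of the profinite situation must be invoked, and as written parts \refpart{prop:properties-irs}{1} and (via its topological input) \refpart{prop:properties-irs}{3} remain unproven.

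The paper closes precisely this gap by establishing \refpart{prop:properties-irs}{2} first and deducing saturation from it: if $z,z'$ are $\Omega$-points of $Z$ with the same image in $X(\Omega)$, then by \refpart{prop:properties-irs}{2} their images in $\cR(\Omega)$ are isomorphic, and for the quotient stack $[Z/\mmu_{\infty}(P)]$ this means exactly that $z'=gz$ for some $g\in\mmu_{\infty}(P)(\Omega)$; hence $\cU=\pi^{-1}\pi(\cU)$ for every open substack $\cU\subseteq\cR$, which is the saturation statement you need. Since your proof of \refpart{prop:properties-irs}{2} is independent of \refpart{prop:properties-irs}{1}, your argument becomes complete once you insert this observation --- but the implication has to be made, whereas your write-up treats the orbit identification as an unproven input to \refpart{prop:properties-irs}{1}. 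Similarly, for \refpart{prop:properties-irs}{3} the phrase ``$\pi_{*}\cO_{\cR}=\cO_{X}$ upgrades the continuous factorization to a morphism of schemes'' is not yet an argument when $T$ is not affine: the paper instead covers $T$ by affine opens $T_{a}$, uses \refpart{prop:properties-irs}{1} to write $f^{-1}(T_{a})=\pi^{-1}(V_{a})$ for opens $V_{a}\subseteq X$, applies the affine case (where a morphism $\cR\to T_{a}$ is a ring homomorphism $\cO(T_{a})\to\cO(\cR)$ and \'etale-locally $\cO(\cR)=\cO(Z)^{\mmu_{\infty}(P)}=\cO(X)$), and then glues the resulting maps $U_{i}\to T$ using the uniqueness of the factorization and \'etale descent of morphisms to a scheme. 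I recommend restructuring your proof in the paper's logical order: \refpart{prop:properties-irs}{2}, then \refpart{prop:properties-irs}{1}, then \refpart{prop:properties-irs}{3}.
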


Properties \refpart{prop:properties-irs}{2} and \refpart{prop:properties-irs}{3} are usually expressed by saying that $X$ is the moduli space of $\cR$.

\begin{remark}
One can show that property \refpart{prop:properties-irs}{3} also holds when $T$ is an algebraic space. We omit the proof, as this fact won't be needed in this paper.
\end{remark}

\begin{proof}[Proof of Proposition~\ref{prop:properties-irs}]\hfil

\proofofpart{prop:properties-irs}{2}
We can base change and assume that $X = \spec \Omega$, with $\Omega$ an algebraically closed field. Then by Proposition~\ref{prop:geometric-fibers} $\cR$ is of the form $\cR = \infroot{P/\Omega}$, so that $\cR(\Omega) \simeq (\cR\red)(\Omega) \simeq (\cB_{\Omega}\mmu_{\infty}(P))(\Omega)$. Since $\mmu_{\infty}(P)$ is profinite and $\Omega$ is algebraically closed, every $\mmu_{\infty}(P)$-torsor is trivial, so $\cR(\Omega)$ has only one isomorphism class, which is exactly what we need.

{\proofofpart{prop:properties-irs}{1} First of all, being an \irs is a stable property under base change, so it is enough to prove that $\pi\colon \cR \arr X$ is a homeomorphism.}

{The morphism $\pi\colon \cR \arr X$ is an fpqc cover, hence it satisfies descent for open substacks. But the two projections $\cR\times_{X}\cR \arr \cR$ are bijective on geometric points, by \refpart{prop:properties-irs}{2}, and they have continuous sections, hence they are homeomorphisms. So $\cR \arr X$ is a homeomorphism, as claimed.}

\proofofpart{prop:properties-irs}{3} Let $f\colon \cR \arr T$ be a morphism. Assume that $T$ and $X$ are affine, and that $\cR$ is a pullback $X\times_{X_{P}}\cR_{P}$ for a certain map $X \arr X_{P}$, where $P$ is a \fs monoid. Then the result follows immediately from the facts that $\cR = \bigl[(X\times_{X_{P}}X_{P}^{[\infty]})/\mmu_{\infty}(P)\bigr]$ and $(X\times_{X_{P}}X_{P}^{[\infty]})/\mmu_{\infty}(P) = X$. 

In the general case, take a covering $\{T_{a}\}$ of $T$ by affine open subschemes. By \refpart{prop:properties-irs}{1} the open substack $f^{-1}(T_{a})\subseteq \cR$ is the inverse image $\pi^{-1}({V_{a}})$ of an open subscheme $V_{a} \subseteq X$. Let $\{U_{i} \arr X\}$ be an étale covering of $X$ such that for each $i$ the image of $U_{i}$ in $X$ is contained in some $V_{a}$, and the pullback $U_{i}\times_{X}\cR$ comes from a map $U_{i} \arr X_{P_{i}}$. Then it follows from the previous case that for each $i$ the composite $\cR_{U_{i}} \arr \cR \arr T$ factors uniquely through $U_{i}$. By étale descent of morphisms to $T$, this proves the uniqueness of the factorization $\cR \arr X \arr T$. On the other hand this uniqueness applied to the fibered products $U_{i}\times_{X} U_{j}$ shows that the composites $U_{i}\times_{X}U_{j} \xarr{\pr_{1}} U_{i} \arr T$ and $U_{i}\times_{X}U_{j} \xarr{\pr_{2}} U_{j} \arr T$ are equal, so the maps $U_{i} \arr T$ glue to a map $X \arr T$, which gives the required factorization.

\proofofpart{prop:properties-irs}{4} This is a local problem in the étale topology of $X$, so we may assume that $X$ is affine, and that $\cR$ is of the form $X \times_{X_{P}} \cR_{P}$ for some \fs monoid $P$ and some map $X \arr X_{P}$. Set once again $Z \eqdef X\times_{X_{P}}X_{P}^{[\infty]}$, so that $\cR = [Z/\mmu_{\infty}(P)]$. Then the diagonal $\cR \arr \cR \times_{X} \cR$ corresponds to the morphism
   \[
   [Z/\mmu_{\infty}(P)] \arr [Z\times_{X}Z/\mmu_{\infty}(P)\times\mmu_{\infty}(P)]
   \]
induced by the diagonals $Z \arr Z\times_{X}Z$ and $\mmu_{\infty}(P) \arr \mmu_{\infty}(P)\times\mmu_{\infty}(P)$. Hence the result follows from Lemma~\ref{lem:affine-map} below.

\proofofpart{prop:properties-irs}{6} This follows formally from \refpart{prop:properties-irs}{4}.

\proofofpart{prop:properties-irs}{5} 
Let $\{U_{i} \arr X\}$ be an étale covering such that each pullback $U_{i}\times_{X}\cR$ is obtained as $U_{i} \times_{X_{P_{i}}}\cR_{P_{i}}$ for a certain map $U_{i} \arr X_{P_{i}}$ for some \fs monoid $P_{i}$. Set $Z_{i} \eqdef U_{i}\times _{X_{P_{i}}}X_{P_{i}}^{[\infty]}$, so that $U_{i}\times_{X}\cR = [Z_{i}/\mmu_{\infty}(P_{i})]$. Set $X' \eqdef \bigsqcup_{i}U_{i}$ and $U \eqdef \bigsqcup_{i}Z_{i}$. The natural morphism $U \arr X'\times_{X}\cR$ is affine and faithfully flat, while the projection $X'\times_{X}\cX \arr \cR$ is étale and surjective. Hence, the composite $U \arr X'\times_{X}\cR \arr \cR$ is fpqc.
\end{proof}

\begin{lemma}\label{lem:affine-map}
Let $G$ be a diagonalizable group scheme over a ring $R$, and $H \subset G$ a diagonalizable subgroup scheme. 
Suppose that $X$ and $Y$ are 
schemes over $R$, that $G$ acts on $X$, $H$ acts on $Y$, and $f\colon Y \arr X$ is an $H$-equivariant affine morphism of $R$-schemes. Then the induced morphism $[Y/H] \arr [X/G]$ is representable and affine.

Furthermore, if the quotient $G/H$ and the morphism $Y \arr X$ are \fp, then $[Y/H] \arr [X/G]$ is also \fp.
\end{lemma}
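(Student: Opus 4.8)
The plan is to factor the morphism as
\[
[Y/H] \xarr{a} [X/H] \xarr{b} [X/G],
\]
where $a$ is induced by the $H$-equivariant map $f\colon Y \arr X$ and $b$ is induced by the inclusion $H \subseteq G$ acting on the same space $X$, and to prove that each factor is representable and affine; since these properties are stable under composition the lemma follows. The single mechanism driving everything is fpqc descent of affine morphisms: a $G$-torsor $E \arr T$ for the affine, flat group scheme $G$ is itself faithfully flat and affine, hence an fpqc cover, so a morphism into $T$ is affine (respectively \fp) as soon as its pullback to $E$ is. I want to stress that since $G$ is typically $\mmu_{\infty}(P)$, which is \emph{not} of finite presentation, the relevant torsors are only fpqc covers, not fppf covers, so one must invoke the fpqc form of descent throughout; this is still available for affineness and for finite presentation.

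For the factor $b$ I would base change along an arbitrary $T \arr [X/G]$, classifying a $G$-torsor $E \arr T$ together with a $G$-equivariant $g\colon E \arr X$. A short groupoid computation identifies $T \times_{[X/G]} [X/H]$ with the reduction space $E/H = E \times^{G}(G/H)$: the extra datum is precisely a reduction of the structure group of $E$ to $H$, i.e.\ a section of $E/H \arr T$, the map to $X$ being forced to be the restriction of $g$. Here the structure theory of diagonalizable groups enters: writing $G = \underhom_{\ZZ}(M,\gm)$ and $H = \underhom_{\ZZ}(N,\gm)$ for a surjection $M \twoheadrightarrow N$ of abelian groups, the quotient $G/H$ is the diagonalizable group $\underhom_{\ZZ}(\ker(M \to N),\gm)$, hence affine. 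Since $E \times^{G}(G/H)$ is the descent along $E \arr T$ of the affine morphism $E \times (G/H) \arr E$, it is affine over $T$, so $b$ is representable and affine.

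For the factor $a$ I would again base change, now along $T \arr [X/H]$ classifying an $H$-torsor $E \arr T$ with equivariant $g\colon E \arr X$. One identifies $T \times_{[X/H]} [Y/H]$ with the quotient $(E \times_{X} Y)/H$, which is the descent along the $H$-torsor $E \arr T$ of the morphism $E \times_{X} Y \arr E$; the latter is a base change of $f\colon Y \arr X$, hence affine. Thus the fiber product is affine over $T$ and $a$ is representable and affine. For the finiteness addendum, finite presentation also descends along fpqc covers, so the very same two computations show that $a$ is \fp when $f\colon Y \arr X$ is, and that $b$ is \fp when $G/H$ is (equivalently when $\ker(M \to N)$ is finitely generated); the composite is then \fp.

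The step I expect to require the most care is the identification of the two base-changed groupoid fiber products with the concrete quotient schemes $E \times^{G}(G/H)$ and $(E \times_{X} Y)/H$ — that is, checking that these are genuinely representable and that their formation is compatible with the fpqc descent being invoked — together with the verification that $G/H$ is affine, which is exactly the point where diagonalizability of $G$ and $H$ is used. The remaining descent arguments are then routine applications of fpqc descent of affine and of finitely presented morphisms.
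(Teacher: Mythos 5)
Your proof is correct and is essentially the paper's argument: the paper uses the same factorization $[Y/H] \to [X/H] \to [X/G]$, handles the second map via the cartesian square identifying it as a base change of $\cB_{R}H \to \cB_{R}G$ (whose pullback along the atlas $\spec R \to \cB_{R}G$ is the affine diagonalizable group $G/H$), handles the first via the cartesian square with $Y \to X$ on top, and concludes by effectivity of fpqc descent for affine morphisms. Your identifications of the fibers over an arbitrary $T$ with $E \times^{G}(G/H)$ and $(E\times_{X}Y)/H$ are just the base-changed form of that same descent argument, so the two proofs differ only in presentation.
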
 

\begin{proof}
Since fpqc descent for affine maps is effective, we have the following fact. Suppose that $\cF$ and $\cG$ are stacks for the fpqc topology of $\cataff R$, where $R$ is a ring, and $\Phi\colon \cG \arr \cF$ is a base-preserving functor. Assume that there exists {a representable fpqc cover} $X \arr \cF$, where $X$ is a
scheme, such that $\cG \times_{\cF}X$ is represented by {schemes that are affine over $X$}. Then $\Phi$ is is representable and affine.

For the proof of the Lemma, notice that we have a cartesian diagram
   \[
   \begin{tikzcd}
   {}[Y/H] \rar & {[X/H]} \rar \dar & {[X/G]}\dar\\
   {}& \cB_{R}H\rar & \cB_{R}G
   \end{tikzcd}
   \]
hence it is enough to show that $[Y/H] \arr [X/H]$ and $\cB_{R}H \arr \cB_{R}G$ are affine.

For the first, we have a cartesian diagram
   \[
   \begin{tikzcd}
   {}Y \rar\dar &{}X\dar\\
   {[Y/H]}\rar &{[X/H]}\,.
   \end{tikzcd}
   \]
For the second, the point is that the stack quotient $[G/H] = G/H$ is represented by an affine diagonalizable group scheme, and the diagram
   \[
   \begin{tikzcd}
   {}[G/H] \rar \dar & \spec R\dar\\
   \cB_{R}H\rar & \cB_{R}G\,
   \end{tikzcd}
   \]
is cartesian.
\end{proof}


\section{Quasi-coherent and \fp sheaves\\on \irss}%
\label{realsec:qc}

This section is dedicated to a discussion of quasi-coherent sheaves on an infinite root stack. We start off by considering quasi-coherent sheaves on an arbitrary fibered category, and then step by step we specify the situation to an infinite root stack over a scheme, as defined in \ref{sec:abstract-irs}.

\subsection{Quasi-coherent sheaves on fibered categories}\label{sec:qc}

Let $\rmp_{\cX}\colon \cX \arr \aff$ be a category fibered in groupoids. Define the presheaf $\cO_{\cX}\colon \cX\op \arr \catring$ by sending an object $\xi \in \cX$ into $\cO(\rmp_{\cX}\xi)$. A presheaf of $\cO_{\cX}$-modules is defined in the obvious way: it is a functor $F\colon \cX\op \arr \catab$ with a structure of $\cO(\rmp_{\cX}\xi)$ module on each $F(\xi)$, such that if $\phi\colon \eta \arr \xi$ is an arrow, the corresponding homomorphism $\phi^{*}\colon F(\xi) \arr F(\eta)$ is linear with respect to the ring homomorphism $\phi^{*}\colon \cO(\rmp_{\cX}\eta) \arr \cO(\rmp_{\cX}\xi)$.

There is an obvious notion of homomorphism of presheaves of $\cO_{\cX}$-modules; the resulting category of $\cO_{\cX}$ modules is an abelian category, denoted by $\catmod \cX$. 

The category $\catmod \cX$ has an obvious symmetric monoidal structure given by tensor product: if $F$ and $G$ are presheaves of $\cO_{\cX}$-modules, we define $F\otimes_{\cO_{\cX}}G$ via the rule $(F\otimes_{\cO_{\cX}}G)(T) = F(T)\otimes_{\cO(T)}G(T)$.

\begin{remark}
In this paper we are not concerning ourselves with set-theoretic difficulties; however, it is hard to ignore the fact that the category $\catmod \cX$ is not locally small, and we don't see a general method for reducing its size. Fortunately, this problem does not arise with \qc sheaves on stacks with an fpqc atlas, which are the only ones that really interest us.
\end{remark}

The following is the definition of \qc sheaf used in \cite{rosenberg-kontsevich}.

\begin{definition}
A presheaf of $\cO_{\cX}$-modules $F$ is \emph{\qc} if for every arrow $\eta \arr \xi$ in $\cX$, the induced homomorphism $\cO(\rmp_{\cX}\eta) \otimes_{\cO(\rmp_{\cX}\xi)}F(\xi) \arr F(\eta)$ is an isomorphism.
\end{definition}

\begin{remark}
The category of \qc (pre)sheaves on $\cX$ is a full additive subcategory $\catqcoh \cX$ of $\catmod \cX$. It is also a monoidal subcategory, because it is easy to see that tensor products of \qc sheaves are \qc. However, kernels of homomorphism of \qc presheaves in $\catmod \cX$ are not necessarily \qc, so it is not clear to us (and probably not true) that $\catqcoh \cX$ is abelian in general.
\end{remark}

Alternatively, a presheaf $F$ is \qc if for every affine scheme $U$ and every object of $\cX(U)$, the pullback of $F$ to $\cataff U$ via the corresponding base-preserving functor $\cataff U \arr \cX$ is isomorphic to the presheaf defined by an $\cO(U)$-module.

Here is an alternative definition. Call $\qcoh$ the fibered category of \qc sheaves over $\aff$. A clean way is to think of $\aff$ as the dual of the category of commutative rings, and define $\qcoh$ as the dual of the category of modules, in the following sense. An object of $\qcoh$ is a pair $(A, M)$, where $A$ is a commutative ring and $M$ is an $A$-module. An arrow from $(B, N)$ to $(A, M)$ is a pair $(\phi, \Phi)$, where $\phi\colon A \arr B$ is a ring homomorphism, and $\Phi\colon M \arr N$ is a homomorphism of groups that is $\phi$-linear.

The category $\qcoh$ is fibered over $\aff$; an arrow $(B, N) \arr (A, M)$ is cartesian if and only if the induced homomorphism $B\otimes_{A}M \arr N$ is an isomorphism.

Then an $\cO_{\cX}$-module corresponds to a base-preserving functor $\cX \arr \qcoh$ of categories fibered over $\aff$. Such a functor is \qc if and only if it is cartesian.

\begin{remark}\label{rmk:topology}
Any topology on $\aff$ induces a topology on $\cX$: a set of arrows $\{\xi_{i} \arr \xi\}$ is a covering if their images $\{\rmp_{\cX}\xi_{i} \arr \rmp_{\cX}\xi\}$ are a covering. In particular, we have fpqc, fppf, étale and Zariski topologies on $\cX$.
\end{remark}

\begin{remark}\label{rmk:qc->sheaf}
Descent theory for modules easily implies that a \qc sheaf on a fibered category is a sheaf in the fpqc topology. This justifies the terminology ``\qc sheaf'' as opposed to the more cumbersome ``\qc presheaf''.
\end{remark}

Suppose that $\cX$ and $\cY$ are categories fibered in groupoids over $\aff$, and let $f\colon \cY \arr \cX$ be a base-preserving functor. There is an obvious pullback map $f^{*}\colon \catmod \cX \arr \catmod \cY$ sending each presheaf $F$ of $\cO_{\cX}$ into the presheaf $\eta \arrto F(f\eta)$; this sends $\catqcoh X$ into $\catqcoh Y$. Furthermore, we have $f^{*}\cO_{\cX} = \cO_{\cY}$ tautologically.


Suppose that $\cX$ has an fpqc atlas $U \arr \cX$ (Definition \ref{def:fpqc-atlas}). Then the fibered product $R \eqdef U \times_{\cX} U$ is represented by a scheme, and we obtain an fpqc groupoid $R \double U$. We consider the category $\catqcoh{(R \double U)}$ of \qc sheaves $F$ on $U$ (in the classical sense) with descent data $\pr_{2}^{*}F \simeq \pr_{1}^{*}F$.

The following is a standard application of fpqc descent.

\begin{proposition}\label{prop:qc<->qc-on-groupoid}
Let $\cX$ and $R \double U$ be as above. Then we have an equivalence of additive categories of $\catqcoh \cX$ with $\catqcoh{(R \double U)}$.

In particular, if $X$ is a scheme the category of \qc sheaves on $\cataff{X}$ is equivalent to the category of \qc sheaves on $X$.\qed
\end{proposition}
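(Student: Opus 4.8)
The plan is to exhibit mutually quasi-inverse additive functors between $\catqcoh\cX$ and $\catqcoh(R\double U)$, with the effectivity of fpqc descent for quasi-coherent sheaves (\cite{vistoli-descent}) as the essential input. Throughout I use the description of a quasi-coherent sheaf on $\cX$ as a cartesian functor $\cX\arr\qcoh$; equivalently, as an $\cO_{\cX}$-module whose restriction along every base-preserving functor $\cataff T\arr\cX$ is the module attached to an $\cO(T)$-module.

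First I would construct the restriction functor $\catqcoh\cX\arr\catqcoh(R\double U)$. Writing $a\colon U\arr\cX$ for the atlas, a quasi-coherent $F$ on $\cX$ pulls back to a quasi-coherent sheaf $a^{*}F$ on the scheme $U$ in the ordinary sense. The canonical $2$-isomorphism $a\circ\pr_{1}\simeq a\circ\pr_{2}$ arising from $R=U\times_{\cX}U$ induces an isomorphism $\pr_{1}^{*}a^{*}F\simeq\pr_{2}^{*}a^{*}F$, and the cocycle condition over $U\times_{\cX}U\times_{\cX}U$ is automatic because $F$ is a genuine functor. This produces an object of $\catqcoh(R\double U)$, functorially in $F$.

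The substance is the quasi-inverse $\catqcoh(R\double U)\arr\catqcoh\cX$. Given $(G,\varphi)$ with $G$ quasi-coherent on $U$ and $\varphi$ a descent datum along $R\double U$, I would define a cartesian functor $\widetilde G\colon\cX\arr\qcoh$ as follows. For $\xi$ in $\cX(T)$ with $T$ affine, set $V_{\xi}\eqdef T\times_{\cX}U$; since the atlas is representable by fpqc maps (Definition~\ref{def:fpqc-atlas}), $V_{\xi}$ is a scheme and $V_{\xi}\arr T$ is faithfully flat and quasi-compact. The second projection $V_{\xi}\arr U$ pulls $G$ back to a quasi-coherent sheaf on $V_{\xi}$, and $\varphi$, pulled back along $V_{\xi}\times_{T}V_{\xi}\simeq T\times_{\cX}U\times_{\cX}U$, equips it with a descent datum for the fpqc cover $V_{\xi}\arr T$. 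By effectivity of fpqc descent this descends to a well-defined $\cO(T)$-module $\widetilde G(\xi)$. An arrow $\eta\arr\xi$ over $T'\arr T$ gives a morphism $V_{\eta}\arr V_{\xi}$ compatible with the descent data, hence a map on descended modules; since $V_{\eta}=T'\times_{\cX}U\simeq T'\times_{T}V_{\xi}$, the compatibility of descent with base change shows that $\cO(T')\otimes_{\cO(T)}\widetilde G(\xi)\arr\widetilde G(\eta)$ is an isomorphism, so $\widetilde G$ is cartesian, i.e.\ quasi-coherent.

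Finally I would check that the two functors are mutually quasi-inverse. That restricting $\widetilde G$ recovers $(G,\varphi)$ follows by taking $T=U$ and $\xi=a$ itself: then $V_{\xi}=R$, its projection to $U$ is $\pr_{1}$, and the diagonal $U\arr R$ is a section with $\pr_{2}\circ\Delta=\id$, so $\Delta^{*}\pr_{2}^{*}G\simeq\widetilde G(a)$ identifies $\widetilde G(a)$ with $G$; the descent datum is recovered similarly. That $\widetilde{a^{*}F}\simeq F$ follows from the uniqueness part of descent. The main obstacle is purely the bookkeeping of the various canonical $2$-isomorphisms (associativity and cocycle data) needed to make $\widetilde G$ a strict functor and the comparison isomorphisms natural; all of it is formal once effectivity of descent is in hand, which is why the statement is standard. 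For the last assertion, I would specialize to $\cX=\cataff X$: a cartesian functor $\cataff X\arr\qcoh$ is exactly a compatible family of quasi-coherent sheaves on the affine opens of $X$, which by Zariski gluing of quasi-coherent sheaves on the scheme $X$ is the same as an object of $\catqcoh X$; equivalently, taking $a$ to be an affine open cover, $R\double U$ is the groupoid of the cover and $\catqcoh(R\double U)\simeq\catqcoh X$, so the general equivalence reduces to the classical one.
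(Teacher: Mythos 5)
Your proposal is correct, and it is exactly the argument the paper has in mind: the paper states this proposition without proof, labelling it ``a standard application of fpqc descent,'' and your construction — restriction along the atlas to get an object of $\catqcoh{(R \double U)}$, and the quasi-inverse obtained by descending $G$ along the fpqc cover $T\times_{\cX}U \arr T$ for each object $\xi \in \cX(T)$, with cartesianness coming from compatibility of descent with base change — is precisely that standard application spelled out. The reduction of the scheme case to an affine open cover at the end likewise matches the intended reading.
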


Since $\catqcoh{(R \double U)}$ is clearly abelian, we have the following.

\begin{corollary}
If $\cX$ has an fpqc atlas, the category $\catqcoh \cX$ is abelian.\qed
\end{corollary}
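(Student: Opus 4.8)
The plan is to deduce this immediately from Proposition~\ref{prop:qc<->qc-on-groupoid}, which identifies $\catqcoh\cX$ with the category $\catqcoh(R\double U)$ of \qc sheaves on $U$ equipped with descent data along the fpqc groupoid $R\double U$. Since a category that is equivalent, as an additive category, to an abelian category is itself abelian, it suffices to check that $\catqcoh(R\double U)$ is abelian. The objects here are pairs $(F,\sigma)$ with $F\in\catqcoh U$ and $\sigma\colon \pr_2^*F \larrowsim \pr_1^*F$ an isomorphism satisfying the cocycle condition over $U\times_{\cX}U\times_{\cX}U$, and morphisms are morphisms of \qc sheaves on $U$ compatible with the descent data; the forgetful functor $\catqcoh(R\double U)\arr\catqcoh U$, $(F,\sigma)\arrto F$, is faithful and additive.

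First I would record the standard fact that $\catqcoh U$ is abelian, with kernels and cokernels computed sheaf-theoretically. The key step is then to verify that the forgetful functor creates kernels and cokernels. Given a morphism $\phi\colon(F,\sigma)\arr(G,\tau)$, let $K=\ker\phi$ and $C=\coker\phi$ in $\catqcoh U$. Because the atlas $U\arr\cX$ is fpqc, the two projections $\pr_1,\pr_2\colon R\arr U$ are flat, so the pullback functors $\pr_1^*$ and $\pr_2^*$ are exact. Hence $\pr_i^*K$ is the kernel and $\pr_i^*C$ the cokernel of $\pr_i^*\phi$, and the isomorphisms $\sigma,\tau$ induce canonical isomorphisms $\pr_2^*K\simeq\pr_1^*K$ and $\pr_2^*C\simeq\pr_1^*C$ inheriting the cocycle condition. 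Thus $K$ and $C$ carry natural descent data and are readily checked to be the kernel and cokernel of $\phi$ in $\catqcoh(R\double U)$.

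The same exactness of the $\pr_i^*$ lets me finish the abelian-category axioms: images and coimages formed in $\catqcoh U$ acquire descent data, and the canonical map from coimage to image, being an isomorphism on the underlying sheaves, is an isomorphism of descent-data objects as well. Therefore $\catqcoh(R\double U)$ satisfies all the axioms of an abelian category, and by the equivalence of Proposition~\ref{prop:qc<->qc-on-groupoid} so does $\catqcoh\cX$.

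There is no serious obstacle here; the one point requiring care is the flatness of $\pr_1,\pr_2$, which is exactly what makes the pullbacks exact and allows the descent data to descend to kernels, cokernels, images and coimages. Everything else is the routine observation that a faithful, exact forgetful functor to an abelian category that creates these constructions transports the abelian structure.
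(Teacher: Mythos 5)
Your proof is correct and follows exactly the paper's route: the paper likewise deduces the corollary from Proposition~\ref{prop:qc<->qc-on-groupoid}, merely asserting that $\catqcoh{(R \double U)}$ is ``clearly abelian'' where you spell out the standard verification (flatness of the projections $R \arr U$, hence exactness of pullback, hence descent data on kernels, cokernels and images). No gap; you have simply made explicit what the paper leaves as routine.
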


\begin{remark}
The category $\catqcoh{(R \double U)}$ is locally small, so in each of the following proofs we can take the stacks $\cX$ with an fpqc atlas and substitute a locally small category for $\catqcoh\cX$.
\end{remark}

We are going to use the following particular case of Proposition~\ref{prop:qc<->qc-on-groupoid}. If $G$ is an affine group scheme over $\ZZ$ acting an on a ring $R$, we define an equivariant $R$-module as an $R$-module $M$ with an action of $G$, such that for every ring $A$, every $r \in R \otimes_{\ZZ} A$, $m\in M \otimes_{\ZZ} A$, and $g \in G(A)$, we have $(gr)(gm) = g(rm)$.

\begin{proposition}\label{prop:qc-group-actions}
Suppose that $X = \spec A$ is a affine scheme, $G$ an affine group scheme over $\ZZ$ acting on $X$. Then there is an equivalence of tensor categories between $\catqcoh{[X/G]}$ and the category of $G$-equivariant $A$-modules.\qed
\end{proposition}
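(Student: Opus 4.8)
The plan is to apply Proposition~\ref{prop:qc<->qc-on-groupoid} to the tautological atlas $X \arr [X/G]$. Since $G$ is affine over $\ZZ$, for any affine scheme $T$ and any object of $[X/G](T)$ --- consisting of a $G$\dash torsor $E \arr T$ together with an equivariant map $E \arr X$ --- the fibered product $T \times_{[X/G]} X$ is the torsor $E$ itself, which is affine and faithfully flat over $T$; hence $X \arr [X/G]$ is an fpqc atlas in the sense of Definition~\ref{def:fpqc-atlas}. The associated fibered product $R \eqdef X \times_{[X/G]} X$ is then canonically identified with $G \times X$, via the map sending a pair of objects of $X$ over $T$ together with an isomorphism of their images in $[X/G]$ to the unique $g \in G(T)$ realizing that isomorphism. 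Under this identification the two projections $R \double X$ become the action $\sigma\colon G \times X \arr X$ and the projection $\pr_{X}\colon G \times X \arr X$, so that $R \double X$ is the action groupoid of $G$ on $X$.

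By Proposition~\ref{prop:qc<->qc-on-groupoid} we therefore obtain an equivalence of additive categories $\catqcoh{[X/G]} \simeq \catqcoh{(G \times X \double X)}$. An object of the right-hand side is a \qc sheaf $F$ on $X$ equipped with an isomorphism $\theta\colon \pr_{X}^{*}F \larrowsim \sigma^{*}F$ of \qc sheaves on $G \times X$, satisfying the usual cocycle condition over $G \times G \times X$ and normalized along the identity section. Since $X = \spec A$ is affine, $F$ is the sheaf associated to an $A$\dash module $M$, and $\theta$ is an isomorphism of modules over $\cO(G \times X) = \cO(G) \otimes_{\ZZ} A$.

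The crux of the argument is the translation of this datum into the functor-of-points notion of a $G$\dash equivariant module used in the statement. Unwinding $\theta$ produces a coaction $M \arr M \otimes_{\ZZ} \cO(G)$; the cocycle and normalization conditions say exactly that this coaction is coassociative and counital, i.e. that it defines an action of the functor $G$ on $M$, while $\cO(G \times X)$\dash linearity of $\theta$ translates precisely into the compatibility $(gr)(gm) = g(rm)$ for every ring $A'$, every $r \in A \otimes_{\ZZ} A'$, $m \in M \otimes_{\ZZ} A'$ and $g \in G(A')$. Conversely, such an equivariant structure manufactures a descent datum $\theta$, and the two constructions are mutually inverse. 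This is the one point requiring genuine care, being a routine but slightly tedious verification in the Hopf-algebraic dictionary between comodules and functorial group actions; everything else is formal. Finally, the tensor product of two descent data corresponds to the tensor product of the associated equivariant module structures, so the equivalence respects the monoidal structures and is one of tensor categories, as claimed.
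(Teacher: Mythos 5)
Your proposal is correct and follows exactly the route the paper intends: the paper states this result as a particular case of Proposition~\ref{prop:qc<->qc-on-groupoid} (with no further proof), and you carry out precisely that reduction, identifying $X \arr [X/G]$ as an fpqc atlas, $X \times_{[X/G]} X$ with the action groupoid $G \times X \double X$, and descent data with the comodule/functorial description of $G$\dash equivariant $A$\dash modules. The only detail worth noting is that the tensor-compatibility and the Hopf-algebraic dictionary you flag as the "crux" are exactly the routine verifications the paper leaves implicit, so there is no divergence in substance.
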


Let $\cX$ be a category fibered in groupoids over $\aff$ and $F$ be a sheaf of $\cO_{\cX}$ modules. We define $F(\cX) = \H^{0}(\cX, F)$ to be $\hom_{\cO_{\cX}}(\cO_{\cX}, F) = \lim_{\cX}F$. This is a (large) module over the commutative ring $\cO(\cX) \eqdef \cO_{\cX}(\cX)$. It is easy to see that $\cO(\cX)$ can also be seen as the ring of base-preserving functors $\cX \arr \AA^{1}_{\ZZ}$.

Let $f\colon \cY \arr \cX$ be a base-preserving functor of categories fibered in groupoids over $\aff$. The pullback functor $f^{*}\colon \catmod \cX \arr \catmod \cY$ induces a homomorphism
   \[
   F(\cX) = \hom_{\cO_{\cX}}(\cO_{\cX}, F)\xarr{f^{*}} \hom_{\cO_{\cY}}(f^{*}\cO_{\cX}, f^{*}F)
   = (f^{*}F)(\cY)\,.
   \]

Given a sheaf of $\cO_{\cY}$-modules $G$, we can also define a sheaf of $\cO_{\cX}$-modules $f_{*}G$ as follows. For each affine scheme $T$ and each object $\xi$ of $\cX(T)$ we consider the fibered product
   \[
   \cY_{T} \eqdef \cataff T \times_{\cX} \cY\,,
   \]
where the morphism $\cataff T \arr \cX$ corresponds to $\xi$, with the projection $\pi_{T}\colon \cY_{T} \arr \cY$. We define $f_{*}G(T) \eqdef (\pi_{T}^{*} G)(\cY_{T})$. The natural homomorphism $\cO(T) \arr \cO(\cY_{T})$ makes $f_{*}G(T)$ into an $\cO(T)$-module, and the obvious arrows make $f_{*}G$ into a sheaf of $\cO_{\cX}$-modules.

In what follows we will shorten $(\pi_{T}^{*} G)(\cY_{T})$ into $G(\cY_{T})$; this should not give rise to confusion.

Let $\phi\colon G' \arr G$ be a morphism of sheaves of $\cO_{\cY}$-modules; this induces a homomorphism $f_{*}\phi\colon f_{*}G' \arr f_{*}G$. If we ignore set-theoretic problems, we have defined an additive functor $f_{*}\colon \catmod \cY \arr \catmod \cX$, which is easily checked to be a left adjoint to $f^{*}\colon \catmod \cX \arr \catmod\cY$. 

Wishing to be a little more careful, assume that $\cY$ has an fpqc atlas $U \arr \cY$, that the diagonal $\cX \arr \cX \times \cX$ is representable, and that $G$ is \qc. Then $U_{T} \eqdef T \times_{\cY} U$ is a scheme, and the projection $U_{T} \arr \cY_{T}$ is an fpqc atlas. Hence the pullback $f_{*}G(T) \eqdef G(\cY_{T}) \arr G(U_{T})$ is injective. On the other hand $G(U_{T})$ is in canonical bijection with the set of global sections of the restriction of $G$ to the Zariski site of $U_{T}$. This allows us to replace $G(U_{T})$ with a set, defining a functor $f_{*}\colon \catqcoh{\cY} \arr \catmod \cX$, with the property that for every sheaf of $\cO_{\cX}$-module $F$ there is a canonical isomorphism $\hom_{\cO_{\cY}}(f^{*}F, G) \simeq \hom_{\cO_{\cX}}(F, f_{*}G)$ that is functorial in $F$ and in $G$.

This proves the following.

\begin{proposition}
Let $f\colon \cY \arr \cX$ be a base-preserving functor between categories fibered in groupoids over $\aff$. Assume that $\cY$ has an fpqc atlas and the diagonal $\cX \arr \cX \times \cX$ is representable. Then there exists an additive functor
   \[
   f_{*}\colon \catqcoh \cY \arr \catmod \cX
   \]
with the property that for every \qc sheaf $G$ in $\cY$ and every presheaf of $\cO_{X}$ modules $F$, there is a canonical isomorphism
   \[
   \hom_{\cO_{\cY}}(f^{*}F, G) \simeq \hom_{\cO_{\cX}}(F, f_{*}G)
   \]
that is functorial in $F$ and in $G$.\qed
\end{proposition}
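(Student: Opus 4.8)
The plan is to take the formula $f_{*}G(T) \eqdef (\pi_{T}^{*}G)(\cY_{T})$ from the preceding discussion as the definition and to supply the three verifications that remain: that each $f_{*}G(T)$ is a genuine (small) $\cO(T)$-module, that $f_{*}G$ is then an object of $\catmod\cX$ (recall that this target is the category of \emph{presheaves} of $\cO_{\cX}$-modules, so no sheaf condition needs checking), and that the displayed isomorphism holds and is natural in both variables. The construction of the naive adjoint on all of $\catmod\cY$ is purely formal — it is the standard global-sections/direct-image adjunction — so the one genuine obstacle is the set-theoretic bounding, which is precisely what the two hypotheses are there to provide.

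I would begin with that set-theoretic point. Fix $\xi \in \cX(T)$ and form $\cY_{T} = \cataff{T} \times_{\cX}\cY$ with its projection $\pi_{T}\colon \cY_{T} \arr \cY$. Pulling back the fpqc atlas $U \arr \cY$ along $\pi_{T}$ yields a map $U_{T} \arr \cY_{T}$, and since $U_{T}$ is canonically the fibered product $T\times_{\cX}U$ of the two \emph{schemes} $T$ and $U$ over $\cX$, the hypothesis that the diagonal $\cX \arr \cX\times\cX$ is representable forces $U_{T}$ to be a scheme; consequently $U_{T} \arr \cY_{T}$ is an fpqc atlas. Because $\pi_{T}^{*}G$ is quasi-coherent, hence a sheaf for the fpqc topology (Remark~\ref{rmk:qc->sheaf}), the module $f_{*}G(T) = (\pi_{T}^{*}G)(\cY_{T})$ is computed as the equalizer of the two restriction maps $(\pi_{T}^{*}G)(U_{T}) \rightrightarrows (\pi_{T}^{*}G)(U_{T}\times_{\cY_{T}}U_{T})$. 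Since $U_{T}$ and $U_{T}\times_{\cY_{T}}U_{T}$ are schemes on which $\pi_{T}^{*}G$ restricts to an honest quasi-coherent sheaf, these are genuine modules, and hence $f_{*}G(T)$ is a well-defined small $\cO(T)$-module, its $\cO(T)$-structure coming from $\cO(T) \arr \cO(\cY_{T})$.

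Next I would record functoriality in the pair $(T, \xi)$: an arrow $\xi' \arr \xi$ in $\cX$ over $g\colon T' \arr T$ induces a base-preserving functor $\cY_{T'} \arr \cY_{T}$ compatible with the projections to $\cY$, and pulling back global sections of $\pi_{T}^{*}G$ gives the required $\cO(T')$-linear map $f_{*}G(T) \arr f_{*}G(T')$, compatible with composition. Thus $f_{*}G$ is an object of $\catmod\cX$, and a morphism $G' \arr G$ of quasi-coherent sheaves on $\cY$ induces $f_{*}G' \arr f_{*}G$ by functoriality of global sections; this defines the additive functor $f_{*}\colon \catqcoh\cY \arr \catmod\cX$.

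Finally, for the adjunction I would exhibit mutually inverse natural bijections between $\hom_{\cO_{\cY}}(f^{*}F, G)$ and $\hom_{\cO_{\cX}}(F, f_{*}G)$. Given $\psi\colon f^{*}F \arr G$, its transpose sends, for $\xi\in\cX(T)$, a section $s\in F(\xi)$ to the global section of $\pi_{T}^{*}G$ over $\cY_{T}$ whose value at an object $\zeta = (g\colon S\arr T,\, \eta,\, \alpha\colon g^{*}\xi \simeq f\eta)$ is the image of $s$ under $F(\xi) \arr F(g^{*}\xi) \xarr{F(\alpha)} F(f\eta) = (f^{*}F)(\eta) \xarr{\psi_{\eta}} G(\eta)$; naturality of $\psi$ guarantees that this family is compatible with the arrows of $\cY_{T}$, hence is a genuine section. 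Conversely, given $\tilde\psi\colon F \arr f_{*}G$, its transpose at $\eta\in\cY(S)$ is obtained by evaluating $\tilde\psi_{f\eta}\colon F(f\eta) \arr (\pi_{f\eta}^{*}G)(\cY_{f\eta})$ at the tautological object $(\id_{S},\, \eta,\, \id_{f\eta})$ of $\cY_{f\eta}$, which lands in $G(\eta)$. A direct check shows these assignments are inverse to one another and natural in $F$ and $G$; the only delicate point is again that all the $\hom$-sets in sight are genuine sets, which the first step has already secured.
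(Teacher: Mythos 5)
Your proposal is correct and follows essentially the same route as the paper: the same definition $f_{*}G(T) = (\pi_{T}^{*}G)(\cY_{T})$, the same use of the representable diagonal of $\cX$ to see that $U_{T} \simeq T\times_{\cX}U$ is a scheme giving an fpqc atlas of $\cY_{T}$, and the same set-theoretic bounding (the paper uses injectivity of $G(\cY_{T}) \arr G(U_{T})$ where you use the full equalizer, a cosmetic difference), with the adjunction then checked formally. The only nitpick is a variance slip: since $\alpha\colon g^{*}\xi \simeq f\eta$ and $F$ is contravariant, the map you want is $F(\alpha)^{-1} = F(\alpha^{-1})\colon F(g^{*}\xi) \arr F(f\eta)$, harmless because $\alpha$ is an isomorphism.
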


\begin{remark}
When $X$ is a scheme, the category $\catmod X \eqdef \catmod\cataff X$ is much larger than the category of $\cO_{X}$-modules on the small Zariski site of $X$, even {though} the corresponding subcategories of \qc sheaves are equivalent.

Also, if $G$ is a \qc coherent sheaf on $X$, the pushforward $f_{*}G$ is not necessarily \qc, even if $f\colon Y \arr X$ is, for example, a proper map of noetherian schemes. In fact, $f_{*}G$ is \qc if and only if the restriction of $f_{*}G$ to the small Zariski site of $X$ is \qc, and the formation of $f_{*}G$ in the Zariski topology commutes with base change.
\end{remark}

There is an alternate definition of the category of \qc sheaves. Recall that if $\cC$ is a site with a sheaf $\cO_{\cC}$ of commutative rings, a sheaf of $\cO_{\cC}$-modules is \emph{\qc} if it has local presentations, that is, for any object $\xi$ there is a covering $\{\xi_{i} \to \xi\}$, such that the restriction $F\rest{(\cC/\xi_{i})}$ to the comma site $(\cC/\xi_{i})$ is the cokernel of a homomorphism of free sheaves $\cO_{(\cC/\xi_{i})}^{\oplus B} \arr \cO_{(\cC/\xi_{i})}^{\oplus A}$, where the sets $A$ and $B$ are not necessarily finite. To avoid confusion, we will refer to such sheaves as sheaves with local presentations.

We will use the following lemma.

\begin{lemma}\label{lem:subcategory-equivalence-qc}
Let $\cC$ be a site with finite fibered products and a sheaf of rings $\cO_{\cC}$. Let $\cD \subseteq \cC$ be a full subcategory closed under fibered products, considered as a ringed site with the induced topology, and the restriction $\cO_{\cD}$ of $\cO_{\cC}$. Suppose that each object $\xi$ of $\cC$ has a covering $\{\eta_{i} \arr \xi\}$ in which each $\eta_{i}$ is in $\cD$. Then the embedding $\cD \subseteq \cC$ induces an equivalence of topoi of sheaves of sets on $\cC$ and $\cD$, and an equivalence of the categories of sheaves with local presentations on $\cC$ and on $\cD$.
\end{lemma}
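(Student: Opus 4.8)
The plan is to prove the two equivalences by verifying the hypotheses of the classical comparison theorem for sites (the theory of morphisms of sites and comparison of topoi, as in SGA~4 or the Stacks Project). The embedding functor $u\colon \cD \into \cC$ is continuous (covering families in $\cD$ push forward to covering families in $\cC$, by the definition of the induced topology) and cocontinuous, and it is fully faithful by assumption. The key structural input is the hypothesis that every object $\xi$ of $\cC$ admits a covering $\{\eta_i \arr \xi\}$ with each $\eta_i \in \cD$, i.e.\ that $\cD$ is a \emph{topologically dense} (or ``generating'') subcategory; this is precisely the condition that makes the restriction functor on sheaves an equivalence.

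First I would treat the equivalence of topoi of sheaves of sets. The restriction functor $\sh\cC \arr \sh\cD$ sends a sheaf $F$ to $F\rest{\cD}$. To construct a quasi-inverse, I would use the standard left Kan extension / sheafification along $u$: given a sheaf $G$ on $\cD$, define $G^{+}$ on $\cC$ by the formula $G^{+}(\xi) = \varprojlim G(\eta_i)$ over covering sieves of $\xi$ by objects of $\cD$ (equivalently, sheafify the presheaf $\xi \mapsto \colim_{\eta \to \xi,\ \eta \in \cD} G(\eta)$). The density hypothesis guarantees that such covering sieves are cofinal, so that the sheaf condition on $\cC$ is entirely determined by values on $\cD$; this is what forces both composites to be naturally isomorphic to the identity. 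The fact that $\cD$ is closed under fibered products ensures that the descent data needed to glue are expressible within $\cD$, so no information is lost in passing between the two sites. Here one also uses that $\cO_{\cD}$ is literally the restriction of $\cO_{\cC}$, so the ringed structures match up under restriction.

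Next I would upgrade this to the statement about sheaves with local presentations. Since being a cokernel of a map of free modules is a local condition, and the topoi are equivalent, a sheaf of $\cO_{\cC}$-modules is $\cO_{\cC}$-quasi-coherent (has local presentations) if and only if its restriction to $\cD$ has local presentations over $\cO_{\cD}$. The nontrivial direction is that local presentations on $\cD$ re-glue to local presentations on $\cC$: given $\xi \in \cC$, first refine to a cover $\{\eta_i \arr \xi\}$ with $\eta_i \in \cD$, then use the presentations of $F\rest{\cD}$ over each $\eta_i$; since the comma sites $(\cC/\eta_i)$ and $(\cD/\eta_i)$ are themselves related by the same density and closure hypotheses, the free resolutions transport. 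One checks compatibility of the restriction with the operations $\oplus$, $\coker$, and with the free sheaves $\cO^{\oplus A}$, all of which commute with the equivalence of topoi because they are defined objectwise and preserved by pullback.

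The main obstacle I expect is \emph{bookkeeping the 2-categorical / set-indexed coherence} rather than any deep point: one must check carefully that the left Kan extension is already a sheaf (not merely a separated presheaf requiring further sheafification), and that the comparison is compatible with restriction to comma sites $(\cC/\xi_i)$ so that ``local presentation'' genuinely transports in both directions. In particular the subtle step is verifying that $\cD$ being closed under fibered products is exactly what is needed for $(\cD/\eta)$ to remain dense in $(\cC/\eta)$ and closed under fibered products, so that the argument can be applied locally and the free presentations assembled; this is where the two hypotheses on $\cD$ (full, closed under fibered products, and density of covers) are all used simultaneously. Everything else is a routine application of descent and of the comparison lemma for sites.
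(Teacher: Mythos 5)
Your proposal is correct and follows essentially the same route as the paper: the paper's entire proof is a citation of the comparison lemma for sites (\cite[Tag~039Z]{stacks-project}), whose hypotheses (continuity, cocontinuity, full faithfulness, and density of covers by objects of $\cD$) are exactly the ones you verify, followed by the remark that the statement on sheaves with local presentations ``follows easily''. You simply unwind the proof of that comparison lemma (restriction, with quasi-inverse given by the sheafified Kan extension along the inclusion) rather than citing it, and make the transport of local presentations explicit.
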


\begin{proof}
From \cite[Tag~039Z]{stacks-project}) we see that the embedding $\cD \subseteq \cC$ induces an equivalence of topoi. The second result follows easily.
\end{proof}

We will consider $\cX$ as a site with the fpqc topology (Remark \ref{rmk:topology}). This site can be enlarged in two ways. 

One is the category $\cX\sch$ whose objects are morphisms $U \arr \cX$, where $U$ is a scheme. The arrows from $U  \arr \cX$ to $V \arr \cX$ are isomorphism classes of $2$-commutative diagrams
   \[
   \begin{tikzcd}[column sep=small, row sep = small]
   {}U \ar{rr}\ar{rd}&&V\ar{dl}\\
   {}&\cX
   \end{tikzcd}
   \]

There is an obvious fpqc topology on $\cX\sch$ (a set of arrows $\{(U_{i} \to \cX) \to (U \to \cX)\}$ is an fpqc covering if the corresponding set of morphisms $\{U_{i} \arr U\}$ is an fpqc covering).

The $2$-categorical version of Yoneda's lemma gives a fully faithful functor $\cX \arr \cX\sch$. 

The other is the category $\cX\rep$, each of whose objects consist of a category fibered in groupoids $\cA$ and a representable base-preserving functor $\cA \arr \cX$. An arrow from $\cA \arr \cX$ to $\cB \arr \cX$ consists of an isomorphism class of $2$-commutative diagrams
   \[
   \begin{tikzcd}[column sep=small, row sep = small]
   {}\cA \ar{rr}\ar{rd}&&\cB\ar{dl}\\
   {}&\cX
   \end{tikzcd}
   \]

Let us define the fpqc topology on the category$\cX\rep$. Given a set $\{\cA_{i} \arr \cX\}_{i\in I}$ of representable base-preserving functors, we denote by $\bigsqcup_{i \in I}\cA \arr \cX$ the object of $\cX$ whose fibers over an affine scheme $T$ are the disjoint union of the fibers of the $\cA_{i} \arr \cX$. More precisely, an object $(\{T_{i}, \xi_{i}\})$ of $\bigsqcup_{i \in I}\cA_{i}$ over an affine scheme $T$ is a set of open and closed subschemes $T_{i} \subseteq T$ such that $T= \bigsqcup_{i} T_{i}$, and $\xi_{i} \in \cA_{i}(T_{i})$. The arrows are defined in the obvious way.

Then an fpqc covering $\{(\cA_{i} \arr \cX) \arr (\cA \arr \cX)\}_{i \in I}$ is a set of maps such that the induced morphism $\bigsqcup_{i \in I}\cA_{i} \arr \cA$ is fpqc (it is automatically representable).

The category $\cX\rep$ comes equipped with a natural structure sheaf $\cO_{\cX\rep}$, which restricts to the structure sheaf $\cO_{\cX}$ on $\cX$. This associates with every representable map $\cA \arr \cX$ the ring $\cO(\cA)$.

The category $\cX\sch$ is a full subcategory of $\cX\rep$.

\begin{proposition}\call{prop:equivalence-qc}
Let $\cX$ be a category fibered in groupoids on $\aff$ with an fpqc atlas $U \arr \cX$; set $R \eqdef U \times_{X} U$. Then we have equivalences of additive tensor categories among the following.

\begin{enumeratea}

\itemref{5} The category $\catqcoh{(R \double U)}$ of \qc sheaves with descent data on the groupoid $R \double U$.

\itemref{1} The category $\catqcoh{\cX}$ of \qc sheaves on $\cX$.

\itemref{2} The category of sheaves with local presentations on the site $\cX$ with the fpqc topology.

\itemref{3} The category of sheaves with local presentations on the site $\cX\sch$ with the fpqc topology.

\itemref{4} The category of sheaves with local presentations on the site $\cX\rep$ with the fpqc topology.

\end{enumeratea}
\end{proposition}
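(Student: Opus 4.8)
The strategy is to establish all five equivalences by reducing to a single ``hub'', namely \refpart{prop:equivalence-qc}{5}, the category $\catqcoh{(R \double U)}$. Proposition~\ref{prop:qc<->qc-on-groupoid} already gives the equivalence between \refpart{prop:equivalence-qc}{5} and \refpart{prop:equivalence-qc}{1}, so that edge is free. The remaining work is to connect the groupoid description (or equivalently $\catqcoh{\cX}$) to the three ``sheaves with local presentations'' categories \refpart{prop:equivalence-qc}{2}, \refpart{prop:equivalence-qc}{3}, and \refpart{prop:equivalence-qc}{4}. The natural route is to first relate the three sites $\cX$, $\cX\sch$, $\cX\rep$ to each other via Lemma~\ref{lem:subcategory-equivalence-qc}, and then separately identify sheaves with local presentations on the largest of these with the groupoid description.

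First I would handle the comparison among the three sites. Since $\cX$ is a full subcategory of $\cX\sch$, which is in turn a full subcategory of $\cX\rep$, and all three are closed under fibered products (using \refpart{prop:properties-irs}{6}-type facts, or more simply that fibered products of representable maps over $\cX$ are again representable, and fibered products of schemes over $\cX$ are schemes when the diagonal is nice), the hypotheses of Lemma~\ref{lem:subcategory-equivalence-qc} are met: every object of $\cX\rep$ admits an fpqc covering by objects of $\cX\sch$ (pull back an fpqc atlas), and every object of $\cX\sch$ admits one by objects of $\cX$ (via the atlas $U \arr \cX$, whose pullbacks give scheme covers that are themselves representable over $\cX$, hence lie in $\cX$ after identifying $\cX$ with its image under Yoneda). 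Applying Lemma~\ref{lem:subcategory-equivalence-qc} twice then yields equivalences of the categories of sheaves with local presentations among \refpart{prop:equivalence-qc}{2}, \refpart{prop:equivalence-qc}{3}, and \refpart{prop:equivalence-qc}{4}, and equivalences of the underlying topoi. The one point requiring care is verifying that the structure sheaves match under these embeddings, i.e.\ that $\cO_{\cX\rep}$ restricts to $\cO_{\cX\sch}$ and to $\cO_{\cX}$, which is true essentially by definition since all three send an object to the global sections of its structure sheaf.

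Next I would identify sheaves with local presentations on $\cX$ (item \refpart{prop:equivalence-qc}{2}) with $\catqcoh{\cX}$ (item \refpart{prop:equivalence-qc}{1}). A sheaf $F$ with local presentations is by definition covered by objects on which $F$ is a cokernel of free $\cO$-modules; pulling back along the atlas and invoking Proposition~\ref{prop:qc<->qc-on-groupoid} together with Remark~\ref{rmk:qc->sheaf} (a \qc sheaf is already an fpqc sheaf) shows that a \qc sheaf in the cartesian-functor sense is, locally on the atlas, such a cokernel, and conversely that a sheaf with local presentations satisfies the base-change/cartesian condition. In effect, on an affine scheme both notions of \qc sheaf agree with the classical one, and the local-presentation condition is exactly what upgrades this to the fibered/site setting. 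This is the standard dictionary between the two definitions of \qc sheaf recalled at the start of \ref{sec:qc}, and it is compatible with tensor products, giving an equivalence of tensor categories.

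\textbf{Main obstacle.} The step I expect to demand the most attention is verifying the hypotheses of Lemma~\ref{lem:subcategory-equivalence-qc} cleanly for the inclusions $\cX \subseteq \cX\sch \subseteq \cX\rep$, specifically closure under fibered products and the existence of the required coverings. The subtlety is that $\cX$, viewed inside $\cX\rep$ via Yoneda, consists of the representable objects that happen to be (affine-)scheme-valued points of $\cX$, and one must check that pulling back the fpqc atlas $U \arr \cX$ genuinely produces coverings landing in the smaller category, and that these are fpqc in the site-theoretic sense of Remark~\ref{rmk:topology}. Once the coverings and product-closure are in place, the rest is bookkeeping: Lemma~\ref{lem:subcategory-equivalence-qc} does the heavy lifting for \refpart{prop:equivalence-qc}{2}--\refpart{prop:equivalence-qc}{4}, and Proposition~\ref{prop:qc<->qc-on-groupoid} closes the loop to \refpart{prop:equivalence-qc}{5} and \refpart{prop:equivalence-qc}{1}. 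I would also remark that all equivalences constructed respect the monoidal structure, since each is induced by pullback or restriction functors that commute with tensor product.
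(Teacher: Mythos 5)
Your overall decomposition coincides with the paper's: \refpart{prop:equivalence-qc}{5}$\leftrightarrow$\refpart{prop:equivalence-qc}{1} is quoted from Proposition~\ref{prop:qc<->qc-on-groupoid}, the chain \refpart{prop:equivalence-qc}{2}$\leftrightarrow$\refpart{prop:equivalence-qc}{3}$\leftrightarrow$\refpart{prop:equivalence-qc}{4} comes from applying Lemma~\ref{lem:subcategory-equivalence-qc} to the inclusions $\cX \subseteq \cX\sch \subseteq \cX\rep$, and the remaining edge is \refpart{prop:equivalence-qc}{1}$\leftrightarrow$\refpart{prop:equivalence-qc}{2}. But your treatment of that last edge has a genuine gap, and it is precisely where the paper invests all of its effort. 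You assert that ``a sheaf with local presentations satisfies the base-change/cartesian condition'' as part of a ``standard dictionary,'' with no argument. This is not a formality: the cartesian condition requires $\cO(V)\otimes_{\cO(T)}F(\xi) \arr F(\eta)$ to be an isomorphism for \emph{every} arrow $\eta \arr \xi$ of $\cX$, i.e. for arbitrary, typically non-flat, morphisms $V \arr T$ of affine schemes, whereas the hypotheses on $F$ (fpqc sheaf plus local presentations) only control its behavior along coverings, which are flat, and only after passing to a cover on which a presentation exists. Since tensor product is right exact but not left exact, one cannot commute $\cO(V)\otimes_{\cO(T)}(-)$ past the equalizers expressing the sheaf condition, and since the presentation is only local, one cannot compute $F(\eta)$ directly from a cokernel description over $T$.

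The paper closes exactly this gap with a four-step bootstrap: first, if $F\rest{(\cX/T)}$ has a global presentation by free sheaves, the base-change map is an isomorphism by right exactness of tensor (plus the fact that fpqc coverings of affines admit finite subcoverings, to compute sections of free sheaves); second, the same holds when $T$ is a finite disjoint union of opens each carrying a presentation; third, the case of flat $V \arr T$ is handled by choosing a cover $T' \arr T$ by such disjoint unions and comparing the two equalizer rows, flatness of $\cO(V)$ over $\cO(T)$ being what preserves the equalizer; fourth, the general case is reduced to the previous two by tensoring with the faithfully flat $\cO(T')$. Without this (or an equivalent argument) your proof is incomplete. Relatedly, your claim that ``on an affine scheme both notions of \qc sheaf agree with the classical one'' is, for the local-presentations notion on the fpqc site, essentially the special case $\cX = \cataff{T}$ of the statement being proved, so invoking it is circular. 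Finally, your identification of the main obstacle is misplaced: checking the hypotheses of Lemma~\ref{lem:subcategory-equivalence-qc} for $\cX \subseteq \cX\sch \subseteq \cX\rep$ is the routine part (the paper dispatches it in two sentences, since every scheme has an fpqc cover by affines), while the real work is the direction you skipped.
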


We thank the referee for suggesting a simplification in the last step of the proof.

\begin{proof}
We have already discussed the equivalence between \refpart{prop:equivalence-qc}{5} and \refpart{prop:equivalence-qc}{1}.

Consider the fully faithful functor $\cX \arr \cX\sch$; this preserves fibered products. Since every scheme has an fpqc cover by affine schemes, from Lemma~\ref{lem:subcategory-equivalence-qc} we obtain an equivalence between \refpart{prop:equivalence-qc}{2} and \refpart{prop:equivalence-qc}{3}. An analogous argument with the fully faithful functor $\cX\sch \arr \cX\rep$ give the equivalence of \refpart{prop:equivalence-qc}{3} and \refpart{prop:equivalence-qc}{4}.

Let us produce an equivalence between \refpart{prop:equivalence-qc}{1} and \refpart{prop:equivalence-qc}{2}; more precisely, let us show that a presheaf of $\cO_{\cX}$-modules $F\colon \cX \op \arr \catab$ is \qc if and only if it is an fpqc sheaf with local presentations. The fact that $F$ is an fpqc sheaf follows easily from descent theory (see Remark~\ref{rmk:qc->sheaf}).

Let us show that it has local presentations.

Suppose that $\xi$ is an object of $\cX$, and set $T \eqdef \p_{\cX}\xi$. Choose a presentation $\cO(T)^{\oplus B} \arr \cO(T)^{\oplus A} \arr F(\xi) \arr 0$ of $F(\xi)$ as an $\cO(T)$-module. This extends uniquely to a sequence
   \[
   \cO_{(\cX/\xi)}^{\oplus B} \arr \cO_{(\cX/\xi)}^{\oplus A} \arr F\rest{(\cX/\xi)} \arr 0\,.
   \]
Since $F$ is \qc and tensor product is right exact, it follows that the sequence of sheaves above is exact, which gives a local presentation for $F$.

Let us remark that this gives an embedding of $\catqcoh \cX$ into the category of sheaves of $\cO_{\cX}$-modules in the fpqc topology. We claim that this embedding is right exact, that is, it preserves cokernels. This is clear when $\cX$ is a scheme, and is easily reduced to this case using the atlas $U \arr \cX$.

Conversely, let $F\colon \cX\op \arr \catab$ be an fpqc sheaf of $\cO_{\cX}$-modules with local presentations. Suppose that $\eta \arr \xi$ is an arrow in $\cX$; denote by $V \arr T$ its image in $\aff$. We need to show that the corresponding homomorphism
   \begin{equation}\label{eq:base-change}
   \cO(V)\otimes_{\cO(T)}F(\xi) \arr F(\eta) 
   \end{equation}
is an isomorphism.  This will be done in two steps.

\step 1 It is straightforward to check that (\ref{eq:base-change}) holds when $F = \cO_{\cX}^{\oplus A}$ is a free sheaf (not necessarily of finite type): by definition every fpqc covering of an affine scheme has a finite subcovering, and so we have $F(\xi) = \cO(\p_{\cX}\xi)^{\oplus A}$ for every object $\xi$ of $\cX$. Since the embedding of $\catqcoh \cX$ into the category of sheaves of $\cO_{\cX}$-modules in the fpqc topology preserves cokernels, our conclusion is also true for a cokernel of a homomorphism of sheaves $\cO_{\cX}^{\oplus B} \arr \cO_{\cX}^{\oplus A}$. We can conclude that if the restriction of $F$ to the comma category $(\cX/\xi)$ has a finite presentation, (\ref{eq:base-change}) is an isomorphism.

\step 2  Now notice that the presheaf $\eta\mapsto \cO(V)\otimes_{\cO(T)}F(\xi)$ on the site $(\cX/\xi)$ is quasi-coherent (this is an easy check). Therefore, by Remark \ref{rmk:qc->sheaf}, it is a sheaf. As $\eta\mapsto F(\eta)$ is also a sheaf on the same site, to verify that the map $\cO(V)\otimes_{\cO(T)}F(\xi) \arr F(\eta)$ is an isomorphism is a local problem on $\xi$, and hence we can replace $\xi$ with a covering where $F$ has a presentation. This reduces the general case to the previous step.
\end{proof}

\begin{definition}
Let $\cX$ be a category fibered in groupoids on $\aff$. A sheaf $F$ of $\cO_{\cX}$-modules is called \emph{\fp} if it is \qc, and for each object $\xi$ of $\cX(T)$, the corresponding $\cO(T)$-module $F(\xi)$ is \fp.
\end{definition}

We will denote by $\FP \cX$ the full subcategory of $\catqcoh \cX$ consisting of \fp sheaves. We have the following obvious variant of Proposition~\ref{prop:equivalence-qc}.

\begin{proposition}\call{prop:equivalence-fp}
Let $\cX$ be a category fibered in groupoids on $\aff$ with an fpqc atlas $U \arr \cX$. Then we have equivalences of additive categories among the following.

\begin{enumeratea}

\itemref{1} The category $\FP{\cX}$ of \fp sheaves on $\cX$.

\itemref{2} The category of \fp sheaves on the site $\cX$ with the fpqc topology.

\itemref{3} The category of \fp sheaves on the site $\cX\sch$ with the fpqc topology.

\itemref{4} The category of \fp sheaves on the site $\cX\rep$ with the fpqc topology.

\end{enumeratea}
\end{proposition}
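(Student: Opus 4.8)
The plan is to deduce the four equivalences by restricting the corresponding equivalences of \qc sheaves furnished by Proposition~\ref{prop:equivalence-qc} to the full subcategories of \fp objects, and then to check that each of these restrictions lands in the \fp subcategory on the other side. Recall that, by the definition of an \fp sheaf, an $\cO$-module is \fp precisely when it is \qc and its value $F(\xi)$ at every object $\xi \in \cX(T)$ with $T$ affine is a \fp $\cO(T)$-module. Consequently $\FP\cX$, and likewise each of the categories in \refpart{prop:equivalence-fp}{2}--\refpart{prop:equivalence-fp}{4}, is a full subcategory of the category of \qc sheaves appearing in the matching part of Proposition~\ref{prop:equivalence-qc}, and the question is purely one of stability of the finiteness condition under the known equivalences.

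First I would observe that every equivalence in Proposition~\ref{prop:equivalence-qc} is compatible with evaluation on affine objects, up to canonical isomorphism. For the equivalence between \refpart{prop:equivalence-qc}{1} and \refpart{prop:equivalence-qc}{2}, this is built into its construction: a cartesian functor $F$ is sent to the sheaf with local presentations whose value at $\xi$ is literally $F(\xi)$, so the two notions of ``value at an affine object'' coincide. For the equivalences relating \refpart{prop:equivalence-qc}{2}, \refpart{prop:equivalence-qc}{3} and \refpart{prop:equivalence-qc}{4}, which arise from the site inclusions $\cX \subseteq \cX\sch \subseteq \cX\rep$ through Lemma~\ref{lem:subcategory-equivalence-qc}, the same holds tautologically: the affine objects of $\cX$ sit inside all three sites, restriction along these inclusions preserves the value of a sheaf at such an object, and the quasi-inverse extension agrees with the given sheaf on $\cX$.

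Since being \fp is, by definition, a condition imposed only on the values $F(\xi)$ at affine objects $\xi$, and since all such values are preserved by the equivalences above, an object on one side is \fp if and only if its image is \fp. Therefore each equivalence of Proposition~\ref{prop:equivalence-qc} restricts to a fully faithful, essentially surjective, additive functor between the corresponding \fp subcategories, and these restrictions are manifestly compatible with composition, which yields the asserted chain of equivalences.

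The only point deserving more than a one-line check, and hence the main (mild) obstacle, is to make sure that no additional finiteness has to be verified on the \emph{non-affine} objects of the enlarged sites $\cX\sch$ and $\cX\rep$. I would dispose of this by noting that ``\fp'' has been defined purely in terms of affine objects, which are common to all three sites, so nothing further is needed. Should one instead wish the local presentations on the larger sites to be \emph{finite}, one invokes the fpqc-local character of finite presentation for modules (finite presentation is preserved by, and descends along, faithfully flat base change) together with the atlas $U \arr \cX$, to conclude that \fp values on affines already force finite local presentations everywhere. With this remark the argument is complete.
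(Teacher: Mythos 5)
Your proposal is correct and matches the paper's intent exactly: the paper gives no separate argument, simply declaring the result an ``obvious variant'' of Proposition~\ref{prop:equivalence-qc}, and what it leaves implicit is precisely what you spell out -- that the \qc equivalences preserve values at affine objects, that the \fp condition is imposed only on those values, and that (for the finite-local-presentation reading on the enlarged sites) fpqc descent of finite presentation along the atlas closes the gap. Nothing further is needed.
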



\subsection{Quasi-coherent sheaves on an infinite root stack}%
\label{sec:qc-on-irs}

In this short subsection we specialize the general theory to infinite root stacks, and collect some results that will be useful later.


We are going to need the following.

\begin{proposition}\call{prop:coherent-pushforward}
Let $\cX$ be a stack on $\aff$ with an fpqc atlas, and $\pi\colon \cR \arr \cX$ an \irs. Let $F$ be a \qc sheaf on $\cX$ and $G$ a \qc sheaf on $\cR$.
\begin{enumeratea}

\itemref{1} The pushforward $\pi_{*}G$ is a \qc sheaf on $\cX$.

\itemref{5} The pushforward $\pi_{*}\colon \catqcoh \cR \arr \catqcoh \cX$ is exact.

\itemref{2} If $G$ is \fp, then $\pi_{*}G$ is \fp.

\itemref{3} The canonical homomorphism $\cO_{\cX} \arr \pi_{*}\cO_{\cR}$ is an isomorphism.

\itemref{4} (Projection formula for infinite root stacks) The natural morphism $F \otimes_{\cO_{\cX}}\pi_{*}G \arr \pi_{*}(\pi^{*}F \otimes_{\cO_{\cR}}G)$ is an isomorphism.

\itemref{6} The unit homomorphism $F \arr \pi_{*}\pi^{*}F$ is an isomorphism.

\end{enumeratea}
\end{proposition}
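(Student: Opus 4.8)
The plan is to reduce all six assertions to an explicit computation with graded modules in the local model of Corollary~\ref{cor:local-model-chart}. Every one of the properties at stake---being \qc, being \fp, exactness, and being an isomorphism---can be checked after base change along an arbitrary $\xi\colon T \arr \cX$ with $T$ affine, since these are local on $\cX$ for the fpqc topology. Moreover the abstract pushforward commutes with such base change: for $T' \arr T \arr \cX$ one has $\cR_{T'} = T'\times_{T}\cR_{T}$, so directly from the definition $\xi^{*}(\pi_{*}G) = (\pi_{T})_{*}(G\rest{\cR_{T}})$. Shrinking $T$ in the \'etale topology and applying Corollary~\ref{cor:local-model-chart}, I may therefore assume $\cX = \spec A$ and $\cR = [\spec B/\mmu_{\infty}(P)]$, where $P$ is a sharp \fs monoid, $B = A\otimes_{\ZZ[P]}\ZZ[P_{\QQ}]$, and $\mmu_{\infty}(P) = \spec\ZZ[\Gamma]$ is the diagonalizable group scheme attached to $\Gamma \eqdef P\grq/P\gr$. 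By Proposition~\ref{prop:qc-group-actions}, $\catqcoh\cR$ is then the category of $\Gamma$-graded $B$-modules, and under this identification $\pi_{*}$ is the degree-zero (equivalently, $\mmu_{\infty}(P)$-invariant) part $M \mapsto M_{0}$, which is an $A$-module because $A = B_{0}$.

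Several of the assertions become immediate. The decomposition $M = \bigoplus_{\gamma\in\Gamma}M_{\gamma}$ into $A$-modules shows at once that $M \mapsto M_{0}$ is exact, giving~(5), and that it commutes with base change $A \arr A'$, since $(M\otimes_{A}A')_{\gamma} = M_{\gamma}\otimes_{A}A'$; this last fact is exactly the statement that $\pi_{*}G$ is \qc, proving~(1). Assertion~(3) is the identity $B_{0} = A$, which is Lemma~\ref{lem:right-quotients}. For the projection formula~(4), writing $F \leftrightarrow N$ one has $\pi^{*}F\otimes_{\cO_{\cR}}G \leftrightarrow N\otimes_{A}M$ with grading $(N\otimes_{A}M)_{\gamma} = N\otimes_{A}M_{\gamma}$, so its degree-zero part is $N\otimes_{A}M_{0} = F\otimes_{\cO_{\cX}}\pi_{*}G$, and the canonical map is precisely this identification. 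Finally~(6) is the special case $G = \cO_{\cR}$ of~(4) combined with~(3).

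The one part carrying real content is~(2), and this is where I expect the difficulty to lie. First I would choose a \emph{homogeneous} finite presentation $\bigoplus_{j}B(-e_{j}) \arr \bigoplus_{i}B(-f_{i}) \arr M \arr 0$ of the graded module $M$, which exists because $M$ is finitely presented and graded. Applying the exact functor $(-)_{0}$ exhibits $M_{0}$ as the cokernel of a map between \emph{finite} direct sums of graded pieces $B_{\gamma}$, so it suffices to prove that each $B_{\gamma} = A\otimes_{\ZZ[P]}\ZZ[P_{\QQ}]_{\gamma}$ is a finitely presented $A$-module; by base change this reduces to showing that $\ZZ[P_{\QQ}]_{\gamma}$ is finitely generated over $\ZZ[P]$, a noetherian ring. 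Here the essential geometric input is saturation: if $\gamma$ has order $d$ in $\Gamma$, then any $q\in P_{\QQ}$ of class $\gamma$ satisfies $dq\in P\gr$ and has a positive multiple lying in $P$, whence $dq\in P$; thus the support of $\ZZ[P_{\QQ}]_{\gamma}$ lies in $\tfrac{1}{d}P$, so that $\ZZ[P_{\QQ}]_{\gamma}$ is a $\ZZ[P]$-submodule of the module-finite extension $\ZZ[\tfrac{1}{d}P]$. Finite generation over the noetherian ring $\ZZ[P]$ follows, and hence so does~(2).
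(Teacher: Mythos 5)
Your proof is correct, and its overall architecture is the same as the paper's: both reduce by fpqc localization to the local model $[\spec B/\mmu_{\infty}(P)]$ of Corollary~\ref{cor:local-model-chart}, identify quasi-coherent sheaves with graded (equivalently, $\mmu_{\infty}(P)$-equivariant) modules via Proposition~\ref{prop:qc-group-actions}, dispose of (1), (3), (4), (5), (6) at once by the eigenspace decomposition under the diagonalizable group (your derivation of (6) from (4) and (3) is a cosmetic variant; the paper reads it off the decomposition directly), and isolate (2) as the only substantive point, reducing it via a homogeneous finite presentation to the finite generation of each graded piece $\ZZ[P_{\QQ}]_{\gamma}$ over the noetherian ring $\ZZ[P]$. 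The one genuine divergence is how that last finiteness is proved. The paper argues with an explicit fundamental domain: if $v_{1},\dots,v_{m}$ generate $P$ and $S$ is the set of combinations $\sum_{i} a_{i}v_{i}$ with $a_{i}\in\QQ$, $0\leq a_{i}<1$, then $\ZZ[P_{\QQ}]_{\gamma}$ is generated over $\ZZ[P]$ by $S\cap P_{\gamma}$, which is finite because the bounded set $S$ meets the coset $P_{\gamma}$ of the lattice $P\gr$ in finitely many points; note that this argument nowhere uses saturation. You instead make saturation do the work: if $d$ is the order of $\gamma$ in $P\grq/P\gr$, any $q\in P_{\QQ}$ of class $\gamma$ has $dq\in P\gr$ together with a positive multiple in $P$, so $dq\in P$ by saturation, whence $\ZZ[P_{\QQ}]_{\gamma}\subseteq\ZZ[\tfrac{1}{d}P]$, a module-finite extension of $\ZZ[P]$, and noetherianity concludes. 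Your variant is shorter and makes transparent exactly where the saturation hypothesis enters; the paper's is more explicit (it produces an actual finite generating set) and would survive for fine monoids that are not saturated.
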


\begin{proof}
All the statements are local in the fpqc topology, so we may assume that $\cX =X = \spec R$, where $R$ is a ring, and $\cR = X \times_{X_{P}} \cR_{P}$ for some \fs monoid $P$ and some map $X \arr X_{P}$. Set $A \eqdef R\otimes_{\ZZ[P]}\ZZ[P_{\QQ}]$, $G \eqdef \mmu_{\infty}(P)$, so that $\cR = [\spec A/G]$ by Corollary~\ref{cor:local-model-chart}. By Lemma~\ref{lem:right-quotients} we have $R = A^{G}$. The \qc sheaf $F$ corresponds to an $R$-module $M$, while by Proposition~\ref{prop:qc-group-actions} the \qc sheaf $G$ corresponds to a $G$-equivariant $A$-module $N$. The pushforward $\pi_{*}$ corresponds to the $R$-module $N^{G}$. Thus, the proposition can be translated into algebraic terms (we omit the translation of \refpart{prop:coherent-pushforward}{3}, which becomes tautological).

\begin{enumeratea}

\item[\refpart{prop:coherent-pushforward}{1}] If $R \arr S$ is a homomorphism of rings, the natural homomorphism $S\otimes_{R}N^{G} \arr (S\otimes_{R}N)^{G}$ is an isomorphism.

\item[\refpart{prop:coherent-pushforward}{5}] The functor $N \arrto N^{G}$ is exact.

\item[\refpart{prop:coherent-pushforward}{2}] If $N$ is \fp on $A$, then $N^{G}$ is \fp over $R$.

\item[\refpart{prop:coherent-pushforward}{4}] The natural homomorphism $M\otimes_{R}(N^{G}) \arr (M\otimes_{R}N)^{G}$ is an isomorphism.

\item[\refpart{prop:coherent-pushforward}{6}] The natural homomorphism $M \arr (M \otimes_{R}A)^{G}$ is an isomorphism.

\end{enumeratea}

Since $G$ is diagonalizable, every abelian group with an action of $G$ decomposes as a sum of eigenspaces; this is easily seen to imply \refpart{prop:coherent-pushforward}{1}, \refpart{prop:coherent-pushforward}{5}, \refpart{prop:coherent-pushforward}{4} and \refpart{prop:coherent-pushforward}{6}. We have only left to prove \refpart{prop:coherent-pushforward}{2}.

The character group $\widehat{G}$ equals $P\gr_{\QQ}/P\gr$, and we have a decomposition $A = \oplus_{\lambda \in P\gr_{\QQ}/P\gr}A_{\lambda}$. For each $\lambda \in P\gr_{\QQ}/P\gr$ let $A(\lambda)$ be the $A$-module $A$ with the $G$-action defined by the grading $A(\lambda)_{\mu} = A_{\lambda+\mu}$.  

Consider the decomposition $N = \oplus_{\lambda \in P\gr_{\QQ}/P\gr}N_{\lambda}$; the $A$ module $N$ is generated as an $A$-module by a finite number of homogeneous elements of degrees, say, $\lambda_{1}$, \dots~$\lambda_{r}$, so $N$ is a quotient of $\bigoplus_{i = 1}^{r}A(-\lambda_{i})$. The kernel of the surjection $\bigoplus_{i = 1}^{r}A(-\lambda_{i}) \arr N$ is also finitely generated, so we have a presentation
   \[
   \bigoplus_{j=1}^{s}A(-\mu_{j}) \arr \bigoplus_{i = 1}^{r}A(-\lambda_{i}) \arr N \arr 0\,.
   \]
By taking invariants we obtain a presentation
   \[
   \bigoplus_{j=1}^{s}A_{-\mu_{j}} \arr \bigoplus_{i = 1}^{r}A_{-\lambda_{i}}
   \arr N^{G} \arr 0\,.
   \]
Hence it is enough to show that each $A_{\lambda}$ is \fp as an $R$-module. Since $A_{\lambda} = R\otimes_{\ZZ[P]}\ZZ[P_{\QQ}]_{\lambda}$ it is enough to show that $\ZZ[P_{\QQ}]_{\lambda}$ is \fp as a $\ZZ[P]$-module. Since $\ZZ[P]$ is noetherian, it is enough to prove that $\ZZ[P_{\QQ}]_{\lambda}$ is finitely generated. {Moreover, we can identify $\ZZ[P_{\QQ}]_{\lambda}$ with the free abelian group generated by} $P_{\lambda} \eqdef \{p \in P_{\QQ} \mid [p] = \lambda \in P\gr_{\QQ}/P\gr\}$. If $v_{1}$, \dots~$v_{m}$ is a set of generators of $P$ giving a basis of $P\gr$, and
   \[
   S \eqdef
   \{a_{1}v_{1} + \dots + a_{m}v_{m} \mid a_{i}\in \QQ,\ 0 \leq a_{i} < 1\text{ for all }i\}\,,
   \]
it is clear that $\ZZ[P_{\QQ}]_{\lambda}$ is generated by $\{t^a\mid a\in S \cap P_{\lambda}\}$, as a $\ZZ[P]$-module: given a monomial $t^a$ with $a\in P_\lambda$, write $a=a_{1}v_{1} + \dots + a_{m}v_{m}$ with $a_i\geq 0$ for all $i$, and note that for every index we can subtract off the vector $v_i$ until the corresponding coefficient is non-negative and strictly less than $1$. We end up with an element of the form $t^p\cdot t^{a'}$, with $p\in P$ and $a'\in S\cap P_\lambda$, and this implies the claim made above. Now $S$ is bounded in $P\gr_{\QQ}$ and $P_\lambda$ is discrete, so $S \cap P_{\lambda}$ is finite. This completes the proof of Proposition~\ref{prop:coherent-pushforward}.
\end{proof}

%
%
%
%
%
%
%
%

Unfortunately, \fp sheaves on an \irs $\infroot{(X, A, L)}$ are not as well-behaved as one would wish. Specifically, we do not believe that they form an abelian category; the reason for this is that the ringed site $\infroot{(X, A, L)}$ is not coherent, in general, even over noetherian schemes. This issue is discussed in \cite{niziol-k-theory} in the context of \qc sheaves in the Kummer-flat site.

Recall that a sheaf of commutative rings $\cO$ on a site $\cC$ is \emph{coherent} when for any object $T$ of $\cC$, every finitely generated sheaf of ideals on the restriction of $\cO$ to $(\cC/T)$ is \fp. This condition ensures that all \fp sheaves of $\cO$-modules have the corresponding property, and that they form an abelian category.

\begin{example}\label{ex:non-coherent}
Let $P$ be the submonoid of $\QQ^{3}$ generated by $e_{1}\eqdef (1,0,0)$, $e_{2} \eqdef (0,1,0)$, $e_{3} \eqdef (0,0,1)$ and $e_{4}\eqdef e_{2} + e_{3} - e_{1}$; then $P$ is sharp, fine and saturated. The associated rational cone $P_{\QQ}\subseteq \QQ^{3}$ is given by the inequalities
\[
P_{\QQ}=\{(a_{1},a_{2},a_{3}) \in \QQ^{3}\mid a_{1}\geq 0, \; a_{2}\geq 0,\; a_{1}+a_{3}\geq 0,\; a_{2}+a_{3}\geq 0\}.
\]

Let $k$ be a field, and set $X = \spec k[P]$. Let $\cR$ be the \irs of the usual logarithmic structure on $X$; by Proposition~\ref{prop:local-model} we have that $\cR = [\spec k[{P_{\QQ}}]/\mmu_{\infty}(P)]$. We will show that $\cO_\cR$ is not coherent.

Set $R \eqdef k[P_{\QQ}]$, and let $x_{i}=x^{e_{i}} \in R$ be the element corresponding to $e_{i}$. The sheaves of ideals of $\cO_{\cR}$ correspond to ideals in $R$ that are homogeneous with respect to the natural $P_{\QQ}\gr/P\gr$-grading. Let $I = (x_{1}, x_{3}) \subseteq R$; we will show that $I$ is not \fp, by showing that the kernel
\[
K=\{(f_{1},f_{3}) \in R^{2}\mid x_{1}f_{1}+x_{3}f_{3}=0 \}
\]
of the presentation of $I$ is not finitely generated.

To check this, we will show that its image $J\subseteq R$ along the first projection $R^{2}\to R$ is not finitely generated. Since $J$ is a homogeneous ideal, it corresponds to an ideal $A\subseteq P_{\QQ}$, the set of degrees of non-zero elements in $J$.

We claim we can describe $A$ as the set of $a\in P_\QQ$ such that $a - e_{3}+e_{1} \in P_{\QQ}$, or equivalently as
\[
A=\{ (a_{1},a_{2},a_{3}) \in \QQ^{3}\mid a_{1}\geq 0, \; a_{2}\geq 0, \; a_{1}+a_{3}\geq 0,\; a_{2}+a_{3}\geq 1  \}.
\]
In fact, if $a \in A$ then there exist $f_{1}, f_{3} \in R$ such that $x_{1}f_{1}+x_{3}f_{3}=0$, with $f_{1}$ of degree $a$. Note that necessarily $f_{3}\neq 0$, and call $b$ the degree of $f_{3}$. Then we conclude that $a+e_{1}=b+e_{3}$, and consequently $a - e_{3}+e_{1}$ is in $P_{\QQ}$.
Conversely if $a - e_{3}+e_{1} \in P_{\QQ}$ and $a \in P_{\QQ}$, we have that $x_{1}x^{a}-x_{3}x^{a-e_{3}+e_{1}}=0$ (where as usual $x^{p}$ denotes the element of $k[P_{\QQ}]$ corresponding to $p \in P_{\QQ}$), so $a \in A$.

Now consider
\begin{align*}
A_{0}   & =   \{ a=(a_{1},a_{2},a_{3}) \in A\mid a_{1}=0,\; a_{2}+a_{3}=1 \}\\
 & = \{ (0,a_{2},a_{3})\in \QQ^{3}\mid a_{2}\geq 0, \; a_{3}\geq 0, a_{2}+a_{3}=1\}.
\end{align*}
It is easy to check that $a+b \in A_{0}$ implies $a=0$ for $a \in P_{\QQ}$ and $b \in A$, and this says that any set of generators of $A$ as an ideal of $P_{\QQ}$ must contain all elements of
$A_{0}$, and thus must be infinite. In conclusion the ideal $J$ is not finitely generated.

\end{example}

This kind of unpleasantness does not happen when the logarithmic structure is simplicial, in the following sense.

\begin{definition}
A monoid $P$ is \emph{simplicial} if it is fine, saturated and sharp, and $P_{\QQ}$ is generated by linearly independent vectors in $P\gr_{\QQ}$.

A \df structure $(A,L)$ on a scheme is simplicial if all the geometric stalks of $A$ are simplicial.
\end{definition}

If $(A,L)$ is simplicial, then étale-locally on the scheme $X$ there exists a Kato chart $P \arr \cO(X)$, where $P$ is a simplicial monoid.

\begin{proposition}
If $(A, L)$ is a simplicial \df structure on a locally noetherian scheme $X$, the structure sheaf on the \irs $\infroot{(A,L)}$ is coherent.
\end{proposition}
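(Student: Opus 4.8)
The plan is to reduce to an explicit affine local model and then present the relevant coordinate ring as a filtered colimit of Noetherian rings along flat transition homomorphisms, which forces coherence.

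First I would reduce to a ring-theoretic statement. Coherence of the structure sheaf is local in the \'etale topology on $X$, and since $X$ is locally Noetherian and $(A,L)$ is simplicial I may assume $X=\spec R$ with $R$ Noetherian, equipped with a Kato chart $P\to R$ by a simplicial monoid $P$, i.e.\ a strict morphism $X\to\sz{P}$. By Corollary~\ref{cor:local-model-chart} we then have $\infroot{(A,L)}\simeq[\spec A/\mmu_\infty(P)]$ with $A\eqdef R\otimes_{\ZZ[P]}\ZZ[P_\QQ]$ and $\mmu_\infty(P)$ diagonalizable. By Proposition~\ref{prop:qc-group-actions} a quasi-coherent ideal sheaf corresponds to a $\mmu_\infty(P)$-equivariant, that is $(P\grq/P\gr)$-graded, ideal of $A$; and for graded ideals finite generation and finite presentation as graded modules coincide with the ungraded notions, since one can always choose homogeneous generators and relations. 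It therefore suffices to prove that $A$ is a coherent ring.

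Next I would produce a good colimit presentation of $A$. Let $w_1,\dots,w_r\in N\eqdef P\gr$ be the primitive generators of the rays of the cone $P_\QQ$; simpliciality means exactly that they are linearly independent, so $\Lambda\eqdef\ZZ w_1\oplus\dots\oplus\ZZ w_r$ has finite index in $N$ and $P'\eqdef\NN w_1+\dots+\NN w_r\cong\NN^r$ is a smooth submonoid of $P$ with $P'_\QQ=P_\QQ$. Let $e$ be the exponent of $N/\Lambda$, so that $N\subseteq\tfrac1n\Lambda$ whenever $e\mid n$; for such $n$ one has $P=N\cap P_\QQ\subseteq\tfrac1n\Lambda\cap P_\QQ=\tfrac1n P'$, so $\ZZ[\tfrac1n P']$ is a $\ZZ[P]$-algebra and $B_n\eqdef R\otimes_{\ZZ[P]}\ZZ[\tfrac1n P']$ is defined. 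Since $\bigcup_{e\mid n}\tfrac1n P'=P_\QQ$, this is a cofinal system and $A=\varinjlim_{e\mid n}B_n$. Each $\ZZ[\tfrac1n P']$ is a $\ZZ[P]$-submodule of the module-finite extension $\ZZ[\tfrac1n P]$, hence module-finite over the Noetherian ring $\ZZ[P]$; thus each $B_n$ is module-finite over $R$, and in particular Noetherian. The crucial point is that, $P'\cong\NN^r$ being smooth, every inclusion $\ZZ[\tfrac1m P']\to\ZZ[\tfrac1n P']$ (for $m\mid n$) is a finite free extension of polynomial rings, hence flat, and base change along $\ZZ[P]\to R$ shows that each transition map $B_m\to B_n$ is flat.

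To finish I would invoke the fact that a filtered colimit of coherent rings along flat transition homomorphisms is coherent. Concretely, a finitely generated ideal $I\subseteq A$ has its generators defined over some $B_n$, yielding $I_n\subseteq B_n$; since $B_n$ is Noetherian the kernel of $B_n^{\oplus s}\to I_n$ is finitely generated, and flatness of $B_n\to A$ identifies the syzygy module of $I$ with the base change of this kernel, which is again finitely generated. Hence $I$ is finitely presented and $A$ is coherent. I expect the flatness of the transition maps to be the crux, and the only place simpliciality enters: for a non-simplicial $P$ the rays span $P_\QQ$ but are linearly dependent, the submonoid they generate is not smooth, and the corresponding root extensions are not flat --- this is precisely the mechanism behind the non-coherence of Example~\ref{ex:non-coherent}. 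The remaining steps, the \'etale-local reduction and the flat-colimit criterion, are routine.
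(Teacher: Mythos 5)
Your proof is correct and follows essentially the same route as the paper's: reduce \'etale-locally to the model $[\spec(R\otimes_{\ZZ[P]}\ZZ[P_{\QQ}])/\mmu_{\infty}(P)]$, write $R\otimes_{\ZZ[P]}\ZZ[P_{\QQ}]$ as a filtered colimit of Noetherian $R$-algebras with flat (finite free) transition maps --- which is exactly where simpliciality enters, via the $\NN^{r}$-shaped roots of the cone --- and conclude by the fact that a filtered colimit of coherent rings along flat transition maps is coherent. The only difference is that you make explicit two points the paper leaves implicit: the passage to the cofinal system of $n$ divisible by the exponent $e$ (so that $P\subseteq \tfrac{1}{n}P'$ and each term is genuinely a $\ZZ[P]$-algebra), and the reduction from equivariant (graded) ideal sheaves to coherence of the underlying ring.
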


\begin{proof}
The problem is local in the étale topology, so we may assume that $X = \spec R$ is affine, and that the \df is generated by a homomorphism of monoids $P \arr R$, where $P$ is simplicial. By Proposition~\ref{prop:local-model} we have
   \[
   \infroot{(A,L)} = [\spec (R \otimes_{\ZZ[P]} \ZZ[P_{\QQ}])/ \mmu_{\infty}(P)]\,;
   \]
it is enough to show that the ring $R \otimes_{\ZZ[P]} \ZZ[P_{\QQ}]$ is coherent. But $P_{\QQ}$ is isomorphic to the monoid $\NN^{r}_{\QQ}$ of $r$-tuples of non-negative rational numbers; so $\ZZ[P_{\QQ}]$ is an inductive limit $\indlim_{n}\ZZ[\tfrac{1}{n}\NN^{r}]$ of finitely generated $\ZZ$-algebras with flat transition maps. Hence $R \otimes_{\ZZ[P]} \ZZ[P_{\QQ}]$ is a limit of of finitely generated $R$-algebras with flat transition maps, and it is an easy exercise to show that an inductive limit of coherent rings with flat transition maps is coherent.
\end{proof}

See also the discussion in \cite[\S 3]{niziol-k-theory}.


\section{Logarithmic structures from \irss}\label{sec:reconstruction}

Let us show how one can produce a \df structure from an \irs $\pi\colon \cR \arr X$. Here we describe our results and state them without proving them right away. We collect most of the proofs in \ref{subsec:proofs}, after a discussion of the fundamental notion of \emph{infinite quotients} in a fine saturated monoid.

Denote by $\div_{\cR\et}$ the fibered category over the small étale site $X\et$, whose objects are pairs $(U \arr X, \Lambda)$, where $U \arr X$ is an étale map, and $\Lambda$ is an object of $\div(\cR_{U})$. Note that we can also describe $\div_{\cR\et}$ as the pushforward {$\pi_*\Div_{\cR}$}, and there is an obvious pullback functor $\div_{X\et} \arr \div_{\cR\et}$.

By definition, and by descent theory, given a \df structure $(A, L)$ on $X$, and set $\cR \eqdef \infroot{(A,L)}$, there exist a \sm functor $\widetilde{L}\colon A_{\QQ} \arr \div_{\cR\et}$, and an isomorphism of \sm functors between the restriction of $\widetilde{L}$ to $A$, and the composite of $L$ with the pullback $\div_{X\et} \arr \div_{\cR\et}$.

\begin{definition}\label{def:infinite.root}
Let $\pi\colon \cR\arr X$ be an \irs. Consider the \sm fibered category $\cA_{\cR} \arr X\et$ defined as follows. For each étale map $U \arr X$, the objects of $\cA_{\cR}(U)$ are of the form $(\Lambda_{n},\alpha_{m,n})$, where:

\begin{enumeratea}


\item For each positive integer $n$, $\Lambda_n$ is an object of $\div \cR_{U}$.

\item The object $\Lambda_{1}$ of $\div \cR_{U}$ is isomorphic to a pullback $\pi^{*}\Lambda$, where $\Lambda$ is an object of $\div U$.

\item For each $m \mid n$, $\alpha_{m,n}\colon \Lambda_n^{\otimes (n/m)} \simeq \Lambda_{m}$ is an isomorphism in $\div \cR_{U}$.

\item Suppose that $p$ is a point of $X$; denote by $\cR_{p}$ the fiber of $\cR$ over $p$. If $n$ is sufficiently divisible and $\Lambda_{n} = (L_{n}, s_{n})$, then the restriction of $s_{n}$ to $\cR_{p}$ is nonzero.

\end{enumeratea}

We require the isomorphisms $\alpha_{m,n}$ to be subject to the following compatibility conditions.

\begin{enumeratei}

\item $\alpha_{n,n} = \id_{\Lambda_n}$ for any $n$.

\item if $m \mid n$ and $n \mid p$, then
   \[
   \alpha_{m,p} = \alpha_{m,n}^{\otimes(p/n)} \circ \alpha_{n,p}\colon \Lambda_{p}^{\otimes(p/n)}
   \simeq \Lambda_{m}.
   \]

\end{enumeratei}

The arrows $(\Lambda_{n}, \alpha_{m,n}) \arr (\Lambda'_{n}, \alpha'_{m,n})$ are given by isomorphisms of invertible sheaves with a section $\Lambda_{n} \simeq \Lambda'_{n}$ that are compatible with the $\alpha_{m,n}$'s.

The fibered structure is obtained from the evident pseudo-functor structure.

We call the objects of $\cA_{\cR}(U)$ \emph{infinite roots}.

\end{definition}

{The point of the previous definition is that if $\cR=\infroot{(A,L)}$, then, as we explain later, to every local section of the sheaf $A$ we can associate an infinite root on $\cR$, and every infinite root arises in this way. Hence this construction will recover the \df structure that we start with, and if $\cR$ is only an infinite root stack in the sense of Definition \ref{def:airs}, it will produce a \df structure $(A_\cR,L_\cR)$ on $X$ ($A_\cR$ will be the sheaf of isomorphism classes of objects of the fibered category $\cA_\cR$) such that $\cR\cong \infroot{(A_\cR,L_\cR)}$.

\begin{remark}\label{rmk:base.root}
Notice that the object $\Lambda$ of $\div U$ and the isomorphism $\phi\colon \pi^{*}\Lambda \simeq \Lambda_{1}$ are in fact unique, up to a unique isomorphism. What's more, there is a canonical choice for $\Lambda$ and $\phi$.

The point is this. Set $\Lambda_{1} = (L_{1}, s_{1})$ and $\Lambda = (L, s)$. Then the isomorphism $\phi\colon  \pi^{*}L \simeq L_{1}$ corresponds by adjunction to a homomorphism $L \arr \pi_{*}L_{1}$; by Proposition \refall{prop:coherent-pushforward}{6} this is an isomorphism, and carries $s$ into $s_{1} \in L_{1}(U) = (\pi_{*}L)(U)$. This gives a canonical isomorphism of $\Lambda$ with $(\pi_{*}L_{1}, s_{1})$.

We call the object $(\pi_{*}L_{1}, s_{1})$ of $\div U$ the \emph{base} of the infinite root $(\Lambda_{n}, \Lambda_{m,n})$; we think of an infinite root as an infinite sequence of roots of its base. 

We obtain a \sm functor $\cA_{\cR}(U) \arr \div U$ by sending each infinite root into its base.
\end{remark}

\begin{remark}
It is immediate from the definition that the formation of $\cA_{\cR}$ commutes with arbitrary base change on $X$.
\end{remark}

The \sm structure is given by tensor product; this is well defined thanks to the following lemma.

\begin{lemma}\label{lem:tensor-product-roots}
$(\Lambda_{n}, \alpha_{m,n})$ and $(\Lambda'_{n}, \alpha'_{m,n})$ be infinite roots in an \irs $\cR$. Then the tensor product
   \[
   (\Lambda_{n} \otimes \Lambda'_{n}, \alpha_{m,n} \otimes \alpha'_{m,n})
   \]
is also an infinite root.
\end{lemma}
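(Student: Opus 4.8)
The plan is to verify that the data (a)--(d) and the compatibilities (i)--(ii) are preserved under tensor product purely formally, and to concentrate all the genuine work on the non-vanishing condition (e). For the formal part: since $\div$ is symmetric monoidal, $\Lambda \otimes \Lambda'$ and each $\Lambda_{n} \otimes \Lambda'_{n}$ are again objects of $\div U$ and $\div \cR_{U}$; the isomorphism $\phi \otimes \phi'$ lands where it should because $\pi^{*}$ is monoidal, so that $\pi^{*}\Lambda \otimes \pi^{*}\Lambda' \simeq \pi^{*}(\Lambda \otimes \Lambda')$; and using the symmetry isomorphism $(\Lambda_{n} \otimes \Lambda'_{n})^{\otimes(n/m)} \simeq \Lambda_{n}^{\otimes(n/m)} \otimes \Lambda_{n}'^{\otimes(n/m)}$ the map $\alpha_{m,n} \otimes \alpha'_{m,n}$ becomes an isomorphism onto $\Lambda_{m} \otimes \Lambda'_{m}$. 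Conditions (i) and (ii) then follow from the bifunctoriality identity $(\beta \circ \gamma) \otimes (\beta' \circ \gamma') = (\beta \otimes \beta') \circ (\gamma \otimes \gamma')$ together with the corresponding identities for the $\alpha$'s and the $\alpha'$'s, which I would only indicate briefly.

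The substance is condition (e). Fix a point $p$ of $X$. Non-vanishing of a section of a line bundle can be tested after extending the residue field, so I would assume the residue field $k$ is algebraically closed; then by Proposition~\ref{prop:geometric-fibers} the fiber is $\cR_{p} \simeq \infroot{P/k} = [\spec A / \mmu_{\infty}(P)]$, where $A \eqdef k[P_{\QQ}]/(P^{+})$. Under this description a section of a line bundle on $\cR_{p}$ is a homogeneous element of $A$ for the $P\gr_{\QQ}/P\gr$-grading, the tensor product of two sections is their product in $A$, and the isomorphisms $\alpha_{m,n}$ translate into the relations $\overline{s}_{m} = \overline{s}_{n}^{\,n/m}$ in $A$ for $m \mid n$, where $\overline{s}_{n}$ denotes the restriction of $s_{n}$. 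Thus (e) becomes the purely algebraic assertion that, for $n$ sufficiently divisible, $\overline{s}_{n} \cdot \overline{s}'_{n} \neq 0$ in $A$. The difficulty is that $A$ has zero-divisors --- already for $P = \NN$ one has $x^{1/2}\cdot x^{3/4} = 0$ --- so the non-vanishing does not follow formally, and this is the main obstacle.

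To overcome it I would exploit the fact that the root relations force the supports of the $\overline{s}_{n}$ to concentrate near the origin, where $A$ looks like the domain $\widetilde{A} \eqdef k[P_{\QQ}]$. Since $P$ is sharp and \fs, $P_{\QQ}$ is a pointed cone, so there is a linear functional $\ell \colon P\gr_{\QQ} \arr \RR$ with $\ell > 0$ on $P_{\QQ} \setminus \{0\}$; this makes $\widetilde{A}$ an $\RR$-graded domain supported in $\ell(P_{\QQ}) = \RR_{\geq 0}$. For nonzero homogeneous $s \in A$, lift it to $\widetilde{A}$ and set $a(s) \eqdef \min\{\ell(q) \mid x^{q} \text{ occurs in } s\} \geq 0$, with $\ell$-initial form $\mathrm{in}(s)$. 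Because $\widetilde{A}$ is a graded domain, initial forms are multiplicative: $\mathrm{in}(s\,s') = \mathrm{in}(s)\,\mathrm{in}(s') \neq 0$ and $a(s\,s') = a(s)+a(s')$, and likewise $a(s^{k}) = k\,a(s)$. Applying this to $\overline{s}_{m} = \overline{s}_{n}^{\,n/m}$: every monomial of $\overline{s}_{n}^{\,n/m}$ has $\ell$-value at least $(n/m)\,a(\overline{s}_{n})$, and reduction modulo $(P^{+})$ only deletes monomials, so $\mathrm{supp}(\overline{s}_{m}) \subseteq \mathrm{supp}(\overline{s}_{n}^{\,n/m})$ and $a(\overline{s}_{m}) \geq (n/m)\,a(\overline{s}_{n})$, i.e. $a(\overline{s}_{n}) \leq (m/n)\,a(\overline{s}_{m})$. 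Fixing $m$ large enough (by the non-vanishing hypothesis on the given root) that $\overline{s}_{m} \neq 0$ and letting $n$ run through multiples of $m$, this yields $a(\overline{s}_{n}) \to 0$, and similarly $a(\overline{s}'_{n}) \to 0$.

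Finally I would combine these estimates. Writing $v_{1}, \dots, v_{r}$ for generators of $P$, one has $(P^{+}) = (x^{v_{1}}, \dots, x^{v_{r}})$, so a monomial $x^{q}$ survives in $A$ whenever $\ell(q) < \delta \eqdef \min_{i} \ell(v_{i}) > 0$. For $n$ sufficiently divisible we have $a(\overline{s}_{n}) + a(\overline{s}'_{n}) < \delta$; then the $\ell$-initial form of the product $\overline{s}_{n} \overline{s}'_{n}$, computed in the domain $\widetilde{A}$, equals $\mathrm{in}(\overline{s}_{n})\,\mathrm{in}(\overline{s}'_{n}) \neq 0$ and is supported in $\ell$-degree $a(\overline{s}_{n}) + a(\overline{s}'_{n}) < \delta$. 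These lowest-degree monomials are distinct from the higher ones and hence are not merged by the reduction, and none of them lies in $(P^{+})$; therefore they survive in $A$ and $\overline{s}_{n} \overline{s}'_{n} \neq 0$. This is exactly condition (e) for the tensor product. As noted, the single non-formal point is this non-vanishing in the presence of zero-divisors, and the mechanism that resolves it is the concentration estimate $a(\overline{s}_{n}) \to 0$ extracted from the root compatibility $\overline{s}_{m} = \overline{s}_{n}^{\,n/m}$.
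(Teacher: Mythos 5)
Your proof is correct, but it is organized quite differently from the paper's. The paper treats the formal conditions as immediate and reduces condition (e) to the second statement of Lemma~\ref{lem:invertible-image}: for sufficiently divisible $n$ the multiplication map $\pi_{*}L_{n}\otimes\pi_{*}L'_{n}\to\pi_{*}(L_{n}\otimes L'_{n})$ is an isomorphism. That lemma is in turn proved by a noetherian reduction and Nakayama down to geometric fibers, where it rests on the Picard-group identification $\pic(\infroot{P/k})\simeq P\gr_{\QQ}/P\gr$ (Lemma~\ref{lem:local-picard}), on Lemma~\ref{lem:dim-1}, and ultimately on the infinite-quotient combinatorics of Proposition~\ref{prop:description-infinite-quotients}; the decisive object there is the set $\Delta_{P}^{0}$ of elements with a unique representative modulo $P\gr$, which yields $\dim_{k}\H^{0}=1$ and makes the multiplication map an isomorphism. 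You also reduce to the geometric fiber, but then you argue directly with the sections themselves: a strictly positive linear functional $\ell$ on the pointed cone (the same device as the additive norm in the paper's proof of Proposition~\ref{prop:description-infinite-quotients}), the root relations $\overline{s}_{m}=\overline{s}_{n}^{\,n/m}$ forcing the concentration estimate $a(\overline{s}_{n})\to 0$, and multiplicativity of initial forms in the domain $k[P_{\QQ}]$ to see that the lowest-degree terms of the product survive reduction modulo $(P^{+})$. Your route is more elementary and self-contained: it needs neither the notion of infinite quotient, nor $\Delta_{P}^{0}$, nor the one-dimensionality of $\H^{0}$, nor any global pushforward statement. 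The paper's route costs more machinery up front, but that machinery is reused (Proposition~\ref{prop:description-infinite-quotients} and Lemma~\ref{lem:invertible-image} also underlie Propositions~\ref{prop:A_R-1-category} and~\ref{prop:equality-sheaves}), and it proves something stronger than condition (e) requires, namely invertibility of $\pi_{*}L_{n}$ and the multiplication isomorphism over the base $X$ itself rather than fiber by fiber. One small point to make explicit in your write-up: the identification of $\alpha_{m,n}$ with the relation $\overline{s}_{m}=\overline{s}_{n}^{\,n/m}$ holds only after choosing trivializations $L_{n}\simeq R(\lambda_{n})$, hence up to a nonzero scalar (degree-zero units of $k[P_{\QQ}]/(P^{+})$ are exactly $k^{\times}$, by sharpness and saturation); this is harmless for your support and valuation arguments, but it should be said.
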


\begin{proof}
Here the essential point is to show that if we set $\Lambda_{n} = (L_{n}, s_{n})$ and $\Lambda'_{n} = (L'_{n}, s'_{n})$, then the restriction of $s_{n} \otimes s'_{n}$ to any geometric fiber is nonzero for sufficiently divisible $n$. This follows from the second statement in Lemma~\ref{lem:invertible-image} below.
\end{proof}

The proof of the following proposition, of Proposition \ref{prop:equality-sheaves} and of Theorem \ref{thm:equivalence} will be postponed (see Section \ref{subsec:proofs}).

\begin{proposition}\label{prop:A_R-1-category}
Let $\cR \arr X$ be an \irs. Then the symmetric monoidal category $\cA_{\cR}$ is fibered in equivalence relations (i.e. equivalent to a presheaf).

\end{proposition}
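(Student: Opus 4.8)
The plan is to reduce the statement to the triviality of all automorphism groups. Each fiber $\cA_{\cR}(U)$ is a groupoid, since by definition every arrow is an isomorphism of invertible-sheaves-with-section; so saying that $\cA_{\cR}$ is fibered in equivalence relations is the same as saying that between any two infinite roots there is at most one arrow, and in a groupoid this is equivalent to requiring that every object have trivial automorphism group (for parallel arrows $f,g$ one has $g^{-1}f \in \aut$). Moreover the presheaf on $X\et$ sending $U$ to the automorphism group of a fixed infinite root is a sheaf, so triviality may be checked �tale-locally on $X$; I would therefore immediately pass to the local model of Corollary~\ref{cor:local-model-chart}, assuming $X = \spec R$ affine with a chart $P \arr R$ and $\cR = [Z/\mmu_{\infty}(P)]$, where $Z = \spec(R\otimes_{\ZZ[P]}\ZZ[P_{\QQ}])$.

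Next I would unwind what an automorphism of $(\Lambda, \Lambda_{n}, \phi, \alpha_{m,n})$ is: a compatible family consisting of an automorphism $u$ of $\Lambda$ in $\div U$ and automorphisms $v_{n}$ of each $\Lambda_{n}$ in $\div\cR_{U}$, i.e. global units fixing the respective sections, subject to $v_{1} = \pi^{*}u$ (via $\phi$) and $v_{n}^{\,n/m} = v_{m}$ (via $\alpha_{m,n}$), in particular $v_{n}^{\,n} = v_{1}$. By Proposition~\ref{prop:coherent-pushforward}\refpart{prop:coherent-pushforward}{3} the canonical map $\cO_{X} \arr \pi_{*}\cO_{\cR}$ is an isomorphism, so $\cO^{*}(\cR_{U}) = \cO^{*}(U)$ and each $v_{n}$ is the pullback $\pi^{*}w_{n}$ of a unique unit $w_{n} \in \cO^{*}(U)$, with $w_{n}^{\,n/m} = w_{m}$. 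Because of these divisibility relations it suffices to prove $w_{n} = 1$ for all sufficiently divisible $n$: for an arbitrary $m$ one chooses $n$ sufficiently divisible with $m \mid n$ and gets $w_{m} = w_{n}^{\,n/m} = 1$, and likewise $u = w_{1} = 1$. The defining condition $v_{n}\cdot s_{n} = s_{n}$ reads $(\pi^{*}w_{n} - 1)\,s_{n} = 0$, and since $w_{n} - 1$ is pulled back from $U$ (a degree-zero element for the grading on $Z$), this says precisely that $w_{n} - 1$ annihilates $s_{n}$ in the $R$-module of sections of $\Lambda_{n}$.

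The heart of the argument is thus to show that, for $n$ sufficiently divisible, the only $w_{n} \in \cO^{*}(U)$ with $(\pi^{*}w_{n} - 1)s_{n} = 0$ is $w_{n} = 1$. The easy half uses condition~(e) together with Proposition~\ref{prop:geometric-fibers}: restricting to a geometric fiber $\cR_{p} \simeq \infroot{P/\Omega}$, one has $\cO(\cR_{p}) = \Omega$, so $w_{n}$ restricts to the scalar $w_{n}(p)$, and since $s_{n}\rest{\cR_{p}}$ is a nonzero section of a line bundle on $\cR_{p}$ the relation $(w_{n}(p)-1)\,s_{n}\rest{\cR_{p}} = 0$ forces $w_{n}(p) = 1$ for every geometric point $p$; this settles the reduced case and in general shows only that $w_{n} - 1$ is nilpotent. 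The genuinely hard half is to upgrade ``$s_{n}$ is nonzero on every fiber'' to ``$s_{n}$ has trivial annihilator over $R$'', which is what is needed to kill the nilpotent $w_{n} - 1$. This cannot hold for a single fixed $n$: the fibers $\cR_{p}$ are badly non-reduced, and an arbitrary section that is nonzero on fibers may still be annihilated by a nilpotent. It is exactly here that ``$n$ sufficiently divisible'' must be exploited: as $n$ grows the representative $q_{n} = a/n$ of the character $\chi_{n}$ of $\Lambda_{n}$ tends to $0$, and I expect one must prove that the graded piece $\ZZ[P_{\QQ}]_{\chi_{n}}$ of $\ZZ[P_{\QQ}]$ behaves, near the tautological monomial $x^{q_{n}}$, like a free rank-one $\ZZ[P]$-module, so that a section which is a minimal generator at every point (condition~(e)) is a unit multiple of a free generator and hence has zero annihilator. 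This monoid-combinatorial control of the ``infinite quotients'' of $q_{n}$ in the saturated monoid $P$ for divisible $n$ is, I expect, the main obstacle, and is precisely why the paper isolates the theory of infinite quotients and the invertible-image lemma before completing the proof. Granting it, $(\pi^{*}w_{n} - 1)s_{n} = 0$ yields $w_{n} = 1$ for all sufficiently divisible $n$, hence all automorphisms are trivial and $\cA_{\cR}$ is fibered in equivalence relations.
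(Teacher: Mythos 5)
Your reduction is correct and matches the paper's: since $\cA_{\cR}$ is fibered in groupoids, it suffices to kill automorphisms; an automorphism of $(\Lambda,\Lambda_{n},\phi,\alpha_{m,n})$ is a compatible system of units $\xi_{n}\in\cO^{\times}(X)$ (using $\cO(\cR)=\cO(X)$, i.e.\ \refall{prop:coherent-pushforward}{3}) with $\xi_{n}s_{n}=s_{n}$ and $\xi_{n}^{n/m}=\xi_{m}$; and it is enough to get $\xi_{n}=1$ for all sufficiently divisible $n$. Your "easy half" (restriction to geometric fibers forces each $\xi_{n}-1$ into the nilradical) is also sound. But the proof stops exactly where the real content begins: the claim that for sufficiently divisible $n$ the section $s_{n}$ has trivial annihilator is stated as something you "expect" and then "grant." This is precisely the paper's Lemma~\ref{lem:invertible-image} (for sufficiently divisible $n$, $\pi_{*}L_{n}$ is invertible and $s_{n}$ is nowhere vanishing), and it is not a routine verification: it requires (a) coherence of $\pi_{*}L_{n}$ (\refall{prop:coherent-pushforward}{2}), so that Nakayama reduces the statement to geometric fibers, and (b) the combinatorial theory of infinite quotients (Definition~\ref{def:infinite-quotient}, Proposition~\ref{prop:description-infinite-quotients}), which shows that condition~(e) in the definition of an infinite root forces the class $[L_{n}]\in P\grq/P\gr$ to have, for sufficiently divisible $n$, a \emph{unique} representative in $\Delta_{P}$, lying in $\Delta_{P}^{0}$; this is what makes $\H^{0}(\infroot{P/k},L_{n})$ one-dimensional (Lemma~\ref{lem:dim-1}) and hence $s_{n}$ a generator on each fiber.

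Without that step the argument genuinely fails, and your own discussion shows why: nonvanishing of $s_{n}$ on the (highly non-reduced) fibers does not by itself control the annihilator, so no soft argument can close the gap. The sketch you give of what should be true ("the graded piece $\ZZ[P_{\QQ}]_{\chi_{n}}$ behaves like a free rank-one $\ZZ[P]$-module near $x^{q_{n}}$") is morally the right statement, but proving it \emph{is} the monoid-theoretic work of \ref{subsec:infinite-quotients} --- in particular one must use the multiplicativity data $\alpha_{m,n}$ (via $s_{jm_{0}}^{\otimes j}\neq 0$) to produce the sequences $\gamma_{1},\dots,\gamma_{j}\in\Delta_{P}$ with $\gamma_{1}+\dots+\gamma_{j}\in\Delta_{P}$ required by the definition of an infinite quotient, and then the norm/boundedness argument of Proposition~\ref{prop:description-infinite-quotients} to land in $\Delta_{P}^{0}$. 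As written, your proposal is a correct outline of the paper's proof with its core lemma assumed rather than proved.
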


Hence by passing to isomorphism classes we get a sheaf of monoids $A_{\cR}\colon X\et \arr \catmon$, and the projection $\cA_{\cR} \arr A_{\cR}$ is an equivalence. By choosing a \sm quasi-inverse $A_{\cR} \arr \cA_{\cR}$ and composing with the \sm functor $\cA_{\cR} \arr \div_{X\et}$ that sends {an infinite root to its base}, we obtain a \sm $L_{\cR}\colon A_{\cR} \arr \div_{X\et}$, unique up to a unique isomorphism.

As with the fibered category $\cA_{\cR}$, the formation of $L_{\cR}$ and $A_{\cR}$ commutes with arbitrary base change on $X$.

\begin{proposition}\label{prop:fine-saturated}
The sheaf $A_{\cR}$ is \fs.
\end{proposition}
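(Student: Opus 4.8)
The plan is to reduce to the local model and to identify $A_\cR$ with the (manifestly \fs) sheaf of the logarithmic structure that defines $\cR$ \'etale-locally. Being \fs is an \'etale-local condition on $X$, and by the remark preceding the statement the formation of $A_\cR$ commutes with arbitrary base change; so I would first assume that $X=\spec R$ is affine and equipped with a map $X\to X_P=\sz P$ for a \fs monoid $P$, with $\cR=X\times_{X_P}\cR_P$. By Corollary~\ref{cor:local-model-chart} this is $[\spec A/\mmu_\infty(P)]$ with $A=R\otimes_{\ZZ[P]}\ZZ[P_\QQ]$. It then suffices to produce an isomorphism of sheaves of monoids between $A_\cR$ and the sheaf $A_P$ of the pulled-back \df structure $(A_P,L_P)$: since $(A_P,L_P)$ is \fs, coherence of $A_\cR$ and the fine and saturated property of its stalks follow immediately, and it is enough to test this comparison map on geometric stalks.

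In this local situation $\cR=\infroot{(A_P,L_P)}$, so by construction there is a \sm functor $\widetilde L\colon (A_P)_\QQ\to\div_{\cR\et}$ restricting to $L_P$ on $A_P$. I would define a morphism $\lambda\colon A_P\to A_\cR$ sending a section $a\in A_P(U)$ to the infinite root with datum $\Lambda=L_P(a)$, $\Lambda_n=\widetilde L(a/n)$, with $\phi$ the tautological identification $\Lambda_1=\widetilde L(a)\simeq\pi^{*}L_P(a)$ and $\alpha_{m,n}$ induced by $(n/m)\cdot(a/n)=a/m$ in $(A_P)_\QQ$. The point to check is condition~(e), that for every point $p$ the section of $\widetilde L(a/n)$ is nonzero on the fiber $\cR_p$ for $n$ sufficiently divisible; this is exactly the mechanism underlying Lemma~\ref{lem:tensor-product-roots} (the second statement of Lemma~\ref{lem:invertible-image}). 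That $\lambda$ is a homomorphism of monoids is the compatibility of $\widetilde L$ with tensor products, again by Lemma~\ref{lem:tensor-product-roots}.

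To see that $\lambda$ is an isomorphism on the geometric stalk at a point $\bar x$, I would base change to the strict henselization $X^{\mathrm{sh}}$; since the formation of $A_\cR$ commutes with base change, the stalk is the monoid of global infinite roots of $\cR'=\cR\times_X X^{\mathrm{sh}}$, whose closed fiber is $\infroot{Q/k}$ for the sharp \fs stalk $Q$ of $A_P$ by Proposition~\ref{prop:geometric-fibers}. Line bundles on $\cR'=[\spec A'/\mmu_\infty(Q)]$ that are roots of a pulled-back line bundle are governed, through the analysis of $\pic$ of the fibers, by the character group $\widehat{\mmu_\infty(Q)}=Q\grq/Q\gr$: an infinite root determines a compatible system of characters $[a/n]\in\tfrac1n Q\gr/Q\gr$ together with semi-invariant sections $s_n\in A'$. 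Injectivity of $\lambda$ follows because the characters $\{[a/n]\}_n$ determine $a$: if $a,a'\in Q$ yield the same characters then $(a-a')/n\in Q\gr$ for all $n$, and since $Q\gr$ is a lattice this forces $a=a'$. For surjectivity one assembles the $[a/n]$ into an element $a\in Q\grq$ and uses the nonvanishing of the $s_n$ on the fibers (condition~(e)) to force first $a\in Q_\QQ$ and then $a\in Q$; the given root is then isomorphic to $\lambda(a)$ by the equivalence-relation property of $\cA_\cR$ (Proposition~\ref{prop:A_R-1-category}).

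The hard part will be this last surjectivity: proving that a compatible system of roots whose sections are nonzero on every geometric fiber must be the tautological root of an \emph{integral} element $a\in Q$, rather than of an element of $Q_\QQ$ or of $Q\grq$. This is precisely where the combinatorial study of the set of degrees on which $\cO$ has nonzero fiber sections — the notion of \emph{infinite quotients} in a \fs monoid announced for \ref{subsec:proofs} — must be invoked, and where saturation of $Q$ enters (morally through the equality $P_\QQ\cap\tfrac1n P\gr=\tfrac1n P$ of Lemma~\ref{lem:right-quotients}). The classification of line bundles on the fibers (the Picard computation) is the other necessary input; once both are available, the isomorphism $\lambda$, and hence the \fs property of $A_\cR$, follow formally.
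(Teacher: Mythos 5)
Your proposal is correct and follows essentially the same route as the paper: the paper's own proof reduces, exactly as you do, to the \'etale-local case where $\cR$ comes from a \fs \df structure, and then invokes Proposition~\ref{prop:equality-sheaves} (that $A \arr A_{\cR}$ is an isomorphism), whose proof is precisely the argument you sketch — base change to a geometric point, the Picard group identification $\pic(\infroot{P/k}) \simeq P\grq/P\gr$, and the infinite-quotient machinery of Proposition~\ref{prop:description-infinite-quotients}. The only slight imprecision is at the final step: the isomorphism between a given root and $\lambda(a)$ comes from Lemma~\ref{lem:dim-1} (the one-dimensionality of the section spaces, giving a unique isomorphism matching the sections) rather than from Proposition~\ref{prop:A_R-1-category} alone, but this does not affect the substance of the argument.
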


\begin{proof}
Since being \fs is a local condition in the étale topology, and étale-locally $\cR$ comes from a \fs \df structure, this follows from Proposition~\ref{prop:equality-sheaves} below.
\end{proof}

Hence, from an \irs $\cR \arr X$ we obtain a \fs \df structure $(A_{\cR}, L_{\cR})$.

Suppose that $(A, L)$ is a \df structure on $X$, and $\cR = \infroot{(A, L)}$. Let $U \arr X$ be an étale map, and $a \in A(U)$. Then we obtain an object $L(a)$ of $\div U$. Furthermore, for each positive integer $n$ we also obtain an object $\widetilde{L}(a/n) \in \div \cR_{U}$. The fact that the functor is \sm gives, for each $m \mid n$, isomorphisms $\alpha_{m,n}\colon \widetilde{L}(a/n)^{\otimes (n/m)} \simeq \widetilde{L}(a/m)$ in $\div \cR_{U}$. Furthermore, the isomorphism between the restriction of $L$ to $A$, and the composite of $L$ with the pullback $\div U \arr \div \cR_{U}$ yields an isomorphism $\phi\colon \Lambda_{1} = \widetilde{L}(a) \simeq L(a)$. This gives a \sm functor $A \arr \cA_{\cR}$; by definition, the composite of $A \arr \cA_{\cR}$ with $\cA_{\cR} \arr \div_{X\et}$ is precisely $L$.

\begin{proposition}\label{prop:equality-sheaves}
Suppose that $(A, L)$ is a \fs \df structure on $X$, and set $\cR \eqdef \infroot{(A,L)}$. Then the composite $A \arr \cA_{\cR} \arr A_{\cR}$ is an isomorphism.
\end{proposition}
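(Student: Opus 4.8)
The plan is to verify that the map $A \arr A_{\cR}$ is an isomorphism of sheaves of monoids by checking it on geometric stalks, where it collapses to an explicit computation on the infinite root stack of a log point. Since the natural \sm functor $A \arr \cA_{\cR}$ and the formation of $\cA_{\cR}$ both commute with arbitrary base change on $X$, and since being an isomorphism of sheaves is local in the �tale topology, I would fix a geometric point $x$ of $X$ with algebraically closed residue field $\Omega$, put $P \eqdef A_{x}$ (a sharp \fs monoid), and reduce to showing that the induced map $P = A_{x} \arr (A_{\cR})_{x}$ is bijective. Using base-change compatibility of $\cA_{\cR}$ together with a standard limit (spreading-out) argument, $(A_{\cR})_{x}$ is identified with the monoid of isomorphism classes of infinite roots on the fiber $\cR_{x} = \infroot{P/\Omega} = \bigl[\spec(\Omega[P_{\QQ}]/(P^{+}))/\mmu_{\infty}(P)\bigr]$ (Proposition~\ref{prop:geometric-fibers} and Corollary~\ref{cor:local-model-chart}). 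Under this identification the element $a \in P$ is sent to the system $\bigl(L(a), \widetilde{L}(a/n), \phi, \alpha_{m,n}\bigr)$, where $\widetilde{L}(a/n)$ is the $\mmu_{\infty}(P)$-equivariant line bundle of character $[a/n] \in P\grq/P\gr$ carrying the monomial section $x^{a/n}$.

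For injectivity I would argue purely at the level of characters. If $a,a' \in P$ induce isomorphic infinite roots, then for every $n$ the equivariant line bundles $\widetilde{L}(a/n)$ and $\widetilde{L}(a'/n)$ are isomorphic, which forces the equality of characters $[a/n] = [a'/n]$ in $P\grq/P\gr$, i.e. $a - a' \in n P\gr$. As $P\gr \simeq \ZZ^{r}$ is torsion free, $\bigcap_{n} n P\gr = 0$, whence $a = a'$. I would also record here the well-definedness input, namely that the map really lands among genuine infinite roots: for $n$ sufficiently divisible the monomial $x^{a/n}$ is nonzero in $\Omega[P_{\QQ}]/(P^{+})$, because sharpness of $P$ supplies a strictly positive linear functional $\ell$ on $P_{\QQ}\setminus\{0\}$ with $\ell(a/n)\to 0$, so $a/n$ eventually cannot dominate any nonzero element of $P$; this is the content of Lemma~\ref{lem:invertible-image}, used to check condition~(e).

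The substance of the proof is surjectivity, which I expect to be the main obstacle. Given an arbitrary infinite root $(\Lambda, \Lambda_{n}, \phi, \alpha_{m,n})$ on $\cR_{x}$, write $\Lambda_{n} = (L_{n}, s_{n})$; the characters $\lambda_{n}$ of the $L_{n}$ assemble into a compatible system in $\projlim_{n}\tfrac{1}{n}P\gr/P\gr \simeq \widehat{P\gr}$ (the profinite completion), while the isomorphisms $\alpha_{m,n}$ translate into the relations $s_{n}^{\otimes(n/m)} = s_{m}$ (up to scalar) among the homogeneous sections, with $s_{n}\neq 0$ for $n$ sufficiently divisible by condition~(e). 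The goal is to show this data is isomorphic to the system attached to a (necessarily unique, by injectivity) element $a \in P$. The difficulty is that the graded pieces of $\Omega[P_{\QQ}]/(P^{+})$ need not be one-dimensional, and the $n$\th power map does not preserve the surviving monomials (already for $P = \NN$ one has $x^{1/2}\neq 0$ but $(x^{1/2})^{2}=0$), so one cannot naively read off $a$ as the exponent of a single $s_{n}$; instead one must prove that a \emph{compatible nondegenerate} system of homogeneous sections is forced, up to isomorphism, to be a system of single monomials $x^{a/n}$ with $a \in P$. I would carry this out through the combinatorial analysis of \emph{infinite quotients} in a \fs monoid developed in \ref{subsec:proofs}: this is precisely the tool that pins the a priori profinite character system in $\widehat{P\gr}$ down to the image of $P$ and that matches the sections. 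Granting this, the recovered $a \in P$ maps to the given root, yielding surjectivity and completing the proof.
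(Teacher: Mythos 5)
Your proposal is correct and follows essentially the same route as the paper's own proof: reduce to a geometric stalk, i.e.\ to the log point $\infroot{P/\Omega}$ over an algebraically closed field, pass to the character system in $\widecheck P$ via $\pic(\infroot{P/\Omega}) \simeq P\grq/P\gr$, and invoke the infinite-quotient combinatorics of \ref{subsec:infinite-quotients} (Proposition~\ref{prop:description-infinite-quotients}) together with the one-dimensionality and multiplication statements of Lemma~\ref{lem:dim-1} to pin an arbitrary root's characters down to an element of $P$ and to match the sections up to isomorphism. The only cosmetic difference is that you prove injectivity of $P \arr A_{\cR}$ directly from $\bigcap_{n} nP\gr = 0$, whereas the paper obtains it automatically from the injectivity of $A_{\cR} \arr \infquot{P}$ combined with the fact that $P \arr \infquot{P}$ is an isomorphism.
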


\begin{corollary}\label{cor:df.infinite}
The \df structures $(A, L)$ and $(A_{\cR}, L_{\cR})$ are isomorphic. \qed
\end{corollary}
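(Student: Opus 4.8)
The plan is to upgrade the isomorphism of sheaves of monoids furnished by Proposition~\ref{prop:equality-sheaves} to an isomorphism of \df structures. Since that proposition already supplies the underlying map, essentially all that remains is to check compatibility with the monoidal functors into $\div_{X\et}$, and this turns out to be formal. Recall that a morphism of \df structures $(A, L) \arr (A_{\cR}, L_{\cR})$ is a pair $(f, \alpha)$ consisting of a morphism of sheaves of monoids $f\colon A \arr A_{\cR}$ together with a natural isomorphism $\alpha\colon L_{\cR}\circ f \simeq L$, and that such a pair is an isomorphism exactly when $f$ is.

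First I would fix notation for the functors already in play. Write $\iota\colon A \arr \cA_{\cR}$ for the \sm functor built from $(A, L)$ in the paragraph preceding Proposition~\ref{prop:equality-sheaves}, $q\colon \cA_{\cR} \arr A_{\cR}$ for the projection to isomorphism classes, $s\colon A_{\cR} \arr \cA_{\cR}$ for the chosen \sm quasi-inverse to $q$ used to define $L_{\cR}$, and $p\colon \cA_{\cR} \arr \div_{X\et}$ for the forgetful functor $(\Lambda, \Lambda_{n}, \phi, \alpha_{m,n}) \mapsto \Lambda$. By construction these satisfy the two identities $L = p\circ\iota$ (this is exactly the statement that ``the composite of $A \arr \cA_{\cR}$ with $\cA_{\cR} \arr \div_{X\et}$ is precisely $L$'') and $L_{\cR} = p\circ s$. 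Finally set $f \eqdef q\circ\iota$; this is precisely the composite $A \arr \cA_{\cR} \arr A_{\cR}$ that Proposition~\ref{prop:equality-sheaves} asserts is an isomorphism.

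The key observation is then that, because $\cA_{\cR}$ is fibered in equivalence relations (Proposition~\ref{prop:A_R-1-category}), there is at most one isomorphism between any two objects of a fiber; hence the quasi-inverse $s$ comes with a \emph{canonical} natural isomorphism $s\circ q \simeq \id_{\cA_{\cR}}$, and all coherence conditions making it monoidal hold automatically, since they are diagrams of isomorphisms in a category where parallel arrows coincide. Whiskering this isomorphism with $p$ on the left and $\iota$ on the right produces a natural isomorphism of \sm functors
   \[
   L_{\cR}\circ f = p\circ s\circ q\circ \iota \;\simeq\; p\circ\iota = L,
   \]
which I take as the desired $\alpha$. Thus $(f, \alpha)$ is a morphism of \df structures, and since $f$ is an isomorphism by Proposition~\ref{prop:equality-sheaves}, it is an isomorphism of \df structures; this is the assertion, and passing to the associated logarithmic structures gives $(A, L) \simeq (A_{\cR}, L_{\cR})$ in the usual sense.

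There is genuinely no hard step here: all the substance sits in Proposition~\ref{prop:equality-sheaves} (identifying $A$ with $A_{\cR}$) and in Proposition~\ref{prop:A_R-1-category} (which renders the coherence of the monoidal natural isomorphism automatic). The only point requiring any care is the bookkeeping of the various monoidal natural transformations, together with the mild subtlety that one must invoke the equivalence-relation property to know that $\alpha$ is canonical and compatible with the monoidal structures, rather than merely existing abstractly.
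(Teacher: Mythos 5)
Your proof is correct and is essentially the paper's own argument: the paper states this corollary with no written proof, regarding it as immediate from Proposition~\ref{prop:equality-sheaves} together with the construction of $L_{\cR}$ (as the forgetful functor composed with a \sm quasi-inverse of $\cA_{\cR}\arr A_{\cR}$) and the definitional identity that the composite $A \arr \cA_{\cR} \arr \div_{X\et}$ equals $L$ --- which is exactly the bookkeeping you spell out. Your appeal to Proposition~\ref{prop:A_R-1-category} to make the comparison $2$-isomorphism canonical and automatically compatible with the \sm structures is precisely the reason the paper can leave these details unwritten.
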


Given an \irs $\cR \arr X$, let us produce a functor $\cR \arr \infroot{(A_{\cR}, L_{\cR})}$. Let $f\colon T \arr \cR$ be a morphism; we need to construct a map $T \arr\infroot{(A_{\cR}, L_{\cR})}$, i.e. an extension $(f^{*}A_{\cR})_{\QQ} \arr \div_{T\et}$ of the \df structure $f^{*}L_{\cR}\colon f^{*}A_{\cR} \arr \div_{T\et}$. Call $f^{-1}A_{\cR}$ the pullback presheaf on $T\et$; its sections on an étale map $V \arr T$ are colimits $\indlim A_{\cR}(U)$, where the colimit is taken over all factorizations $V \arr U \arr X$, with $U \arr X$ étale, of the composite $V \arr T \arr \cR \arr X$. The sheafification of the presheaf $B_{T}$ on $T\et$ sending $V$ to $(f^{-1}A_{\cR})(V)_{\QQ}$ is the sheaf $(f^{*}A_{\cR})_{\QQ}$; by \cite[Proposition 3.3]{borne-vistoli1}, every \sm functor $B_{T}\arr \div_{T\et}$ extends uniquely to a \sm functor $(f^{*}A_{\cR})_{\QQ} \arr \div_{T\et}$. 

Consider the filtered category $I_{V}$ defined as follows. The objects are pairs $(m, V \arr U \arr X)$, where $m$ is a positive integer and $V \arr U \arr X$ is a factorization of the composite $V \arr T \arr \cR \arr X$. An arrow $\phi\colon (m, V \arr U \arr X) \arr (n, V \arr U' \arr X)$ exists only when $m \mid n$, in which case it consists of a morphism $\phi\colon U \arr U'$ such that the diagram
   \[
   \begin{tikzcd}[row sep = tiny]
   {}&U \ar{rd}\ar{dd}{\phi}&\\
   V\ar{ur}\ar{dr}&&X\\
   &U'\ar{ur}
   \end{tikzcd}
   \]
commutes. Composition is the obvious one.

There is a lax $2$-functor from $I_{V}\op$ into the $2$-category of symmetric monoidal categories, sending each $(m, V \arr U \arr X)$ into $\cA_{\cR}(U)$, and each morphism $\phi\colon (m, V \arr U \arr X) \arr (n, V \arr U' \arr X)$ into the composite of the pullback $\phi^{*}\colon \cA_{\cR}(U') \arr \cA_{\cR}(U)$ with the functor $\cA_{\cR}(U) \arr \cA_{\cR}(U)$ given by raising to the $(n/m)\th$ power. We have a canonical equivalence of \sm categories between $\indlim_{I_{V}}\cA_{\cR}(U)$ and the monoid $B_{T}(V)$.

There is also a \sm functor $\indlim_{I_{V}}\cA_{\cR}(U) \arr \div V$ that sends an object $(\Lambda_{n}, \alpha_{m,n}\bigr)$ over $(m, V \arr U \arr X)$ to $h^{*}\Lambda_{m}$, where $h \colon V \arr \cR_{U}$ is the morphism induced by $V \arr U$ and the composite $V \arr T \arr \cR$. By composing this with a quasi-inverse of the equivalence $\indlim_{I_{V}}\cA_{\cR}(U) \arr B_{T}(V)$ we obtain a \sm functor $B_{T}(V) \arr \div V$. This induces the desired \sm functor $B_{T} \arr \div_{T\et}$.

This way we defined a functor $\cR \arr \infroot{(A_{\cR}, L_{\cR})}$.

\begin{theorem}\label{thm:equivalence}
The functor $\cR \arr \infroot{(A_{\cR}, L_{\cR})}$ above is an equivalence.
\end{theorem}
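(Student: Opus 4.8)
The plan is to reduce to the case where $\cR$ is already the \irs of a genuine \df structure, and then use Proposition~\ref{prop:equality-sheaves} to recognize the constructed functor as the identity up to a canonical equivalence.

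First I would note that being an equivalence of stacks over $\cataff X$ is a local condition in the \'etale topology of $X$: a base-preserving functor between two stacks on $\cataff X$ (and both $\cR$ and $\infroot{(A_{\cR}, L_{\cR})}$ are stacks in the \'etale topology) is fully faithful and essentially surjective if and only if it becomes so after base change along some \'etale cover $\{U_{i}\arr X\}$. The formation of $\cA_{\cR}$, and hence of $(A_{\cR}, L_{\cR})$, commutes with arbitrary base change on $X$, and the construction of $\cR \arr \infroot{(A_{\cR}, L_{\cR})}$ through the filtered categories $I_{V}$ is likewise compatible with base change. By the definition of an abstract \irs we may therefore choose the $U_{i}$ so that each $U_{i}\times_{X}\cR$ is of the form $\infroot{(A, L)}$ for an honest \fs \df structure $(A,L)$ on $U_{i}$, obtained by pullback from a chart $U_{i} \arr \sz{P_{i}}$ via Proposition~\ref{prop:base-change} and Proposition~\ref{prop:local-model}. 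This reduces the statement to the case $\cR = \infroot{(A,L)}$.

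In this case Proposition~\ref{prop:equality-sheaves} shows that the natural \sm functor $A \arr \cA_{\cR}$ induces an isomorphism $A \larrowsim A_{\cR}$, and the corollary following it gives a canonical isomorphism of \df structures $(A, L) \simeq (A_{\cR}, L_{\cR})$. By functoriality of the \irs construction this induces a canonical equivalence $c\colon \cR = \infroot{(A,L)} \larrowsim \infroot{(A_{\cR}, L_{\cR})}$ over $X$, so it suffices to prove that the functor $F\colon \cR \arr \infroot{(A_{\cR}, L_{\cR})}$ constructed before the theorem is $2$-isomorphic to $c$. To see this I would evaluate both functors on an object $f\colon T \arr \cR$ with $T$ affine. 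By the definition of the \irs, $f$ corresponds to a map $t\colon T \arr X$ together with an extension $M_{\infty}\colon t^{*}A_{\QQ} \arr \div_{T\et}$ of $t^{*}L$, and $c(f)$ is exactly $M_{\infty}$ read through the identification $A \simeq A_{\cR}$. Unwinding the definition of $F$, over an \'etale $V \arr T$ one uses the equivalence $\indlim_{I_{V}}\cA_{\cR}(U) \simeq B_{T}(V)$ together with the \sm functor sending an infinite root $(\Lambda, \Lambda_{n}, \phi, \alpha_{m,n})$ over $(m, V \arr U \arr X)$ to $h^{*}\Lambda_{m}$, where $h\colon V \arr \cR_{U}$ is the induced map. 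Under the identification of Proposition~\ref{prop:equality-sheaves} the infinite root attached to a section $a \in A(U)$ has $\Lambda_{m} = \widetilde{L}(a/m)$, so $h^{*}\Lambda_{m} = M_{\infty}(a/m)$, while the isomorphisms $\phi$ and $\alpha_{m,n}$ are precisely the \sm coherence data of $M_{\infty}$. Hence the extension produced by $F(f)$ agrees with $M_{\infty}$ on every $\tfrac{1}{m}A$, compatibly with the transition maps, and so equals $c(f)$ up to the canonical isomorphism; naturality in $f$ and in $V$ yields the desired $2$-isomorphism $F \simeq c$.

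The main obstacle is not conceptual but bookkeeping: making the last paragraph precise requires matching the colimit, monoidal, and descent identifications built into the definition of $F$ with the explicit description of $\cA_{\cR}$ afforded by Proposition~\ref{prop:equality-sheaves}, and checking that the resulting $2$-isomorphism is natural and compatible with the base-change identifications used in the reduction. Once Proposition~\ref{prop:equality-sheaves} is granted the equivalence is essentially forced, but the $2$-categorical coherence must be verified with some care.
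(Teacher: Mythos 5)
Your argument is correct, but there is nothing in the paper to compare it against: the authors announce, just before Proposition~\ref{prop:A_R-1-category}, that the proof of Theorem~\ref{thm:equivalence} is postponed to~\ref{subsec:proofs}, yet that subsection contains only Lemma~\ref{lem:invertible-image} and the proofs of Propositions \ref{prop:A_R-1-category} and \ref{prop:equality-sheaves}; a proof of Theorem~\ref{thm:equivalence} never appears in the text (it is nonetheless cited, together with Proposition~\ref{prop:equality-sheaves}, in the proof of Theorem~\ref{thm:equivalence1}). Your proposal thus supplies the missing argument, and it does so with exactly the ingredients the paper prepares for this purpose. The two substantive points are the ones you isolate, and both hold. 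First, the \'{e}tale-local reduction is legitimate: $\cR$ is by definition a stack in the \'{e}tale topology and so is $\infroot{(A_{\cR}, L_{\cR})}$, equivalences of stacks can be checked on an \'{e}tale cover, the formation of $(A_{\cR}, L_{\cR})$ commutes with base change (a remark in the paper), and the functor construction commutes with base change because for an \'{e}tale map $U_{i} \arr X$ the factorizations $V \arr U \arr U_{i}$ are cofinal in $I_{V}$ among the factorizations $V \arr U \arr X$ (any $V \arr U \arr X$ is dominated by $V \arr U \times_{X} U_{i} \arr U_{i}$). Second, in the reduced case $\cR = \infroot{(A, L)}$, the identification $F \simeq c$ holds for the reason you give: under the isomorphism $A \simeq A_{\cR}$ of Proposition~\ref{prop:equality-sheaves} the infinite root attached to $a \in A(U)$ has $\Lambda_{m} = \widetilde{L}(a/m)$; the defining (descent) property of the universal extension $\widetilde{L}$ gives $h^{*}\widetilde{L}(a/m) = M_{\infty}(a/m)\rest{V}$ for the map $h\colon V \arr \cR_{U}$ induced by an object corresponding to $(M_{\infty}, \alpha)$; and the uniqueness of extensions of \sm functors from $B_{T}$ to $(f^{*}A_{\cR})_{\QQ}$, invoked in the construction of $F$, upgrades agreement on $B_{T}$, compatibly with $\alpha$ and the monoidal data, to an isomorphism $F(f) \cong c(f)$ natural in $f$. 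Since a functor isomorphic to an equivalence is an equivalence, this completes the proof; none of the coherence verifications you defer conceals a genuine difficulty, though note that the direction conventions for the arrows of $I_{V}$ and the transition functors in the paper's construction of $F$ need to be read with the intended meaning (pullback to a finer neighborhood combined with raising to a power represents the same rational section) for the bookkeeping to come out.
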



\subsection{Infinite quotients in sharp \fs monoids}\label{subsec:infinite-quotients}

To prove the statements of the previous section we need to investigate the notion of an \emph{infinite quotient} in a sharp \fs monoid.

Let $P$ be a sharp \fs monoid, and assume that $P\gr$ has rank $r$. In other words we have $P\gr\cong \ZZ^{r}$, and consequently $P\gr_{\QQ}\cong \QQ^{r}$. Moreover $P\grq /P\gr = P\gr \otimes (\QQ/\ZZ)$ is isomorphic to $(\QQ/\ZZ)^{r}$. We consider $P\grq$ as a topological space via the usual metric topology on $\QQ^{r}$.

Set
\[
\widecheck P = \projlim_{n} (P\grq /P\gr)[n] = P\gr \otimes \widehat{\ZZ} \cong \widehat{\ZZ}^{r},
\]
where with the square brackets we denote the $n$-torsion, the map $(P\grq /P\gr)[n]\to (P\grq /P\gr)[m]$ for $m\mid n$ is given by multiplication by $n/m$, and $\widehat{\ZZ}$ denotes as usual the profinite completion of $\ZZ$. An element of $\widecheck P$ consists of a collection $\{\lambda_{n}\}$ of elements of $P\grq/P\gr$ such that $\lambda_{1} = 0$, and $(n/m)\lambda_{n} = \lambda_{m}$ whenever $m \mid n$.

Set
   \[
   \Delta_{P} = P_{\QQ}\setminus (P^{+} + P_{\QQ})\,,
   \]
where $P^{+}=P\setminus \{0\}$. Since $\Delta_{P}$ is the complement of an ideal in $P_{\QQ}$, it has the property that if $\gamma$, $\delta \in P_{\QQ}$ and $\gamma + \delta \in \Delta_{P}$, then $\gamma$ and $\delta$ are in $\Delta_{P}$. Hence, if $n$ is a positive integer, $\gamma \in P\grq$, and $n\gamma \in \Delta_{P}$, then $\gamma \in \Delta_{P}$.

The set $\Delta_{P}$ is clearly bounded in $P\grq$. Also, if $v_{1}$, \dots~$v_{m}$ are the indecomposable elements of $P$ we have
   \[
   \Delta_{P} = P_{\QQ} \setminus \bigcup_{i=1}^{m} (v_{i} + P_{\QQ})\,;
   \]
since $P_{\QQ}$ is closed in $P\grq$, we have that $\Delta_{P}$ is open in $P_{\QQ}$.

We set
   \[
   \Delta_{P}^{0} = \bigl\{\gamma \in \Delta_{P} \mid (\gamma + P\gr) \cap \Delta_{P}
   = \{\gamma\}\bigr\}\,.
   \]
Figure~1 below illustrates what $\Delta_{P}$ and $\Delta_{P}^{0}$ look like in a simple but significant example.
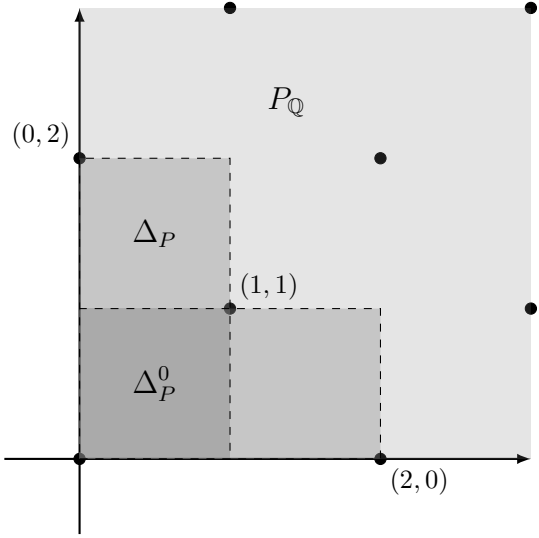
\begin{figure}[ht]
  \centering
\begin{tikzpicture}
    \coordinate (Origin)   at (0,0);
    \coordinate (XAxisMin) at (-1,0);
    \coordinate (XAxisMax) at (6,0);
    \coordinate (YAxisMin) at (0,-1);
    \coordinate (YAxisMax) at (0,6);
    \draw [thick,-latex] (XAxisMin) -- (XAxisMax);
    \draw [thick,-latex] (YAxisMin) -- (YAxisMax);
    \foreach \x in {0,...,1}{
      \foreach \y in {0,...,1}{
        \node[draw,circle,inner sep=1.5pt, fill] at (4*\x,4*\y) {};
      }
    }
    \foreach \x in {1,3}{
      \foreach \y in {1,3}{
        \node[draw,circle,inner sep=1.5pt, fill] at (2*\x,2*\y) {};
      }
    }
    \fill[fill=gray, fill opacity=0.2] (Origin)
        rectangle (6,6);
        \filldraw[fill=gray, dashed, fill opacity=0.3]  (0,0) -- (4,0) -- (4,2) -- (2,2) -- (2,4) -- (0,4) -- cycle;
           \filldraw[fill=gray, dashed, fill opacity=0.4]  (0,0) -- (2,0) -- (2,2) -- (0,2) -- cycle;
  \fill (4,0)  node[below right] {$\mathlarger{(2,0)}$};
   \fill (0,4)  node[above left] {$\mathlarger{(0,2)}$};
    \fill (2,2)  node[above right] {$\mathlarger{(1,1)}$};
      \fill (2.75,4.75)  node {$\mathlarger{\mathlarger{P_\QQ}}$};
      \fill (1,3)  node {$\mathlarger{\mathlarger{\Delta_P}}$};
       \fill (1,1)  node {$\mathlarger{\mathlarger{\Delta^0_P}}$};
\end{tikzpicture}
\caption{The cone $P_\QQ$ and the open loci $\Delta_P$ and $\Delta_P^0$ for the monoid $P=\langle (0,2), (1,1), (2,0) \rangle\subseteq \ZZ^2$.}
  \label{figure}
\end{figure}

By definition, the restriction of the projection $P\grq \arr P\grq/P\gr$ to $\Delta_{P}^{0}$ is injective.

\begin{lemma}\label{lem:neighborhood-0}
The set $\Delta_{P}^{0}$ is a neighborhood of $0$ in $P_{\QQ}$.
\end{lemma}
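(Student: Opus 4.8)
The plan is to exhibit an explicit open neighbourhood of $0$ contained in $\Delta_{P}^{0}$. First I would record the two facts that drive everything: that $0\in\Delta_{P}$, and that $\Delta_{P}$ is disjoint from $P^{+}+P_{\QQ}$ by its very definition. The membership $0\in\Delta_{P}$ holds because $P_{\QQ}$ is sharp (if $x$ and $-x$ both lie in $P_{\QQ}$ then a common multiple $nx$ is a unit of $P$, hence $0$, so $x=0$), whence $0$ cannot be written as $p+q$ with $p\in P^{+}$ and $q\in P_{\QQ}$. Since $\Delta_{P}$ is open in $P_{\QQ}$, it is already a neighbourhood of $0$ inside $P_{\QQ}$; the work is to shrink it so as to kill the lattice-translation condition defining $\Delta_{P}^{0}$.

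For $\lambda\in P\gr\setminus\{0\}$ set $B_{\lambda}\eqdef\{\gamma\in\Delta_{P}\mid\gamma+\lambda\in\Delta_{P}\}$, so that $\Delta_{P}^{0}=\Delta_{P}\setminus\bigcup_{\lambda\neq0}B_{\lambda}$. The first key step is finiteness: if $\gamma\in B_{\lambda}$ then $\lambda=(\gamma+\lambda)-\gamma\in\Delta_{P}-\Delta_{P}$, and since $\Delta_{P}$ is bounded in $P\grq$ the set $(\Delta_{P}-\Delta_{P})\cap P\gr$ is the intersection of a bounded set with a lattice, hence finite. Thus only finitely many $\lambda$, forming a finite set $S$, give $B_{\lambda}\neq\emptyset$.

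The main step, and the one I expect to carry the essential content, is to show $0\notin\overline{B_{\lambda}}$ for each $\lambda\in S$ (closure taken in $P\grq$). Here I would argue by contradiction using sequences, legitimate since $P\grq=\QQ^{r}$ is metric: a sequence $\gamma_{k}\in B_{\lambda}$ with $\gamma_{k}\to0$ would give $\gamma_{k}+\lambda\to\lambda$ with every $\gamma_{k}+\lambda\in\Delta_{P}\subseteq P_{\QQ}$; since $P_{\QQ}$ is closed in $P\grq$ this forces $\lambda\in P_{\QQ}$, and then saturation, in the form $P\gr\cap P_{\QQ}=P$, gives $\lambda\in P$, so $\lambda\in P^{+}$. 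But then $\gamma_{k}+\lambda=\lambda+\gamma_{k}\in P^{+}+P_{\QQ}$ is disjoint from $\Delta_{P}$, contradicting $\gamma_{k}+\lambda\in\Delta_{P}$.

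Finally I would assemble the pieces. As $S$ is finite, $\bigcup_{\lambda\in S}\overline{B_{\lambda}}$ is closed and avoids $0$, so its complement $W$ is open and contains $0$; then $U\eqdef W\cap\Delta_{P}$ is open in $P_{\QQ}$, contains $0$, and lies in $\Delta_{P}^{0}$. Indeed, any $\gamma\in U$ lies in $\Delta_{P}$, and if $\gamma+\lambda\in\Delta_{P}$ for some $\lambda\neq0$ then $\gamma\in B_{\lambda}$, forcing $\lambda\in S$ and contradicting $\gamma\in W$; hence $(\gamma+P\gr)\cap\Delta_{P}=\{\gamma\}$, i.e. $\gamma\in\Delta_{P}^{0}$, so $U$ is the required neighbourhood. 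The only genuinely delicate point is the interplay, in the main step, between the closedness of $P_{\QQ}$ and saturation; everything else is bounded-set-meets-lattice bookkeeping.
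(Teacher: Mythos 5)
Your proof is correct and takes essentially the same route as the paper: the same decomposition of $\Delta_{P}^{0}$ over nonzero lattice vectors (your $B_{\lambda}$ is the paper's $\Delta_{P}\cap(\gamma+\Delta_{P})$ with $\gamma=-\lambda$), the same boundedness-implies-finiteness step, and the same dichotomy according to whether the lattice vector lies in $P_{\QQ}$, settled by closedness of $P_{\QQ}$ together with saturation and sharpness. The only difference is packaging: where the paper argues the two cases directly (the translate is disjoint from $\Delta_{P}$ when $\gamma\in -P_{\QQ}$, and otherwise $\gamma+P_{\QQ}$ is a closed set missing $0$), you run a single sequential-closure contradiction, which is the same mathematics.
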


\begin{proof}
It is easy to see that $0 \in \Delta_{P}^{0}$. Also, we have
   \[
   \Delta_{P}^{0} = \bigcap_{\gamma \in P\gr \setminus \{0\}}
   \bigl(\Delta_{P} \setminus (\gamma + \Delta_{P})\bigr)\,;
   \]
but $\Delta_{P}$ is bounded, so there exists a finite number of $\gamma \in P\gr \setminus \{0\}$ such that $\Delta_{P} \cap (\gamma + \Delta_{P}) \neq \emptyset$. So it is enough to prove that $\Delta_{P} \setminus (\gamma + \Delta_{P})$ is a neighborhood of $0$ in $P_{\QQ}$ for all $\gamma \in P\gr \setminus \{0\}$.

If $\gamma \in -P_{\QQ}\cap (P\gr\setminus \{0\})=-P\setminus \{0\}$ we have
   \[
   \Delta_{P} \cap (\gamma + \Delta_{P}) = \gamma +
   \bigl(\Delta_{P} \cap (-\gamma + \Delta_{P})\bigr) = \emptyset\,
   \]
{since $-\gamma \in P\setminus \{0\}=P^+$, and by definition of $\Delta_P$.}
Otherwise, we have $0 \notin \gamma+ P_{\QQ}$, so $\Delta_{P} \setminus (\gamma+ P_{\QQ})$ is neighborhood of $0$ in $P_{\QQ}$, and $\Delta_{P} \setminus (\gamma+ P_{\QQ}) \subset \Delta_{P} \setminus (\gamma+ \Delta_{P})$. This finishes the proof.
\end{proof}

There is a group homomorphism $P\gr \arr \widecheck P$ sending each $p \in P\gr$ into the element $\infquot p \eqdef \{[p/n]\} \in \widecheck P$. This is easily seen to be injective. Consider the restriction $P \arr \widecheck P$.

We need to recognize elements in $\widecheck P$ that come from $P$. To do so, we introduce the following definition, in which we think of an element $\lambda \in P\grq/P\gr$ as a coset of $P\gr$ in $ P\grq$.

\begin{definition}\label{def:infinite-quotient}
An element $\{\lambda_{n}\}$ of $\widecheck P$ is \emph{an infinite quotient} if there exists a positive integer $m_{0}$ such that for any positive integer $j$ there exists a sequence $\gamma_{1}$, \dots,~$\gamma_{j}$ of elements of $\lambda_{jm_{0}} \cap \Delta_{P}$ such that $\gamma_{1} + \dots + \gamma_{j} \in \Delta_{P}$.

We call such an $m_{0}$ a \emph{characteristic integer} for $\{\lambda_{n}\}$.

We denote the set of infinite quotients in $P$ by $\infquot P$.
\end{definition}

The following is straightforward.

\begin{lemma}\label{lem:multiple-characteristic}
If $\{\lambda_{n}\}$ is an infinite quotient and $m_{0}$ is a characteristic integer, then any positive multiple of $m_{0}$ is also a characteristic integer for $\{\lambda_{n}\}$.\qed
\end{lemma}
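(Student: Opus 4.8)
The plan is to unwind the definition of a characteristic integer and to exploit the fact, already recorded in the text, that $\Delta_P = P_\QQ \setminus (P^{+} + P_\QQ)$ is the complement of an ideal of $P_\QQ$. Suppose $m_0$ is a characteristic integer for $\{\lambda_n\}$, and fix a positive integer $k$; I want to show that $km_0$ is again characteristic, i.e.\ that for every positive integer $j$ there exist $\gamma_1, \dots, \gamma_j \in \lambda_{jkm_0} \cap \Delta_P$ with $\gamma_1 + \dots + \gamma_j \in \Delta_P$.

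First I would isolate the elementary closure property of $\Delta_P$ that does all the work. Because $\Delta_P$ is the complement of an ideal, the excerpt already observes that $\gamma + \delta \in \Delta_P$ with $\gamma, \delta \in P_\QQ$ forces both $\gamma, \delta \in \Delta_P$. Iterating this gives the partial-sum statement I need: if $x_1, \dots, x_N \in P_\QQ$ and $x_1 + \dots + x_N \in \Delta_P$, then every partial sum $x_{i_1} + \dots + x_{i_\ell}$ lies in $\Delta_P$. Indeed, writing the total sum as the chosen partial sum plus the complementary partial sum, both of which lie in $P_\QQ$ since $P_\QQ$ is a submonoid, and applying the two-term closure property, yields the claim.

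The main step is then immediate. Given $j$, apply the characteristic property of $m_0$ with the integer $j' \eqdef jk$: this produces a sequence $\delta_1, \dots, \delta_{jk}$ of elements of $\lambda_{(jk)m_0} \cap \Delta_P$ with $\delta_1 + \dots + \delta_{jk} \in \Delta_P$. Setting $\gamma_i \eqdef \delta_i$ for $i = 1, \dots, j$, each $\gamma_i$ lies in the correct coset $\lambda_{jkm_0} \cap \Delta_P$ because $(jk)m_0 = j(km_0)$, and the partial-sum observation above gives $\gamma_1 + \dots + \gamma_j \in \Delta_P$. Hence $km_0$ is a characteristic integer for $\{\lambda_n\}$, as desired.

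I do not expect any genuine obstacle: the statement is essentially index bookkeeping together with the downward-closure of the complement of an ideal. The only point demanding a moment's care is matching the coset subscript, namely checking that the first $j$ terms extracted from the length-$jk$ sequence associated to $m_0$ genuinely live in $\lambda_{jkm_0}$ and not in some $\lambda_\ast$ with a different index; this is exactly what the identity $(jk)m_0 = j(km_0)$ guarantees, so the verification is routine.
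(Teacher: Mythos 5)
Your proof is correct: the paper omits the argument entirely (labelling the lemma as straightforward), and your verification---invoking the defining property of $m_0$ with $j' = jk$, keeping the first $j$ terms, noting $\lambda_{(jk)m_0} = \lambda_{j(km_0)}$, and using that $\Delta_P$ is the complement of an ideal so partial sums of an element of $\Delta_P$ stay in $\Delta_P$---is exactly the intended routine check.
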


Let us note that the image $\infquot p = \{[p/n]\}$  of an element $p \in P$ is an infinite quotient. In fact, by Lemma~\ref{lem:neighborhood-0} there exists $m_{0}$ such that $p/jm_{0} \in \Delta_{P}$ for all positive integers $j$; then it is enough to take $\gamma_{1} = \dots = \gamma_{j} = p/jm_{0}$.

The following proposition says in particular that the converse holds, i.e. infinite quotients in $\widecheck{P}$ correspond exactly to elements of $P$.

\begin{proposition}\call{prop:description-infinite-quotients} \hfil
\begin{enumeratea}

\itemref{1} Let $\{\lambda_{n}\}$ be an infinite quotient in $P$. For every sufficiently divisible $n$ we have $\lambda_{n} = [\gamma]$ for some $\gamma \in \Delta_{P}^{0}$.

\itemref{3} Let $\{\lambda_{n}\}$  and $\{\lambda'_{n}\}$ be infinite quotients in $P$. For every sufficiently divisible $n$ we have $\lambda_{n} = [\gamma]$ and $\lambda'_{n} = [\gamma']$ with $\gamma + \gamma' \in \Delta_{P}^{0}$.

\itemref{2} The image of $P$ in $\widecheck P$ is precisely $\infquot{P}$, hence the map $P\to \infquot P$ is an isomorphism.

\end{enumeratea}
\end{proposition}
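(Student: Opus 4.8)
The plan is to exploit that $P_{\QQ}$ is a \emph{pointed} rational cone (because $P$ is sharp), together with the facts that $\Delta_{P}$ is bounded and that $\Delta_{P}^{0}$ is a neighborhood of $0$ (Lemma~\ref{lem:neighborhood-0}). Concretely, I would fix a linear functional $\ell\colon P\grq\to\QQ$ that is strictly positive on $P_{\QQ}\setminus\{0\}$ (an interior point of the dual cone), set $C\eqdef\sup_{\Delta_{P}}\ell<\infty$, and observe that pointedness makes $\{x\in P_{\QQ}\mid \ell(x)\le 1\}$ relatively compact, so the truncated cones $\{x\in P_{\QQ}\mid \ell(x)\le\varepsilon\}$ shrink to $\{0\}$; hence there is $\delta>0$ with $\{x\in P_{\QQ}\mid \ell(x)\le\delta\}\subseteq\Delta_{P}^{0}$. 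The entire argument rests on one averaging remark: if $\gamma_{1},\dots,\gamma_{j}\in\Delta_{P}$ and $\gamma_{1}+\dots+\gamma_{j}\in\Delta_{P}$, then $\sum_{i}\ell(\gamma_{i})=\ell(\sum_{i}\gamma_{i})\le C$ with all summands nonnegative, so $\min_{i}\ell(\gamma_{i})\le C/j$.

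For \refpart{prop:description-infinite-quotients}{1}, let $m_{0}$ be a characteristic integer for $\{\lambda_{n}\}$ and, for a given $j$, take $\gamma_{1},\dots,\gamma_{j}\in\lambda_{jm_{0}}\cap\Delta_{P}$ with $\sum_{i}\gamma_{i}\in\Delta_{P}$. The averaging remark produces an index $i_{*}$ with $\ell(\gamma_{i_{*}})\le C/j$; since $\gamma_{i_{*}}\in P_{\QQ}$, as soon as $C/j\le\delta$ we get $\gamma_{i_{*}}\in\Delta_{P}^{0}$, a representative of $\lambda_{jm_{0}}$. Writing a sufficiently divisible $n$ as $n=jm_{0}$ with $j=n/m_{0}$, this gives the claim for all $n$ divisible by $N\eqdef j_{0}m_{0}$, where $j_{0}$ is chosen with $C/j_{0}\le\delta$. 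I would also record that the representative in $\Delta_{P}^{0}$ is unique — if $\gamma\in\Delta_{P}^{0}$ and $\gamma''\in\Delta_{P}$ share a $P\gr$-coset then $\gamma''\in(\gamma+P\gr)\cap\Delta_{P}=\{\gamma\}$ — so the $\Delta_{P}^{0}$-representative $\gamma_{n}$ of $\lambda_{n}$ is well defined and, being equal to $\gamma_{i_{*}}$, satisfies $\ell(\gamma_{n})\le Cm_{0}/n$. Part \refpart{prop:description-infinite-quotients}{3} follows the same way after replacing $m_{0}$ by a common characteristic integer for $\{\lambda_{n}\}$ and $\{\lambda'_{n}\}$ (legitimate by Lemma~\ref{lem:multiple-characteristic}): applying the averaging to both families and choosing the single index $i_{*}$ minimizing $\ell(\gamma_{i})+\ell(\gamma'_{i})$, one gets $\ell(\gamma_{i_{*}}+\gamma'_{i_{*}})\le 2C/j\le\delta$, whence $\gamma_{i_{*}}+\gamma'_{i_{*}}\in\Delta_{P}^{0}$ while $\gamma_{i_{*}}$ and $\gamma'_{i_{*}}$ represent $\lambda_{n}$ and $\lambda'_{n}$.

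For \refpart{prop:description-infinite-quotients}{2}, the inclusion of the image of $P$ into $\infquot P$ was already noted before the statement, so it remains to show every infinite quotient $\{\lambda_{n}\}$ comes from $P$. Using \refpart{prop:description-infinite-quotients}{1}, for each sufficiently divisible $n$ let $\gamma_{n}\in\Delta_{P}^{0}$ be the unique representative of $\lambda_{n}$; since $n\lambda_{n}=0$ we have $p_{n}\eqdef n\gamma_{n}\in P\gr$. The key step is that $p_{n}$ is independent of $n$: for $m\mid n$ both sufficiently divisible, $(n/m)\gamma_{n}$ represents $\lambda_{m}$, and the bound $\ell(\gamma_{n})\le Cm_{0}/n$ gives $\ell((n/m)\gamma_{n})\le Cm_{0}/m\le\delta$ once $m$ is large, so $(n/m)\gamma_{n}\in\Delta_{P}^{0}$; uniqueness of the $\Delta_{P}^{0}$-representative then forces $(n/m)\gamma_{n}=\gamma_{m}$, i.e. $p_{n}=p_{m}$. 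Call the common value $p$. Then $p=n\gamma_{n}\in P_{\QQ}\cap P\gr=P$, the last equality being saturation of $P$. Finally $[p/n]=[\gamma_{n}]=\lambda_{n}$ for sufficiently divisible $n$; since these indices are cofinal and every $\lambda_{m}$ is recovered as $(n/m)\lambda_{n}$ in the projective system $\widecheck P$, this yields $[p/n]=\lambda_{n}$ for all $n$, that is $\infquot p=\{\lambda_{n}\}$.

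I expect the stabilization step $(n/m)\gamma_{n}=\gamma_{m}$ in the last part to be the genuine obstacle: it is the one place where the metric estimate $\ell(\gamma_{n})\le Cm_{0}/n$, the uniqueness of representatives in $\Delta_{P}^{0}$, and the saturation of $P$ must all be combined, and it is what upgrades the ``sufficiently divisible'' existence statement into an honest element of $P$. By comparison, constructing $\ell$ and checking that the truncated cones shrink to $0$ is routine, though it should be carried out with some care over $\QQ^{r}$ rather than $\RR^{r}$.
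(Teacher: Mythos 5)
Your proof is correct, and its engine coincides with the paper's: the paper also fixes an additive nonnegative functional on $P_{\QQ}$ --- an $\ell^{1}$\dash norm $\abs{-}$ taken in a basis for which $P_{\QQ}$ lies in the nonnegative orthant, which restricted to the cone is exactly a linear functional like your $\ell$ --- and proves \refpart{prop:description-infinite-quotients}{1} by the same pigeonhole, phrased contrapositively: if every representative of $\lambda_{jm_{0}}$ in $\Delta_{P}$ had norm $>\epsilon$, then any $j$-fold sum would have norm $>j\epsilon$, exceeding the diameter of $\Delta_{P}$ for large $j$ and hence escaping $\Delta_{P}$, against Definition~\ref{def:infinite-quotient}. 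Where you genuinely diverge is in the remaining two parts, and there your route is correct but does somewhat more work. For the stabilization step in \refpart{prop:description-infinite-quotients}{2} --- the step you single out as the main obstacle --- the paper needs no quantitative bound of the form $\ell(\gamma_{n})\le Cm_{0}/n$: once every $\lambda_{jm_{0}}$ has its unique representative $\gamma_{jm_{0}}\in\Delta_{P}^{0}$, the sequence $\gamma_{1},\dots,\gamma_{j}\in\lambda_{jm_{0}}\cap\Delta_{P}$ furnished by the definition of infinite quotient is forced by uniqueness to be constant equal to $\gamma_{jm_{0}}$, so the membership $j\gamma_{jm_{0}}\in\Delta_{P}$ comes for free; since $j\gamma_{jm_{0}}$ represents $\lambda_{m_{0}}$, uniqueness applies once more to give $j\gamma_{jm_{0}}=\gamma_{m_{0}}$, and one sets $q\eqdef m_{0}\gamma_{m_{0}}$. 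In other words, the paper dispatches the stabilization by re-invoking the definition of infinite quotient rather than by a metric estimate; both mechanisms are valid, yours being a reasonable substitute at the cost of carrying the decay bound through part \refpart{prop:description-infinite-quotients}{1}. Finally, the paper obtains \refpart{prop:description-infinite-quotients}{3} as an immediate corollary of \refpart{prop:description-infinite-quotients}{2}: writing the two infinite quotients as $\infquot{p}$ and $\infquot{p'}$ with $p,p'\in P$, one has $p/n+p'/n=(p+p')/n\in\Delta_{P}^{0}$ for all sufficiently divisible $n$ simply because $\Delta_{P}^{0}$ is a neighborhood of $0$ in $P_{\QQ}$ (Lemma~\ref{lem:neighborhood-0}); your simultaneous-averaging argument proves \refpart{prop:description-infinite-quotients}{3} independently of \refpart{prop:description-infinite-quotients}{2}, which is harmless but unnecessary. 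One small merit of your write-up: the appeal to saturation via $P_{\QQ}\cap P\gr=P$ to conclude $p\in P$ is made explicit, whereas the paper leaves this verification implicit.
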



\begin{proof}

Let us show that there is a norm $\abs{-}$ on $P\grq$ with the property that $\abs{\gamma+\delta} = \abs{\gamma} + \abs{\delta}$ for any $\gamma$ and $\delta$ in $P_{\QQ}$. For this, notice that there is a basis $v_{1}$, \dots~$v_{r}$ of $P\grq \cong \QQ^{r}$ with the property that every vector in $P_{\QQ}$ has non-negative coordinates (in fact, since $P$ is sharp the cone in the dual space $(P\gr_{\QQ})^{\vee}$ that is dual to $P_{\QQ}$ has nonempty interior, so it contains a basis of $P_{\QQ}^{\vee}$, and the dual basis in $P\gr_{\QQ}$ has this property). Then the norm defined by $\abs{x_{1}v_{1} + \dots + x_{r}v_{r}} \eqdef \abs{x_{1}} + \dots + \abs{x_{n}}$ has this property.

Now let $m_{0}$ be a characteristic integer for $\{\lambda_{n}\}$ and pick a positive real number $\epsilon$ such that every $\gamma \in P_{\QQ}$ with $\abs{\gamma} \leq \epsilon$ is in $\Delta_{P}^{0}$. Since $\Delta_{P}$ is bounded in $P\grq$ we can also choose $N > 0$ with the property that $N\epsilon$ is larger than the diameter of $\Delta_{P}$. If $n$ is divisible by $m_{0}$ and $n/m_{0} > N$, then we claim that we there exists $\gamma \in \lambda_{n}$ with $\abs{\gamma} \leq \epsilon$, so that $\gamma \in \Delta_{P}^{0}$, which will conclude the proof of part~\refpart{prop:description-infinite-quotients}{1}.

Write $n = jm_{0}$; it follows immediately from Definition~\ref{def:infinite-quotient} that $\lambda_{n} = [\gamma]$ for some $\gamma \in \Delta_{P}$. If $\abs{\gamma}>\epsilon$ for all $\gamma \in \lambda_{n} \cap \Delta_{P}$, then for any sequence $\gamma_{1}$, \dots~$\gamma_{j}$ in $\lambda_{n} \cap \Delta_{P}$ we have
   \[
   \abs{\gamma_{1} + \dots + \gamma_{j}} = \abs{\gamma_{1}} + \dots + \abs{\gamma_{j}}
   > j\epsilon > N\epsilon\,,
   \]
hence $\gamma_{1}$, \dots~$\gamma_{j} \notin \Delta_{P}$, and this contradicts Definition~\ref{def:infinite-quotient}.

\smallskip

For \refpart{prop:description-infinite-quotients}{2}, we already showed that $\infquot p \in \infquot P$.

Conversely, suppose that $\{\lambda_{n}\} \in \infquot{P}$. Let $m_{0}$ be a characteristic integer for $\{\lambda_{n}\}$; from Lemma~\ref{lem:multiple-characteristic} and part~\refpart{prop:description-infinite-quotients}{1}, we see that we can take $m_{0}$ such that $\lambda_{n} = [\gamma_{n}]$ with $\gamma_{n} \in \Delta_{P}^{0}$ for all $n$ divisible by $m_{0}$. Then by definition of infinite quotients and the fact that $\gamma_{jm_{0}}$ is the \emph{only} representative of $\lambda_{jm_{0}}$ in $\Delta_{P}$ for all positive $j$ (by definition of $\Delta_P^0$), we have $j\gamma_{jm_{0}} \in \lambda_{m_{0}} \cap\Delta_{P}$ for all $j > 0$, and hence $\gamma_{m_{0}} = j\gamma_{jm_{0}}$ for all $j > 0$. Setting $q \eqdef m_{0}\gamma_{m_{0}}$, we have $\gamma_{n} = q/n$ for all $n$ divisible by $m_{0}$, which implies that for all $n$ we have $\lambda_{n}=[q/n]$, so that $\{\lambda_{n}\} = \infquot q$.

\smallskip

Part~\refpart{prop:description-infinite-quotients}{3} follows immediately from \refpart{prop:description-infinite-quotients}{1} and \refpart{prop:description-infinite-quotients}{2}.
\end{proof}

\subsection{Picard groups of infinite root stacks over geometric points}\label{subsec:picard-groups}

Next we need some results on the Picard group of an infinite root stack over a geometric point. We will show that it can be identified with the quotient $P\gr_{\QQ}/P\gr$, where $P$ is the stalk of the sheaf $A$ at the point.

We recall our notations for logarithmic points.

\begin{notation}\label{notation-geometric-fiber}
If $k$ is a field and $P$ is a sharp \fs monoid, we will denote by $(A_{P}, L_{P})$ the \df structure on $\spec k$, where $A_{P}$ is the constant sheaf of monoids on $(\spec k){\et}$ corresponding to $P$, and $L_{P}\colon A \arr \div_{(\spec k)\et}$ corresponds to the homomorphism $\Lambda\colon P \arr k$ that sends $0$ into $1$ and everything else into $0$.

We also set $\infroot{P/k} \eqdef \infroot{(A_{P}, L_{P})}$. 
\end{notation}

As explained in Section \ref{sec:abstract-irs}, the \irs $\infroot{P/k}$ is isomorphic to the fibered product $\spec k \times_{\spec k[P]}  \bigl[\spec k[P_{\QQ}]/\mmu_{\infty}(P)\bigr]$, where $\spec k \arr \spec k[P]$ corresponds to the ring homomorphism $k[P] \arr k$ determined by $\Lambda$. We also have
   \[
   \infroot{P/k} = \bigl[\spec(k[P_{\QQ}]/(P^{+}))/\mmu_{\infty}(P)\bigr]
   \]
where the action of $\mmu_{\infty}(P)$ on $\spec(k[P_{\QQ}]/(P^{+}))$ is determined by the natural $P\grq/P\gr$-grading on $k[P_{\QQ}]/(P^{+})$. Moreover the reduced substack $(\infroot{P/k})_{\red}$ is the classifying stack $\cB_{k} \mmu_{\infty}(P) = \bigl[\spec k/ \mmu_{\infty}(P)\bigr]$.

Set $R \eqdef k[P_{\QQ}]/(P^{+})$, and note that for the natural $P\grq/P\gr$-grading, we have $R_{\lambda} = \oplus_{\gamma \in \lambda \cap \Delta_{P}} R_{\gamma}$, for $\lambda \in P\grq/P\gr$. 
Invertible sheaves on $\infroot{P/k}$ correspond to $P\grq/P\gr$-graded invertible modules on $R$; this gives a very concrete description of $\pic(\infroot{P/k})$. There is a natural homomorphism
   \[
   P\grq/P\gr = \hom(\mmu_{\infty}(P), \gm\bigr) \arr \pic(\infroot{P/k})
   \]
that sends $\gamma \in P\grq/P\gr$ into the graded $R$-module $R(\lambda)$, where $R(\lambda) = R$ as an $R$-module, but the $P\grq/P\gr$-grading is defined by $R(\lambda)_{\mu} = R_{\lambda + \mu}$.

Since $R$ is the inductive limit of the local artinian rings $k[\frac{1}{n}P]/(P^{+})$, every invertible module on $R$ is trivial; hence, every $P\grq/P\gr$-graded invertible module on $R$ is of the form $R(\lambda)$ for $\lambda \in P\grq/P\gr$. So the homomorphism above is surjective. Since $R(\lambda) \otimes_{R}k = k(\gamma)$ is a $P\grq/P\gr$-graded vector space, we see that $R(\lambda) \cong R(\mu)$ if and only if $\lambda = \mu$, and the homomorphism is also injective.

Let us record this in a lemma.

\begin{lemma}\label{lem:local-picard}
The natural homomorphism
   \[
   P\grq/P\gr = \hom\bigl(\mmu_{\infty}(P), \gm\bigr) \arr \pic(\infroot{P/k})
   \]
is an isomorphism.\qed
\end{lemma}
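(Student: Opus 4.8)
The plan is to translate the statement about line bundles on the quotient stack into graded commutative algebra and then exploit the fact that $R = k[P_{\QQ}]/(P^{+})$ is local. Since $\mmu_{\infty}(P)$ is diagonalizable and $\infroot{P/k} = [\spec R/\mmu_{\infty}(P)]$, Proposition~\ref{prop:qc-group-actions} identifies $\catqcoh{\infroot{P/k}}$ with the category of $\mmu_{\infty}(P)$-equivariant $R$-modules, that is, with $P\grq/P\gr$-graded $R$-modules (the character group of $\mmu_{\infty}(P)$ being $P\grq/P\gr$). Under this dictionary invertible sheaves correspond to graded invertible $R$-modules, and the homomorphism of the statement is $\lambda \mapsto R(\lambda)$. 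So it suffices to show that $\lambda \mapsto R(\lambda)$ is a bijection from $P\grq/P\gr$ onto the set of isomorphism classes of graded invertible $R$-modules, and that it is a group homomorphism.

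The key input is that $R$ is a local ring with residue field $k$, whose maximal ideal $\frm$ is the graded ideal spanned by the monomials $x^{\gamma}$ with $\gamma \in \Delta_{P}\setminus\{0\}$, and that $\frm$ is nil. I would justify this either by writing $R$ as the filtered colimit of the finite-dimensional local $k$-algebras $k[\tfrac{1}{n}P]/(P^{+})$ along local transition maps, or directly: each $x^{\gamma}$ with $\gamma\neq 0$ satisfies $(x^{\gamma})^{n}=x^{n\gamma}=0$ in $R$ for $n\gg 0$, because $\Delta_{P}$ is bounded in $P\grq$ and hence $n\gamma\notin\Delta_{P}$ eventually. In particular every ungraded invertible $R$-module is free of rank one, so the ordinary Picard group of $R$ vanishes.

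For surjectivity, let $N$ be a graded invertible $R$-module. Forgetting the grading $N\simeq R$, so $N\otimes_{R}k$ is one-dimensional over $k$, and being graded it is concentrated in a single degree $\lambda_{0}$. Choosing a homogeneous element $e\in N_{\lambda_{0}}$ lifting a $k$-basis vector of $N\otimes_{R}k$, I would observe that $e$ generates $N$: writing $e=rf$ for a free generator $f$ of the ungraded module $N\simeq R$, the image of $e$ in $N/\frm N$ being nonzero forces $r\notin\frm$, hence $r$ is a unit and $e$ generates $N$ freely. The assignment $a\mapsto ae$ then gives a degree-preserving isomorphism $R(-\lambda_{0})\simeq N$, where I use the convention $R(\lambda)_{\mu}=R_{\lambda+\mu}$, so $N\simeq R(-\lambda_{0})$ lies in the image. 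For injectivity, applying $-\otimes_{R}k$ to a graded isomorphism $R(\lambda)\simeq R(\mu)$ yields a degree-preserving isomorphism of the one-dimensional graded vector spaces concentrated in the degrees $-\lambda$ and $-\mu$, forcing $\lambda=\mu$. Finally the canonical graded isomorphism $R(\lambda)\otimes_{R}R(\mu)\simeq R(\lambda+\mu)$ shows the map respects the group law, so it is an isomorphism.

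I expect the only content-bearing step to be the second and third paragraphs, namely establishing that $R$ is local with nil maximal ideal and then upgrading ``ungraded-free of rank one'' to ``graded-isomorphic to some $R(\lambda)$'' via a homogeneous generator; everything else is formal once the equivariant-sheaf dictionary is in place. The potential pitfall is invoking graded Nakayama over the non-noetherian, non-artinian ring $R$, which I would sidestep by appealing directly to the nilpotence of $\frm$, making the lifting of a generator unconditional.
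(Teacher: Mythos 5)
Your proof is correct and follows essentially the same route as the paper: identify invertible sheaves on $\infroot{P/k}$ with $P\grq/P\gr$-graded invertible $R$-modules, observe that the ungraded Picard group of $R = k[P_{\QQ}]/(P^{+})$ is trivial because $R$ is local (the paper cites the presentation of $R$ as a colimit of local artinian rings, you additionally note the maximal ideal is nil), upgrade an ungraded trivialization to a graded isomorphism $N \simeq R(-\lambda_{0})$ via a homogeneous generator, and get injectivity by tensoring with $k$. The only difference is that you spell out the homogeneous-generator step that the paper compresses into a single ``hence,'' which is a faithful filling-in rather than a new argument.
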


Furthermore, if $\lambda = [L] \in  \pic(\infroot{P/k}) = P\grq/P\gr$, we have $\H^{0}(\infroot{P/k}, L) = R(\lambda)_{0} = R_{\lambda}$. So $\dim_{k}\H^{0}(\infroot{P/k}, L) = \sharp(\lambda \cap \Delta_{P})$.

Let $(\Lambda_{n}, \alpha_{m,n}\bigr)$ and $(\Lambda'_{n}, \alpha'_{m,n}\bigr)$ be infinite roots on $\infroot{P/k}$, and set $\Lambda_{n} = (L_{n}, s_{n})$ and $\Lambda'_{n} = (L'_{n}, s'_{n})$. The following will be used later.

\begin{lemma}\label{lem:dim-1}
For sufficiently divisible $n$, we have $\dim_{k}\H^{0}(\infroot{P/k}, L_{n})= 1$, and the multiplication map
   \[
   \H^{0}(\infroot{P/k}, L_{n}) \otimes_{k} \H^{0}(\infroot{P/k}, L'_{n}) \arr
   \H^{0}(\infroot{P/k}, L_{n} \otimes L'_{n})
   \]
is an isomorphism.
\end{lemma}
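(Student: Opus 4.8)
The plan is to translate the entire statement into the explicit graded picture of $\infroot{P/k}$ recalled above, recognize the data $\{L_n\}$ as an infinite quotient, and then read off both conclusions from Proposition~\ref{prop:description-infinite-quotients}.

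First I would unwind the infinite root data. By Lemma~\ref{lem:local-picard} each $L_n$ has a well-defined class $\lambda_n \in P\grq/P\gr$, and the isomorphisms $\alpha_{m,n}\colon L_n^{\otimes(n/m)} \simeq L_m$ together with $\phi\colon L_1 \simeq \pi^{*}\Lambda$ (and the triviality of $\pic(\spec k)$) show that $\lambda_1 = 0$ and $(n/m)\lambda_n = \lambda_m$ whenever $m \mid n$; thus $\{\lambda_n\}$ is an element of $\widecheck P$. Under the identifications recalled after Lemma~\ref{lem:local-picard} we have $\H^0(\infroot{P/k}, L_n) = R_{\lambda_n}$, with basis the monomials $x^{\gamma}$ for $\gamma \in \lambda_n \cap \Delta_P$; in particular the section $s_n$ is a nonzero element of $R_{\lambda_n}$ exactly when $\lambda_n \cap \Delta_P \neq \emptyset$, which by condition~(e) in the definition of an infinite root holds for all sufficiently divisible $n$.

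The key step, which I expect to be the main obstacle, is to prove that $\{\lambda_n\}$ is an infinite quotient in the sense of Definition~\ref{def:infinite-quotient}. I would fix $m_0$ sufficiently divisible that $s_{m_0} \neq 0$ and propose $m_0$ as a characteristic integer. For any $j$ the isomorphism $\alpha_{m_0, jm_0}$, being an isomorphism in $\div$, carries $s_{jm_0}^{\otimes j}$ to $s_{m_0}$; since $s_{m_0} \neq 0$ this forces the product $s_{jm_0}^{j}$ to be a nonzero element of $R$. Expanding $s_{jm_0} = \sum_{\gamma \in \lambda_{jm_0}\cap\Delta_P} c_{\gamma} x^{\gamma}$, the power $s_{jm_0}^{j}$ is a sum of monomials $x^{\gamma_1 + \dots + \gamma_j}$ with each $\gamma_i \in \lambda_{jm_0}\cap\Delta_P$, and in $R = k[P_{\QQ}]/(P^{+})$ a monomial $x^{\delta}$ vanishes precisely when $\delta \notin \Delta_P$. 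Hence nonvanishing of $s_{jm_0}^{j}$ produces at least one tuple $\gamma_1, \dots, \gamma_j \in \lambda_{jm_0}\cap\Delta_P$ with $\gamma_1 + \dots + \gamma_j \in \Delta_P$, which is exactly the defining condition for $\{\lambda_n\}$ to be an infinite quotient with characteristic integer $m_0$.

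Finally I would invoke Proposition~\ref{prop:description-infinite-quotients}. Part~\refpart{prop:description-infinite-quotients}{1} gives, for all sufficiently divisible $n$, a representative with $\lambda_n = [\gamma]$ and $\gamma \in \Delta_P^{0}$; by definition of $\Delta_P^{0}$ the coset $\gamma + P\gr$ meets $\Delta_P$ only in $\gamma$, so $\sharp(\lambda_n \cap \Delta_P) = 1$ and therefore $\dim_k \H^0(\infroot{P/k}, L_n) = 1$. For the multiplication map I would apply Part~\refpart{prop:description-infinite-quotients}{3} to the two infinite quotients $\{\lambda_n\}$ and $\{\lambda'_n\}$: for sufficiently divisible $n$ this yields $\lambda_n = [\gamma]$ and $\lambda'_n = [\gamma']$ with $\gamma + \gamma' \in \Delta_P^{0}$. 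Then $\H^0(L_n)$, $\H^0(L'_n)$ and $\H^0(L_n \otimes L'_n) = R_{\lambda_n + \lambda'_n}$ are all one-dimensional, spanned respectively by $x^{\gamma}$, $x^{\gamma'}$ and $x^{\gamma+\gamma'}$ (the last because $\gamma + \gamma' \in \Delta_P^{0}$ again forces $\sharp((\lambda_n+\lambda'_n)\cap\Delta_P) = 1$). Under these identifications the multiplication map is induced by the ring multiplication of $R$, sending $x^{\gamma} \otimes x^{\gamma'}$ to $x^{\gamma+\gamma'}$, which is nonzero since $\gamma + \gamma' \in \Delta_P$; being a nonzero map between one-dimensional $k$-vector spaces, it is an isomorphism.
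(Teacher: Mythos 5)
Your proof is correct and takes essentially the same route as the paper: the paper's own proof of this lemma is a direct appeal to parts (1) and (3) of Proposition~\ref{prop:description-infinite-quotients}, with the prerequisite fact that $\{[L_{n}]\}$ is an infinite quotient established by exactly your argument (nonvanishing of $s_{jm_{0}}^{\otimes j}$ forcing a tuple $\gamma_{1},\dots,\gamma_{j}\in\lambda_{jm_{0}}\cap\Delta_{P}$ with sum in $\Delta_{P}$) in a lemma embedded in the proof of Proposition~\ref{prop:equality-sheaves}. Your writeup merely assembles in one place what the paper distributes between the two statements, including the final bookkeeping with $\Delta_{P}^{0}$ that the paper leaves implicit.
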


\begin{proof}
This follows from \refall{prop:description-infinite-quotients}{1} and \refpart{prop:description-infinite-quotients}{3}.
\end{proof}

\subsection{Proofs}\label{subsec:proofs}

Here we collect the proofs of the statements that we left without proof in the previous sections.

%
%

\begin{lemma}\label{lem:invertible-image}
Let $\pi\colon \cR \arr X$ be an \irs, and let $(\Lambda_{n}, \alpha_{m,n})$ be in infinite root on $\cR$. Set $\Lambda_{n} = (L_{n}, s_{n})$. 

If $X$ is quasi-compact, then for sufficiently divisible $n$ the sheaf $\pi_{*}L_{n}$ is an invertible sheaf on $X$, and the section $s_{n} \in \H^{0}(X, \pi_{*}L_{n})$ does not vanish anywhere.

Furthermore, let $(\Lambda'_{n}, \alpha'_{m,n})$ be another infinite root on $\cR$, and set $\Lambda'_{n} = (L'_{n}, s'_{n})$. Then for sufficiently divisible $n$ the multiplication map
   \[
   \pi_{*}L_{n} \otimes \pi_{*}L'_{n} \arr \pi_{*}(L_{n}\otimes L'_{n})
   \]
is an isomorphism.
\end{lemma}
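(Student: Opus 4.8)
The statement is local, so I will first reduce to the case where $X$ is affine and the \irs comes from a chart. Concretely, by the definition of an \irs I may assume $X = \spec R$ and $\cR = [Z/\mmu_\infty(P)]$, where $Z = X\times_{X_P}X_P^{[\infty]} = \spec\bigl(R\otimes_{\ZZ[P]}\ZZ[P_\QQ]\bigr)$, as furnished by Corollary~\ref{cor:local-model-chart}. Under the dictionary of Proposition~\ref{prop:qc-group-actions}, an invertible sheaf $L_n$ on $\cR$ with its section $s_n$ becomes a $\mmu_\infty(P)$-equivariant invertible module over $A \eqdef R\otimes_{\ZZ[P]}\ZZ[P_\QQ]$ equipped with an invariant global section, and $\pi_*$ is the functor of taking $\mmu_\infty(P)$-invariants (equivalently, the degree-$0$ part with respect to the $P\grq/P\gr$-grading), by the translation used in the proof of Proposition~\ref{prop:coherent-pushforward}.

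**The two claims.** First I would establish that $\pi_*L_n$ is invertible with a nowhere-vanishing section $s_n$ for $n$ sufficiently divisible. The plan is to check this on geometric fibers and then spread out. Over a geometric point $p$ the fiber is $\infroot{P/\Omega}$, and by the explicit Picard computation of \ref{subsec:picard-groups} a line bundle $L_n$ corresponds to some class $\lambda_n \in P\grq/P\gr$ with $\H^0(\infroot{P/\Omega},L_n) = R_{\lambda_n}$ of dimension $\sharp(\lambda_n\cap\Delta_P)$. The compatibility isomorphisms $\alpha_{m,n}$ together with the non-vanishing axiom (e) in the definition of an infinite root force $\{\lambda_n\}$ to be an infinite quotient in the sense of Definition~\ref{def:infinite-quotient}; then Proposition~\refpart{prop:description-infinite-quotients}{1} gives, for $n$ sufficiently divisible, a representative $\gamma \in \Delta_P^0$, so that $\lambda_n\cap\Delta_P = \{\gamma\}$ is a singleton and $\dim_\Omega\H^0 = 1$. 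This is exactly Lemma~\ref{lem:dim-1}, which I may cite. A one-dimensional space of invariant sections whose generator has nonzero image in the fiber (axiom (e)) shows $\pi_*L_n$ is a line bundle with $s_n$ a trivializing section; to globalize from fibers to $X$ I would invoke Proposition~\refpart{prop:coherent-pushforward}{2}, which guarantees $\pi_*L_n$ is \fp, and then use that fiberwise rank $1$ plus the local structure makes $\pi_*L_n$ invertible after passing to a sufficiently divisible $n$ uniformly (using quasi-compactness of $X$ to choose a single $n$ that works, since only finitely many indecomposables of $P$ enter the bound on $\Delta_P$).

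**The multiplication map.** For the isomorphism
\[
\pi_*L_n\otimes\pi_*L'_n \arr \pi_*(L_n\otimes L'_n),
\]
the cleanest route is again fiberwise. By Lemma~\ref{lem:dim-1}, for $n$ sufficiently divisible the multiplication map on global sections over $\infroot{P/\Omega}$ is an isomorphism; the key input is Proposition~\refpart{prop:description-infinite-quotients}{3}, which says that for infinite quotients $\{\lambda_n\}$ and $\{\lambda'_n\}$ the unique $\Delta_P$-representatives $\gamma,\gamma'$ can be chosen with $\gamma+\gamma'\in\Delta_P^0$, so that $\sharp\bigl((\lambda_n+\lambda'_n)\cap\Delta_P\bigr)=1$ as well and the product $R_{\lambda_n}\otimes R_{\lambda'_n}\to R_{\lambda_n+\lambda'_n}$ is an isomorphism of one-dimensional spaces. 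Since both source and target of the multiplication map are now line bundles on $X$ (by the first part) and the map is an isomorphism on every geometric fiber, it is an isomorphism. The commutation of $\pi_*$ with the tensor products involved is the projection formula, Proposition~\refpart{prop:coherent-pushforward}{4}, which lets me interpret the fiberwise statement globally.

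**Main obstacle.** The routine parts are the descent to a chart and the dictionary with graded modules. The genuinely delicate point is passing from the fiberwise (over geometric points) non-vanishing and one-dimensionality to a \emph{uniform} choice of $n$ over all of $X$, and to the conclusion that $\pi_*L_n$ is an honest invertible sheaf rather than merely fiberwise rank one. Here I expect to lean on quasi-compactness of $X$ to bound the relevant data (the finitely many indecomposable generators $v_1,\dots,v_m$ of $P$ control $\Delta_P$ and hence the characteristic integer $m_0$ and the bound $N$ in the proof of Proposition~\refpart{prop:description-infinite-quotients}{1}), and on the \fp-ness from Proposition~\refpart{prop:coherent-pushforward}{2} to guarantee that a fiberwise-trivializing section actually trivializes $\pi_*L_n$ on a neighborhood, Nakayama-style. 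Checking that the single section $s_n$ does not vanish anywhere then follows because axiom (e) already provides non-vanishing on every fiber.
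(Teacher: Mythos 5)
Your overall skeleton matches the paper's: reduce to a local chart, get one\dash dimensionality of $\H^{0}$ on geometric fibers from Lemma~\ref{lem:dim-1} (i.e.\ from Proposition~\refpart{prop:description-infinite-quotients}{1} and \refpart{prop:description-infinite-quotients}{3}), then globalize using finite presentation of $\pi_{*}L_{n}$ (Proposition~\refpart{prop:coherent-pushforward}{2}), base-change compatibility of $\pi_{*}$, and quasi-compactness of $X$ to pick a single $n$. Your treatment of the multiplication map is also sound \emph{granted} the first claim: once $\pi_{*}L_{n}$, $\pi_{*}L'_{n}$ and $\pi_{*}(L_{n}\otimes L'_{n})$ are known to be invertible (the last one by applying the first claim to the pair $(L_{n}\otimes L'_{n}, s_{n}s'_{n})$, whose fiberwise non-vanishing is exactly \refpart{prop:description-infinite-quotients}{3}), a morphism of invertible sheaves that is an isomorphism on every geometric fiber is an isomorphism.

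The genuine gap is in your globalization of the first claim. The implication you lean on, ``finitely presented with all fibers of rank one implies invertible,'' is false over a non-reduced base: for $R = k[\epsilon]/(\epsilon^{2})$ the module $M = R/(\epsilon)$ is finitely presented with one-dimensional fiber at the unique point, yet is not free. What Nakayama actually extracts from ``$s_{n}$ generates every fiber of the \fp sheaf $\pi_{*}L_{n}$'' is only \emph{surjectivity} of the map $\cO_{X} \arr \pi_{*}L_{n}$ defined by $s_{n}$; this exhibits $\pi_{*}L_{n} \cong \cO_{X}/I$ for some ideal $I$, and fiberwise rank one only forces $I$ to lie in the nilradical, not to vanish. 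Nothing in your proposal addresses injectivity, and this is precisely where the paper inserts a separate argument: since $L_{n}$ is invertible on $\cR$ and $\pi_{*}\cO_{\cR} = \cO_{X}$ (Proposition~\refpart{prop:coherent-pushforward}{3}), the annihilator of $\pi_{*}L_{n}$ is trivial; the kernel $I$ of the surjection $\cO_{X} \arr \pi_{*}L_{n}$ annihilates $\cO_{X}/I \cong \pi_{*}L_{n}$, hence $I = 0$, so the map is an isomorphism and $s_{n}$ is nowhere vanishing. You need this annihilator observation (or an equivalent injectivity argument) for your proof to close.
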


\begin{proof}
{Since $X$ is quasi-compact, there is a finite étale covering $X_{i} \arr X$ such that for all $i$, the stack $\cR_{X_{i}}$ is the pullback of the canonical \irs on $X_{P_{i}}$ for some fine saturated monoid $P_{i}$. Since the covering is finite, the formation of $\cA_{\cR}$ commutes with base change on $X$, and the pushforward $\pi_{*}$ also commutes with base change, we may assume that $X = X_{P_{i}}$. In particular, we may assume that $X$ is affine and noetherian.}

Each $L_{n}$ is invertible on $\cR$ and $\pi_{*}\cO_{\cR} = \cO_{X}$, so we see that the annihilator of $\pi_{*}L_{n}$ is trivial. Since each $\pi_{*}L_{n}$ is coherent, by Lemma~\refall{prop:coherent-pushforward}{2}, to prove the statement it is enough to check that $s_{n}$ generates all the fibers of $\pi_{*}L_{n}$. Again because $\pi_{*}$ commutes with base change, and by Nakayama's lemma, we can reduce to the case $X = \spec k$, where $k$ is a field.

We can also assume that $k$ is algebraically closed. Then $\cR = \infroot{P/k}$ for a certain sharp \fs monoid $P$. {Since $s_{n} \neq 0$ for sufficiently divisible $n$, by definition of infinite root,} the result follows from Lemma~\ref{lem:dim-1}.
\end{proof}

\begin{proof}[Proof of Proposition~\ref{prop:A_R-1-category}]
Since the category $\cA_{\cR}$ is fibered in groupoids, it is enough to show that an object of some $\cA_{\cR}(U)$ has no non-trivial automorphisms. We may assume that $X = U$, and $X$ is quasi-compact. Choose an object $(\Lambda_{n}, \alpha_{m,n}\bigr)$ of $\cA_{\cR}(X)$, and set $\Lambda_{n} = (L_{n}, s_{n})$. An automorphism of $(\Lambda_{n}, \alpha_{m,n}\bigr)$ is given by a sequence of elements $\xi_{n} \in \cO_{\cR}^{\times}(\cR) = \cO_{X}^{\times}(X)$ with $\xi_{n}s_{n} = s_{n}$ for all $n$, and such that $\xi_{n}^{(m/n)} = \xi_{m}$ whenever $m \mid n$. From Lemma~\ref{lem:invertible-image} we see that $\xi_{n} = 1$ when $n$ is sufficiently divisible, and this implies that $\xi_{n} = 1$ for all $n$.
\end{proof}

\begin{proof}[Proof of Proposition~\ref{prop:equality-sheaves}]
The statement can be checked on the geometric stalks; since formation of $A_{\cR}$ commutes with base change, we may assume that $X = \spec k$ is the spectrum of an algebraically closed field $k$, so that the logarithmic structure is given by a sharp \fs monoid $P$ and the monoidal functor $L\colon P \arr \div_{(\spec k)\et}$ sending $0$ to $(\cO_{\spec k}, 1)$ and everything else to $(\cO_{\spec k}, 0)$. Then the root stack $\cR = \infroot{(P, L)}$ equals $\cR = \bigl[\spec(k[P_{\QQ}]/(P^{+}))/\mmu_{\infty}(P)\bigr]$.

Let us identify $\pic \cR$ with $P\grq/P\gr$. We have a homomorphism of monoids $A_{\cR} \arr \widecheck P$ sending an infinite root $(\Lambda_{n}, \alpha_{m,n}\bigr)$ to $\{[L_{n}]\}$, where $\Lambda_{n}=(L_{n}, s_{n})$. 

\begin{lemma}
$\{[L_{n}]\} \in \widecheck P$ is an infinite quotient. 
\end{lemma}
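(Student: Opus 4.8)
The plan is to push everything down into the $P\grq/P\gr$-graded ring $R \eqdef k[P_{\QQ}]/(P^{+})$ and to read the defining condition of an infinite quotient (Definition~\ref{def:infinite-quotient}) directly off the monomial expansion of the distinguished sections $s_{n}$. Recall from \ref{subsec:picard-groups} the identification $\pic(\cR) = P\grq/P\gr$ (Lemma~\ref{lem:local-picard}), and, writing $\lambda_{n} \eqdef [L_{n}]$, the formula $\H^{0}(\cR, L_{n}) = R_{\lambda_{n}} = \bigoplus_{\gamma \in \lambda_{n} \cap \Delta_{P}} k\,x^{\gamma}$. Thus each $s_{n}$ is a finite $k$-combination $s_{n} = \sum_{\gamma \in \lambda_{n} \cap \Delta_{P}} c^{(n)}_{\gamma}\,x^{\gamma}$ supported on $\lambda_{n} \cap \Delta_{P}$, and (as already built into the homomorphism $A_{\cR} \arr \widecheck P$) one has $\lambda_{1} = 0$ since $\Lambda_{1} \cong \pi^{*}\Lambda$ is pulled back from $\spec k$, together with $(n/m)\lambda_{n} = \lambda_{m}$ coming from $\alpha_{m,n}$.

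The key translation step is to turn the compatibility $\alpha_{m,n}$ into a multiplicative relation in $R$. For $m \mid n$, the isomorphism $\alpha_{m,n}$ carries $s_{n}^{\otimes(n/m)}$ to $s_{m}$; under the canonical identification of the graded invertible module $R(\lambda_{n})^{\otimes(n/m)}$ with $R((n/m)\lambda_{n}) = R(\lambda_{m})$, the tensor power of global sections becomes the ordinary product in $R$. Hence $\alpha_{m,n}$ says precisely that $s_{n}^{n/m} = s_{m}$ in $R_{\lambda_{m}}$.

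Next I would fix the characteristic integer. Since $X = \spec k$ with $k$ algebraically closed, the geometric fiber $\cR_{p}$ is $\cR$ itself and the nonvanishing condition~(e) simply says $s_{n} \neq 0$ for sufficiently divisible $n$; choose such an $m_{0}$, so that $s_{m_{0}} \neq 0$. I claim $m_{0}$ is a characteristic integer for $\{\lambda_{n}\}$. Fix $j$ and set $n = jm_{0}$. By the previous paragraph $s_{n}^{\,j} = s_{m_{0}} \neq 0$. Expanding
\[
s_{n}^{\,j} = \sum c^{(n)}_{\gamma_{1}}\cdots c^{(n)}_{\gamma_{j}}\,x^{\gamma_{1}+\dots+\gamma_{j}},
\]
where the $\gamma_{i}$ range over $\lambda_{n} \cap \Delta_{P}$ and the monomial $x^{\gamma_{1}+\dots+\gamma_{j}}$ vanishes in $R$ unless $\gamma_{1}+\dots+\gamma_{j} \in \Delta_{P}$, the nonvanishing of the sum forces at least one index tuple $(\gamma_{1},\dots,\gamma_{j})$ with every $c^{(n)}_{\gamma_{i}} \neq 0$ (so each $\gamma_{i} \in \lambda_{n} \cap \Delta_{P}$) and $\gamma_{1}+\dots+\gamma_{j} \in \Delta_{P}$. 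As $\lambda_{n} = \lambda_{jm_{0}}$, this is exactly the sequence demanded by Definition~\ref{def:infinite-quotient}, so $\{[L_{n}]\}$ is an infinite quotient.

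The step I expect to be the main obstacle is the bookkeeping in the second paragraph: making watertight that $\alpha_{m,n}$ really becomes the monomial identity $s_{n}^{n/m} = s_{m}$, i.e.\ that the identification $R(\lambda_{n})^{\otimes(n/m)} \cong R(\lambda_{m})$ is canonical, compatible with $\alpha_{m,n}$, and carries multiplication of global sections to multiplication in $R$. Once this identification is nailed down, the remaining verifications (well-definedness of $\{[L_{n}]\}$ in $\widecheck P$ and the expansion argument) are routine, and the argument is deliberately independent of Lemma~\ref{lem:dim-1} so as to avoid any circularity with Proposition~\ref{prop:description-infinite-quotients}.
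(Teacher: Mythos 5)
Your proof is correct and follows essentially the same route as the paper's: choose $m_{0}$ with $s_{m_{0}} \neq 0$, use the compatibility $\alpha_{m_{0},jm_{0}}$ to conclude $s_{jm_{0}}^{\otimes j} \neq 0$, identify sections of $L_{n}$ with elements of $R_{\lambda_{n}}$ in $R = k[P_{\QQ}]/(P^{+})$, and extract from the monomial expansion a tuple $\gamma_{1},\dots,\gamma_{j} \in \lambda_{jm_{0}}\cap\Delta_{P}$ with $\gamma_{1}+\dots+\gamma_{j} \in \Delta_{P}$. The only nuance is that the identification $L_{n} \cong R(\lambda_{n})$ involves a choice, so your equation $s_{n}^{n/m} = s_{m}$ holds only up to a unit of $R$; since nonvanishing is all the argument uses, this changes nothing.
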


\begin{proof}
Set $R = k[P_{\QQ}]/(P^{+})$ as above, {and $\lambda_n=[L_n]\in \pic \cR\cong P\grq/P\gr$. As  before, we denote by $R_\lambda$ the piece of degree $\lambda \in P\grq/P\gr$ of the $P\grq/P\gr$-graded ring $R$.} We will write the elements of $R$ as linear combinations $\sum_{i}a_{i}x^{\gamma_{i}}$, where $a_{i} \in k$ and $\gamma_{i} \in \Delta_{P}$. The multiplication in $R$ is given by the rule
   \[
   x^{\gamma}x^{\gamma'} = 
   \begin{cases}
   x^{\gamma + \gamma'}&\text{if }\gamma + \gamma' \in \Delta_{P}\\
   0 &\text{otherwise}
   \end{cases}
   \]
for any $\gamma$ and $\gamma'$ in $\Delta_{P}$.

Let $m_{0}$ such that $s_{m_{0}} \neq 0$; we claim that $m_{0}$ is a characteristic integer for $\{[L_{n}]\}$. Let $j$ be a positive integer; choose an isomorphism between $L_{jm_{0}}$ and the invertible sheaf corresponding to the invertible $R$-module $R(\lambda_{jm_{0}})$ (which, recall, is just the $R$-module $R$, but with $P\grq/P\gr$-grading defined by $R(\lambda_{jm_{0}})_\mu=R_{\lambda_{jm_0}+\mu}$), yielding an isomorphism $\H^{0}(\cR, L_{jm_{0}}) \simeq R_{\lambda_{jm_{0}}}$. The section $s_{jm_{0}}$ corresponds to an element $\sum_{\gamma \in \lambda_{jm_{0}}\cap\Delta_{P}} a_{\gamma}x^{\gamma}$; since $s_{jm_{0}}^{\otimes j} \neq 0$, we have
   \[
   \bigl(\sum_{\gamma \in \lambda_{jm_{0}}\cap\Delta_{P}} a_{\gamma}x^{\gamma}\bigr)^{j}
   \neq 0 \in R\,.
   \]
This implies that there must be a sequence of elements $\gamma_{1}$, \dots~$\gamma_{j}$ of $\lambda_{jm_{0}}\cap\Delta_{P}$ such that $x^{\gamma_{1}} \dots x^{\gamma_{j}} \neq 0$, which implies $\gamma_{1} + \dots + \gamma_{j} \in \Delta_{P}$. This show that $m_{0}$ is a characteristic integer, and completes the proof of the Lemma.
\end{proof}

Let $(\Lambda_{n}, \alpha_{m,n}\bigr)$ and $(\Lambda'_{n}, \alpha'_{m,n}\bigr)$ be two infinite roots on $\cR$. Assume that $[L_{n}] = [L'_{n}]$ for all $n$. {Notice that an automorphism of $L_{n}$ is given by an invertible element of $\H^{0}(\cR, \cO) = k$; hence the isomorphisms $L_{n} \simeq L'_{n}$, which exist by hypothesis, are well determined up to multiplication by scalars.} Then from Lemma~\ref{lem:dim-1} we see that for sufficiently divisible $n$ there is a unique isomorphism $L_{n} \cong L'_{n}$ carrying $s_{n}$ to $s'_{n}$. These give an isomorphism of the two infinite roots. This implies that the homomorphism $A_{\cR} \arr \infquot P$ is injective.

Now consider the composite $P = A \arr A_{\cR} \arr \infquot P$, which is easily seen to send $p \in P$ into $\infquot p \in \infquot P$. Since $A_{\cR} \arr \infquot P$ is injective and $P \arr \infquot P$ is an isomorphism, by Proposition~\refall{prop:description-infinite-quotients}{2}, the result follows.
\end{proof}


\subsection{Morphisms of \irss}\label{sec:morphisms-irss}

Let us fix a scheme $X$. Let $\phi = (\phi, \Phi)\colon (A, L) \arr (B,M)$ be a morphism of \fs \df structures on $X$. Recall that this means that $\phi\colon A \arr B$ is a homomorphism of sheaves of monoids on $X{\et}$, while $\Phi\colon L \cong M \circ \phi$ is a base-preserving isomorphism of \sm functors $A \arr \div_{X\et}$.

A morphism of \fs \df structures as above induces a morphism of fibered categories $\infroot{\phi}\colon \infroot{(B, M)} \arr \infroot{(A, L)}$ by composition. More precisely, let $t\colon T \arr X$ be a morphism of schemes, with $T$ affine. An object $(N, \alpha)$ of $\infroot{(B, M)}(T)$ consists of a \df structure $N\colon t^{*}B_{\QQ} \arr \div_{T\et}$, together with an isomorphism $\alpha\colon t^{*}M \simeq N\rest{t^{*}B}$ of \sm functors $t^{*}B \arr \div_{T\et}$. From this we obtain an object $(N\circ t^{*}\phi_{\QQ}, \id_{t^{*}\phi} * \alpha)$ of $\infroot{(A, L)}(T)$, where $\id_{t^{*}\phi} * \alpha\colon  t^{*}L\simeq N\rest{t^{*}B} \circ t^{*}\phi$ is the Godement product.

Together with the obvious action on arrows, this gives a functor from the opposite of the category of \df structures on $X$, to the category of categories fibered in groupoids on $X$. This functor, however, is not full.

\begin{example}\label{ex:not.full}
Take $X=\spec k$ and $(A, L)$ to be the trivial \df structure on $X$, while $(B, M)$ is the \df structure corresponding to the standard logarithmic point (Example \ref{ex:log-points}). In other words, $(B, M)$ is obtained from the chart $\NN \arr \div X$, in which $n$ is sent to $(\cO, 1)$ if $n = 0$ and to $(\cO, 0)$ if  $n \neq 0$. It is immediate to check that there is no homomorphism of \df structures $(B, M) \arr (A, L)$. On the other hand $\infroot{(A, L)} = \spec k$ while $\infroot{(B, M)} = \infroot{\NN/k}$; since $\infroot{\NN/k}\red = \cB_{k}\mmu_{\infty}$, there is a morphism $\spec k \arr \cB_{k}\mmu_{\infty} \subseteq \infroot{(B, M)}$ corresponding to the trivial torsor.
\end{example}

There is a necessary condition for a base-preserving functor $f\colon \infroot{(B, M)} \arr \infroot{(A, L)}$ of \irss over $X$ to come from a morphism of \df structures. 

Suppose that $f\colon \cS \arr \cR$ is a morphism of fibered categories over $\cataff X$, where $\cS$ and $\cR$ are \irss over $X$. Let $t\colon T\arr X$ be a morphism of schemes, with $T$ affine, and $\lambda = (\Lambda_{n}, \alpha_{m,n})$ be an infinite root on $\cR_{T}$. Set $\Lambda_{n} = (L_{n}, s_{n})$. Consider the pullback $f^{*}\lambda \eqdef (f^{*}\Lambda_{n}, f^{*}\alpha_{m,n})$. This is not necessarily an infinite root on $\cS_{T}$; the issue is that $f^{*}\Lambda_{n} = (f^{*}L_{n}, f^{*}s_{n})$, and it is not necessarily true that $f^{*}s_{n}$ does not vanish on each geometric fiber of $\cS \arr X$ for sufficiently divisible $n$. This justifies the following definition.

\begin{definition}
A morphism $f\colon \cS \arr \cR$ of \irs over $X$ is an isomorphism class of base-preserving functors $f\colon \cS \arr \cR$, such that for any geometric point $p:\spec\Omega \arr X$ and any infinite root $\lambda$ on the geometric fiber $\cR_{p}$, the pullback $f_{p}^{*}\lambda$ is again an infinite root on $\cS_{p}$.
\end{definition}

The composite of two morphisms of \irs is a morphism of \irss; thus, with this notion of morphism \irss over $X$ form a category.

\begin{remark}
As suggested by the referee, we can reinterpret Definition \ref{def:infinite.root} as follows: given an infinite root $(\Lambda_n, \alpha_{m,n})$ on the infinite root stack $\cR$ over $X$, each $\Lambda_n$ gives us a morphism of stacks $f_n\colon \cR\to [\mathbb{A}^1/\gm]$, and the isomorphisms $\alpha_{m,n}$ give natural isomorphisms between $f_m$ and the composite $\cR\xrightarrow{f_n} [\mathbb{A}^1/\gm]\to  [\mathbb{A}^1/\gm]$, where $m\mid n$ and the last morphism raises a line bundle with a section to the $n/m$-th power. Note that this map $[\mathbb{A}^1/\gm]\to  [\mathbb{A}^1/\gm]$ is exactly the projection from the $n/m$-th root stack of $ [\mathbb{A}^1/\gm]$, equipped with its tautological logarithmic structure. Thus, an infinite root induces a morphism of fibered categories over schemes $\cR\to \infroot{[\mathbb{A}^1/\gm]}=\varprojlim_n [\mathbb{A}^1/\gm]$, and a compatible morphism $X\to [\mathbb{A}^1/\gm]$, given by the base of the infinite root (see Remark \ref{rmk:base.root}).

Condition (d) of Definition \ref{def:infinite.root} then exactly says that the morphism $\cR\to X\times_{[\mathbb{A}^1/\gm]}\infroot{[\mathbb{A}^1/\gm]}$ is a morphism of infinite root stacks over $X$, in the sense of the definition above.
\end{remark}

Corollary~\ref{cor:df.infinite} implies that the base-preserving functor $\infroot{\phi}\colon \infroot{(B, M)} \arr \infroot{(A, L)}$ induced by a morphism of \fs \df structures $\phi\colon (A, L) \arr (B, M)$ is a morphism of \irss: for a geometric point $p\colon \spec \Omega\to X$, an infinite root on ${\infroot{(A, L)}}_p$ comes from an element of the stalk $A_p$, and the pullback to  $\infroot{(B, M)}_p$ coincides with the image of that element via the map $A_p\to B_p$, and hence is also an infinite root. So we obtain a functor from the opposite of the category of \df structures to the category of \irss.

We can also define a functor going in the opposite direction, from \irss to \df structures. At the level of objects, we send an \irs $\cR$ to $(A_{\cR}, L_{\cR})$. If $f\colon \cS \arr \cR$ a morphism of \irss, pullback of infinite roots defined above defines a base-preserving functor $f^{*}\colon \cA_{\cR} \arr \cA_{\cS}$, and hence a morphism of \df structures $f^{*}\colon (A_{\cR}, L_{\cR}) \arr (A_{\cS}, L_{\cS})$.

Consider the morphism $\infroot{f^{*}}\colon \infroot{(A_{\cS}, L_{\cS})} \arr \infroot{(A_{\cR}, L_{\cR})}$ induced by $f^{*}$. It is easy to show that the diagram
   \[
   \begin{tikzcd}
   {}\cS \rar{f}\dar& \cR\dar\\
   {}\infroot{(A_{\cS}, L_{\cS})} \rar{\infroot{f^{*}}} & \infroot{(A_{\cR}, L_{\cR})}
   \end{tikzcd}
   \]
is $2$-commutative.
%

\begin{theorem}\label{thm:equivalence1} The functors above define an equivalence between the opposite of the category of \fs \df structures on $X$ and the category of \irss on $X$.
\end{theorem}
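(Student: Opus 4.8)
The plan is to show that the two functors constructed above---$F\colon (A,L) \mapsto \infroot{(A,L)}$ from the opposite of the category of \df structures to that of \irss, and $G\colon \cR \mapsto (A_{\cR}, L_{\cR})$ in the reverse direction---are mutually quasi-inverse. All the substantial input is already at hand: Theorem~\ref{thm:equivalence} provides, for each \irs $\cR$, an equivalence $\theta_{\cR}\colon \cR \larrowsim \infroot{(A_{\cR}, L_{\cR})} = FG(\cR)$; the Corollary to Proposition~\ref{prop:equality-sheaves} provides, for each \df structure $(A,L)$ with $\cR = \infroot{(A,L)}$, an isomorphism $\eta_{(A,L)}\colon (A,L) \larrowsim (A_{\cR}, L_{\cR}) = GF(A,L)$; and the $2$-commutative square displayed just before the theorem records the compatibility of $\theta$ with morphisms. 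The proof is then the assembly of these data into a pair of natural isomorphisms.

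First I would confirm that $F$ and $G$ are genuinely functors. For $G$ the one nontrivial point is well-definedness on morphisms: given a morphism $f\colon \cS \arr \cR$ of \irss, the defining condition---that $f$ carry infinite roots on geometric fibers to infinite roots---is exactly what ensures that pullback induces a base-preserving functor $f^{*}\colon \cA_{\cR} \arr \cA_{\cS}$ landing among infinite roots, and hence a morphism $(A_{\cR}, L_{\cR}) \arr (A_{\cS}, L_{\cS})$ of \df structures; functoriality follows from compatibility of pullback with composition. For $F$, functoriality is immediate, as $F$ acts on a morphism by precomposition.

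Next, $\theta$ is a natural isomorphism $\id \simeq FG$: this is precisely the $2$-commutative square, for there the lower horizontal arrow $\infroot{f^{*}}$ equals $F(G(f)) = FG(f)$, so the square reads $FG(f)\circ \theta_{\cS} = \theta_{\cR}\circ f$. The only genuinely computational point is the naturality of $\eta$ as an isomorphism $\id \simeq GF$. For a morphism $\phi\colon (A,L)\arr(B,M)$ of \df structures one unwinds both constructions: by Proposition~\ref{prop:equality-sheaves}, $\eta_{(A,L)}$ sends a section $a\in A(U)$ to the infinite root on $\infroot{(A,L)}$ with underlying object $L(a)$ and $n\th$ level $\widetilde L(a/n)$, while $F(\phi) = \infroot{\phi}$ is given by precomposition with $\phi$; hence pulling the infinite root attached to $a$ back along $\infroot{\phi}$ produces the infinite root attached to $\phi(a)$. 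This yields $GF(\phi)\circ \eta_{(A,L)} = \eta_{(B,M)}\circ\phi$, which is the required naturality.

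With both $\theta$ and $\eta$ shown to be natural isomorphisms, $F$ and $G$ are quasi-inverse equivalences, as asserted. The main obstacle is the naturality of $\eta$: everything else is either already proven or a formal check, but establishing it requires carefully identifying, via Proposition~\ref{prop:equality-sheaves}, a section $a$ with its associated system of roots and tracking that system through the pullback that defines $F(\phi)$. This single computation is what simultaneously shows $F$ is faithful and closes the equivalence. It is worth emphasizing that without the infinite-root-preservation clause in the definition of a morphism of \irss, the functor $G$ would not be defined on morphisms at all, so that technical condition carries real weight here.
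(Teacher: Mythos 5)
Your proposal is correct and follows essentially the same route as the paper: the paper's proof is just the citation ``this follows from Proposition~\ref{prop:equality-sheaves} and Theorem~\ref{thm:equivalence}'', and your argument is exactly the implicit assembly of those two results --- $\theta\colon \id \simeq FG$ from Theorem~\ref{thm:equivalence} with naturality given by the $2$-commutative square, and $\eta\colon \id \simeq GF$ from (the corollary to) Proposition~\ref{prop:equality-sheaves} with naturality checked by unwinding pullback of infinite roots --- spelled out in detail.
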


\begin{proof}
This follows from Proposition~\ref{prop:equality-sheaves} and Theorem~\ref{thm:equivalence}.
\end{proof}

Since an equivalence of fibered categories between \irss is clearly a morphism of \irss, we obtain the following.

\begin{corollary}\label{cor:isomorphisms}
Let $(A, L)$ and $(B, M)$ be \df structures on a scheme $X$. An equivalence of \irss $\infroot{(B, M)} \simeq \infroot{(A, L)}$ over $X$ induces an isomorphism of \df structures $(A, L) \simeq(B, M)$.
\end{corollary}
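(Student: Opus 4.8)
The plan is to deduce this as a formal consequence of Theorem~\ref{thm:equivalence1}, whose heavy lifting (Proposition~\ref{prop:equality-sheaves} and Theorem~\ref{thm:equivalence}) is already in place. The content of that theorem is that the assignments $(A,L)\mapsto\infroot{(A,L)}$ and $\cR\mapsto(A_{\cR},L_{\cR})$ give an equivalence between the opposite of the category of \fs \df structures on $X$ and the category of \irss on $X$; and any equivalence of categories carries isomorphisms to isomorphisms. So the whole task reduces to upgrading the given equivalence of fibered categories into an isomorphism in the category of \irss, and then transporting it through Theorem~\ref{thm:equivalence1}.

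First I would check that an equivalence $f\colon\infroot{(B,M)}\larrowsim\infroot{(A,L)}$ of fibered categories is a morphism of \irss in the sense of the definition preceding Theorem~\ref{thm:equivalence1}. Writing $\cS=\infroot{(B,M)}$ and $\cR=\infroot{(A,L)}$, the only non-formal point is condition~(e) in the definition of an infinite root: I must verify that for each geometric point $p$ and each infinite root $\lambda$ on $\cR_{p}$, the pulled-back datum $f_{p}^{*}\lambda$ is again an infinite root on $\cS_{p}$. This is where I expect the only real (if mild) work to lie. Because $f$ is an equivalence, its restriction $f_{p}$ to the geometric fiber is again an equivalence of fibered categories, hence induces an equivalence on the categories $\div(\,\cdot\,)$ of invertible sheaves with a section, and commutes with tensor products and pullbacks. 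In particular it sends a section that is nonzero on $\cR_{p}$ to a section that is nonzero on $\cS_{p}$, so $f_{p}^{*}\lambda$ satisfies~(e); the remaining conditions (a)--(d) and (i)--(ii) are preserved for purely formal reasons. This is exactly the content of the remark that an equivalence ``is clearly a morphism of \irss''.

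Having established this, I would note that a quasi-inverse $g$ of $f$ is likewise a morphism of \irss by the same argument, and that $g\circ f$ and $f\circ g$ are isomorphic to the respective identities; hence $f$ is an isomorphism in the category of \irss. Applying the equivalence of Theorem~\ref{thm:equivalence1} then produces an isomorphism of \df structures
\[
(A_{\infroot{(A,L)}},L_{\infroot{(A,L)}})\simeq(A_{\infroot{(B,M)}},L_{\infroot{(B,M)}}).
\]
Finally I would invoke Proposition~\ref{prop:equality-sheaves} and the corollary following it, which identify $(A_{\cR},L_{\cR})$ with the original \df structure whenever $\cR=\infroot{(A,L)}$, to rewrite the left-hand side as $(A,L)$ and the right-hand side as $(B,M)$, giving the desired isomorphism $(A,L)\simeq(B,M)$. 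The argument is almost entirely formal; the substance was absorbed into Theorem~\ref{thm:equivalence1}, and the only thing needing a direct check is the preservation of the nonvanishing condition by an equivalence, which I expect to be immediate from the fact that restriction to a geometric fiber commutes with the equivalence.
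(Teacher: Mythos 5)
Your proposal is correct and follows exactly the paper's own route: the paper proves this corollary by observing that an equivalence of fibered categories between \irss is ``clearly'' a morphism of \irss, and then invoking Theorem~\ref{thm:equivalence1}. You have merely spelled out the check behind that ``clearly'' (preservation of the nonvanishing condition on geometric fibers), which is sound.
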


The theorem can be restated in global terms, without fixing a base scheme. Consider the category $\fslogsch$ of fine saturated logarithmic schemes. Let us define a category $\rootstack$ in which the objects are pairs $(X, \cR)$, where $X$ is a scheme and $\cR \arr X$ is an \irs on $X$. The arrows $(Y, \cS) \arr (X, \cR)$ are isomorphism classes of commutative diagrams
   \[
   \begin{tikzcd}
   {}\cS \rar \dar & \cR \dar\\
   {}Y \rar & X
   \end{tikzcd}
   \]
with the property that the induced base-preserving functor $\cS \arr Y \times_{X} \cR$ is a morphism of \irss over $Y$. Note that the map $Y\to X$ is actually determined by the functor $\cS\to \cR$, by Proposition \ref{prop:properties-irs}.

On the other hand if $(X, A, L)$ and $(Y, B, M)$ are \fs logarithmic schemes, a morphism of logarithmic schemes $(Y, B, M) \arr (X, A, L)$ corresponds to a morphism of schemes $f\colon Y \arr X$ and a morphism of \df structures $f^{*}(A, L) \arr (B, M)$. Since $\infroot{f^{*}(A, L)} = Y\times_{X}\infroot{(A, L)}$,  we have the following variant of Theorem~\ref{thm:equivalence1}.

\begin{theorem}\label{thm:equivalence2}
There is an equivalence between the category of \fs logarithmic schemes and the category of \irss, sending a \fs scheme $(X, A, L)$  into $\bigl(X, \infroot{(A, L)}\bigr)$.\qed
\end{theorem}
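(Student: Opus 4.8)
The plan is to deduce this global statement from the fixed-base equivalence of Theorem~\ref{thm:equivalence1} together with the base-change compatibility of Proposition~\ref{prop:base-change}; all the substantive work (the reconstruction of the \df structure and the fixed-base fully faithfulness) has already been done, and what remains is to organize the variance correctly. I would first check that the assignment $(X, A, L) \mapsto \bigl(X, \infroot{(A,L)}\bigr)$ is a functor $\fslogsch \arr \rootstack$: a morphism of log schemes $(Y, B, M) \arr (X, A, L)$ is a pair $(f, f^{\flat})$ with $f\colon Y \arr X$ and $f^{\flat}\colon f^{*}(A, L) \arr (B, M)$, and $\infroot{f^{\flat}}$ together with the identification $\infroot{f^{*}(A, L)} \simeq Y\times_{X}\infroot{(A, L)}$ of Proposition~\ref{prop:base-change} produces exactly a commutative square $\cS \arr \cR$ over $f$ whose induced functor $\cS \arr Y\times_{X}\cR$ is a morphism of \irss over $Y$. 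Note that the functor is covariant: the contravariance of Theorem~\ref{thm:equivalence1} in the \df direction is cancelled by the fact that the log part $f^{\flat}$ of a morphism of log schemes already runs from $f^{*}(A,L)$ to $(B,M)$.

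For essential surjectivity I would invoke the reconstruction results: given an object $(X, \cR)$ of $\rootstack$, Theorem~\ref{thm:equivalence} provides an equivalence $\cR \simeq \infroot{(A_{\cR}, L_{\cR})}$ over $X$, so $(X, A_{\cR}, L_{\cR})$ maps to an object isomorphic to $(X, \cR)$. For fully faithfulness, fix log schemes $(X, A, L)$ and $(Y, B, M)$ and compare hom-sets. A morphism $(Y, \infroot{(B,M)}) \arr (X, \infroot{(A,L)})$ in $\rootstack$ is, by definition, an isomorphism class of commutative squares consisting of a morphism of schemes $f\colon Y \arr X$ and a morphism of \irss over $Y$ from $\cS = \infroot{(B, M)}$ to $Y\times_{X}\cR = Y\times_{X}\infroot{(A, L)}$; here the morphism $f$ on moduli spaces is forced by \refpart{prop:properties-irs}{3}, so the data is genuinely the pair $\bigl(f, \cS \arr Y\times_{X}\cR\bigr)$.

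I would then decompose both hom-sets as disjoint unions over the set of underlying morphisms of schemes $f\colon Y \arr X$, and establish the bijection fiber by fiber over a fixed $f$. Using Proposition~\ref{prop:base-change} to rewrite $Y\times_{X}\infroot{(A,L)} \simeq \infroot{f^{*}(A, L)}$, a morphism of \irss over $Y$ from $\infroot{(B, M)}$ to $\infroot{f^{*}(A, L)}$ corresponds, by Theorem~\ref{thm:equivalence1} applied over the base $Y$, exactly to a morphism of \df structures $f^{*}(A, L) \arr (B, M)$ on $Y$ --- which is precisely the log part of a morphism of log schemes lying over $f$. Summing over $f$ gives the desired bijection, and checking that it is compatible with composition is routine. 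The one point that needs care, and where I expect the only real friction, is the bookkeeping of variance together with the identification of $Y\times_{X}\cR$ with $\infroot{f^{*}(A,L)}$: one must make sure that the ``morphism of \irss over $Y$'' condition built into the definition of $\rootstack$ matches the morphisms that Theorem~\ref{thm:equivalence1} classifies, rather than arbitrary base-preserving functors.
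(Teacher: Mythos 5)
Your proposal is correct and follows essentially the same route as the paper: the paper derives Theorem~\ref{thm:equivalence2} as a direct "variant" of the fixed-base equivalence (Theorem~\ref{thm:equivalence1}), using exactly the decomposition of morphisms over a fixed underlying map $f\colon Y \arr X$ together with the identification $\infroot{f^{*}(A,L)} \simeq Y\times_{X}\infroot{(A,L)}$ from Proposition~\ref{prop:base-change}. Your additional remarks (functoriality and variance, essential surjectivity via Theorem~\ref{thm:equivalence}, and the determination of $f$ by \refpart{prop:properties-irs}{3}) simply make explicit what the paper leaves implicit.
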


\begin{remark}
Note in particular that if $X$ and $Y$ are \fs logarithmic schemes and a base-preserving functor $F\colon \infroot{X}\to \infroot{Y}$ is representable and faithfully flat, then the associated pullback will automatically carry infinite roots to infinite roots, and thus $F$ is a morphism of infinite root stacks. Because of the previous theorem it will then come from a unique morphism of logarithmic schemes $X\to Y$.
\end{remark}

For later use we add the following.

\begin{lemma}\label{lem:between->logarithmic}
Let $\phi\colon (A, L) \arr (B, M)$ and $\psi\colon (A, L) \arr (C, N)$ be morphisms of \df structures on a scheme $X$, such that $\psi$ is Kummer. Suppose that $f\colon  \infroot{(B,M)}\arr \infroot{(C, N)}$ is a morphism of fibered categories over $X$ making the diagram
   \[
   \begin{tikzcd}[column sep = small]
   \infroot{(B, M)} \ar{rr}{f} \ar{rd}[swap]{\infroot{\phi}}&&\infroot{(C,N)} \ar{ld}{\infroot \psi}\\
   {}& \infroot{(A, L)}
   \end{tikzcd}
   \]
commute. Then $f$ is a morphism of \irss.
\end{lemma}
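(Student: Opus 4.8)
The plan is to reduce everything to geometric fibers, since by definition $f$ is a morphism of \irss precisely when, for each geometric point $p\colon \spec\Omega \arr X$ and each infinite root $\lambda$ on the fibre of $\infroot{(C,N)}$, the pullback $f_{p}^{*}\lambda$ is again an infinite root; all the data of $f_{p}^{*}\lambda$ other than the non-vanishing condition~(e) are automatically inherited from $\lambda$. So fix $p$. By Proposition~\ref{prop:base-change} the commuting triangle base-changes to $p$, and since the Kummer property is stable under pullback, I am reduced to the following situation: by Example~\ref{ex:log-points} the pulled-back \df structures of $(A,L)$, $(B,M)$, $(C,N)$ are log points with sharp \fs monoids $P$, $Q$, $P'$; the maps $\phi_{p}\colon P \arr Q$ and $\psi_{p}\colon P \arr P'$ are the induced homomorphisms with $\psi_{p}$ Kummer; and $f_{p}\colon \infroot{Q/\Omega} \arr \infroot{P'/\Omega}$ satisfies $\infroot{\psi_{p}} \circ f_{p} = \infroot{\phi_{p}}$. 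By Proposition~\ref{prop:equality-sheaves} applied to $\infroot{P'/\Omega}$, every infinite root $\lambda$ on $\infroot{P'/\Omega}$ is isomorphic to the tautological root attached to a unique $q' \in P'$; writing $\widetilde{L}_{P'}\colon P'_{\QQ}\arr \div_{\infroot{P'/\Omega}}$ for the tautological extension, this gives $\Lambda_{n} \cong \widetilde{L}_{P'}(q'/n)$, sections included.

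The key step is to rewrite $f_{p}^{*}\Lambda_{n}$ by inserting the commuting triangle. Because $\infroot{\phi_{p}}$ and $\infroot{\psi_{p}}$ are, by their very definition, induced by precomposition with $\phi_{\QQ}$ and $\psi_{\QQ}$, their pullbacks act on the tautological functors by precomposition: for every $\gamma \in P_{\QQ}$ one has $\infroot{\phi_{p}}^{*}\widetilde{L}_{P}(\gamma) \cong \widetilde{L}_{Q}(\phi_{\QQ}\gamma)$ and $\infroot{\psi_{p}}^{*}\widetilde{L}_{P}(\gamma) \cong \widetilde{L}_{P'}(\psi_{\QQ}\gamma) = \widetilde{L}_{P'}(\gamma)$, the last equality because the Kummer map $\psi_{p}$ induces the identity on $P_{\QQ}=P'_{\QQ}$. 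Feeding these into $f_{p}^{*}\circ \infroot{\psi_{p}}^{*} = \infroot{\phi_{p}}^{*}$ yields, for all $\gamma \in P_{\QQ}$,
\[
f_{p}^{*}\widetilde{L}_{P'}(\gamma) \cong \widetilde{L}_{Q}(\phi_{\QQ}\gamma)
\]
as objects of $\div_{\infroot{Q/\Omega}}$, sections included. This is the one place the Kummer hypothesis is essential: since $\psi_{p}$ is Kummer there is $N>0$ with $Nq' \in P$, so $q' \in \tfrac{1}{N}P \subseteq P_{\QQ}$ and hence $q'/n \in P_{\QQ}$ for every $n$. I may therefore apply the displayed identity with $\gamma = q'/n$, using $\phi_{\QQ}(q'/n)=\phi_{\QQ}(q')/n$, to get $f_{p}^{*}\Lambda_{n} \cong \widetilde{L}_{Q}(\eta/n)$ with $\eta \eqdef \phi_{\QQ}(q') \in Q_{\QQ}$.

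It then remains to verify condition~(e) for $f_{p}^{*}\lambda$, i.e.\ that the section of $\widetilde{L}_{Q}(\eta/n)$ is non-zero for sufficiently divisible $n$. By the construction of the tautological extension this section is the class of $x^{\eta/n}$ in $R=\Omega[Q_{\QQ}]/(Q^{+})$, and since $\dim_{\Omega}R_{\gamma}=1$ exactly when $\gamma \in \Delta_{Q}$ (Subsection~\ref{subsec:picard-groups}), it is non-zero precisely when $\eta/n \in \Delta_{Q}$. By Lemma~\ref{lem:neighborhood-0} the set $\Delta_{Q}$ is a neighborhood of $0$ in $Q_{\QQ}$, and $\eta/n \arr 0$, so $\eta/n \in \Delta_{Q}$ for all sufficiently large, in particular sufficiently divisible, $n$. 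Thus $f_{p}^{*}\lambda$ is an infinite root on $\infroot{Q/\Omega}$, as required. I expect the main obstacle to lie in the key step: one must check that the naturality identities for $\infroot{\phi_{p}}^{*}$ and $\infroot{\psi_{p}}^{*}$, and hence the displayed isomorphism, hold in $\div$ compatibly with the distinguished sections — not merely at the level of line bundles or Picard classes — since it is the section, not its underlying invertible sheaf, that condition~(e) constrains.
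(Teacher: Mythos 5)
Your proof is correct, and it takes a genuinely different route from the paper's. The paper also reduces to a geometric fiber and invokes Proposition~\ref{prop:equality-sheaves} to identify infinite roots with elements of the stalk monoids, but from there it argues entirely at the level of Picard groups: writing $P$, $Q$, $R$ for the stalks of $A$, $B$, $C$, the triangle induces a commutative diagram relating $P\gr\otimes\widehat{\ZZ}$, $R\gr\otimes\widehat{\ZZ}$ and $Q\gr\otimes\widehat{\ZZ}$, and the Kummer hypothesis, the torsion-freeness of $\widehat{\ZZ}/\ZZ$ and the saturation of $Q$ combine to show that $f^{*}\colon R\gr\otimes\widehat{\ZZ}\arr Q\gr\otimes\widehat{\ZZ}$ carries $R$ into $Q$, i.e.\ carries $\infquot{R}$ into $\infquot{Q}$. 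You instead stay at the level of $\div$-valued data: you normalize the root to the tautological system of some $q'\in R$ (your $P'$), use the $2$-commutativity of the triangle to compute $f_{p}^{*}\Lambda_{n}\cong \widetilde{L}_{Q}(\phi_{\QQ}(q')/n)$ together with its section, and verify condition~(e) directly from the openness of $\Delta_{Q}$ at $0$ (Lemma~\ref{lem:neighborhood-0}). Each route buys something. Yours verifies the non-vanishing of the pulled-back sections explicitly, which is literally what the definition of a morphism of \irss requires; in the paper's argument this step is left implicit in the reduction to the Picard-level statement (knowing that the classes $\{[f^{*}L_{n}]\}$ form an infinite quotient does not by itself exclude that all pulled-back sections vanish, so the section-level use of the triangle that you carry out is genuinely needed to close the argument). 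Moreover, you never use saturation of $Q$ or torsion-freeness of $\widehat{\ZZ}/\ZZ$: you only need $\eta=\phi_{\QQ}(q')\in Q_{\QQ}$, and the fact that $\eta$ actually lies in $Q$ then follows a posteriori. What the paper's computation buys is the monoid-theoretic content: it exhibits $f^{*}$ as a homomorphism of monoids $R\arr Q$, i.e.\ essentially the morphism of \df structures predicted by Theorem~\ref{thm:equivalence1}. Finally, your closing caveat is the right one, and it is satisfied: $\infroot{\phi}$, $\infroot{\psi}$ and $f$ all act on $\div$-valued functors, so every isomorphism in your chain lives in $\div$ and hence respects the distinguished sections.
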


\begin{proof}
We need to check that $f$ sends infinite roots in geometric fibers to infinite roots; by base change, we may assume that $X = \spec k$, where $k$ is an algebraically closed field. For consistency with the previous notation, set $P \eqdef A$, $Q \eqdef B$ and $R \eqdef C$; we need to check that the homomorphism $f^{*}\colon R\grq/R\gr \arr Q\grq/Q\gr$ induced by $f$ sends $\infquot R$ into $\infquot Q$ (here we are using the identification $Q\grq/Q\gr \simeq \pic \infroot{Q/k}$). Taking projective limits and using the identifications ${\widecheck P} \simeq P\gr \otimes \widehat{\ZZ}$ we obtain a commutative diagram
   \[
   \begin{tikzcd}[column sep = small]
   {}&P\gr \otimes \widehat{\ZZ} \ar{rd} \ar{ld}&\\
   R\gr \otimes \widehat{\ZZ} \ar{rr}{f^{*}} && Q\gr \otimes \widehat{\ZZ}
   \end{tikzcd}
   \]
in which the two diagonal arrows take $P$ into $R$ and $Q$ respectively. We need to show that $f^{*}$ takes $R$ into $Q$. Since the homomorphism $P \arr R$ is Kummer, given $r \in R$ we can find a positive integer $n$ such that $nr$ comes from $P$; this implies that $nf^{*}(r) = f^{*}(nr)$ is in $Q$. {Since $\widehat{\ZZ}/\ZZ$ is torsion free, by the Lemma below, so is $(Q\gr \otimes \widehat{\ZZ})/Q\gr = Q\gr\otimes (\widehat{\ZZ}/\ZZ)$, because $Q\gr$ is free. Hence we see that $f^{*}(r) \in Q\gr$; since $Q$ is saturated this implies $f^{*}(r) \in Q$, and this concludes the proof.}
\end{proof}

\begin{lemma}
The abelian group $\widehat{\ZZ}/\ZZ$ is torsion-free.
\end{lemma}

\begin{proof}
Consider the embeddings $\widehat{\ZZ} = \prod_{p}\ZZ_{p} \subseteq \prod_{p}\QQ_{p}$ and $\QQ \subseteq \prod_{p}\QQ_{p}$. Since $(\prod_p \ZZ_{p}) \cap \QQ = \ZZ$, we have that the natural homomorphism $\widehat{\ZZ}/\ZZ \arr (\prod_p \QQ_{p})/\QQ$ is injective, which proves the result.
\end{proof}

The preceding Lemma~\ref{lem:between->logarithmic} has the following immediate corollary.

\begin{corollary}\label{cor:kummer->logarithmic}
Let $f\colon Y\to X$ and $g\colon Z\to X$ be two morphisms of \fs logarithmic schemes, such that $Y\to X$ is Kummer. Then every morphism $\infroot{Z}\to \infroot{Y}$ fitting in a commutative diagram
  \[
   \begin{tikzcd}[column sep = small]
   \infroot{Z} \ar{rr} \ar{rd}[swap]{\infroot{g}}&&\infroot{Y} \ar{ld}{\infroot f}\\
   {}& \infroot{X}
   \end{tikzcd}
   \]
comes from a morphism $Z\to Y$ of logarithmic schemes over $X$.\qed
\end{corollary}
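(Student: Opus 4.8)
The plan is to reduce the statement to Lemma~\ref{lem:between->logarithmic} together with the equivalence of Theorem~\ref{thm:equivalence2} (note that the bottom vertex of the triangle should read $\infroot{X}$, the common target of $\infroot{f}$ and $\infroot{g}$). First I would extract the underlying morphism of schemes: the given functor $u\colon \infroot{Z} \arr \infroot{Y}$, composed with the projection $\infroot{Y}\arr Y$, is a morphism from $\infroot{Z}$ to a scheme, so by \refall{prop:properties-irs}{3} it factors uniquely through a morphism $h\colon Z\arr Y$ of schemes. Chasing the commuting triangle and the fact that $X$ is the moduli space of both $\infroot{Y}$ and $\infroot{Z}$ shows that $h$ lies over $X$. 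Since $u$ is a base-preserving functor over $h$, it factors through the projection, giving $\tilde u\colon \infroot{Z}\arr Z\times_{Y}\infroot{Y}$.

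Next I would place all the relevant data on the single scheme $\underline{Z}$, so that Lemma~\ref{lem:between->logarithmic} applies. Using $h$ I pull back the logarithmic structure of $Y$, obtaining a \df structure $(C,N)\eqdef h^{*}(A_{Y},L_{Y})$ on $Z$, and by Proposition~\ref{prop:base-change} we have $\infroot{(C,N)}\simeq Z\times_{Y}\infroot{Y}$. Together with $Z$'s own structure $(A_{Z},L_{Z})$ and the pullback $(A'',L'')\eqdef \underline{g}^{*}(A_{X},L_{X})$ this yields three \df structures on $Z$, with morphisms $\phi\colon (A'',L'')\arr (A_{Z},L_{Z})$ (the logarithmic part of $g$) and $\psi\colon (A'',L'')\arr (C,N)$ (the pullback along $h$ of the logarithmic part of $f$). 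The morphism $\psi$ is Kummer because $f$ is Kummer and Kummerness is preserved by pullback. The one point requiring care is to check that $\tilde u\colon \infroot{(A_{Z},L_{Z})}\arr \infroot{(C,N)}$ fits into a commutative triangle over $\infroot{(A'',L'')}\simeq Z\times_{X}\infroot{X}$; I would verify this by base-changing the hypothesized triangle $\infroot{f}\circ u\simeq \infroot{g}$ along $g\colon Z\arr X$, identifying the base change of $\infroot{f}$ with $\infroot{\psi}$ and that of $\infroot{g}$ with $\infroot{\phi}$, and then comparing the two projections to $Z$ and to $\infroot{X}$, where the latter comparison reduces exactly to $\infroot{f}\circ u \simeq \infroot{g}$.

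With this in place, Lemma~\ref{lem:between->logarithmic} shows that $\tilde u$ is a morphism of \irss over $Z$. Hence the pair $(h,u)$ is an arrow $(Z,\infroot{Z})\arr (Y,\infroot{Y})$ in the category $\rootstack$, and by Theorem~\ref{thm:equivalence2} it comes from a unique morphism of \fs logarithmic schemes $Z\arr Y$, which is over $X$ by construction. I expect the main obstacle to be the $2$-categorical bookkeeping in the previous paragraph: matching the base change of $\infroot{f}$ along $g$ with $\infroot{\psi}$ and checking that the two legs of the triangle over $Z\times_{X}\infroot{X}$ agree, up to the canonical $2$-isomorphisms, as maps into a fibre product of stacks. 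Everything else—the existence of $h$, the stability of the Kummer condition under pullback, and the final appeal to the equivalence—is routine.
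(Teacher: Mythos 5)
Your proposal is correct and is precisely the unfolding of the argument the paper leaves implicit: the corollary is stated with no proof as an immediate consequence of Lemma~\ref{lem:between->logarithmic}, and your reduction --- extracting $h\colon Z\to Y$ via \refall{prop:properties-irs}{3}, transporting the three \df structures to $\underline{Z}$ (with $\psi$ Kummer since the Kummer condition is a stalkwise property preserved by pullback), applying the lemma to get that $\tilde u$ is a morphism of \irss over $Z$, and concluding with Theorem~\ref{thm:equivalence2} --- is exactly the intended deduction, including your observation that the bottom vertex of the printed triangle should read $\infroot{X}$. The only slip is cosmetic: the moduli spaces of $\infroot{Y}$ and $\infroot{Z}$ are $Y$ and $Z$, not $X$; what your argument actually needs, and correctly cites, is the uniqueness in \refall{prop:properties-irs}{3} of the factorization through $Z$ of the morphism $\infroot{Z}\to X$, which identifies $\underline{f}\circ h$ with $\underline{g}$.
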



\section{The small fppf site of an \irs\\and Kato's Kummer-flat site}
\label{realsec:fp-sheaves-fppf}

In this section we introduce the small fppf site of an \irs, a site obtained by using a natural flat topology, and we compare it with Kato's Kummer-flat site of the corresponding logarithmic structure.

Our main result here is that there is a natural comparison functor, that induces an equivalence of the corresponding topoi; in other words sheaves on the Kummer-flat site of a logarithmic scheme are the same as sheaves on the fppf site of its \irs.

\subsection{Finitely presented sheaves and the small fppf site of an \irs}%
\label{sec:fp-sheaves-fppf}
Let $\cR$ be an \irs over a scheme $X$. Set $\cR_{n} \eqdef \radice n {(X, A_{X}, L_{X})}$; by Proposition~\ref{prop:projective-limit} we have $\cR = \projlim_{n}\cR_{n}$. We will denote by $\pi_{n}\colon \cR \arr \cR_{n}$ the canonical map.

The pullback functors $\FP\cR_{n} \arr \FP\cR$ induce a functor $\indlim_{n}\FP\cR_{n} \arr \FP\cR$.

\begin{proposition}\label{prop:limit-finitely-presented}
Assume that $X$ is quasi-compact and quasi-separated. Then the functor $\indlim_{n}\FP\cR_{n} \arr \FP\cR$ is an equivalence.
\end{proposition}

\begin{proof}
Assume that the logarithmic structure on $X$ comes by pullback from a map $X \arr X_{P}$ for some sharp \fs monoid $P$. Denote by $U_{n}$ and $U_{\infty}$ the fibered products $X \times_{X_{P}}X_{P}^{[n]}$ and $X \times_{X_{P}}X_{P}^{[\infty]}$ respectively, by $R_{n}$ and $R_{\infty}$ the products $U_{n} \times_{\spec \ZZ} \mmu_{n}(P)$ and $U_{\infty} \times_{\spec \ZZ} \mmu_{\infty}(P)$. We have that $\projlim_{n}U_{n} \simeq U_{\infty}$, $\projlim_{n}R_{n} \simeq R_{\infty}$, and $\projlim_{n}(R_{n}\times_{U_{n}}R_{n}) \simeq (R_{\infty}\times_{U_{\infty}}R_{\infty})$; furthermore, the isomorphisms above are compatible with the structure maps of the groupoids $R_{n}\double U_{n}$ and $R_{\infty} \double U_{\infty}$, that give presentations for $\cR_{n}$ and $\cR$ respectively. Consequently $R_{\infty} \double U_{\infty}$ is the projective limit of the $R_{n}\double U_{n}$ in the category of groupoid schemes. 

By \cite[Théorème~8.5.2]{ega43}, if $Y_{\lambda}$ is a projective system of affine schemes over a fixed quasi-compact and quasi-separated scheme $X$, and $Y \eqdef \projlim_{\lambda}Y_{\lambda}$, the pullbacks $\FP Y_{\lambda} \arr \FP Y$ induce an equivalence $\indlim \FP Y_{\lambda} \simeq \FP Y$. From this we obtain an equivalence of the category $\FP(R_{\infty} \double U_{\infty})$ of \fp sheaves with descent data on the groupoid $R_{\infty} \double U_{\infty}$ and the inductive limit $\indlim_{n}\FP(R_{n} \double U_{n})$. But $\FP(R_{\infty} \double U_{\infty})$ is equivalent to $\FP\cR$, while $\FP(R_{n} \double U_{n})$ is equivalent to $\FP\cR_{n}$; the resulting equivalence $\indlim_{n}\FP\cR_{n} \simeq \FP\cR$ is easily seen to be isomorphic to the functor in the statement.

In general, let $\{X_{i}\to X\}_{1 \leq i \leq r}$ be {an \'etale covering} of $X$, such that for each $i$ the restriction of the logarithmic structure on $X$ to $X_{i}$ comes by pullback from a map $X_{i} \arr X_{P_{i}}$ for some sharp \fs monoid $P_{i}$. Set
   \begin{align*}
   X' &\eqdef \coprod_{i}X_{i}\,,\\
   X'' &\eqdef X' \times_{X} X' = \coprod_{i,j}(X_{i} \times_X X_{j})\,,\\
   \cR'_{n} &\eqdef X'\times_{X}\radice{n}X\,,\\
   \cR''_{n} &\eqdef X''\times_{X}\radice{n}X\,,\\
   \cR' &\eqdef X'\times_{X}\cR\,,\\
   \cR'' &\eqdef X''\times_{X}\cR\,.
   \end{align*}

Notice that the functors $\indlim_{n}\FP \cR'_{n} \arr \FP\cR'$ and $\indlim_{n}\FP \cR''_{n} \arr \FP\cR''$ are equivalences, because of the previous case, and because filtered inductive limits commute with finite products. Furthermore, the natural functors $\FP\cR_{n} \arr \FP(\cR'_{n} \double \cR''_{n})$ and $\FP\cR \arr \FP(\cR' \double \cR'')$ are equivalences, by {\'etale} descent; hence it is enough to show that the functor
   \[
   \indlim_{n}\FP(\cR'_{n} \double \cR''_{n}) \arr \FP(\cR' \double \cR'')
   \]
is an equivalence. In fact, since filtered inductive limits are exact, we easily see that $\indlim_{n}\FP(\cR'_{n} \double \cR''_{n})$ is equivalent to the category of objects of $\indlim_{n}\FP\cR'_{n}=\FP \cR'$ with descent data in $\indlim_{n}\FP\cR''_{n}=\FP\cR''$, as desired.
\end{proof}

Recall from Proposition~\refall{prop:equivalence-fp}{4} that if $\cX$ is a category fibered in groupoids over $\aff$, the category $\FP\cX$ of \fp sheaves on $\cX$ is equivalent to the category $\FP\cX\rep$ whose objects are representable maps $\cA \arr \cX$, endowed with the fpqc topology. For root stacks, we have a much smaller site with an equivalent category of \fp sheaves.

\begin{definition}\label{def:small-fppf}
Let $\cX$ be a category fibered in groupoids over $\aff$. The \emph{small fppf site} $\cX\fppf$ is the full subcategory of $\cX\rep$ whose objects are flat locally \fp representable base-preserving functors $\cA \arr \cX$.

A covering $\{\cA_{i} \arr \cA\}$ is a collection of arrows in $\cX\fppf$ such that the induced base-preserving functor $\bigsqcup_{i}\cA_{i} \arr \cA$ is surjective and fppf.
\end{definition}

This site is particularly important for our purposes, because as we will see (Theorem~\ref{equiv.topoi}) the topos of the small fppf site of an \irs is equivalent to Kato's Kummer-flat topos of the corresponding logarithmic scheme.

Since any representable fppf morphism $\cA\to \cX$ is also fpqc, we have an inclusion functor $i\colon \cX\fppf\to \cX\rep$, which is a morphism of sites, in the sense of \cite[Tag~00X0]{stacks-project}, and induces a morphism of topoi $(i_{*},i^{-1})\colon\sh\cX\rep\to \sh\cX\fppf$. The functor $i_{*}\colon \sh\cX\rep \arr \sh\cX\fppf$ is given by restriction. We define the structure sheaf $\cO_{\cX\fppf}$ as $i_{*}\cO_{\cX\rep}$; this makes $(i_{*},i^{-1})$ into a morphism of ringed topoi.

We denote by $i^{*}\colon \mod{\cO_{\cX\fppf}} \arr \mod{\cO_{\cX\rep}}$ the corresponding pullback of sheaves of $\cO$-modules; we will use the same symbol for the functors $i^{*}\colon \catqcoh{\cX\fppf} \arr \catqcoh{\cX\rep}$  and $i^{*}\colon \FP{\cX\fppf} \arr \FP{\cX\rep}$ induced by restriction.
 
If $\cY$ is another category fibered in groupoids and $f\colon \cY \to \cX$ is a base-preserving functor, then we have pushforward and pullback functors $f_{*}\colon \sh\cY\fppf\to  \sh\cX\fppf$ and $f^{-1}\colon \sh\cX\fppf \to \sh\cY\fppf$, together with an adjunction $f^{-1} \dashv f_{*}$. {These functors are defined in the usual manner; for example the pushforward $f_*$ is defined by the formula $f_*F(\cA\to \cX)=F(\cA\times_\cX\cY\to \cY)$ on objects, and by the analogous formula on arrows.} {Note that in this case $(f_*, f^{-1})$ is not necessarily a morphism of topoi, as $f^{-1}$ might not be left exact.}

\begin{remark}
Additionally, pullback on quasi-coherent sheaves is compatible with the morphism $(i_{*}, i^{-1})$, i.e. the following diagram is 2-commutative
   \[
   \begin{tikzcd}
   \catqcoh\cX\fppf\ar{r}{i^{*}}\ar{d}{f^{*}} &\catqcoh\cX\rep\ar{d}{f^{*}}\\
   \catqcoh\cY\fppf\ar{r}{i^{*}} & \catqcoh\cY\rep
   \end{tikzcd}
   \]
where we used the same symbol $i^{*}$ for $\cX$ and $\cY$.
\end{remark}

\begin{lemma}\label{lem:alg->fppf=rep}
Suppose that $\cX$ is an algebraic stack over $\aff$ with schematic diagonal. Then the functors
   \[
   i^{*}\colon \catqcoh \cX\fppf \arr \catqcoh \cX\rep
   \quad\text{and}\quad
   i^{*}\colon \FP \cX\fppf \arr \FP \cX\rep
   \]
are equivalences.
\end{lemma}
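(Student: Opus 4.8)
The plan is to reduce both sides to descent data along a smooth atlas and compare them there.

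Since $\cX$ is algebraic it has representable diagonal and admits a smooth surjective representable morphism $U \arr \cX$ with $U$ a scheme (which we may take to be a disjoint union of affines). Being faithfully flat and locally of finite presentation, $U \arr \cX$ is \emph{simultaneously} an fpqc atlas in the sense of Definition~\ref{def:fpqc-atlas} and an object of $\cX\fppf$; moreover, since the diagonal is representable, $R \eqdef U \times_{\cX} U$ is a scheme and $R \arr \cX$ is again smooth, so $R \in \cX\fppf$ as well. We thus obtain the groupoid $R \double U$ lying inside $\cX\fppf \subseteq \cX\rep$, and restriction to $U$ together with the descent datum induced by the two projections $R \double U$ defines functors from each of $\catqcoh\cX\rep$ and $\catqcoh\cX\fppf$ to $\catqcoh(R \double U)$. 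The strategy is to show that both of these restriction functors are equivalences and that they are compatible with $i^{*}$; then $i^{*}$ is identified with the composite of one equivalence with the inverse of the other, hence is itself an equivalence.

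For $\cX\rep$ this is already available: combining the identifications in Proposition~\ref{prop:equivalence-qc} of parts \refpart{prop:equivalence-qc}{1} and \refpart{prop:equivalence-qc}{5} (together with Proposition~\ref{prop:qc<->qc-on-groupoid}), restriction to the fpqc atlas $U$ gives an equivalence $\catqcoh\cX\rep \simeq \catqcoh(R \double U)$. The new content is the analogue for $\cX\fppf$, which I would establish by hand using effectivity of fppf descent for quasi-coherent sheaves. The key point is that every object $\cA \arr \cX$ of $\cX\fppf$ admits the fppf cover $\cA \times_{\cX} U \arr \cA$ whose source is again an object of $\cX\fppf$ (it is flat and locally of finite presentation over $\cX$, being the composite of the base change of $\cA \arr \cX$ along $U \arr \cX$ with the smooth map $U \arr \cX$) and is moreover a scheme mapping to $U$. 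Thus a quasi-coherent (locally presented) sheaf $F$ on $\cX\fppf$ restricts to a quasi-coherent sheaf $F\rest{U}$ on the scheme $U$ carrying a descent datum along $R \double U$, while conversely a descent datum $(M,\sigma)$ determines, for each $\cA \in \cX\fppf$, the module obtained by descending the pullback of $M$ to $\cA \times_{\cX} U$ through the fppf cover $\cA \times_{\cX} U \arr \cA$; effectivity of fppf descent, applied over the scheme case where locally presented sheaves coincide with ordinary quasi-coherent sheaves, shows this is a well-defined quasi-inverse. Compatibility with $i^{*}$ is then formal: since $U$ and $R$ already lie in $\cX\fppf$, the inverse image $i^{-1}$ (and hence the module pullback $i^{*}$) does not change the value of a sheaf on $U$ or $R$, so $(i^{*}F)\rest{U} \simeq F\rest{U}$ compatibly with descent. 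The finitely presented case follows at once, because a quasi-coherent sheaf is \fp if and only if its restriction to the atlas $U$ is, and all the functors in play preserve and reflect this condition (alternatively one repeats the argument with Proposition~\ref{prop:equivalence-fp} in place of Proposition~\ref{prop:equivalence-qc}).

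The main obstacle is precisely the reconstruction step for $\cX\fppf$, i.e. producing a genuine quasi-coherent sheaf on \emph{all} of $\cX\fppf$ from descent data on the atlas. One cannot shortcut this by applying Lemma~\ref{lem:subcategory-equivalence-qc} to the inclusion $\cX\fppf \subseteq \cX\rep$: that lemma requires the subcategory to be closed under fibered products, whereas $\cX\fppf$ is \emph{not}, since a fibered product $\cA \times_{\cE} \cB$ of objects flat and locally of finite presentation over $\cX$ need not be flat over $\cX$ (for instance $\{0\} \times_{\AA^{1}} \{0\}$ computed over a one-dimensional base is a non-flat closed point). Consequently the comparison must be carried out through the atlas and genuine fppf descent rather than by a purely formal site comparison, and the careful points are to verify that the reconstructed presheaf is an fppf sheaf and has local presentations. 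This is also where algebraicity of $\cX$ is used essentially: it is exactly the flatness and finite presentation of the atlas that place $U$ and $R$ in $\cX\fppf$, a feature that fails for the infinite root stack itself (whose only available atlas is fpqc but not locally of finite presentation), explaining why the statement is restricted to algebraic $\cX$.
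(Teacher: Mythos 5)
Your proof is correct and follows essentially the same route as the paper: both arguments fix a representable fppf atlas $U \arr \cX$ by a scheme, set $R \eqdef U\times_{\cX}U$, identify each of $\catqcoh\cX\fppf$ and $\catqcoh\cX\rep$ with the descent-data category $\catqcoh(R\double U)$, and dispose of the finitely presented case by fpqc-locality of finite presentation; both also leave the actual descent verification at a comparable level of informality (the paper literally says ``easy descent arguments''). The one organizational difference is that the paper does use Lemma~\ref{lem:subcategory-equivalence-qc}, but \emph{vertically}: it is applied to the inclusions $\cX\sfppf\subseteq\cX\fppf$ and $\cX\sch\subseteq\cX\rep$ of the full subcategories of scheme objects (the covering hypothesis holds there, since any $\cA$ in $\cX\fppf$ is covered by the scheme $\cA\times_{\cX}U$), reducing to sites whose objects are honest schemes before descending to $R\double U$; you instead run the fppf descent directly over the stacky objects $\cA$, which works for the same reason. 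Your side remark ruling out a formal site comparison between $\cX\fppf$ and $\cX\rep$ reaches the right conclusion, but your counterexample is broken: $\{0\}\times_{\AA^{1}}\{0\}$ is not a fibered product of objects of $(\AA^{1})\fppf$, since $\{0\}\arr\AA^{1}$ is not flat and hence $\{0\}$ is not an object of that site at all. A correct example of non-closure is two sections of $\AA^{2}\arr\AA^{1}$ meeting in a single point: all three objects are flat and finitely presented over $\AA^{1}$, but their fibered product is a non-flat closed point. Moreover the decisive obstruction to such a shortcut is not fibered products but local surjectivity: a non-flat object of $\cX\rep$ like $\{0\}\arr\AA^{1}$ admits no fpqc covering by objects of $\cX\fppf$, so the topoi $\sh\cX\fppf$ and $\sh\cX\rep$ are genuinely inequivalent, and the lemma can only ever assert an equivalence of the quasi-coherent subcategories --- which is exactly why your (and the paper's) detour through the atlas groupoid is necessary.
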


{
The assumption that the diagonal is schematic is unnecessary, but it will be satisfied in the cases where we apply this lemma.
}

\begin{proof}
The point is that in this case there exists a representable fppf map $U \arr \cX$, where $U$ is a scheme. Set $R \eqdef U \times_{\cX}U$, which is again a scheme by assumption. Denote by $\cX\sfppf$ and $\cX\sch$ the subcategories of $\cX\fppf$ and $\cX\rep$ respectively, consisting of maps $T \arr \cX$ in which $T$ is a scheme ($\cX\sch$ already appeared in section \ref{sec:qc}). It follows from Lemma~\ref{lem:subcategory-equivalence-qc} that the embeddings $\cX\sfppf \subseteq \cX\fppf$ and $\cX\sch \subseteq \cX\rep$ induce equivalences of the corresponding topoi, hence equivalences of the categories of \qc and \fp sheaves. By Proposition~\ref{prop:equivalence-qc}, the obvious functor $\catqcoh{\cX\sch} \arr \catqcoh{(R \double U)}$ is an equivalence; and easy descent arguments show that the composite $\catqcoh{\cX\sfppf} \xarr{i^{*}}\catqcoh{\cX\sch} \arr \catqcoh{(R \double U)}$ is also an equivalence. The first statement follows from this.

The second statement follows from the fact that being of finite presentation is a local property in the fpqc topology.
\end{proof}

In general the pullback $i^{*}\colon \catqcoh \cX\fppf \arr \catqcoh \cX\rep$ will not be an equivalence for an arbitrary fibered category with an fpqc atlas (see Remark~\ref{rmk:fppf-fpqc-not-equivalence}).  Nonetheless, this is true if we restrict to \fp sheaves on an infinite root stack $\cR$.

\begin{proposition}\label{prop:equivalence-fppf-fpqc}
Let $\cR$ be an \irs over a quasi-separated scheme. The functor
   \[
   i^{*}\colon \FP\cR\fppf\arr \FP\cR\rep
   \]
is an equivalence.
\end{proposition}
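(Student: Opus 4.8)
The plan is to reduce to the finite root stacks $\cR_{n}\eqdef\radice n X$, which are algebraic and for which the comparison between the fppf site and the representable site is already available (Lemma~\ref{lem:alg->fppf=rep}), and then to pass to the limit using $\cR\simeq\projlim_{n}\cR_{n}$ (Proposition~\ref{prop:projective-limit}). Since \fp sheaves form a stack for the \'etale topology on the base, the assertion that $i^{*}$ is an equivalence is \'etale-local on $X$; using that $X$ is quasi-separated I would first cover it by quasi-compact opens with quasi-compact overlaps and reduce to the case where $X$ is quasi-compact, the quasi-separatedness guaranteeing that the gluing of \fp sheaves takes place over quasi-compact bases so that finite presentation is preserved. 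After a further \'etale localization I may also assume that the logarithmic structure comes from a chart $X\arr X_{P}$, and write $\cR=[U_{\infty}/\mmu_{\infty}(P)]$ as in Corollary~\ref{cor:local-model-chart}, with $U_{\infty}=\projlim_{n}U_{n}$ and $U_{n}\arr\cR_{n}$ the tautological fppf atlas.

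I would then assemble the $2$-commutative square of additive tensor functors
   \[
   \begin{tikzcd}
   \indlim_{n}\FP\cR_{n}\fppf \ar{r} \ar{d}{i^{*}} & \FP\cR\fppf \ar{d}{i^{*}}\\
   \indlim_{n}\FP\cR_{n}\rep \ar{r} & \FP\cR\rep
   \end{tikzcd}
   \]
whose horizontal arrows are induced by the pullbacks $\pi_{n}^{*}$ along $\pi_{n}\colon\cR\arr\cR_{n}$, commutativity being the compatibility of $i^{*}$ with pullback recorded in the Remark above. The left-hand $i^{*}$ is an equivalence, being a filtered colimit of the equivalences $\FP\cR_{n}\fppf\arr\FP\cR_{n}\rep$ of Lemma~\ref{lem:alg->fppf=rep} (each $\cR_{n}$ is algebraic). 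The bottom arrow is an equivalence: identifying $\FP(-)\rep$ with $\FP(-)$ by Proposition~\ref{prop:equivalence-fp}, it becomes $\indlim_{n}\FP\cR_{n}\arr\FP\cR$, which is an equivalence for quasi-compact $X$ by Proposition~\ref{prop:limit-finitely-presented}. By two-out-of-three for the square it then suffices to show the top arrow $\indlim_{n}\FP\cR_{n}\fppf\arr\FP\cR\fppf$ is an equivalence, after which the right-hand $i^{*}$ is one as well.

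This last point is the analogue of Proposition~\ref{prop:limit-finitely-presented} for the fppf site, and I would prove it by rerunning that argument with the fpqc atlas replaced by finite-level fppf atlases. Pulling back $U_{n}\arr\cR_{n}$ along $\cR\arr\cR_{n}$ produces $V_{n}\eqdef U_{n}\times_{\cR_{n}}\cR\arr\cR$, which is representable, faithfully flat and \fp -- in fact a $\mmu_{n}(P)$-torsor -- hence an object of $\cR\fppf$, and $\{V_{n}\arr\cR\}$ is an fppf covering. Thus $\FP\cR\fppf$ is computed by \fp sheaves with descent data on the groupoid $V_{n}\times_{\cR}V_{n}\double V_{n}$, which is the base change along $\cR\arr\cR_{n}$ of the groupoid presenting $\cR_{n}$. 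Descending one step further to the common scheme atlas $U_{\infty}$, these assemble into the projective system of affine groupoid schemes $R_{n}\double U_{n}$ of Proposition~\ref{prop:limit-finitely-presented}, with limit $R_{\infty}\double U_{\infty}$, and \cite[Th\'eor\`eme~8.5.2]{ega43} yields $\FP\cR\fppf\simeq\indlim_{n}\FP\cR_{n}\fppf$ compatibly with the $\pi_{n}^{*}$.

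The main obstacle I anticipate is precisely this identification. The natural fpqc atlas $U_{\infty}\arr\cR$ is \emph{not} an object of $\cR\fppf$ -- this is exactly why $i^{*}$ fails to be an equivalence for a general fibered category with an fpqc atlas (Remark~\ref{rmk:fppf-fpqc-not-equivalence}) -- so one cannot compute $\FP\cR\fppf$ by naive fpqc descent along $U_{\infty}$; the descent must be routed through the finite-level torsors $V_{n}$, and one has to check that the fppf coverings they generate really suffice to detect \fp sheaves before the affine groupoid schemes can be fed into the limit argument. The other delicate point is the quasi-separated reduction, needed to glue the quasi-compact case back to all of $X$.
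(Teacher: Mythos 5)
Your overall frame coincides with the paper's up to the point you yourself flag as the main obstacle: the reduction to quasi-compact $X$ using quasi-separatedness, the $2$-commutative square, the equivalence of the left column (Lemma~\ref{lem:alg->fppf=rep} applied to the algebraic stacks $\cR_{n}$), and the equivalence of the bottom row (Propositions \ref{prop:equivalence-fp} and \ref{prop:limit-finitely-presented}) are all exactly the paper's steps. The gap is in your plan for the top row. First, note that given the other two equivalences, the statement that $\indlim_{n}\FP(\cR_{n})\fppf \arr \FP\cR\fppf$ is an equivalence is \emph{equivalent} to the proposition itself (two-out-of-three works in both directions), so you have reformulated the problem rather than reduced it. Second, the descent argument you propose for it cannot be run. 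The covering $V_{n} = U_{n}\times_{\cR_{n}}\cR \arr \cR$ is indeed an object of $\cR\fppf$, but $V_{n} \cong [U_{\infty}/\Gamma_{n}]$ with $\Gamma_{n} = \mmu_{\infty}(\tfrac{1}{n}P)$ is itself an \irs, not a scheme or an algebraic stack; descent along $\{V_{n}\arr\cR\}$ expresses $\FP\cR\fppf$ in terms of $\FP(V_{n})\fppf$ and $\FP(V_{n}\times_{\cR}V_{n})\fppf$, which are categories of precisely the kind you are trying to control, so the problem recurs one level up. And ``descending one step further to the common scheme atlas $U_{\infty}$'' is not available inside the small fppf site: $U_{\infty}\arr V_{n}$ is a $\Gamma_{n}$-torsor, faithfully flat and affine but \emph{not} finitely presented (the group $\Gamma_{n}$ is profinite), so it is not a covering in $(V_{n})\fppf$, and sheaves on the small fppf sites have no values on $U_{\infty}$ at all. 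Consequently \cite[Th\'eor\`eme~8.5.2]{ega43} has nothing to apply to; you never reach a projective system of affine groupoid schemes on the fppf side.

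The missing ingredient, which the paper supplies, is a substitute for the unavailable atlas: Lemma~\ref{lem:refine-to-irs}, asserting that every object $\cA \arr \infroot{X}$ of the small fppf site admits an fppf refinement $\infroot{Y} \arr \cA$ with $Y \arr X$ Kummer-flat. Its proof (given inside the proof of Theorem~\ref{equiv.topoi}) requires genuinely new work on actions of profinite diagonalizable group schemes, in particular Lemma~\ref{lem:invariants-finitely-presented}. With this in hand, the paper does not prove the top row is an equivalence directly; it reads off from the square that $i^{*}$ is full and essentially surjective, and then establishes faithfulness separately by showing that $i_{*}$ is exact on \qc sheaves (Lemma~\ref{lem:pushforward-exact}) and that the unit $F \arr i_{*}i^{*}F$ is an isomorphism for every \fp sheaf $F$ on $\cR\fppf$ (Lemma~\ref{lem:unity-isom}) --- the step the authors describe as ``surprisingly non-trivial'', and both lemmas rest on Lemma~\ref{lem:refine-to-irs}. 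Your proposal contains no counterpart of this refinement statement, and without it neither your limit argument nor any faithfulness argument can be carried out on the fppf side.
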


\begin{proof}
Assume that $X$ is also quasi-compact.

If $n$ is a positive integer we have a $2$-commutative diagram of categories
   \[
   \begin{tikzcd}
   {}\FP(\cR_{n})\fppf \rar{\pi_{n}^{*}} \dar{i^{*}_{n}} & \FP\cR\fppf \dar{i^{*}}\\
   {}\FP(\cR_{n})\rep \rar{\pi_{n}^{*}} & \FP\cR\rep
   \end{tikzcd}
   \]
where, as before, $\cR_{n}$ is the $n$-th root stack of the logarithmic structure determined by $\cR$, $\pi_{n}\colon \cR\arr \cR_{n}$ is the projection, and $i_{n}\colon (\cR_{n})\fppf \arr (\cR_{n})\rep$ is the embedding. Thus we get a $2$-commutative diagram
   \[
   \begin{tikzcd}[column sep = large]
   {}\indlim_{n}\FP(\cR_{n})\fppf \rar \dar{\indlim i^{*}_{n}} & \FP\cR\fppf \dar{i^{*}}\\
   {}\indlim_{n}\FP(\cR_{n})\rep\rar{\indlim\pi_{n}^{*}}  & \FP\cR\rep\,.
   \end{tikzcd}
   \]
The left hand column is an equivalence by Lemma~\ref{lem:alg->fppf=rep}, while the bottom row is an equivalence by Proposition~\ref{prop:limit-finitely-presented}. From this it follows that $i^{*}\colon \FP\cR\fppf \arr \FP\cR\rep$ is full and essentially surjective. It remains to show that it is faithful. So, take two \fp sheaves $F$ and $G$ on $\cR\fppf$, and a morphism $\phi\colon G \arr F$ that is $0$ when pulled back to $\cR\rep$; we need to show that $\phi$ is $0$. This seems surprisingly non-trivial to us.

We will make use of the following result, that is a consequence of Lemma~\ref{lem:refine-to-irs}, proven below.

\begin{lemma}\label{lem:stated}
Let $\cR$ be an infinite root stack over a scheme $X$, and let $\cA\to \cR$ be a representable fppf morphism. Then there exists a representable fppf morphism $\cS\to \cA$, where $\cS\to Y$ is an infinite root stack over a scheme $Y$.
\end{lemma}

\begin{lemma}\label{lem:pushforward-exact}
Let $\cR$ be an infinite root stack over a scheme $X$. Then the restriction $i_{*}\colon \catqcoh \cR\rep \arr \mod{\cO_{\cR\fppf}}$ is exact.
\end{lemma}

\begin{proof}
Since $i_{*}$ is a right adjoint, it is enough to prove that it is right exact. Let $\phi\colon G \arr F$ be a surjective homomorphism of \qc sheaves on $\cR$; we need to show that $i_{*}\phi\colon i_{*}G \arr i_{*}F$ is surjective. Let $\cA \arr \cR$ be a representable \fp flat map, and $s \in F(\cA)$; we need to show that there exists a representable fppf map $\cB \arr \cA$ such that the pullback of $s$ to $F(\cB)$ is in the image of $G(\cB)$. 

By Lemma \ref{lem:stated} stated above, there exists a representable fppf morphism $\cS \arr \cA$ where $\cS \arr Y$ is an \irs over a scheme $Y$. By base-changing to $\cS$, we can assume that $\cA = \cR$. Furthermore, the problem is clearly local in the fppf topology of $X$; hence we can assume that $X = \spec R$ is affine, and that the logarithmic structure of $X$ comes from a morphism $\ZZ[P] \arr R$, where $P$ is a sharp fine saturated monoid. Set $A \eqdef R \otimes_{\ZZ[P]} \ZZ[P_{\QQ}]$ and $H \eqdef \mmu_{\infty}(R)$; then $H$ acts on $A$, and $\cR = [\spec A/H]$ (Corollary~\ref{cor:local-model-chart}). We also have $R = A^{H}$. The \qc sheaves $G$ and $F$ correspond to $H$-equivariant $A$-modules $N$ and $M$, and $\phi\colon G \arr F$ corresponds to a $H$-equivariant morphism of $A$-modules $\Phi\colon N \arr M$. Then $G(\cR) = N^{H}$ and $F(\cR) = M^{H}$. But the group scheme $H$ is diagonalizable, hence the functor $M \arr M^{H}$ is exact. This shows that $s \in F(\cR)$ comes from $G(\cR)$, and completes the proof.
\end{proof}

\begin{lemma}\label{lem:unity-isom}
If $F$ is a finitely presented sheaf on $\cR\fppf$, the unit homomorphism $F \arr i_{*}i^{*}F$ is an isomorphism.
\end{lemma}

\begin{proof}
Let $\cA \arr \cR$ be a representable fppf cover, such that there is an exact sequence of $\cO_{\cA\fppf}$-modules $\cO_{\cA\fppf}^{\oplus n} \arr \cO_{\cA\fppf}^{\oplus m} \arr F \arr 0$. Using Lemma~\ref{lem:refine-to-irs} again, by refining $\cA$ we may assume that $\cA$ is an \irs. Since $i^{*}$ is right exact and because of Lemma~\ref{lem:pushforward-exact} we have a commutative diagram
   \[
   \begin{tikzcd}
   \cO_{\cA\fppf}^{\oplus n} \rar\dar& \cO_{\cA\fppf}^{\oplus m} \dar\rar& F \dar\rar& 0\\
   i_{*}\cO_{\cA\rep}^{\oplus n} \rar& i_{*}\cO_{\cA\rep}^{\oplus m} \rar& i_{*}i^{*}F \rar& 0
   \end{tikzcd}
   \]
with exact rows. But $i_{*}\cO_{\cX\rep} = \cO_{\cX\fppf}$, and since $i_{*}$ commutes with finite direct sums the two left-hand vertical maps are isomorphisms; this implies the thesis.
\end{proof}

The fact that $i_{*}$ is faithful follows immediately from Lemma~\ref{lem:unity-isom}. This completes the proof of Proposition~\ref{prop:equivalence-fppf-fpqc} when $X$ is quasi-compact.

For the general case, when $X$ is not necessarily quasi-compact, notice that by assigning to each open subscheme $U \subseteq X$ the categories $\FP(U\times_{X}\cR)\fppf$ and $\FP(U\times_{X}\cR)\rep$ we get a stack in the Zariski topology, and $i^{*}$ extends to a base-preserving functor of Zariski stacks. If $\{U_{\alpha}\}$  is a covering by quasi-compact open subschemes, then
   \[
   i^{*}\colon \FP(U_{\alpha}\times_{X}\cR)\fppf \arr \FP(U_{\alpha}\times_{X}\cR)\rep
   \]
and
   \[
   i^{*}\colon \FP(U_{\alpha\beta}\times_{X}\cR)\fppf \arr
   \FP(U_{\alpha\beta}\times_{X}\cR)\rep\,,
   \]
where we have set $U_{\alpha\beta}\eqdef U_{\alpha}\times_{X}U_{\beta}$, are equivalences for all $\alpha$ and $\beta$. Consequently $i^{*}\colon \FP\cR\fppf\arr \FP\cR\rep$ is also an equivalence.
\end{proof}

\begin{remark}\label{rmk:fppf-fpqc-not-equivalence}
It follows from the proof of Proposition~\ref{prop:equivalence-fppf-fpqc} that the image of the pullback $i^{*}\colon \catqcoh{\cR\fppf} \arr \catqcoh{\cR\rep}$ is equivalent to the colimit $\indlim_{n}\catqcoh{\cR_{n}}$. It is easy to show that $\indlim_{n}\catqcoh{\cR_{n}}$ is not closed under infinite direct sums, when $\cR$ is not trivial (if for each $n$ we pick a \qc sheaf $F_{n}$ on $\cR_{n}$ that is not a pullback of a sheaf $\cR_{m}$ for any proper divisor $m$ of $n$, the direct sum $\oplus_{n}F_{n}$ will not come from the direct limit). Hence $i^{*}\colon \catqcoh{\cR\fppf} \arr \catqcoh{\cR\rep}$ is not an equivalence.
\end{remark}


\subsection{The relation with Kato's Kummer-flat site}\label{sec:fppf-kummer-flat}

In this section we will show that the Kummer-flat topos of a logarithmic scheme (\cite{kato2, niziol-k-theory, illusie-nakayama-tsuji}) can be recovered as the fppf topos of the corresponding infinite root stack.

We briefly recall the construction of the Kummer-flat topos of a logarithmic scheme.

Recall that a morphism of logarithmic schemes $f\colon Y\to X$ is log-flat if the following holds: fppf locally on $X$ and $Y$ we can find Kato charts $P\to M_{X}$ and $Q\to M_{Y}$ and a morphism $P\to Q$ such that the diagram
   \begin{equation}\label{eq:diagram2}
   \begin{tikzcd}
   Y \rar\dar &X_{Q} \dar\\
   X \rar & X_{P}
   \end{tikzcd}
   \end{equation}
commutes, and the induced map $Y\to X\times_{X_{P}}X_{Q}$ is flat (here, as usual, $X_{P}=\sz P$ for a monoid $P$). Recall also that a homomorphism of monoids $P\to Q$ is Kummer if it is injective, and every element of $Q$ has a positive multiple in the image. A morphism $f\colon Y\to X$ is Kummer if the corresponding $f^{*}A_{X}\to A_{Y}$ is Kummer, meaning that the homomorphism of monoids $(f^{*}A_{X})_{y}\to (A_{Y})_{y}$ is Kummer for any geometric point $y\to Y$.

\begin{definition}[Kazuya Kato]
A morphism of fine saturated logarithmic schemes $f\colon Y\to X$ is \emph{Kummer-flat} if it is log-flat and Kummer, and the underlying map of schemes is locally of finite presentation.
\end{definition}

\begin{remark}
The finite presentation of the underlying map of schemes is not essential, but will be useful for our purposes, so we include it in the definition.
\end{remark}

Since charts can be made up from stalks, if $f\colon Y\to X$ is Kummer-flat, then locally we can find charts as above such that in addition $P\to Q$ is Kummer, and it is proved in \cite[Proposition 1.3]{illusie-nakayama-tsuji} that we can also make $Y\to X\times_{X_{P}}X_{Q}$ locally of finite presentation.

For a logarithmic scheme $X$, there is a site, called the \emph{Kummer-flat site} and denoted by $X\kfl$, whose objects are morphisms of logarithmic schemes $U\to X$ that are Kummer-flat, with morphisms of logarithmic schemes over $X$ as arrows, and with jointly surjective families $\{U_{i}\to U\}_{i \in I}$ of Kummer-flat morphisms as coverings. The corresponding topos $\sh X\kfl$ is called the \emph{Kummer-flat topos} of $X$.  
 
\begin{remark}
The site $X\kfl$ has a final object, the identity morphism $X = X$, and fibered products. Given a diagram
   \[
   \begin{tikzcd}
   {}&V\dar\\
   Z\rar & Y
   \end{tikzcd}
   \]
in $X\kfl$, the fibered product is given by the fibered product $V\times_{Y}Z$ in the category of fine saturated logarithmic schemes over $k$, together with the induced Kummer-flat map $V\times_{Y}Z\to X$. Notice, however, that the underlying scheme of the fibered product $V\times_{Y}Z$ may not be the fibered product $\underline V \times_{\underline Y} \underline Z$ (see Section 2.4 of \cite{ogus}).
\end{remark}

\begin{remark}
If we have two objects $Y\to X$ and $Z\to X$ of $X\kfl$, then any morphism $Z\to Y$ in $X\kfl$ is also Kummer. This follows from the fact that if two morphisms of \fs torsion-free monoids $P\to Q$ and $P\to R$ are Kummer and we have a commutative diagram
   \[
   \begin{tikzcd}
   P\rar\dar& R \\
   Q\ar{ru}&
   \end{tikzcd}
   \]
then $Q\to R$ is also Kummer.

Indeed, any $r \in R$ has some multiple $nr$ coming from $p$, which means that it also comes from $Q$. Moreover the map is injective: if $q$ and $q'$ go to the same element $r$, take a positive integer $n$ such that $nr, nq$ and $nq'$ all come from $P$. Then if $p$ goes, say, to $nq$ and $p'$ goes to $nq'$, since $P\to R$ is injective and $p$, $p'$ both go to $nr$, we must have $p=p'$, which means that $nq=nq'$, and so $q=q'$ by torsion-freeness.
\end{remark}


\begin{proposition}\label{kfl.morphisms}
Let $f\colon Y\to X$ be a Kummer-flat (resp. Kummer-étale) morphism of logarithmic schemes. Then the induced morphism $\infroot{f}\colon \infroot{Y}\arr \infroot{X}$ between the infinite root stacks is representable, locally \fp and flat (resp. representable and étale).
\end{proposition}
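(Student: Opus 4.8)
The statement asserts that a Kummer-flat (resp. Kummer-\'etale) morphism $f\colon Y \to X$ of \fs logarithmic schemes induces a representable, locally \fp, flat (resp. representable, \'etale) morphism $\infroot{f}\colon \infroot{Y} \to \infroot{X}$. Since all of these properties --- representability, flatness, local finite presentation, and \'etaleness --- are local in the fppf topology on the base and stable under base change, my first move is to reduce to a local situation. By the discussion preceding the proposition, being Kummer-flat allows us to find, fppf-locally on $X$ and $Y$, Kato charts $P \to M_X$ and $Q \to M_Y$ together with a \emph{Kummer} homomorphism $P \to Q$ fitting into the diagram~\eqref{eq:diagram2}, with the induced map $Y \to X \times_{X_P} X_Q$ flat and locally of finite presentation. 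So I would assume throughout that we are in this charted situation, and in fact (using Corollary~\ref{cor:local-model-chart}) that $\infroot{X} = [(X \times_{X_P} X_P^{[\infty]})/\mmu_\infty(P)]$ and $\infroot{Y} = [(Y \times_{X_Q} X_Q^{[\infty]})/\mmu_\infty(Q)]$.

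The technical heart of the argument is to understand $\infroot{f}$ through these explicit quotient presentations. The Kummer homomorphism $P \to Q$ induces a homomorphism of profinite group schemes $\mmu_\infty(Q) \to \mmu_\infty(P)$ (dually, $P_\QQ\gr/P\gr \to Q_\QQ\gr/Q\gr$), and a compatible map $X_Q^{[\infty]} \to X_P^{[\infty]}$ of the infinite covers, because $P \to Q$ being Kummer means $Q\gr_\QQ = P\gr_\QQ$ after extension and $P_\QQ \to Q_\QQ$ is an isomorphism of rationalized monoids. The key structural fact I would isolate is that $\mmu_\infty(Q) \to \mmu_\infty(P)$ is \emph{faithfully flat} with kernel a finite diagonalizable group scheme, and that $X_Q^{[\infty]} \to X_P^{[\infty]}$ together with the base change along $Y \to X$ realizes the comparison map as a composite of a quotient-presentation morphism and the flat, locally \fp map coming from the hypothesis $Y \to X \times_{X_P} X_Q$. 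Concretely, I would factor $\infroot{f}$ so that, after pulling the $\mmu_\infty(Q)$-quotient back, the map becomes the honest scheme map $Y \times_{X_Q} X_Q^{[\infty]} \to X \times_{X_P} X_P^{[\infty]}$, and then analyze this map directly on affine atlases, where everything reduces to ring maps between $\ZZ[P_\QQ]$- and $\ZZ[Q_\QQ]$-type algebras.

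For representability and flatness I would argue as follows. Pulling back $\infroot{f}$ along an fpqc atlas of $\infroot{X}$ (which exists by Proposition~\ref{prop:properties-irs}\refpart{prop:properties-irs}{5}) yields a map to a scheme; using Lemma~\ref{lem:affine-map} applied to the subgroup inclusion coming from $\mmu_\infty(Q) \to \mmu_\infty(P)$, the fiber product becomes a scheme, giving representability. Flatness and local finite presentation then follow by descending through the atlas from the corresponding properties of the scheme-level map $Y \times_{X_Q} X_Q^{[\infty]} \to X \times_{X_P} X_P^{[\infty]}$, which factors as the flat, locally \fp base change of $Y \to X \times_{X_P} X_Q$ followed by the flat map $X_Q^{[\infty]} \to X_Q \times_{X_P} X_P^{[\infty]}$ (this second flatness uses that $\ZZ[P] \subseteq \ZZ[P_\QQ]$ is flat, being an inductive limit of finite free extensions because $P$ is saturated). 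The \'etale case is identical with ``flat'' replaced by ``\'etale'' throughout: a Kummer-\'etale map has $Y \to X \times_{X_P} X_Q$ \'etale, and the group-theoretic comparison contributes no ramification since we work with the \emph{full} rational saturation on both sides.

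The main obstacle I anticipate is verifying flatness of the transition map $X_Q^{[\infty]} \to X_Q \times_{X_P} X_P^{[\infty]}$ precisely --- i.e., controlling the interaction between the two different infinite covers $X_P^{[\infty]}$ and $X_Q^{[\infty]}$ over the common base along a Kummer extension. Because $P \to Q$ is Kummer but not an isomorphism, $Q_\QQ = P_\QQ$ as rationalized monoids, so morally the two infinite covers agree ``at infinity''; the care is in checking that the finite-level discrepancies assemble into a flat (resp.\ \'etale) limit and that the group-scheme quotients match up so that $\infroot{f}$ is genuinely the map induced by $P \to Q$. I would handle this by passing to the finite root stacks $\radice n{}$, where Proposition~\ref{prop:finite-local-models} gives algebraic quotient presentations and the comparison is a map of tame algebraic stacks for which flatness/\'etaleness is checkable by standard chart computations, and then taking the limit using $\infroot{} = \projlim_n \radice n{}$ (Proposition~\ref{prop:projective-limit}) together with the fact that flatness and local finite presentation are preserved under the relevant filtered limits of the presenting groupoid schemes.
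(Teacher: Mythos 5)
Your skeleton matches the paper's argument (fppf-local reduction to a chart with a Kummer homomorphism $P \arr Q$, quotient presentations from Corollary~\ref{cor:local-model-chart}, representability via Lemma~\ref{lem:affine-map}, flatness and finite presentation descended from the scheme level), but the technical core of your flatness step is wrong. You assert that $\ZZ[P] \subseteq \ZZ[P_{\QQ}]$ is flat, ``being an inductive limit of finite free extensions because $P$ is saturated''. This is false for a general \fs monoid: saturation does not make $\ZZ[\tfrac{1}{n}P]$ free, or even flat, over $\ZZ[P]$ (that holds essentially only for free monoids). Concretely, take $P = \langle (2,0),(1,1),(0,2)\rangle \subseteq \ZZ^{2}$, so that $k[P] \cong k[x^{2},xy,y^{2}]$ is the $A_{1}$ surface singularity. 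The graded piece of $k[\tfrac{1}{2}P]$ (equivalently, of $k[P_{\QQ}]$) in the class of $(1,0)$ is isomorphic as a $k[P]$-module to the divisorial ideal $(x^{2},xy)$, which is not locally free, hence not flat; since graded pieces are direct summands, $k[P] \arr k[P_{\QQ}]$ is not flat (equivalently, $P \subseteq \tfrac{1}{n}P$ is not an integral morphism of monoids in Kato's sense). Moreover the map you want to be flat, $X_{Q}^{[\infty]} \arr X_{Q}\times_{X_{P}}X_{P}^{[\infty]}$, is a section of the projection $X_{Q}\times_{X_{P}}X_{P}^{[\infty]} \arr X_{P}^{[\infty]} \cong X_{Q}^{[\infty]}$, hence a closed immersion, and is essentially never flat. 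So both the claim and its proposed justification fail, and your ``main obstacle'' paragraph is built on this broken factorization.

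The fix --- which is what the paper does, and which makes the obstacle disappear --- is to use that Kummer-ness gives an \emph{isomorphism} $P_{\QQ} \cong Q_{\QQ}$, hence an isomorphism $X_{Q}^{[\infty]} \cong X_{P}^{[\infty]}$, not merely ``a compatible map''. Then $X\times_{X_{P}}X_{Q}^{[\infty]} \cong X\times_{X_{P}}X_{P}^{[\infty]}$, and the scheme-level comparison $Y\times_{X_{Q}}X_{Q}^{[\infty]} \arr X\times_{X_{P}}X_{P}^{[\infty]}$ is precisely the base change of the strict, flat, \fp map $Y \arr X\times_{X_{P}}X_{Q}$ along $X\times_{X_{P}}X_{Q}^{[\infty]} \arr X\times_{X_{P}}X_{Q}$; no flatness of $\ZZ[P]\subseteq\ZZ[P_{\QQ}]$ is needed anywhere. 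The only genuine discrepancy is the change of group, and here your ``key structural fact'' is also stated backwards: $\mmu_{\infty}(Q) \arr \mmu_{\infty}(P)$ is \emph{not} faithfully flat with finite kernel; it is a closed subgroup-scheme inclusion, dual to the surjection $P\grq/P\gr \arr Q\grq/Q\gr$ whose kernel is the finite group $Q\gr/P\gr$, so it has finite diagonalizable \emph{cokernel} $\Gamma = \hom(Q\gr/P\gr,\gm)$. (You do invoke Lemma~\ref{lem:affine-map} ``for the subgroup inclusion'', which is the correct usage, but the stated fact is the opposite of what is true.) It is exactly the finiteness and finite presentation of $\Gamma$ that lets Lemma~\ref{lem:affine-map} deliver representability, affineness, flatness and finite presentation of $[X_{P}^{[\infty]}/\mmu_{\infty}(Q)] \arr [X_{P}^{[\infty]}/\mmu_{\infty}(P)]$, the fiber over the atlas $X_{P}^{[\infty]}$ being $X_{P}^{[\infty]}\times\Gamma$. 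Your fallback through finite root stacks suffers from the same missing ingredient in a different guise: finite presentation is not preserved under filtered limits of presenting groupoids, so that route also only closes once the group discrepancy is known to be the finite group $\Gamma$.
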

\begin{proof}
Let us prove the result for Kummer-flat maps. Since the question is local for the fppf topology of $X$ and $Y$, we can assume that we have a diagram
   \[
   \begin{tikzcd}
   Y \rar\ar{rd} & X\times_{X_{P}} X_{Q} \rar\dar & X_{Q}\dar \\
   {}& X \rar &X_{P}
   \end{tikzcd}
   \]
in which the map $X_{Q} \arr X_{P}$ is induced by a Kummer homomorphism $P \arr Q$, $X \arr X_{P}$ and $Y \arr X_{Q}$ are strict, and $Y\arr X\times_{X_{P}}X_{Q}$ is flat and \fp, and also strict. By Proposition~\ref{prop:base-change} we see that we have two cartesian diagrams
   \[
   \begin{tikzcd}
   {}\infroot{X\times_{X_P}X_{Q}}\rar\dar & \infroot{X_{Q}}\dar\\
   {}\infroot{X} \rar & \infroot{X_{P}}
   \end{tikzcd}
   \]
and
   \[
   \begin{tikzcd}
   {}\infroot{Y} \rar \dar &{}\infroot{X\times_{X_P}X_{Q}}\dar\\
   Y \rar & X\times_{X_{P}}X_{Q}\,;
   \end{tikzcd}
   \]
hence it is enough to prove that the map $\infroot{X_{Q}} \arr \infroot{X_{P}}$ is representable, flat and \fp. But the Kummer homomorphism $P \arr Q$ induces and isomorphism $P_{\QQ}\arr Q_{\QQ}$, hence an isomorphism $X_{P}^{[\infty]} \simeq X_{Q}^{[\infty]}$. On the other hand the homomorphism $\mmu_{\infty}(P) \arr \mmu_{\infty}(Q)$ is Cartier dual to the group homomorphism $P\gr_{\QQ}/P\gr \arr Q\gr_{\QQ}/Q\gr$, which is surjective with kernel $Q\gr/P\gr$. Call $\Gamma \eqdef \hom(Q\gr/P\gr, \gm)$ the Cartier dual to $Q\gr/P\gr$; we see that $\mmu_{\infty}(P) \arr \mmu_{\infty}(Q)$ is injective, with cokernel $\Gamma$. Since $\Gamma$ is \fp, being Cartier dual to a finite group, the conclusion follows from Lemma~\ref{lem:affine-map}.
\end{proof}

Because of proposition \ref{kfl.morphisms} there is a natural functor $F\colon X\kfl\to {\infroot X}\fppf$ from the Kummer-flat site of $X$ to the small fppf site of $\infroot{X}$, acting on objects by taking $f\colon Y\to X$ to $\infroot{f}\colon \infroot Y\arr \infroot X$, and on arrows by taking $g\colon Z\to Y$ over $X$ to $\infroot g\colon \infroot Z\arr \infroot Y$ over $\infroot X$.

\begin{lemma}\label{infty.products}
The functor $F$ preserves fibered products.
\end{lemma}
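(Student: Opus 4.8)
The plan is to show that the canonical comparison morphism
\[
\Theta\colon \infroot{V\times_{Y}Z}\arr \infroot{V}\times_{\infroot{Y}}\infroot{Z}
\]
is an equivalence, where $V\times_{Y}Z$ denotes the fibered product in \fs logarithmic schemes (which, by the remark preceding the statement, is the fibered product in $X\kfl$), and the right-hand side is the $2$-fibered product of fibered categories, which computes the fibered product in $\infroot{X}\fppf$. The morphism $\Theta$ exists by the universal property: the two projections $V\times_{Y}Z\arr V$ and $V\times_{Y}Z\arr Z$ induce maps $\infroot{V\times_{Y}Z}\arr\infroot{V}$ and $\infroot{V\times_{Y}Z}\arr\infroot{Z}$ that agree after composing into $\infroot{Y}$. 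Since all morphisms in $X\kfl$ are Kummer, and since formation of \irss, of fibered products, and of $\Theta$ itself all commute with base change on $X$ (Proposition~\ref{prop:base-change}), I would first reduce to a local situation: working étale, respectively fppf, locally on $X$ and on the objects, the local models of Corollary~\ref{cor:local-model-chart} let me assume $Y=X_{P}$, $V=X_{Q}$ and $Z=X_{R}$ for sharp \fs monoids $P$ and Kummer homomorphisms $P\arr Q$, $P\arr R$; the strict, flat parts of the morphisms base-change compatibly on both sides and can be stripped away.

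In this local model both sides can be computed explicitly, and the content is that they coincide. On the logarithmic side, since $P\arr Q$ and $P\arr R$ are Kummer we have $P_{\QQ}=Q_{\QQ}=R_{\QQ}$ with $Q,R\subseteq P_{\QQ}$; the pushout $N=Q\oplus_{P}R$ of monoids has group $N\gr$ with torsion subgroup $\Theta_{0}\eqdef (Q\gr\cap R\gr)/P\gr$, and its saturation splits as $N\sat\cong S\times\Theta_{0}$, where $S\eqdef\langle Q,R\rangle\sat\subseteq P_{\QQ}$ is sharp and $\Theta_{0}$ consists of units. Hence $V\times_{Y}Z=X_{N\sat}=X_{S}\times\hom(\Theta_{0},\gm)$, so that
\[
\infroot{V\times_{Y}Z}\simeq [W/\mmu_\infty(S)]\times\hom(\Theta_{0},\gm),
\]
with $W\eqdef\sz{P_{\QQ}}$ and $\mmu_\infty(S)=\mmu_\infty(Q)\cap\mmu_\infty(R)$ inside $\mmu_\infty(P)$. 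On the stack side, writing $G=\mmu_\infty(P)$, $H=\mmu_\infty(Q)$ and $K=\mmu_\infty(R)$, the local models give $\infroot{V}=[W/H]$, $\infroot{Z}=[W/K]$, $\infroot{Y}=[W/G]$, and a standard computation of fibered products of quotient stacks yields $[W/H]\times_{[W/G]}[W/K]\simeq[(W\times G/H)/K]$. Analyzing the $K$-action on the finite group scheme $G/H$ — every point has stabilizer $H\cap K$, and the orbit space is $\hom(\Theta_{0},\gm)$ — identifies this with $\hom(\Theta_{0},\gm)\times[W/(H\cap K)]$, which matches the left-hand side.

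The main obstacle, and where the real content lies, is exactly this matching of the two finite ``extra'' contributions: the torsion $\Theta_{0}=(Q\gr\cap R\gr)/P\gr$ created by saturating the monoid pushout on the logarithmic side, and the components of $[(W\times G/H)/K]$ produced by $G/H=\mmu_\infty(P)/\mmu_\infty(Q)$ on the stack side, together with the identity $\mmu_\infty(S)=\mmu_\infty(Q)\cap\mmu_\infty(R)$. All three are governed by Cartier duality, relating the finite groups $\Theta_{0}$, $Q\gr/P\gr$, $R\gr/P\gr$ to their annihilators inside $\mmu_\infty(P)$. Once these are established, the remaining work is bookkeeping: I must check that the equivalence produced above really is the canonical morphism $\Theta$, by tracing the two projections through the identifications, and that the reduction from general Kummer morphisms to the monoid model is compatible with $\Theta$. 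Both are routine but require care, precisely because the underlying scheme of $V\times_{Y}Z$ is genuinely larger than $\underline{V}\times_{\underline{Y}}\underline{Z}$, and it is these surplus components that make $\Theta$ an equivalence rather than merely a surjection.
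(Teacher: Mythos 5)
Your computations on the logarithmic side are correct: $N\gr$ has torsion subgroup $\Theta_{0}=(Q\gr\cap R\gr)/P\gr$, the saturation decomposes as $N\sat\cong S\times\Theta_{0}$ with $S=\langle Q,R\rangle\sat$, and $\mmu_{\infty}(S)=\mmu_{\infty}(Q)\cap\mmu_{\infty}(R)$; the identification $[W/H]\times_{[W/G]}[W/K]\simeq[(W\times G/H)/K]$ is also standard. The gap is the very next step, which you present as a consequence of the orbit/stabilizer analysis: $[(W\times G/H)/K]\simeq \hom(\Theta_{0},\gm)\times[W/(H\cap K)]$. Knowing that the $K$-action on $G/H$ has constant stabilizer $H\cap K$ and orbit space $\hom(\Theta_{0},\gm)$ only exhibits $[(W\times G/H)/K]$ as a fibration over $\hom(\Theta_{0},\gm)$ whose fibers are \emph{forms} of $[W/(H\cap K)]$; globally it is twisted by the $K/(H\cap K)$-torsor $G/H\arr\hom(\Theta_{0},\gm)$, and this torsor is in general nontrivial. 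Already for $P=\NN$, $Q=\tfrac{1}{4}\NN$, $R=\tfrac{1}{2}\NN$ it is $\mmu_{4}\xarr{x\mapsto x^{2}}\mmu_{2}$, which has no section over $\ZZ$ or even over $\QQ$. So Cartier duality plus bookkeeping cannot close this step: the untwisting is precisely the content of the lemma in the local model.

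That the untwisting is not formal can be seen by keeping the same groups and changing $W$, for then your claimed decomposition is simply false. With $P,Q,R$ as above (so $G/H=\mmu_{4}$, $H\cap K=H$, $\Theta_{0}\cong\ZZ/2\ZZ$), replace $W$ by the $K$-scheme $K/(H\cap K)\cong\mmu_{2}$ with translation action: then $[(W\times G/H)/K]\cong[(G/H)/(H\cap K)]\cong\mmu_{4}\times\cB_{\ZZ}H$ (the action of $H\cap K$ on $G/H$ being trivial), which over $\QQ$ has three connected components, whereas $\hom(\Theta_{0},\gm)\times[W/(H\cap K)]\cong\mmu_{2}\times\mmu_{2}\times\cB_{\ZZ}H$ has four. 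Hence any correct argument must use the specific $K$-scheme $W=\sz{P_{\QQ}}$; what makes the twist trivial there is the unique divisibility of $P_{\QQ}$, which provides compatible systems of roots (a multiplicative family of trivializations indexed by $q\in P_{\QQ}$, even though none exists indexed by $P\grq/R\gr$). This is exactly where the paper's proof takes a different route: it never passes to quotient presentations, but works with the modular description of the infinite root stack; since $(-)_{\QQ}$ preserves pushouts and Kummer maps become isomorphisms after rationalization, an object of $\infroot{V}\times_{\infroot{Y}}\infroot{Z}$ over $T$ amounts to a single symmetric monoidal functor $(A_{T})_{\QQ}\arr\div_{T\et}$, from which a strict morphism $T\arr V\times_{Y}Z$ is built via the saturated pushout of sheaves of monoids, yielding a quasi-inverse directly. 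If you wish to salvage your approach you must supply this untwisting argument for $W=\sz{P_{\QQ}}$, in addition to the compatibility with the canonical comparison functor that you rightly flag as unchecked.
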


\begin{proof}
The statement means that if
   \[
   \begin{tikzcd}
   W \rar\dar & V\dar\\
   Z\rar & Y
   \end{tikzcd}
   \]
is a cartesian diagram in $X\kfl$, then the diagram
   \[
   \begin{tikzcd}
   {}\infroot W\rar\dar & \infroot V\dar\\
   \infroot Z\rar & \infroot Y
   \end{tikzcd}
   \]
is $2$-cartesian, i.e. the induced morphism $\infroot W\arr \infroot Z\times_{\infroot Y}\infroot V$ is an equivalence.

Recall first of all that the morphisms $Z\to Y$ and $V\to Y$ are Kummer, and denote by $A$, $B$, $C$ and $D$ the sheaves of monoids giving the logarithmic structures of $Y$, $V$, $Z$ and $W$ respectively. Recall (\cite[Section 2.4]{ogus}) that $W$ is obtained in the following way: we first form the fibered product of the underlying schemes $\underline{V}\times_{\underline{Y}}\underline{Z}$ and, locally where we have charts $P\to A, Q\to B, R\to C$, equip it with the logarithmic structure coming from the pushout $Q\oplus^{P}R$ of the diagram
   \[
   \begin{tikzcd}
   P\rar\dar & Q\dar\\
   R\rar& Q\oplus^{P} R
   \end{tikzcd}
   \]
and then base change along $\sz{(Q\oplus^{P}R)^{\fsa}}\to \sz{Q\oplus^{P}R}$ (here $(-)^{\fsa}$ denotes the ``fine saturation'' of a monoid, see \cite[Section 1.2]{ogus}). Now note that since the functor $P\mapsto P_{\QQ}$ preserves pushouts (being a left adjoint; its right adjoint is the inclusion of the category of monoids $M$ such that $M \arr M_{\QQ}$ is an isomorphism into all \fs monoids), the diagram
   \[
   \begin{tikzcd}
   P_{\QQ}\rar\dar & Q_{\QQ}\dar\\
   R_{\QQ}\rar& (Q\oplus^{P} R)_{\QQ}
   \end{tikzcd}
   \]
is also a pushout; but in this case the maps $P_{\QQ}\to Q_{\QQ}$ and $P_{\QQ}\to R_{\QQ}$ are isomorphisms, since $P\to Q$ and $P\to R$ are Kummer. Consequently the remaining two maps in the diagram are also isomorphisms, and we have $(Q\oplus^{P}R)_{\QQ} = P_{\QQ}$.

Now we construct a quasi-inverse to the natural functor $\infroot W\arr \infroot Z\times_{\infroot Y}\infroot V$. Take an object of $(\infroot Z \times_{\infroot Y} \infroot V)(T)$, i.e. a triple $(\xi,\eta, f)$ where $\xi\colon (B_{T})_{\QQ}\to \div_{T\et}$ and $\eta\colon (C_{T})_{\QQ}\to \div_{T\et}$ are liftings of the \df structures coming from $V$ and $Z$ respectively, and $f$ is an isomorphism between their restrictions to $(A_{T})_{\QQ}$. Call $E$ the pushout of the diagram
   \[
   \begin{tikzcd}
   A_{T}\rar\dar& B_{T}\\
   C_{T}& 
   \end{tikzcd}
   \]
of sheaves of monoid over $T$. The preceding remarks imply that $(A_{T})_{\QQ}, (B_{T})_{\QQ},(C_{T})_{\QQ}$ are all isomorphic, and they are also isomorphic to $E_\QQ$, so we have an induced \df structure $E_{\QQ} \to \div_{T\et}$. Moreover since $E_{\QQ}=(A_{T})_{\QQ}$ is integral and saturated, the map $E\to E_{\QQ}$ factors through $E\to E^{\fsa}$, the fine saturation of the sheaf $E$. By restriction along $E^{\fsa}\to E_{\QQ}$, this gives a logarithmic structure on $T$ that makes the diagram
   \[
   \begin{tikzcd}
   T\rar\dar & V\dar\\
   Z\rar & Y
   \end{tikzcd}
   \]a commutative diagram of \fs logarithmic schemes. Consequently there is an induced strict morphism $T\to W$, and together with the lifting $(D_{T})_{\QQ}\cong E_{\QQ}\to \div_{T\et}$ of the \df structure coming from $W$ this gives our object of $\infroot W(T)$. We leave the remaining verifications to the reader.
\end{proof}

From Lemma~\ref{infty.products} we see that $F$ gives a morphism of sites $\infroot{X}\fppf \arr X\kfl$, in the sense of \cite[Tag~00X0]{stacks-project}; this in turn induces a morphism of topoi 
   \[
   (F_{*}, F^{-1})\colon \sh (\infroot{X}\fppf) \arr \sh (X\kfl)\,.
   \]

\begin{theorem}\label{equiv.topoi}
The morphism of topoi $(F_{*}, F^{-1})\colon \sh (\infroot{X}\fppf) \arr \sh (X\kfl)$ is an equivalence.
\end{theorem}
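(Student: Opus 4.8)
The plan is to deduce the equivalence from a comparison-lemma argument, by factoring $F$ through the full subcategory $\cC'\subseteq\infroot{X}\fppf$ whose objects are those representable, flat, locally \fp maps $\cS\arr\infroot{X}$ for which $\cS$ is itself an \irs over a scheme. First I would show that $F$ defines an equivalence of sites $X\kfl\arr\cC'$, where $\cC'$ carries the topology induced from $\infroot{X}\fppf$; then that the inclusion $\cC'\subseteq\infroot{X}\fppf$ induces an equivalence of topoi via Lemma~\ref{lem:subcategory-equivalence-qc}. Composing these two gives the statement, and one checks that the composite continuous functor is $F$, so that the induced morphism of topoi is (isomorphic to) $(F_{*},F^{-1})$.

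For the first step I would verify the three ingredients of an equivalence of sites. Full faithfulness of $F$: an arrow in $X\kfl$ between objects $Y\arr X$ and $Z\arr X$ is a morphism of logarithmic schemes over $X$, which is automatically Kummer; since $Y\arr X$ is Kummer, Corollary~\ref{cor:kummer->logarithmic} together with the reconstruction Theorem~\ref{thm:equivalence2} identifies such morphisms bijectively with the morphisms $\infroot{Z}\arr\infroot{Y}$ over $\infroot{X}$, i.e. with arrows in $\cC'$. Essential surjectivity onto $\cC'$: given $\cS\arr\infroot{X}$ in $\cC'$ with $\cS=\infroot{Y}$ for an \fs logarithmic scheme $Y$ (Theorem~\ref{thm:equivalence2}), the structure map is flat and locally \fp, hence open, so by \refall{prop:properties-irs}{1} its image is $\infroot{U}$ for an open $U\subseteq X$; over $U$ the map is representable and faithfully flat, hence by the remark following Theorem~\ref{thm:equivalence2} it comes from a morphism $Y\arr U$, which is Kummer-flat by Theorem~\ref{thm:flat<->Kummer-flat}, and composing with the open immersion $U\arr X$ exhibits $\cS\arr\infroot{X}$ as $F(Y\arr X)$. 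Compatibility with the topologies: that $F$ takes Kummer-flat covers to fppf covers follows from Proposition~\ref{kfl.morphisms} together with the bijection on geometric points \refall{prop:properties-irs}{2} (for joint surjectivity), while the converse direction uses full faithfulness to descend a covering family $\{\infroot{Y_i}\arr\infroot{Y}\}$ to $\{Y_i\arr Y\}$ and then Theorem~\ref{thm:flat<->Kummer-flat} to see that each member is Kummer-flat.

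For the second step I would check the hypotheses of Lemma~\ref{lem:subcategory-equivalence-qc} for $\cC'\subseteq\infroot{X}\fppf$: the ambient site $\infroot{X}\fppf$ has finite fibered products (these are preserved by base change, so they remain representable, flat and locally \fp), and $\cC'$ is closed under them because $F$ preserves fibered products (Lemma~\ref{infty.products}) and $X\kfl$ has fibered products. The density hypothesis, that every object of $\infroot{X}\fppf$ has an fppf covering by objects of $\cC'$, is precisely Lemma~\ref{lem:refine-to-irs}, which provides, for any representable flat \fp map $\cA\arr\infroot{X}$, a representable fppf cover $\cS\arr\cA$ with $\cS$ an \irs over a scheme; the composite $\cS\arr\infroot{X}$ then lies in $\cC'$ by the essential-surjectivity argument above.

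I expect the main obstacle to be the density step, i.e. the structural input of Lemma~\ref{lem:refine-to-irs} that an arbitrary representable flat locally \fp map to an \irs is, fppf-locally, again an \irs over a scheme; this is what lets one reduce an arbitrary object of the small fppf site to one genuinely coming from a logarithmic scheme. The most delicate bookkeeping point will be the essential surjectivity onto $\cC'$: since an object of the small fppf site need not be surjective onto $\infroot{X}$, one cannot invoke the faithfully-flat remark after Theorem~\ref{thm:equivalence2} directly, and must first cut down to the open image $\infroot{U}$ using the universal homeomorphism $\infroot{X}\arr X$ before recognizing the map as induced by a Kummer-flat morphism of logarithmic schemes.
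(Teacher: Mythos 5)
Your architecture is essentially a repackaging of the paper's own argument: the paper applies the comparison criterion of Lemma~\ref{lem:criterion-equivalence-topoi} directly to $F\colon X\kfl \arr \infroot{X}\fppf$, getting continuity from Lemma~\ref{infty.products}, (local) faithfulness and fullness from Theorem~\ref{thm:equivalence1} and Corollary~\ref{cor:kummer->logarithmic}, and density from Lemma~\ref{lem:refine-to-irs} --- exactly your three pillars, rearranged. However, as written your proposal has two genuine gaps.

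The first is your claim that $\infroot{X}\fppf$ has finite fibered products and that $\cC'$ is closed under them, which you need in order to invoke Lemma~\ref{lem:subcategory-equivalence-qc}. This is false: arrows in the small fppf site are \emph{arbitrary} morphisms over $\infroot{X}$, not flat ones, so a fibered product of objects factors through a (non-flat) diagonal and need not stay flat over $\infroot{X}$. Already for the trivial log structure, where $\infroot{X}\fppf$ is the ordinary small fppf site of the scheme $X$, this fails: over $X = \AA^{1} = \spec k[t]$, take the object $\cB = \spec k[t,x]$ and the two sections $X \arr \cB$ given by $x=0$ and $x=t$; all three are objects of the site, but the fibered product of the two sections over $\cB$ is $\spec k[t]/(t)$, which is not flat over $X$. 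For the same reason $\cC'$ is not closed under fibered products ($\infroot{Y_{1}\times_{Y}Y_{2}}$ need not be flat over $\infroot{X}$ when neither $Y_{i}\arr Y$ is Kummer-flat). The conclusion of your second step is still true, but to get it you must apply the fibered-product-free criterion of Lemma~\ref{lem:criterion-equivalence-topoi} to the dense inclusion $\cC'\subseteq \infroot{X}\fppf$ --- at which point the detour through $\cC'$ buys nothing over the paper's direct application of that same criterion to $F$.

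The second gap is in your essential surjectivity step: you invoke Theorem~\ref{thm:flat<->Kummer-flat} to conclude that $Y \arr U$ is Kummer-flat, but that theorem \emph{assumes} the underlying morphism of schemes is locally of finite presentation, and you have not verified this. What you know is that $\cS = \infroot{Y} \arr \infroot{U}$ is representable, flat and locally \fp; the induced map $\underline{Y}\arr\underline{U}$ of moduli spaces is, locally, a map of full rings of invariants $A^{G}\arr B^{G}$ with $A \arr B$ flat and \fp and $G$ profinite diagonalizable, and finite presentation of such invariants is not automatic --- Lemma~\ref{lem:invariants-finitely-presented} only controls invariants under the kernels $\Gamma_{n}$ for sufficiently divisible $n$, not under all of $G$. (This is not a problem in your topology-compatibility step, where $\underline{Y_{i}}$ and $\underline{Y}$ are both locally of finite presentation over $\underline{X}$, so $\underline{Y_{i}}\arr \underline{Y}$ is automatically locally of finite presentation; in essential surjectivity you have no a priori finiteness of $\underline{Y}$ over $\underline{U}$.) The paper avoids this issue entirely: in the proof of Lemma~\ref{lem:refine-to-irs} the covering object is constructed by hand as $V_{n}=V/\Gamma_{n}$ with the log structure pulled back from $U_{n}$, so it is Kummer-flat over $X$ by construction, and no recognition theorem is needed. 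The clean repair of your argument is to define $\cC'$ as the essential image of $F$, which makes essential surjectivity vacuous and leaves density, i.e.\ Lemma~\ref{lem:refine-to-irs}, as the one substantive input --- which is precisely the shape of the paper's proof. Note also that your use of Theorem~\ref{thm:flat<->Kummer-flat} is non-circular only because its proof rests on Lemma~\ref{lem:refine-to-irs} rather than on Theorem~\ref{equiv.topoi} itself, so the order of the proofs would have to be rearranged accordingly.
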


Putting Theorem~\ref{equiv.topoi} together with Propositions \ref{prop:equivalence-fp} and \ref{prop:equivalence-fppf-fpqc} we obtain one of the main results of this paper.

\begin{corollary}\label{cor:equiv-fp}
If $X$ is a \fs quasi-separated logarithmic scheme, there is an equivalence of additive tensor categories between the categories of \fp sheaves on the Kummer-flat site $X\kfl$ and \fp sheaves on the \irs $\infroot{X}$.\qed
\end{corollary}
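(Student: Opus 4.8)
Since the corollary is presented as a consequence of Theorem~\ref{equiv.topoi} together with Propositions~\ref{prop:equivalence-fp} and~\ref{prop:equivalence-fppf-fpqc}, the plan is to splice these three inputs into a single chain of tensor equivalences
\[
\FP{X\kfl} \simeq \FP{\infroot{X}\fppf} \simeq \FP{\infroot{X}\rep} \simeq \FP{\infroot{X}}.
\]
The right-hand equivalence is the instance $\cX = \infroot{X}$ of Proposition~\ref{prop:equivalence-fp}, which applies because $\infroot{X}$ carries an fpqc atlas by~\refpart{prop:properties-irs}{5}; the middle equivalence is precisely Proposition~\ref{prop:equivalence-fppf-fpqc}, and it is here that the quasi-separatedness hypothesis on $X$ enters. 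So the only genuinely new ingredient is the left-hand equivalence, which I would extract from the equivalence of topoi of Theorem~\ref{equiv.topoi} by promoting it to an equivalence of \emph{ringed} topoi and then descending to finitely presented modules.

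To carry out that promotion, I would check that the comparison functor $F\colon X\kfl \arr \infroot{X}\fppf$ respects structure sheaves. The site $X\kfl$ carries the structure sheaf sending $Y \arr X$ to $\cO(\underline Y)$, while $\cO_{\infroot{X}\fppf}$ sends a representable fppf map $\cA \arr \infroot{X}$ to $\cO(\cA)$; since $F$ takes $Y \arr X$ to $\infroot Y \arr \infroot X$, the required compatibility reduces to the identity $\cO(\infroot{Y}) = \cO(\underline Y)$. This is the assertion that $Y$ is the moduli space of $\infroot Y$: by~\refpart{prop:coherent-pushforward}{3} the canonical map $\cO_{Y} \arr \pi_{*}\cO_{\infroot{Y}}$ is an isomorphism, whence $\H^{0}(\infroot{Y}, \cO) = \H^{0}(Y, \cO_{Y}) = \cO(\underline Y)$ (equivalently, one may invoke the factorization property~\refpart{prop:properties-irs}{3}). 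Having matched the structure sheaves naturally in $Y$, the equivalence of topoi $(F_*, F^{-1})$ becomes an equivalence of ringed topoi, so its pullback on sheaves of modules induces an equivalence between the module categories of the two sites. I would then observe that this functor is exact and additive and sends the structure sheaf to the structure sheaf, hence carries any finite free presentation $\cO^{\oplus n} \arr \cO^{\oplus m} \arr F \arr 0$ to such a presentation of its image; thus finite presentation is preserved in both directions, giving $\FP{X\kfl} \simeq \FP{\infroot{X}\fppf}$.

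Finally I would verify that every link in the chain is monoidal, so that the composite is an equivalence of additive tensor categories as stated. The equivalence of Proposition~\ref{prop:equivalence-fp} is the restriction to finitely presented objects of the tensor equivalence of Proposition~\ref{prop:equivalence-qc}, and since the tensor product of two finitely presented modules is again finitely presented, $\FP{-}$ is a tensor subcategory and the restriction is monoidal; the functor $i^{*}$ of Proposition~\ref{prop:equivalence-fppf-fpqc} is a module pullback and so commutes with tensor products; and the pullback on modules attached to an equivalence of ringed topoi is automatically a tensor functor, because the comparison of structure sheaves is an isomorphism. I do not anticipate a serious obstacle, since all the substantive content already resides in the three cited results. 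The one step that requires care is the ringed-topos upgrade: one must ensure that the identification $\cO(\infroot{Y}) = \cO(\underline Y)$ is natural in $Y$ and compatible with $F$, so that the structure sheaves genuinely correspond under the equivalence, rather than merely agreeing objectwise.
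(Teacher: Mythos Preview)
Your proposal is correct and follows exactly the route the paper intends: the paper's proof is the single sentence ``Putting Theorem~\ref{equiv.topoi} together with Propositions~\ref{prop:equivalence-fp} and~\ref{prop:equivalence-fppf-fpqc} we obtain\dots'' followed by a \qedsymbol, and you have simply written out the chain of equivalences and the ringed-topos upgrade that this sentence leaves implicit. The care you take in verifying $\cO(\infroot{Y}) = \cO(\underline Y)$ via~\refpart{prop:coherent-pushforward}{3} and in checking monoidality at each link is appropriate and not supplied by the paper.
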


\begin{proof}[Proof of Theorem~\ref{equiv.topoi}]
We will apply the following lemma from the Stacks Project.

\begin{lemma}[\hbox{\cite[Tag~039Z]{stacks-project}}]\call{lem:criterion-equivalence-topoi}
Let $\cC$ and $\cD$ be sites and $F\colon \cC \arr \cD$ a functor. Assume that the following conditions hold.
\begin{enumeratea}

\itemref{1} $F$ is continuous and cocontinuous.

\itemref{2} Given two arrows $a$, $b \colon U'\to U$ in $\cC$ such that $Fa = Fb$, there exists a covering $\{f_{i}\colon U_{i}'\to U'\}$ in $\cC$ such that $a\circ f_{i} = b\circ f_{i}$ for every $i$.

\itemref{3} Given two objects $U$ and $U'$ of $\cC$ and a morphism $c\colon FU' \arr FU$ in $\cD$, then there exists a covering $\{f_{i}\colon U_{i}'\to U'\}$ in $\cC$ and morphisms $c_{i}\colon U_{i}'\to U$ such that $Fc_{i} = c\circ Ff_{i}$ for every $i$.

\itemref{4} Given $V \in \cD$, then there exists a covering of $V$ in $\cD$ of the form $\{FU_{i}\to V\}$.
\end{enumeratea}

Then the induced morphism of topoi $(F_{*}, F^{-1})\colon \sh\cD \arr  \sh\cC$ is an equivalence.
\end{lemma}

The fact that $F$ is continuous follows from Lemma~\ref{infty.products} {and Proposition \ref{kfl.morphisms}}.  Showing that it is cocontinuous amounts to proving that for any Kummer-flat map $Z\to X$, any covering $\{\cA_{j}\to \infroot{Z}\}$ of the corresponding object in $\infroot{X}\fppf$ can be refined by the family of maps $\{\infroot{Z_{i}}\to \infroot{Z}\}$, for some Kummer-flat covering $\{Z_{i}\to Z\}$. Clearly this will follow from \refpart{lem:criterion-equivalence-topoi}{4} applied to $Z$ in place of $X$.

Part~\refpart{lem:criterion-equivalence-topoi}{2} (local faithfulness) follows directly from Theorem \ref{thm:equivalence1}, which implies that if two morphisms of fine saturated logarithmic schemes $Z\to Y$ induce equivalent morphisms $\infroot{Z}\to \infroot{Y}$, then they coincide, and part~\refpart{lem:criterion-equivalence-topoi}{3} (local fullness) from Corollary~\ref{cor:kummer->logarithmic}, that implies that every morphism $\infroot{Z}\arr \infroot{Y}$ in $\infroot{X}\fppf$ comes from a morphism $Z\to Y$ of logarithmic schemes. {In fact, in both instances the adjective ``local'' can be removed, but this is not needed to apply the lemma}. All that is left is to prove \refpart{lem:criterion-equivalence-topoi}{4}.

We do it in the form of the following lemma, which therefore will conclude the proof.

\begin{lemma}\label{lem:refine-to-irs}
Let $\cA\to \infroot{X}$ be an object of $\infroot{X}\fppf$. Then there exists a Kummer-flat morphism $Y\to X$ with a factorization $\infroot{Y}\arr \cA \to \infroot{X}$, such that $\infroot{Y}\to \cA$ is fppf.
\end{lemma}

From now on we focus on the proof of this lemma.

After shrinking $X$ in the étale topology, we can assume that $X$ is affine, and that we have a Kato chart $P\to \cO_{X}(X)$ for the logarithmic structure of $X$. Call $U_{\infty}$ and $U_{n}$ the pullbacks of $X^{[\infty]}_{P}$ and $X^{[n]}_{P}$ to $X$ (see \ref{sec:local-models} for the notation); also, set $G_{\infty} \eqdef \mmu_{\infty}(P)$ and $G_{n} \eqdef \mmu_{n}(P)$. Then $\infroot{X} = [U_{\infty}/G_{\infty}]$ and $\radice n X = [U_{n}/G_{n}]$. Let $V \eqdef \cA\times_{\infroot X} U_{\infty}$, so that $\cA = [V/G_{\infty}]$. 

Next we need a technical lemma.

\begin{lemma}
Let $V$ a scheme with an action of a profinite diagonalizable group scheme $G\arr \spec \ZZ$, and $p \in V$. Then there exists a diagonalizable subgroup scheme $H \subseteq G$ of finite index and an open affine $H$-invariant neighborhood of $p$ in $V$.
\end{lemma}

Here by \emph{profinite} we mean that $G$ is a projective limit of finite diagonalizable group schemes, or, equivalently, that the group of characters $\widehat{G}$ is torsion.

\begin{proof}
If $k(p)$ is the residue field of $p$, the action of $G$ on $V$ induces a morphism $\phi_{p}\colon G_{k(p)} \arr V$ sending the origin to $p$. Let $V'$ be a an open affine neighborhood of $p$, consider the inverse image $\phi_{p}^{-1}(V') \subseteq G_{k(p)}$. The subgroup schemes of the form $H_{k(p)}$, where $H$ is a diagonalizable subgroup scheme of finite index in $G$, form a basis of open neighborhoods of the origin in $G_{k(p)}$. By restricting to a diagonalizable subgroup scheme of finite index $H \subseteq G$ such that $H_{k(p)} \subseteq \phi_{p}^{-1}(V')$, we can assume that $G_{k(p)}=\phi_{p}^{-1}(V')$, i.e. the orbit of $p$ is contained in $V'$.

Denote by $\alpha\colon G \times V \arr V$ the action and by $\pi\colon G \times V \arr V$ the projection. Notice that since $\pi$ is an affine integral map, it is closed, and consequently, so is $\alpha$. Let $Z$ be the complement of $V'$ in $V$; then $\alpha\bigl(\pi^{-1}(Z)\bigr)$ is a closed invariant subset of $V$ whose complement contains $p$ and is contained in $V'$. By substituting $V$ with the complement of $\alpha\bigl(\pi^{-1}(Z)\bigr)$ we may assume that $V$ is an open subscheme of an affine scheme.

Set $A \eqdef \cO(V)$ and $\overline{V}\eqdef \spec A$. The action of $G$ on $V$ induces an action of $G$ on $\overline{V}$, and the natural morphism $j\colon V \arr \overline{V}$, is $G$-equivariant. If $V$ were quasi-affine (that is, according to \cite[Définition~5.1.1]{ega2}, it were also quasi-compact) then $j$ would be an open embedding. In general we don't know whether $j$ is always an open embedding; however, if $i\colon V \arr Y$ is an {open} embedding into an affine scheme $Y$, there is an induced morphism $\overline{V} \arr Y$. Let $W$ be the inverse image of $V$ in $\overline{V}$: we obtain a cartesian diagram
   \[
   \begin{tikzcd}
   W\arrow{r}\dar & \overline{V}\dar\\
   V \ar[hookrightarrow]{ur}{j} \rar{i}\uar[dashrightarrow, bend left=40]{s} & Y
   \end{tikzcd}
   \]
Thus $j$ factors as a section $s\colon V \arr W$, which is closed embedding, because $W \arr V$ is affine, followed by the open embedding $W \subseteq \overline{V}$. Let $Z$ be the complement of $W$ in $\overline{V}$, and pick an element $f \in A = \cO(\overline{V})$ that vanishes along $Z$, but not at $s(p)$. Write $f = \sum_{\lambda \in \widehat{G}}f_{\lambda}$, where $\widehat{G}$ is the group of characters of $G$. Since $\widehat{G}$ is torsion the intersection of the kernels of the $\lambda \in \widehat{G}$ with $f_\lambda \neq 0$ is a finite index subgroup scheme $H \subseteq G$; by restricting the action to $H$, we may assume that $f$ is $G$-invariant. Then $G$ acts on $\overline{V}_{f} = \spec A_{f}$, and $\overline{V}_{f} \subseteq W$. The subscheme $j^{-1}(\overline{V}_{f})$ is an affine $G$-invariant neighborhood of $p$, and this concludes the proof.
\end{proof}

Now, let $\{V_{i}\}$ be an open covering of $V$ with open affine subschemes, such that for each $i$ there exists a subgroup scheme $H_{i} \subseteq G_\infty$ of finite index such that $V_{i}$ is $H_{i}$-invariant. Clearly, the induced morphisms $[V_{i}/H_i] \arr [V/G_\infty]$ are affine, \fp and flat. Assume that for each $i$ we have found a \fs logarithmic scheme $Y_{i}$ with a Kummer-flat morphism $Y_{i}\to X$, and a factorization $\infroot{Y_{i}} \arr [V_{i}/H_{i}]$ such that $\infroot{Y_{i}} \arr [V_{i}/H_{i}]$ is fppf. If we set $Y \eqdef \bigsqcup_{i}Y_{i}$, we get a morphism of logarithmic schemes $Y \arr X$, such that the induced base-preserving functor $\infroot{Y} \arr \infroot {X}$ factors as
   \[
   \infroot{Y} = \bigsqcup{\infroot{Y_{i}}} \arr \bigsqcup_{i}[V_{i}/H_{i}] \arr \cA \arr \infroot{X},
   \]
and the base-preserving functor $\infroot{Y} \arr \cA$ is fppf.   

This means that we can replace $\cA$ with $[V_{i}/H_{i}]$; in this case the morphism $\cA \arr \infroot{X}$, which we are assuming to be locally \fp, is affine, hence \fp. This means that $V$ is affine, and \fp over $U_{\infty}$.

Denote by $\Gamma_{n}$ the kernel of the homomorphism $G_{\infty} \arr G_{n}$, and set $V_{n} \eqdef V/\Gamma_{n}$. We have a commutative diagram
   \begin{equation}\label{eq:diagram1}
   \begin{tikzcd}
   V \rar \dar &U_{\infty} \dar\\
   V_{n}\rar &U_{n}\,;
   \end{tikzcd}
   \end{equation}
we claim that for  for sufficiently divisible $n$ the homomorphism $V_{n} \arr U_{n}$ is flat and \fp, and the diagram is cartesian. This is a consequence of the following lemma.

\begin{lemma}\call{lem:invariants-finitely-presented}
Let $G$ be a profinite diagonalizable group over $\spec \ZZ$; write $G = \projlim_{i \in I}G_{i}$, where $I$ is a filtered partially ordered set and the $G_{i}$ are finite diagonalizable groups. Denote by $\Gamma_{i}$ the kernel of the homomorphism $G \arr G_{i}$. 

Suppose that $G$ acts on two rings $R$ and $A$, and let $R \arr A$ a \fp $G$-equivariant ring homomorphism. For each $i \in I$ set $R_{i} = R^{\Gamma_{i}}$ and $A_{i} = A^{\Gamma_{i}}$. Then there exists $i_{0} \in I$ such that for all $i \geq i_{0}$ we have the following.

\begin{enumeratea}

\itemref{1} The induced homomorphism $R_{i} \arr A_{i}$ is \fp.

\itemref{2} The natural homomorphism $A_{i}\otimes_{R_{i}}R \arr A$ is an isomorphism.

\itemref{3} If $R \arr A$ is flat, then $R_{i} \arr A_{i}$ is also flat.

\end{enumeratea}
\end{lemma}

\begin{proof}
Suppose that $A$ has a presentation with $m$ generators $(a_{1}, \dots, a_{m})$. Since $G$ is profinite we have that $A$ is the union of the $A_{i}$; hence for sufficiently large $i$ the generators are $\Gamma_{i}$-invariant. Let $\bfx = (x_{1}, \dots, x_{m})$ be a sequence of indeterminates, and consider the surjective homomorphism $R[\bfx] \arr A$ that sends each $x_{j}$ to $a_{j}$. If we let $\Gamma_{i}$ act on $R[\bfx]$ by acting on the coefficients and leaving the $x_{j}$ invariant, this homomorphism is $\Gamma_{i}$-equivariant.

If the ideal of relations $I \subseteq R[\bfx]$ is generated by $n$ elements, we have an exact sequence
   \[
   R[\bfx]^{\oplus n} \arr R[\bfx] \arr A \arr 0\,;
   \]
by enlarging $i$ we can arrange that the generators be also $\Gamma_{i}$-invariant, and the the sequence above is $\Gamma_{i}$-equivariant. Taking invariants under $\Gamma_{i}$ is an exact functor, because $\Gamma_{i}$ is diagonalizable; clearly $R[\bfx]^{\Gamma_{i}} = R_{i}[\bfx]$, so for all sufficiently large $i$ we have an exact sequence
   \[
   R_{i}[\bfx]^{\oplus n} \arr R_{i}[\bfx] \arr A_{i} \arr 0\,.
   \]
This proves \refpart{lem:invariants-finitely-presented}{1}.

Part \refpart{lem:invariants-finitely-presented}{2} is immediate when $A = R[\bfx]$ and $\Gamma_{i}$ acts leaving the indeterminates fixed. In the general case it follows from the exact sequence above, and right exactness of the tensor product.

For part~\refpart{lem:invariants-finitely-presented}{3} see \cite[Théorème~11.2.6]{ega43}.
\end{proof}

We are now ready to finish the proof of Lemma \ref{lem:refine-to-irs} (and this will also conclude the proof of Theorem~\ref{equiv.topoi}). {Recall that we have reduced the statement to considering the case of an object of  $\infroot{X}\fppf$ of the form $\cA=[V/G_\infty]\to \infroot{X} = [U_{\infty}/G_{\infty}]$, where $V$ is affine and the $G_\infty$-equivariant morphism $V\to U_\infty$ is finitely presented. By the previous lemma} we can find an $n$ sufficiently large such that $V_{n}=V/\Gamma_n$ is flat and \fp over $U_{n}$, and diagram~(\ref{eq:diagram1}) is cartesian. The scheme $U_{n}$ has a tautological \fs logarithmic structure coming from the morphism $U_{n} \arr \radice{n}X$; with this logarithmic structure, the morphism $U_{n} \arr X$ is obviously Kummer-flat. We have $\infroot{U_{n}} = [U_{\infty}/\Gamma_{n}]$ by Corollary~\ref{cor:local-model-chart}; if we give to $V_{n}$ the logarithmic structure pulled back from $U_{n}$, then $V_{n}\to X$ is Kummer-flat, and  by Proposition~\ref{prop:base-change} we have
   \[
   \infroot{V_{n}} \cong V_{n}\times_{U_{n}} \infroot{U_{n}} \cong [V/\Gamma_{n}].\,
   \]
So $[V/\Gamma_{n}]$ is an \irs. Since the morphism $[V/\Gamma_{n}] \arr [V/G_{\infty}]$ is representable, faithfully flat and \fp, this concludes the proof.
\end{proof}

As a variation on this one can consider the small Kummer-étale site $X\ket$ of a \fs logarithmic scheme $X$, whose objects are Kummer-étale morphism $Y \arr X$ (see \cite{illusie-kato-nakayama,niziol-k-theory}). The definition of Kummer-étale is obtained by replacing ``flat'' with  ``étale'' in the definition of the Kummer-flat morphism, and assuming that in diagram (\ref{eq:diagram2}) the order of the cokernel of $P\gr \arr Q\gr$ is not divisible by the characteristic of the residue field of any point of $X$. 

We can link the site $X\ket$ with the small étale site $\infroot X\et$, whose objects are representable étale morphisms $\cA \arr \infroot{X}$. More precisely one can prove the following.

\begin{theorem}\label{thm:equivalence-ket}
If $Y \arr X$ is a Kummer-étale morphism of \fs logarithmic schemes, the induced base-preserving functor $\infroot Y \arr \infroot X$ is \'{e}tale and representable. The resulting functor $X\ket \arr \infroot X\et$ induces an equivalence of topoi $\sh (\infroot{X}\et) \simeq \sh (X\ket)$.\qed
\end{theorem}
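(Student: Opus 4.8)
The plan is to mimic, almost line for line, the proofs of Proposition~\ref{kfl.morphisms} and Theorem~\ref{equiv.topoi}, substituting ``étale'' for ``flat'' and ``Kummer-étale'' for ``Kummer-flat'' throughout; the only genuinely new ingredient is a control on residue characteristics. First I would prove that $\infroot f\colon \infroot Y \arr \infroot X$ is representable and étale for a Kummer-étale $f\colon Y \arr X$, repeating the argument of Proposition~\ref{kfl.morphisms}. After reducing via Proposition~\ref{prop:base-change} to the map $\infroot{X_Q} \arr \infroot{X_P}$ attached to a Kummer homomorphism $P \arr Q$, one has $X_P^{[\infty]} \simeq X_Q^{[\infty]}$ and an injection $\mmu_\infty(P) \arr \mmu_\infty(Q)$ with cokernel $\Gamma = \hom(Q\gr/P\gr, \gm)$. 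The new point is that for a Kummer-\emph{étale} chart the order $|Q\gr/P\gr|$ is invertible on $X$, so $\Gamma$ is not merely \fp but finite étale; Lemma~\ref{lem:affine-map} then gives that $\infroot{X_Q} \arr \infroot{X_P}$ is representable, affine and étale, and the factorization through the strict étale map $Y \arr X\times_{X_P}X_Q$ finishes the first assertion.

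For the equivalence of topoi I would verify the four hypotheses of the criterion Lemma~\ref{lem:criterion-equivalence-topoi} for the functor $F\colon X\ket \arr (\infroot X)\et$ sending $(Y\arr X)$ to $(\infroot Y \arr \infroot X)$. Continuity combines the first assertion (Kummer-étale covers go to étale covers) with the étale analogue of Lemma~\ref{infty.products}, whose proof is unchanged since a Kummer-étale morphism is in particular Kummer and $(-)_\QQ$ still trivializes the relevant pushouts. Local faithfulness, condition~\refpart{lem:criterion-equivalence-topoi}{2}, is immediate from the reconstruction equivalence (Theorem~\ref{thm:equivalence1}), and local fullness, condition~\refpart{lem:criterion-equivalence-topoi}{3}, from Corollary~\ref{cor:kummer->logarithmic}, which lifts any morphism $\infroot Z \arr \infroot Y$ over $\infroot X$ to a morphism $Z \arr Y$ of logarithmic schemes over $X$ (automatically a morphism in $X\ket$). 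Cocontinuity and condition~\refpart{lem:criterion-equivalence-topoi}{4} both reduce to the étale refinement statement treated next.

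The heart of the matter is the étale analogue of Lemma~\ref{lem:refine-to-irs}: every representable étale $\cA \arr \infroot X$ admits a factorization $\infroot Y \arr \cA \arr \infroot X$ with $Y \arr X$ Kummer-étale and $\infroot Y \arr \cA$ a representable étale cover. I would run the reduction of Lemma~\ref{lem:refine-to-irs}: localize so that $X=\spec R$ is affine with a chart $P \arr R$, write $\infroot X = [U_\infty/G_\infty]$ with $G_\infty = \mmu_\infty(P)$ and $\cA = [V/G_\infty]$ where $V \eqdef \cA\times_{\infroot X}U_\infty \arr U_\infty$ is representable étale, and use the affine-neighbourhood lemma to arrange $V$ affine and \fp over $U_\infty$. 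Descending $V$ to a finite level via Lemma~\ref{lem:invariants-finitely-presented}, parts~\refpart{lem:invariants-finitely-presented}{1} and~\refpart{lem:invariants-finitely-presented}{2} make $V_n \arr U_n$ finitely presented with the square cartesian; since $U_\infty \arr U_n$ is faithfully flat and quasi-compact, étaleness of $V \arr U_\infty$ now descends to étaleness of $V_n \arr U_n$ by fpqc descent, so no separate étale version of~\refpart{lem:invariants-finitely-presented}{3} is required. Equipping $V_n$ with the logarithmic structure pulled back from $U_n$ (coming from $\radice n X$) and invoking Corollary~\ref{cor:local-model-chart} identifies $\infroot{V_n} \simeq [V/\Gamma_n]$, which maps to $\cA$ by a representable faithfully flat \fp functor.

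The crux, and where I expect the real difficulty, is that this construction only produces a Kummer-\emph{flat} $V_n \arr X$: landing in $X\ket$ forces one to choose the descent level $n$ \emph{invertible} on $X$, because the chart $P \arr \tfrac1n P$ has cokernel of order $n^r$ and Kummer-étaleness demands this be prime to every residue characteristic. The statement that must be proved is that a representable étale morphism to $\infroot X$ cannot detect the infinitesimal (connected) part of $\mmu_\infty(P)$ — concretely the pro-$p$ part $\mmu_{p^\infty}$ in residue characteristic $p$ — so that the étale $U_\infty$-scheme $V$ already descends through the ``tame'' subtower of the $U_n$ with $n$ invertible. I would establish this by topological invariance of the small étale site along the universal homeomorphisms cut out by the wild part of the tower, arguing the descent to a level prime to the local residue characteristic pointwise on $X$ and then patching. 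Once $n$ is invertible the map $V_n \arr U_n$ is strict étale and $n^r$ is invertible, so $V_n \arr X$ is Kummer-étale; this yields condition~\refpart{lem:criterion-equivalence-topoi}{4} and cocontinuity, and Lemma~\ref{lem:criterion-equivalence-topoi} then delivers the asserted equivalence $\sh(\infroot X\et) \simeq \sh(X\ket)$.
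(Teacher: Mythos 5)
The paper never writes out a proof of this theorem: it is stated with a closing symbol as the Kummer-\'etale analogue of Theorem~\ref{equiv.topoi}, so your architecture --- Proposition~\ref{kfl.morphisms} for representability and \'etaleness (with the correct new observation that $\Gamma$ becomes finite \'etale once $|Q\gr/P\gr|$ is invertible), the criterion of Lemma~\ref{lem:criterion-equivalence-topoi}, faithfulness and fullness from Theorem~\ref{thm:equivalence1} and Corollary~\ref{cor:kummer->logarithmic}, and an \'etale analogue of Lemma~\ref{lem:refine-to-irs} --- is exactly the intended one. However, your execution of the refinement lemma has one step that is false, and the step you yourself call the crux is asserted rather than proved.

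The false step is the claim that $U_{\infty} \arr U_{n}$ is faithfully flat. The transition maps of this tower are flat only for very special monoids. Already for $P \subseteq \ZZ^{2}$ generated by $(2,0)$, $(1,1)$, $(0,2)$ and any field $k$, the extension $k[P] \subseteq k[\tfrac{1}{2}P]$ is not flat: its graded piece of degree $[(1,0)] \in \tfrac{1}{2}P\gr/P\gr$ is the $k[P]$-module spanned by the monomials $x^{(a,b)}$ with $a,b \in \NN$ and $a+b$ odd, which has generic rank one but needs two generators at the vertex. In characteristic $p$ the failure is visible for every non-free sharp \fs monoid $P$: composing $\FF_{p}[P] \subseteq \FF_{p}[\tfrac{1}{p}P]$ with the isomorphism $\FF_{p}[\tfrac{1}{p}P] \cong \FF_{p}[P]$, $x^{a/p} \mapsto x^{a}$, gives the Frobenius endomorphism of $\FF_{p}[P]$, which by Kunz's theorem is flat only when $\FF_{p}[P]$ is regular, i.e.\ only when $P$ is free. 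Since $\ZZ[\tfrac{1}{m}P]$ is a $\ZZ[\tfrac{1}{n}P]$-module direct summand of $\ZZ[P_{\QQ}]$ (the degree-zero part of the $P\grq/\tfrac{1}{m}P\gr$-grading, using saturation), non-flatness at a finite stage propagates to $U_{\infty} \arr U_{n}$. So fpqc descent cannot transfer \'etaleness from $V \arr U_{\infty}$ to $V_{n} \arr U_{n}$; this is precisely why the paper proves \refpart{lem:invariants-finitely-presented}{3} by the limit theorem \cite[Th\'eor\`eme~11.2.6]{ega43} rather than by descent. Contrary to your claim, an \'etale version of \refpart{lem:invariants-finitely-presented}{3} \emph{is} required; fortunately it holds, by the corresponding limit theorem for \'etale morphisms (EGA IV, 17.7.8) applied to the filtered system $V_{n_{0}}\times_{U_{n_{0}}}U_{m}$ with limit $V$, so this part of your argument is repaired by following the paper's own template.

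The second problem is the tame-descent step. Topological invariance of the \'etale site applies to the wild subtower only over $\FF_{p}$: the maps $U_{p^{k}m} \arr U_{m}$ are universal homeomorphisms on the characteristic-$p$ fiber of $X$, but over any base where $p$ is not nilpotent they are generically finite of degree $p^{kr}$ (\'etale on the characteristic-zero fibers), so there is nothing to invoke away from $V(p)$. Your invariance argument therefore descends $V$ to a prime-to-$p$ level only after restriction to $X\otimes\FF_{p}$. To manufacture an object of $X\ket$ one needs \'etaleness of $V_{m} \arr U_{m}$, the cartesianness $V_{n} \cong V_{m}\times_{U_{m}}U_{n}$, and invertibility of $m$, all over an \emph{open} subscheme of $X$ containing the characteristic-$p$ points. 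Both loci are open upstairs on $V_{m}$, but $V_{m} \arr X$ is not proper, so the images of their complements need not be closed in $X$, and producing the required open neighbourhood of $V(p)$ (say by localizing at $1+pR$ and a quasi-compactness argument) is genuine work that ``pointwise on $X$ and then patching'' does not carry out. In equal characteristic --- $X$ over $\FF_{p}$, where invariance applies globally, or over $\QQ$, where every level is already invertible --- your outline is essentially complete once the first error is fixed; in mixed characteristic the heart of the proof is still missing.
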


Therefore we also obtain an equivalence of categories $\sh(\infroot{X}\et) \simeq \sh (X\ket)$. In characteristic~$0$ (that is, if $X$ is a scheme over $\QQ$) the stack $\infroot{X}$ is a limit of De\-ligne--Mumford stacks, and one can show that there is also an equivalence of categories $\FP(\infroot{X}\et)$ and $\FP(\infroot X)$; however, this fails in positive characteristic. This is a reflexion of the fact that the Kummer-étale site works very well in characteristic~$0$, but is not adequate for many applications in positive characteristic.

Using Lemma~\ref{lem:refine-to-irs} we can also prove a converse to Proposition~\ref{kfl.morphisms}.

\begin{theorem}\label{thm:flat<->Kummer-flat}
Let $f\colon Y \arr X$ be a morphism of \fs logarithmic schemes, such that the underlying morphism of schemes is locally of finite presentation. Then $\infroot{f}\colon \infroot{Y} \arr \infroot{X}$ is flat, locally finitely presented and representable if and only if $f$ is Kummer-flat.
\end{theorem}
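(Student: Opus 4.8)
The implication from Kummer-flat to representable, flat and finitely presented is exactly Proposition~\ref{kfl.morphisms} (the distinction between finite and local finite presentation of $\infroot f$ is harmless here). I will therefore concentrate on the converse, so assume $\infroot f\colon \infroot Y\arr\infroot X$ is representable, flat and finitely presented. The plan is to compare $f$ with a genuinely Kummer-flat morphism produced by Lemma~\ref{lem:refine-to-irs}, and then to descend.

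Under the hypothesis, $(\infroot Y\xarr{\infroot f}\infroot X)$ is an object of the small fppf site $\infroot X\fppf$. Applying Lemma~\ref{lem:refine-to-irs} to it yields a Kummer-flat morphism $h\colon Y'\arr X$ together with a factorization $\infroot{Y'}\xarr{q}\infroot Y\xarr{\infroot f}\infroot X$ in which $q$ is representable, faithfully flat and finitely presented. Being representable and faithfully flat, $q$ is in particular a morphism of \irss (by the remark following Theorem~\ref{thm:equivalence2}), so by Theorem~\ref{thm:equivalence2} it has the form $q=\infroot g$ for a unique morphism $g\colon Y'\arr Y$ of \fs logarithmic schemes. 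Since $\infroot f\circ\infroot g=\infroot h$ and the functor $\infroot{(-)}$ is faithful on morphisms, $g$ is a morphism over $X$ and $f\circ g=h$ is Kummer-flat.

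The crux is to show that $g$ is Kummer and that its underlying morphism of schemes is faithfully flat and locally of finite presentation. Granting this, the argument concludes as follows. Write $P\arr Q\arr S$ for the homomorphisms induced by $g$ and $f$ on the geometric stalks of the characteristic monoids along compatible geometric points, so $P=(A_X)_x$, $Q=(A_Y)_y$, $S=(A_{Y'})_{y'}$. The composite $P\arr S$ is Kummer because $h$ is, and $Q\arr S$ is injective because $g$ is Kummer; hence $P\arr Q$ is injective (a left factor of an injection), and given $q\in Q$ one chooses $n>0$ with $n\cdot(\text{image of }q\text{ in }S)$ in the image of $P$, whereupon injectivity of $Q\arr S$ forces $nq$ itself into the image of $P$. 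Thus $f^{*}A_X\arr A_Y$ is Kummer on stalks, i.e. $f$ is Kummer. For log-flatness, one uses that $f\circ g=h$ is log-flat while $g$ is Kummer-flat and surjective: passing to compatible charts (available because $g$ is Kummer) and applying faithfully flat descent to the flatness appearing in the definition of log-flatness shows that $f$ is log-flat. As the underlying morphism of $f$ is locally of finite presentation by hypothesis, $f$ is Kummer-flat.

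The main obstacle is precisely this claim about $g$, namely extracting Kummerness and faithful flatness of $g$ from the fact that $\infroot g$ is representable, faithfully flat and finitely presented; this is the genuine ``flatness forces Kummer'' content of the theorem, and it cannot follow from representability alone, which by Cartier duality and Proposition~\ref{prop:geometric-fibers} only yields surjectivity of $Q\grq/Q\gr\arr S\grq/S\gr$. I would reduce to a geometric point: by Propositions~\ref{prop:geometric-fibers} and~\ref{prop:local-model} both source and target are then explicit local models $\infroot{S/\Omega}$ and $\infroot{Q/\Omega}$, and faithful flatness of $\infroot g$ translates into flatness of the induced homomorphism $\Omega[Q_\QQ]/(Q^{+})\arr\Omega[S_\QQ]/(S^{+})$ of the graded algebras computing these stacks. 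Using the description of their Picard groups and global sections (Lemma~\ref{lem:local-picard}) together with the infinite-quotient analysis of Proposition~\ref{prop:description-infinite-quotients}, one argues that such a flat homomorphism can only come from a homomorphism $Q\arr S$ that is injective with finite cokernel on the associated groups, i.e. from a Kummer homomorphism; the faithful flatness and finite presentation of $\underline g$ can then be read off from the explicit flat, finitely presented data ($V$ flat and finitely presented over $U_\infty$) used in the construction of Lemma~\ref{lem:refine-to-irs}, rather than attempting to descend them abstractly through $\infroot g$ (which is delicate, since $g$ is Kummer but not strict, so $Y'\times_Y\infroot Y$ does not agree with $\infroot{Y'}$).
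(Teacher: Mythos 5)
Your outline of the converse has the right skeleton, and in fact mirrors the paper's own Steps 2 and 3: produce an auxiliary $g\colon Y'\arr Y$ via Lemma~\ref{lem:refine-to-irs} with $f\circ g$ Kummer-flat, deduce that $f$ is Kummer on stalks by the composite trick, and get log-flatness of $f$ by descent using \cite[Theorem~0.2]{illusie-nakayama-tsuji}. But the ``crux'' you isolate is a genuine gap, for two reasons. First, it is circular: the claim that $\infroot{g}$ representable, faithfully flat and finitely presented forces $g$ to be Kummer (indeed Kummer-flat, which is what the descent step actually needs) is exactly the theorem being proved, applied to $g$; nothing in the construction of Lemma~\ref{lem:refine-to-irs} gives it for free, because the log structure on $Y'$ is pulled back from $U_{n}$, i.e.\ from the $X$-side, and its relation to the log structure of $Y$ is precisely what is unknown. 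Second, your proposed proof of the crux does not work. The fiber of $\infroot{Y'}$ over a geometric point $\spec\Omega\arr\infroot{Y}$ lying over $\bar y\in Y$ is not $\infroot{S/\Omega}$: by Proposition~\ref{prop:base-change} it is built from $\infroot{(Y'_{\bar y})}$, the \irs of the \emph{whole} scheme-theoretic fiber of $\underline{g}$ with its pulled-back log structure. So faithful flatness of $\infroot{g}$ does not yield a flat homomorphism $\Omega[Q_{\QQ}]/(Q^{+})\arr\Omega[S_{\QQ}]/(S^{+})$; the flat ring map one actually gets has as target an algebra involving the full coordinate ring of the fiber, and restricting it to a single point of that fiber destroys flatness. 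Moreover, the property you ask of $\underline{g}$ --- faithful flatness of the underlying morphism of schemes --- is both unobtainable and false in general: Kummer-flat morphisms such as $U_{n}\arr X$ typically have non-flat underlying morphisms, and what \cite[Theorem~0.2]{illusie-nakayama-tsuji} requires is \emph{log}-flatness of $g$, again an instance of the theorem.

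The paper supplies exactly the missing independent input, and does so by keeping the source global. Its Step~1 shows that flatness of $\infroot{f}$ forces injectivity of the stalk maps $P\arr Q$: one specializes \emph{only} the target $X$ to a geometric point, so that flatness must be tested over the non-reduced local ring $\Omega[P_{\QQ}]/(P^{+})$ (a non-vacuous condition), while $Y$ stays arbitrary; injectivity then follows from the explicit formula $\Phi(t^{a})=1\otimes u^{f^{*}(a)}\otimes v^{\pi(a)}$ together with a $p$-primary decomposition of $\mmu_{\infty}(P)$ and an $l$-power-denominator argument. Applying this both to $f$ and to the auxiliary map turns the composite trick into a complete proof that $f$ is Kummer (the paper's Step~2, identical to yours). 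For log-flatness the paper never needs the auxiliary map to be log-flat: instead (Step~3) it deduces flatness of $\radice{n}{Y}\arr\radice{\frac{1}{n}Q/P}{X}$ for sufficiently divisible $n$ from flatness of $\infroot{f}$ by a limit argument (\cite[Th\'eor\`eme~11.2.6]{ega43}), and then applies \cite[Theorem~0.2]{illusie-nakayama-tsuji} to a cover $V'_{n}\arr Y$ that is Kummer-flat \emph{by construction}, being strict and flat over $U_{n}$ and built from $Y$'s own chart $Q$. If you want to keep your outline, the repair is to replace your geometric-point argument by the paper's Step~1 applied to $g$ (yielding injectivity of $Q\arr S$, which is all the composite trick needs), and to replace the descent along $g$ by the paper's Step~3.
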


\begin{proof}
The ``if'' part of the statement follows directly from Proposition \ref{kfl.morphisms}. Let us prove the converse.

Let $\overline{y}\colon \spec\Omega\arr  Y$ be a geometric point of $Y$, and $\overline{x} \eqdef f\circ \overline{y}$. Set $P \eqdef M_{X, \overline{x}}$ and $Q \eqdef M_{Y,\overline{y}}$. By refining $\underline X$ and $\underline Y$ in the étale topology we may assume that they are affine, and there exists a commutative diagram
   \[
   \begin{tikzcd}
   Y \rar\dar{f} &X_{Q}\dar\\
   X \rar & X_{P}
   \end{tikzcd}
   \]
in which the rows are strict, and the right-hand column is induced by $f^{*}\colon  P \arr Q$. Set $U_{\infty} \eqdef X \times_{X_{P}}X_{P}^{[\infty]}$ and $V_{\infty} \eqdef Y \times_{X_{Q}}X_{Q}^{[\infty]}$, so that $\infroot{X} = [U_{\infty}/\mmu_{\infty}(P)]$ and $\infroot{Y} = [V_{\infty}/\mmu_{\infty}(Q)]$. We have a commutative (non-cartesian) diagram
   \[
   \begin{tikzcd}
   V_{\infty} \rar\dar& U_{\infty}\dar\\
   \infroot{Y} \rar &\infroot{X}\hsmash\,.
   \end{tikzcd}
   \]

We will prove the theorem by first showing that $f^{*}\colon P \arr Q$ is Kummer, and then that $f$ is log-flat.

\step{1: $f^{*}\colon P \arr Q$ is injective} For this purpose we may base change through $\overline{x}$, and assume that $X = \spec \Omega$ with the logarithmic structure given by $M_{X} = \cO_{X}^\times \oplus P$, so that $\infroot{X} = \infroot{P/\Omega} = \bigl[\spec( \Omega[P_{\QQ}]/(P^{+}))/\mmu_{\infty}(P)\bigr]$. The flatness of $\infroot{Y} \arr \infroot{X}$ implies the flatness of the composite $V_{\infty} \arr \infroot{Y} \arr \infroot{X}$, which equals the composite $V_{\infty} \arr U_{\infty} \arr \infroot{X}$. In turn, since flatness is preserved by base-change, this implies the flatness of the projection $V_{\infty}\times_{\infroot{X}}U_{\infty} \arr U_{\infty}$.

Note that $V_{\infty}\times_{\infroot{X}}U_{\infty} = V_{\infty}\times\mmu_{\infty}(P)$; the projection
   \begin{equation}\label{eq:flat-map-1}
   V_{\infty}\times\mmu_{\infty}(P) \arr U_{\infty}\, ,
   \end{equation}
which is flat by the argument above, is obtained by composing the morphism $V_{\infty}\times\mmu_{\infty}(P) \arr U_{\infty}\times\mmu_{\infty}(P)$ with the action $U_{\infty}\times\mmu_{\infty}(P) \arr U_{\infty}$.

Now, consider a prime $p$, which equals the characteristic of $\Omega$ if this is positive, and is arbitrary if it is $0$. For any torsion abelian group $A$ we will denote by $A\{p\}$ the $p$-primary torsion part, and by $A\{p'\}$ the prime to $p$ part; clearly $A = A\{p\} \oplus A\{p'\}$. If $G$ is a profinite diagonalizable group scheme over $\Omega$, with character group $\widehat{G}$, we call $G\{p\}$ and $G\{p'\}$ the diagonalizable group schemes over $\Omega$ with character groups $\widehat{G}\{p\}$ and $\widehat{G}\{p'\}$ respectively, so that $G = G\{p\} \times G\{p'\}$. By abuse of notation, we will use $\mmu_{\infty}(P)$ to denote the fibered product $\spec \Omega \times_{\spec \ZZ} \mmu_{\infty}(P)$. We claim that the morphism
   \begin{equation}\label{eq:flat-map-2}
   V_{\infty}\times\mmu_{\infty}(P)\{p\} \arr U_{\infty}
   \end{equation}
obtained by composing the morphism (\ref{eq:flat-map-1}) with the embedding $V_{\infty}\times\mmu_{\infty}(P)\{p\} \subseteq V_{\infty}\times\mmu_{\infty}(P)$, is flat. Since (\ref{eq:flat-map-1}) is flat, it is enough to show that the embedding $\mmu_{\infty}(P)\{p\} \subseteq \mmu_{\infty}(P)$ is flat. But this is clear, since this is obtained by taking a limit of the embeddings $\mmu_{\infty}(P)\{p\} \subseteq \mmu_{\infty}(P)\{p\} \times \mmu_{n}(P)$ for $n$ not divisible by $p$, and each $\mmu_{n}(P)$ is étale over $\Omega$.

Since (\ref{eq:flat-map-2}) is flat and $U_{\infty} = \spec \Omega[P_{\QQ}]/(P^{+})$ is topologically a point, we see that (\ref{eq:flat-map-2}) is faithfully flat. Now write $Y = \spec R$: then (\ref{eq:flat-map-2}) corresponds to a homomorphism of $\Omega$-algebras
   \[
   \Phi\colon \Omega[P_{\QQ}]/(P^{+}) \arr
   (R \otimes_{\Omega[Q]}{\Omega[Q_{\QQ}]} ) \otimes_{\Omega} \Omega[P\gr\otimes(\QQ/\ZZ)\{p\}]\hsmash{\,,}
   \]
which must then be injective. The homomorphism $\Phi$ can be described as follows. If $a \in P_{\QQ}$ we denote by $t^{a}$ the corresponding element of $\Omega[P_{\QQ}]/(P^{+})$, and if $b \in Q_{\QQ}$ we denote by $u^{b}$ the corresponding element of $\Omega[Q_{\QQ}]$. We also denote by $\pi \colon P_{\QQ} \arr P_{\QQ}\gr \arr  (P\gr_{\QQ}/P\gr)\{p\}\simeq P\gr\otimes(\QQ/\ZZ)\{p\}$ the projection, and by $v^{c}$ the element of $\Omega[P\gr\otimes(\QQ/\ZZ)\{p\}]$ corresponding to some $c \in P\gr\otimes(\QQ/\ZZ)\{p\}$. Then we have
   \[
   \Phi(t^{a}) = 1 \otimes u^{f^{*}(a)}\otimes v^{\pi(a)}\hsmash{\,.}
   \]

Now, assume that $f^{*}\colon P \arr Q$ is not injective, and take $a$, $a' \in P$ with $a \neq a'$ and $f^{*}(a) = f^{*}(a)$. Fix a prime $l$ different from $p$. Then for sufficiently large $m$ we have $t^{{a}/{l^{m}}} \neq t^{{a'}/{l^{m}}}$, while $f^{*}(a/l^{m}) =  f^{*}(a'/l^{m})$ and $\pi({a}/{l^{m}}) = 0 = \pi({a'}/{l^{m}})$. Therefore $\Phi(t^{{a}/{l^{m}}}) = \Phi(t^{{a'}/{l^{m}}})$, and this gives a contradiction.

\step{2: $P \arr Q$ is Kummer} Lemma~\ref{lem:refine-to-irs} implies the existence of a Kummer-flat morphism $g\colon Z \arr X$ such that $\infroot{g}\colon \infroot{Z} \arr \infroot{X}$ factors as
   \[
   \begin{tikzcd}[column sep = 5ex]
   \infroot{Z} \rar{\psi} &\infroot{Y} \rar{\infroot{f}} &\infroot{X}\,,
   \end{tikzcd}
   \]
where $\psi$ is a morphism of stacks that is representable, flat and surjective. By Theorem~\ref{thm:equivalence2} (and the remark following it) $\psi$ is a morphism of \irss, and there exists a morphism of logarithmic schemes $h\colon Z \arr Y$ with $\infroot{h} = \psi$, and $g = f \circ h$. Let $\overline{z}\colon \spec \Omega \arr Z$ be a geometric point; set $\overline{y} = h(\overline{z})$ and $\overline{x} = f(\overline{y})$. We need to show that $f^{*}\colon M_{X, \overline{x}} \arr M_{Y, \overline{y}}$ is Kummer. However, the composite $M_{X, \overline{x}} \xarr{f^{*}} M_{Y, \overline{y}} \xarr{h^{*}} M_{Z, \overline{z}}$ is Kummer, and $h^{*}\colon M_{Y, \overline{y}} \arr M_{Z, \overline{z}}$ is injective (from step $1$ applied to $\infroot{Z}\to \infroot{Y}$). Now the result follows from an easy check.

\step {3: $f\colon Y \arr X$ is log-flat} 
By definition, the morphism $Y \arr X$ factors through the root stack $\radice{Q}{X}$  of $X$ with respect to the Kummer homomorphism $P\to Q$ (see the first paragraphs of Section \ref{sec:irss-log-schemes}). We also have $\infroot{Y} = \projlim_{n}\radice{n}{Y}$ and $\infroot{X} = \varprojlim_{n} \radice{\frac{1}{n}Q}{X}$. If $n$ is a positive integer, we have a commutative diagram
   \[
   \begin{tikzcd}
   {}\infroot{Y}\dar{\infroot{f}}\rar &
   {}\radice{n}{Y}\dar{f_{n}}\rar&
   Y\rar\dar\ar[bend left = 40]{dd}{f}
   & X_{Q}\ar{dd}\\
   {}\infroot{X} \rar &
   {}\radice{\frac{1}{n}Q}{X}\rar&
   {}\radice{Q}{X}\dar&\\
   &&X \rar & 
   X_{P}
   \end{tikzcd}
   \]
in which the two left squares are cartesian. From this it follows that $\radice{n}Y \arr \radice{\frac{1}{n}Q}{X}$ is representable. We claim that it is also flat when $n$ is sufficiently divisible.

Set $U_{n} \eqdef X\times_{X_{P}}X_{\frac{1}{n}Q}$ and $W_{n} \eqdef U_{n}\times_{\radice{\frac{1}{n}Q}{X}} \radice{n}Y$; since $U_{n} \arr \radice{\frac{1}{n}Q}{X}$ is flat and surjective, it is enough to show that $W_{n} \arr U_{n}$ is flat when $n$ is sufficiently divisible. We have $\projlim_{n} U_{n} = U_{\infty}$, $\projlim_{n}\bigl( \radice{\frac{1}{n}Q}{X}\bigr) = \infroot{X}$, and $\projlim_{n} \radice{n}Y = \infroot Y$, hence $\projlim_{n}  W_{n} = U_{\infty}\times_{\infroot{X}} \infroot{Y}$. Furthermore, if $m \mid n$ we have a diagram
   \[
   \begin{tikzcd}
   W_{n} \ar{rrr}\ar{ddd}\ar{rd} &
   &{}&U_{n}\ar{ddd}\ar{ld} \\
   {}&{}\radice{n}Y
   \rar\dar&\radice{\frac{1}{n}Q}{X}\dar&{}\\
   {}&{}\radice{m}Y \rar&\radice{\frac{1}{m}Q}{X}&{}\\
   W_{m}\ar{rrr}\ar{ru}&
   &{}&U_{m}\ar{lu}
   \end{tikzcd}
   \]
in which the upper, middle and lower squares are cartesian; hence $W_{n} = W_{m}\times_{U_{m}}U_{n}$.
Since the projection $U_{\infty}\times_{\infroot{X}} \infroot{Y}\to U_{\infty}$ is flat, because it is obtained by base change from $\infroot{f}$, we can apply \cite[Théorème~11.2.6]{ega43} and conclude that $W_{n} \arr U_{n}$ is flat when $n$ is sufficiently divisible. Choose such an $n$.

Now set $V_{n} \eqdef Y\times_{X_{Q}}X_{\frac{1}{n}Q}$ and $V_{n}' \eqdef V_{n}\times_{\radice{\frac{1}{n}Q}{X}}U_{n}$. We have a commutative diagram
   \[
   \begin{tikzcd}
   V_{n}' \ar{rr}\dar && U_{n}\dar\\
   V_{n} \rar & \radice{n}Y \rar & \radice{\frac{1}{n}Q}X\,;
   \end{tikzcd}
   \]
let us endow $U_{n}$, $V_{n}$ and $V_{n}'$ with the tautological logarithmic structures coming from the maps into $\radice{\frac{1}{n}Q}X$ (notice that in the case of $V_{n}$ and $V_{n}'$ these coincide with the ones coming from the maps into $\radice{n}Y$). The maps $V_{n}' \arr U_{n}$ and $V_{n}' \arr V_{n}$ are flat and strict, hence Kummer-flat. On the other hand $V_{n} \arr Y$ and $U_{n} \arr X$ are Kummer-flat by definition, so the composites $V'_{n} \arr Y$ and $V'_{n} \arr X$ are Kummer-flat. From \cite[Theorem~0.2]{illusie-nakayama-tsuji} (which we recall below) we conclude that $f\colon Y \arr X$ is log-flat. This concludes the proof of Theorem~\ref{thm:flat<->Kummer-flat}.
\end{proof}

\begin{lemma}\cite[Theorem~0.2]{illusie-nakayama-tsuji} 
Let $f\colon X\to Y$ and $g\colon Y\to Z$ be morphisms of \fs logarithmic schemes, and assume that $f$ is surjective and Kummer. If $f$ and $g\circ f$ are log-flat (resp. log-\'etale), then $g$ is also log-flat (resp. log-\'etale). \qed
\end{lemma}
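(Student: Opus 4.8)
The statement is local on $Z$ for the fppf topology on underlying schemes, and log-flatness, Kummerness and surjectivity are all preserved and reflected under such localization; so I would first reduce to the case where $\underline Z$, $\underline Y$, $\underline X$ are affine and we are given Kato charts arising from the stalks, $P \to M_Z$, $Q \to M_Y$, $R \to M_X$, together with homomorphisms $P \to Q \to R$ in which $Q \to R$ is Kummer (since $f$ is) and the three structure maps $Z \to \sz{P}$, $Y \to \sz{Q}$, $X \to \sz{R}$ are strict. Writing $X_S$ for $\sz{S}$, log-flatness now becomes honest flatness of maps of schemes: that $g$ is log-flat is exactly the assertion that $Y \to Z\times_{X_P}X_Q$ is flat; the hypothesis that $g\circ f$ is log-flat gives that $X \to Z\times_{X_P}X_R$ is flat; and the hypothesis that $f$ is log-flat gives that $X \to Y\times_{X_Q}X_R$ is flat. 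After a further refinement of the charts so that this last comparison map is strict and surjective, the surjectivity of $f$ makes $X \to Y\times_{X_Q}X_R$ \emph{faithfully} flat.

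Next I would use that $Z\times_{X_P}X_R = (Z\times_{X_P}X_Q)\times_{X_Q}X_R$ and that the composite $X \to Y\times_{X_Q}X_R \to Z\times_{X_P}X_R$ is precisely the flat map coming from $g\circ f$. By faithfully flat descent of flatness of the target morphism along the faithfully flat map $X \to Y\times_{X_Q}X_R$ (\cite{ega43}), the second arrow $Y\times_{X_Q}X_R \to Z\times_{X_P}X_R$ is flat. This arrow is the base change of the desired map $Y \to Z\times_{X_P}X_Q$ along $X_R \to X_Q$, so it remains to descend flatness along $X_R \to X_Q$. The point where I would then need care is that, although $Q \to R$ is Kummer, the monoid-algebra map $\ZZ[Q]\to\ZZ[R]$, and hence $X_R \to X_Q$, is in general not flat, so one cannot descend directly. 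To circumvent this I would exploit that $Q \to R$ Kummer forces $Q\grq \simeq R\grq$ and hence $X_Q^{[\infty]} \simeq X_R^{[\infty]}$: for $n$ sufficiently divisible so that $R \subseteq \tfrac1n Q$, the covering $X_Q^{[n]} = \sz{\tfrac1n Q} \to X_Q$ and its pullbacks are flat and surjective, exactly as in the local models of Corollary~\ref{cor:local-model-chart} (and as used to produce flat covers in Step~3 of the proof of Theorem~\ref{thm:flat<->Kummer-flat}). Replacing $X_R \to X_Q$ by this genuinely faithfully flat root-stack covering, and checking flatness in the limit via \cite[Th\'eor\`eme~11.2.6]{ega43}, lets me descend flatness of $Y\times_{X_Q}X_R \to Z\times_{X_P}X_R$ down to flatness of $Y \to Z\times_{X_P}X_Q$, i.e.\ log-flatness of $g$.

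The main obstacle is precisely this final descent: the naive base $X_R \to X_Q$ is not faithfully flat, so ordinary descent of flatness fails, and one is forced to substitute the faithfully flat root-stack covering and argue in the limit. A second delicate point to pin down is the reduction to charts for which $X \to Y\times_{X_Q}X_R$ is not merely flat but faithfully flat; this is exactly where the hypothesis that $f$ is \emph{surjective} (and not just log-flat and Kummer) is used, and it relies on the chart theory for Kummer-flat covers. Finally, I would keep the argument independent of Theorem~\ref{thm:flat<->Kummer-flat} in order to avoid circularity, since the proof of that theorem already invokes the present statement; the plan above uses only the elementary local models of the root stacks together with classical faithfully flat descent, so no circularity arises.
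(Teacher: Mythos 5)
Your argument breaks at exactly the step you yourself flagged as the main obstacle, because the proposed workaround rests on a false assertion: for a general \fs monoid $Q$, the Kummer cover $X_{Q}^{[n]} = \sz{\tfrac{1}{n}Q} \arr \sz{Q}$ is \emph{not} faithfully flat (it is flat when $Q$ is free, e.g.\ $Q = \NN^{r}$, but not in general). Concretely, take $Q = \{(a,b)\in\NN^{2} \mid a+b\ \text{even}\}$, so that $\ZZ[Q]\cong\ZZ[x,y,z]/(xz-y^{2})$, and $n=2$: the extension $\ZZ[Q]\subseteq\ZZ[\tfrac12 Q]$ is finite of rank $[\tfrac12 Q\gr : Q\gr]=4$ over the dense torus, but the fiber over the maximal monomial ideal has dimension $6$, with basis the monomials $x^{\gamma}$ for $\gamma\in\tfrac12 Q\setminus(Q^{+}+\tfrac12 Q)$, namely $(0,0)$, $(1,0)$, $(0,1)$, $(\tfrac12,\tfrac12)$, $(\tfrac32,\tfrac12)$, $(\tfrac12,\tfrac32)$; a finite flat module over a noetherian domain has constant rank, so the map is not flat. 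You have conflated the faithfully flat maps that actually occur in Corollary~\ref{cor:local-model-chart} and in Step~3 of the proof of Theorem~\ref{thm:flat<->Kummer-flat}, which are maps $X\times_{X_{P}}X_{P}^{[n]}\arr\radice{n}{X}$ from a scheme to a root \emph{stack} (torsors under $\mmu_{n}(P)$), with the map of affine schemes $X_{Q}^{[n]}\arr X_{Q}$ down to the coarse moduli space, which is not flat; and replacing your cover by $\radice{n}{X_{Q}}$ does not help, since $\radice{n}{X_{Q}}\arr X_{Q}$ is again a coarse space map and again not flat. So your substitute cover has exactly the same defect as $X_{R}\arr X_{Q}$ itself, the final descent of flatness of $Y\times_{X_{Q}}X_{R}\arr(Z\times_{X_{P}}X_{Q})\times_{X_{Q}}X_{R}$ down to $Y\arr Z\times_{X_{P}}X_{Q}$ has no basis, and the proof collapses. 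This is not a repairable slip: the failure of flatness of Kummer covers is precisely why the Kummer-flat topology is strictly finer than the fppf topology, and precisely why this statement is a genuine theorem rather than an instance of classical faithfully flat descent.

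For the comparison you were asked to make: the paper does not prove this lemma at all; it is quoted from Illusie--Nakayama--Tsuji, where it is the main theorem of that paper, which is why it carries a bare end-of-proof symbol here. The known proofs require genuinely more input: either the structure theory of Kummer log-flat morphisms, or the passage to root stacks, where the relevant covers really are flat --- but in this paper that passage is Theorem~\ref{thm:flat<->Kummer-flat}, whose proof invokes the present lemma, so it is unavailable to you, as you correctly observed. Two further steps in your reduction also need justification, though they are secondary: using log-flatness of $f$ and of $g\circ f$ to conclude flatness of the comparison maps \emph{for your particular compatible charts} requires a chart-independence statement for log flatness, since the definition only asserts the existence of some charts fppf-locally; and surjectivity of $\underline{X}\arr\underline{Y}$ does not by itself give surjectivity of $X\arr Y\times_{X_{Q}}X_{R}$, because the finite projection $Y\times_{X_{Q}}X_{R}\arr Y$ has fibers containing several points, all of which must be hit.
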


Analogous arguments prove the following.

\begin{theorem}\label{thm:etale<->Kummer-etale}
Let $f\colon Y \arr X$ be a morphism of \fs logarithmic schemes, such that the underlying morphism of schemes is locally of finite presentation. Then $\infroot{f}\colon \infroot{Y} \arr \infroot{X}$ is étale if and only if $f$ is Kummer-étale.\qed
\end{theorem}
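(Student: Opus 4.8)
The plan is to deduce this from Theorem~\ref{thm:flat<->Kummer-flat}, treating étaleness as flatness together with an unramifiedness condition that is exactly responsible for the tameness built into the definition of Kummer-étale. The ``if'' direction (if $f$ is Kummer-étale then $\infroot{f}$ is representable and étale) is already contained in Proposition~\ref{kfl.morphisms}, so the work is in the converse: assuming $\infroot{f}\colon \infroot{Y} \arr \infroot{X}$ is representable and étale, I must show that $f$ is Kummer-étale. Since an étale morphism is in particular flat, finitely presented and representable, Theorem~\ref{thm:flat<->Kummer-flat} applies verbatim and shows that $f$ is Kummer-flat; in particular, with the notation of that proof, $f^{*}\colon P \arr Q$ is Kummer and $f$ is log-flat. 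It therefore remains to upgrade ``log-flat'' to ``log-étale'' and to extract the tameness condition on the cokernel of $P\gr \arr Q\gr$.

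For log-étaleness I would reproduce Step~3 of the proof of Theorem~\ref{thm:flat<->Kummer-flat} with ``flat'' replaced by ``étale'' throughout. Forming the same schemes $U_{n}$, $V_{n}$, $W_{n}$ and using that $\projlim_{n}W_{n} = U_{\infty}\times_{\infroot{X}}\infroot{Y}$ has étale projection to $U_{\infty}$ (being a base change of $\infroot{f}$), the étale analogue of the limit argument used there (étaleness, like flatness, is detected at a finite stage of a filtered limit of finitely presented morphisms, cf.\ \cite{ega43}) shows that $W_{n} \arr U_{n}$ is étale for $n$ sufficiently divisible. The same strictness bookkeeping as in Step~3, together with the log-étale version of \cite[Theorem~0.2]{illusie-nakayama-tsuji}, then yields that $f$ is log-étale.

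The genuinely new ingredient, and the step I expect to be the main obstacle, is the tameness condition, since flatness alone does not see it (for instance $\mmu_{p}$ is flat but not étale in characteristic~$p$). I would base change to a geometric point $\overline{x}\colon \spec\Omega \arr X$ and reduce, as in the proof of Proposition~\ref{kfl.morphisms}, to the local model in which $\infroot{f}$ is built from the inclusion $\mmu_{\infty}(P) \into \mmu_{\infty}(Q)$, Cartier dual to the surjection $P\gr_{\QQ}/P\gr \arr Q\gr_{\QQ}/Q\gr$ with kernel $Q\gr/P\gr$, whose cokernel is $\Gamma = \hom(Q\gr/P\gr, \gm)$. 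By the cartesian square of Lemma~\ref{lem:affine-map}, the classifying-stack factor $\cB_{\Omega}\mmu_{\infty}(Q) \arr \cB_{\Omega}\mmu_{\infty}(P)$ is étale if and only if the finite group scheme $\Gamma_{\Omega}$ is étale over $\Omega$, which happens precisely when the order of $Q\gr/P\gr$ is prime to $\cha\Omega$. Since $\infroot{f}$ is étale, so is this factor, and hence the order of the cokernel of $P\gr \arr Q\gr$ is prime to the residue characteristic at every point of $X$ --- exactly the extra hypothesis distinguishing Kummer-étale from Kummer-flat. Combining log-étaleness, the Kummer property, and this tameness condition shows that $f$ is Kummer-étale, completing the converse.
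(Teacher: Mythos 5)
Your overall architecture is sensible, and two of its three pieces are sound: the ``if'' direction is indeed exactly Proposition~\ref{kfl.morphisms}, and invoking Theorem~\ref{thm:flat<->Kummer-flat} as a black box (étale $\Rightarrow$ representable, flat, finitely presented) to conclude that $f$ is Kummer-flat, with charts in which $P \arr Q$ is Kummer and $Y \arr X\times_{X_{P}}X_{Q}$ is strict, flat and locally of finite presentation, is a legitimate shortcut. The genuine gap is the step upgrading log-flat to log-étale. The étale version of \cite[Theorem~0.2]{illusie-nakayama-tsuji} requires a surjective Kummer log-flat morphism $h\colon Z \arr Y$ such that the \emph{composite} $f\circ h\colon Z \arr X$ is log-étale (in the form quoted in the paper, it even requires $h$ itself to have the property); only then does it transfer the property to $f$. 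But in the Step~3 construction every auxiliary map is a root extraction: $U_{n} \arr X$ is induced by the Kummer inclusion $P \subseteq \tfrac{1}{n}Q$, the atlas maps $V_{n} \arr \radice{n}Y$ and $U_{n} \arr \radice{\frac{1}{n}Q/P}{X}$ are torsors under finite diagonalizable group schemes, and $n$ must be taken \emph{sufficiently divisible} for the limit argument of \cite{ega43} to apply, so in positive or mixed characteristic one cannot avoid residue characteristics dividing $n$. Consequently $U_{n} \arr X$, $V_{n} \arr Y$, and hence the composites $V_{n}' \arr X$ and $W_{n} \arr X$, are Kummer-flat but not log-étale, and the descent theorem simply does not apply, however the strictness bookkeeping is arranged. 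What your étale limit argument really proves is that $\radice{n}Y \arr \radice{\frac{1}{n}Q/P}{X}$ is étale for sufficiently divisible $n$; unlike flatness, this cannot be transported down to $Y \arr X$ through the non-étale covers $U_{n}$, $V_{n}$ --- this is precisely where the tameness condition lives, so the failure is structural, not cosmetic.

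What is missing is a direct proof that the strict part $Y \arr X\times_{X_{P}}X_{Q}$ of the chart is étale, not merely flat; since flatness and finite presentation are already known, unramifiedness suffices, and it can be checked on geometric fibers by essentially the same computation as your tameness paragraph, which then settles both issues at once. Over a geometric point $\overline{x}\colon \spec\Omega \arr X$, with $\overline{y}$ a point of $Y$ above it and stalk charts $P$, $Q$, one has $\infroot{Y}\cong Y\times_{X\times_{X_{P}}X_{Q}}\infroot{(X\times_{X_{P}}X_{Q})}$ because $Y \arr X\times_{X_{P}}X_{Q}$ is strict, and the fiber of $\infroot{f}$ over the canonical $\Omega$-point of $\infroot{X}$ (the trivial torsor at the closed point of the local model) decomposes as the product of the fiber of $Y \arr X\times_{X_{P}}X_{Q}$ through $\overline{y}$ with $\Gamma_{\Omega}=\hom(Q\gr/P\gr,\gm)_{\Omega}$. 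This fiber is a nonempty étale $\Omega$-scheme, so \emph{both} factors are étale: étaleness of $\Gamma_{\Omega}$ is your tameness condition, and étaleness of the other factor gives unramifiedness of the strict part at $\overline{y}$, whence $f$ is Kummer-étale. Note that this computation also repairs a smaller hole in your final paragraph: from ``$\infroot{f}$ is étale'' you cannot conclude that the factor $\infroot{(X\times_{X_{P}}X_{Q})} \arr \infroot{X}$ is étale, because the other factor $\infroot{Y} \arr \infroot{(X\times_{X_{P}}X_{Q})}$, though flat and locally of finite presentation, need not be surjective; the fiberwise product decomposition is the correct substitute.
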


\section{Parabolic sheaves on \fs logarithmic schemes}\label{sec:parabolic}

In this section we extend the parabolic interpretation of \qc sheaves on finite root stacks given in \cite{borne-vistoli1} to \irss. Let us review some definitions from \cite{borne-vistoli1}, specialized to the case of integral monoids, the only one that is of interest here.

Let $P$ be an integral monoid. We define the \emph{weight lattice} $P\wt$ as the set $P\gr$, with the partial order relation defined $x \leq y$ when $y - x \in P \subseteq P\gr$. We will interpret $P\wt$ as a category in which there is at most one arrow between any two objects, in the usual way.

If $X$ is a scheme and $A$ is a sheaf of integral monoids on $X\et$, we define a fibered category $A\wt \arr X\et$. If $U \arr X$ a map with $U$ affine, then $A\wt(U)$ is the set $A\gr(U)$ with the ordering defined by $x \leq y$ if $y - x \in A(U)$. We will think of $A\wt$ as a fibered category on $X\et$ whose fibers are partially ordered sets.

Let us fix a \fs logarithmic scheme $X$ with a \df structure $L\colon A\to \div_{X\et}$.

\begin{definition}\label{def:parabolic-sheaf1}
Assume at first that there is a global chart $P\arr A(X)$. A \emph{parabolic sheaf} $E$ on the logarithmic scheme $X$ is a functor $E\colon P_{\QQ}\wt\to \catqcoh X $ that we denote by $a\mapsto E_{a}$, for $a$ an object or an arrow of $P_{\QQ}\wt$, with an additional datum for any $p \in P\gr$ and $a \in P_{\QQ}\gr$ of an isomorphism of $\cO_{X}$-modules
   \[
   \rho^{E}_{p,a}\colon E_{p+a}\cong L_{p}\otimes E_{a}
   \]
called the pseudo-periods isomorphism.

These isomorphism are required to satisfy some compatibility conditions. Let $p$, $p'\in P\gr$, $r \in P$, $q \in P_{\QQ}$ and $a \in P_{\QQ}\gr$. Then the following diagrams
   \[
   \begin{tikzcd}
   E_{a} \rar{E_{r}}\dar &[4ex] E_{r+a}\dar{\rho^{E}_{r,a}}\\
   \cO_{X}\otimes E_{a}\rar{\sigma_{r}\otimes \id} & L_{r}\otimes E_{a}
   \end{tikzcd}
   \]
   \[
   \begin{tikzcd}
   E_{p+a}\rar{\rho^{E}_{p,a}}\dar[swap]{E_{q}} &[4ex]  L_{p}\otimes E_{a}\dar{\id\otimes E_{q}}\\
   E_{p+q+a}\rar{\rho^{E}_{p,q+a}} & L_{p}\otimes E_{q+a}
   \end{tikzcd}
   \]
   \[
   \begin{tikzcd}
   E_{p+p'+a}\rar{\rho^{E}_{p+p',a}}\dar[swap]{\rho^{E}_{p,p'+a}} &[4ex] L_{p+p'}\otimes E_{a}
   \dar{\mu_{p,p'} \otimes \id} \\
   L_{p}\otimes E_{p'+a}\rar{\id\otimes \rho^{E}_{p',a}} & L_{p}\otimes L_{p'}\otimes E_{a},
  \end{tikzcd}
   \]
where $\mu_{p,p'}\colon L_{p+p'}\cong L_{p}\otimes L_{p'}$ is the natural isomorphism given by the symmetric monoidal functor $L$,  are commutative. Furthermore we assume that the composite
   \[
   E_{a}=E_{0+a}\xarr{\rho^{E}_{0,a}}  L_{0}\otimes E_{a}\cong \cO_{X}\otimes E_{a}
   \]
coincides with the natural isomorphism $E_{a}\cong \cO_{X}\otimes E_{a}$.
\end{definition}

A homomorphism $E' \arr E$ of parabolic sheaves on $X$ is given by a base-preserving natural transformation of functors $P_{\QQ}\wt\to \catqcoh X$, which is moreover compatible with the pseudo-periods isomorphisms.

As in the case of parabolic sheaves with fixed weights, the definition extends to the general case (without a global chart), where one requires the commutativity of the diagrams and compatibility of $\rho^{E}$ with pullback. One shows that in the presence of a global chart, the corresponding categories are equivalent (the analogue of \cite[Proposition~5.10]{borne-vistoli1}).

More precisely, denote by $\qcoh_{X\et}$ the fibered category on $X\et$ associated with the pseudo-functor from $X\et$ into abelian categories, sending each map $U \arr X$ with $U$ affine into the category of quasi-coherent sheaves on $U$. 

\begin{definition}
A \emph{parabolic sheaf} $E$ on $X$ consists of the following data.

\begin{enumeratea}

\item A cartesian base-preserving cartesian functor $E\colon A_{\QQ}\wt \arr \qcoh_{X\et}$,  denoted by $a \arr E_{a}$.

\item For any $U \arr X$ in $X\et$, any $a \in A\wt(U)$ and $b \in A_{\QQ}\wt(U)$, an isomorphism of $\cO_{U}$-modules
   \[
   \rho^{E}_{a,b}\colon E_{a+b} \simeq L_{a} \otimes E_{b}.
   \]

\end{enumeratea}

These data are required to satisfy the conditions analogous to those of Definition~\ref{def:parabolic-sheaf1}, and the following.

If $f\colon U \arr V$ is an arrow in $X\et$, $a \in A\wt(V)$ and $b \in A_{\QQ}\wt(V)$, then the isomorphism
   \[
   \rho^{E}_{f^{*}a,f^{*}b}\colon 
   E_{f^{*}(a+b)} = E_{f^{*}a + f^{*}b} \simeq L_{f^{*}a} \otimes E_{f^{*}b}
   \]
is the pullback of $j^{E}_{a, b}\colon E_{a+b} \simeq L_{a} \otimes E_{b}$ (note that $f^{*}E_{a}\cong E_{f^{*}a}$ for every $a \in  A_{\QQ}\wt(V)$ since $E$ is cartesian).
\end{definition}

This gives an abelian category $\catpar X$ of parabolic sheaves on $X$.

The following is an analogue of \cite[Theorem~6.1]{borne-vistoli1}.

\begin{theorem}\label{BV.rational}
There is an equivalence of abelian categories between $\catpar X$ and $\catqcoh {\infroot X}$.
\end{theorem}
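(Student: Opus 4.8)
The plan is to reduce the global statement to the local model via \'etale descent, and then to prove the local equivalence using the projective limit description of $\infroot{X}$ as $\projlim_n \radice n X$ together with the known finite-level equivalence from \cite[Theorem~6.1]{borne-vistoli1}. Since both $\catpar X$ and $\catqcoh{\infroot X}$ are built from data that satisfy \'etale descent (parabolic sheaves by the cartesian/pullback-compatibility conditions in the second definition, and quasi-coherent sheaves by Remark~\ref{rmk:qc->sheaf}), it suffices to construct the equivalence assuming a global Kato chart $P \arr \cO_X(X)$ and check that it is compatible with pullback along \'etale maps; functoriality then lets the local equivalences glue.

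So first I would assume a global chart and define the functor from $\catpar X$ to $\catqcoh{\infroot X}$. The key observation is that a parabolic sheaf $E\colon P_\QQ\wt \arr \catqcoh X$ with pseudo-period isomorphisms is the ``same data'' as a $P_\QQ\gr$-graded module with an equivariant structure, once one uses the local presentation $\infroot{X} \simeq [(X\times_{X_P} X_P^{[\infty]})/\mmu_\infty(P)]$ from Corollary~\ref{cor:local-model-chart}. Concretely, writing $\cR = [\spec A/\mmu_\infty(P)]$ with $A = \cO(X)\otimes_{\ZZ[P]}\ZZ[P_\QQ]$, by Proposition~\ref{prop:qc-group-actions} a quasi-coherent sheaf on $\cR$ is a $P_\QQ\gr/P\gr$-graded $A$-module. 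The plan is to match the weight-$a$ piece $E_a$ of a parabolic sheaf with the corresponding graded component, with the pseudo-period isomorphisms $\rho^E_{p,a}\colon E_{p+a}\simeq L_p\otimes E_a$ encoding exactly the $\ZZ[P]$-module (hence $A$-module) structure, and the functoriality in $P_\QQ\wt$ encoding the transition maps between rational weights. This is the rational-weight analogue of the dictionary set up in \cite[\S5]{borne-vistoli1}.

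Rather than redo this dictionary from scratch, I would instead leverage Proposition~\ref{prop:projective-limit} and the finite-level result. At each finite level $n$, \cite[Theorem~6.1]{borne-vistoli1} gives an equivalence between parabolic sheaves with coefficients in $\tfrac1n P$ and $\catqcoh{\radice n X}$. One checks that a parabolic sheaf on $X$ in the sense of Definition~\ref{def:parabolic-sheaf1} is precisely a compatible system of such $\tfrac1n$-weighted parabolic sheaves, i.e. that $\catpar X \simeq \indlim_n \catpar[\tfrac1n]{X}$ where the latter is the category of $\tfrac1n P$-weighted parabolic sheaves, the colimit being taken over divisibility with the obvious restriction functors. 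On the other side, since the finite root stacks themselves form a projective system $\radice n X$ with colimit $\infroot X$, I would show $\catqcoh{\infroot X}$ is the corresponding colimit of the $\catqcoh{\radice n X}$ compatibly with the finite-level equivalences. The finite-level equivalences of \cite{borne-vistoli1} are natural in $n$ (the restriction functor on parabolic side corresponds to pullback $\pi_n^*$ on the stack side), so passing to the colimit yields the desired equivalence of abelian categories.

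The main obstacle I expect is on the quasi-coherent side: one must verify that $\catqcoh{\infroot X}$ really is the $2$-colimit of the $\catqcoh{\radice n X}$ along the pullback functors $\pi_n^*$. This is genuinely subtle because the structure sheaf of $\infroot X$ need not be coherent (Example~\ref{ex:non-coherent}), so the clean colimit statement of Proposition~\ref{prop:limit-finitely-presented}, which holds for \emph{finitely presented} sheaves via \cite[Th\'eor\`eme~8.5.2]{ega43}, does \emph{not} directly extend to all quasi-coherent sheaves (indeed Remark~\ref{rmk:fppf-fpqc-not-equivalence} shows $\indlim_n\catqcoh{\radice n X}$ fails to be closed under infinite direct sums). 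Thus I cannot route the whole argument through finite levels for arbitrary quasi-coherent sheaves, and must instead prove the equivalence directly at the level of the groupoid presentation $\mmu_\infty(P)$-equivariant $A$-modules, using that $A_\QQ = \indlim_n \tfrac1n A$ and that every graded component is a filtered colimit of its finite-level pieces. The care needed is to show the parabolic-to-graded-module dictionary is an equivalence \emph{on the nose} for all weights, checking exactness and that the pseudo-period and cocycle conditions correspond exactly to the grading and $A$-module axioms, and finally that this local equivalence is $\mmu_\infty(P)$-equivariant and natural under \'etale base change so it descends globally.
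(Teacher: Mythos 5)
Your final plan is correct, and its mathematical core --- the chart-local identification of $\catqcoh{\infroot X}$ with $P\grq/P\gr$-graded (equivalently, $\mmu_{\infty}(P)$-equivariant) modules over $A=\cO(X)\otimes_{\ZZ[P]}\ZZ[P_{\QQ}]$, matched weight-by-weight against the parabolic data --- is exactly the computation the paper relies on when it asserts that the proof of \cite[Theorem~6.1]{borne-vistoli1} goes through without changes. The packaging, however, differs. The paper first constructs the comparison functor \emph{globally} and in the opposite direction, $\Phi\colon \catqcoh{\infroot X}\arr\catpar X$, by setting $(\Phi E)_{v}=\pi_{*}(E\otimes\Lambda_{v})$, with the structure maps induced by the tautological sections and the pseudo-period isomorphisms coming from the projection formula; every property needed for this to make sense (quasi-coherence and exactness of $\pi_{*}$, the projection formula, $\pi_{*}\cO_{\cR}=\cO_{X}$) is supplied by Proposition~\ref{prop:coherent-pushforward}. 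This makes the functor visibly chart-independent and compatible with \'etale localization, so charts enter only when verifying that it is an equivalence; your route instead builds the equivalence chart-locally out of Corollary~\ref{cor:local-model-chart} and Proposition~\ref{prop:qc-group-actions} and then descends it, which is equally valid but shifts the work onto the compatibility-with-pullback checks that the paper's global construction renders automatic. Finally, you were right to abandon the finite-level colimit shortcut: $\catqcoh{\infroot X}$ is strictly larger than $\indlim_{n}\catqcoh{\radice{n}{X}}$ (Remark~\ref{rmk:fppf-fpqc-not-equivalence}), and the finite-level equivalences only assemble for \fp sheaves (Proposition~\ref{prop:limit-finitely-presented}); recognizing this and switching to the direct equivariant-module argument is precisely what keeps your proof sound.
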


\begin{proof}
Let us describe the functor $\Phi\colon \catqcoh {\infroot X}\arr \catpar X $. Denote by $\pi\colon \infroot{X} \arr X$ the projection, and by $\Lambda\colon A_{\QQ}\to \Div_{{\infroot X}\et}$ the universal \df structure on $\infroot{X}$ (that we denoted by $\widetilde{L}$ in Section \ref{sec:reconstruction}). Moreover consider the induced functor $A_{\QQ}\wt\to \pic_{{\infroot{X}}\et}$; by abuse of notation we will still denote it by $\Lambda$.

Let $F\in \catqcoh {\infroot X}$ be a quasi-coherent sheaf, $U\to X$ be an \'{e}tale morphism with $U$ affine, and $v \in A_{\QQ}\wt(U)$ be a section on $U$. We define a sheaf $(\Phi F)_{v}$ as
\[
 (\Phi F)_{v}=\pi_{*}(F\otimes_{\infroot{X}}\Lambda_{v}).
\]
This is a quasi-coherent sheaf on $X$ because of Proposition \ref{prop:coherent-pushforward}.

Now for $v \in A_{\QQ}\wt(U)$ and $a \in A_{\QQ}(U)$ we get a morphism $(\Phi F)_{v}\to (\Phi F)_{v+a}$ by applying $\pi_{*}$ to the morphism
\[
F\otimes  \Lambda_{v}\to (F\otimes \Lambda_{v})\otimes \Lambda_{a} \cong F\otimes \Lambda_{v+a}
\]
induced by the section $s_{a}$ of $\Lambda_{a}$.

Finally, for $a \in A\wt(U)$ and $b\in A_{\QQ}\wt(U)$, the isomorphism 
\[
\rho_{a,b}^{\Phi F}\colon (\Phi F)_{a+b}\cong L_{a}\otimes (\Phi F)_{b}
\]
is given by the following composition, where we use the projection formula for $\pi$:
\begin{align*}
(\Phi F)_{a+b}  & = \pi_{*}(F\otimes \Lambda_{a+b})\\
 & \cong  \pi_{*}(F\otimes \Lambda_{b}\otimes \Lambda_{a})\\
 & \cong \pi_{*}(F\otimes \Lambda_{b}\otimes \pi^{*}L_{a})\\
 & \cong L_{a}\otimes \pi_{*}(F\otimes \Lambda_{b})\\
 & \cong L_{a}\otimes (\Phi F)_{b}.
\end{align*}
One easily verifies that this gives a parabolic sheaf on $X$, and that this construction can be extended to an additive functor $\catqcoh \infroot X\to \catpar X$.

Let us also describe the quasi-inverse $\Psi\colon \catpar X\arr \catqcoh {\infroot X}$, that we can construct \'etale locally.  Hence we may assume that $X$ has a chart $X\to [\sz{P}/\widehat{P}]$ (in the sense of Definition \ref{def:chart}). Call $\eta\colon E\to X$ the corresponding $\widehat{P}$-torsor, and $A=\eta_*\cO_E$, a $P\gr$-graded $\cO_X$-algebra. We leave it to the reader to check that the infinite root stack has a quotient presentation as
\[
\infroot{X}\cong [(E\times_{\sz{P}}\sz{P_\QQ})/\widehat{P_\QQ}]=[\underline{\spec}_X(A\otimes_{\ZZ[P]}\ZZ[P_\QQ])/\widehat{P_\QQ}]
\]
where the action is via the induced $P\gr_\QQ$-grading on $A\otimes_{\ZZ[P]}\ZZ[P_\QQ]$. This is analogue to Corollary \ref{cor:local-model-chart} and \cite[Proposition 4.13]{borne-vistoli1}. Hence, a quasi-coherent sheaf on $\infroot{X}$ can be described as a $P\gr_\QQ$-graded quasi-coherent sheaf of $A\otimes_{\ZZ[P]}\ZZ[P_\QQ]$-algebras on $X$.

Given a parabolic sheaf $E\colon P\wt_\QQ\arr \catqcoh X$, consider the $P\gr_\QQ$-graded quasi-coherent sheaf $\bigoplus_{a\in P\gr_\QQ}E_a$ on $X$. This has a structure of $A$-module, coming from the isomorphisms $E_a\otimes L_p\cong E_{a+p}$ (note that $A\cong \bigoplus_{p\in P\gr}L_p$), and a structure of $\sz{P_\QQ}$-algebra, coming from the maps $E_q\colon E_{a} 
\to E_{a+q}$ for $a\in P\gr_\QQ$ and $q\in P_\QQ$. By the properties of a parabolic sheaf, the two actions of $\ZZ[P]$ via the maps to $\ZZ[P]\to A$ and $\ZZ[P]\to \ZZ[P_\QQ]$ are compatible, and induce a structure of $P\gr_\QQ$-graded quasi-coherent sheaf of $A\otimes_{\ZZ[P]}\ZZ[P_\QQ]$-modules. Hence we obtain the desired object $\Psi(E)\in \catqcoh {\infroot X}$.

The rest of the proof of Theorem $6.1$ in \cite{borne-vistoli1} goes through without changes.
\end{proof}

\subsection{Finitely presented parabolic sheaves} We are interested in characterizing parabolic sheaves corresponding to \fp sheaves on $\infroot{X}$. Being \fp is a local condition, so we may assume that $X$ is quasi-compact, and that there is a global chart $P \arr A(X)$, where $P$ is a \fs sharp monoid. Suppose that $F$ is a \qc sheaf on $\infroot{X}$, and call $E\colon P_{\QQ}\wt \arr \catqcoh X$ the corresponding parabolic sheaf. For each $a \in P\wt_{\QQ}$ we have $E_{a} = \pi_{*}(F\otimes_{\cO_{\infroot X}}\Lambda_{a})$; hence, by \refall{prop:coherent-pushforward}{2} each $E_{a}$ is \fp on $X$. However, the converse is not true.

\begin{example}
Let $X$ be the standard logarithmic point (\ref{ex:log-points}), and consider the parabolic sheaf $E \in \catpar X$ given by $E_{q}=k$ for $q \in \ZZ$ and $E_{q}=0$ otherwise, with the only possible maps and the trivial quasi-periods isomorphisms. Although $E_{q}$ is finitely presented for every $q \in \QQ$, we claim that $E$ is not finitely presented as a quasi-coherent sheaf on $\infroot X$.

In fact it is immediate from \ref{prop:limit-finitely-presented} and the explicit description of the pullback along $\pi\colon \infroot X\to \radice{n} X$ given below that a finitely presented parabolic sheaf $F \in \catpar X$ has the property that for any $q \in \QQ$, for small enough $q'>0$ we have $F_{q}\cong F_{q+q'}$, and this is not true for the sheaf $E$.

This property is generalized by \ref{thm:char-fp-parabolic} below.
\end{example}

It is, however, true for the finite root stack $\radice{n}{X}$: if $E\colon \tfrac{1}{n}P\wt \arr \catqcoh X$ is a parabolic sheaf, such that $E_{a}$ is \fp on $X$ for all $a \in \tfrac{1}{n}P\wt$, then the corresponding \qc sheaf on $\radice{n}{X}$ is \fp. This follows easily from the construction of the functor from parabolic sheaves to sheaves on $\radice{n}{X}$ given in \cite{borne-vistoli1}: by inspecting the construction, one sees that the sheaf $\Psi E \in \catqcoh \radice{n} X$ corresponding to the parabolic sheaf $E$ is constructed by taking a finite direct sum of the sheaves $E_{a}$ on a smooth cover of the stack $\radice{n} X$, and then by using descent. Since the sheaves $E_{a}$ are finitely presented and the sum is finite, $\Psi E$ is also finitely presented. This fact can be exploited as follows.

Let $n$ be a positive integer; denote by $\pi_{n}\colon \infroot{X} \arr \radice{n}{X}$ the projection. The sheaf $\pi_{n*}F$ corresponds to the restriction of $E$ to $\frac{1}{n}P\wt$. Set $F[n] \eqdef \pi_{n}^{*}\pi_{n*}F$, and call $E[n]\colon P_{\QQ}\wt \arr \catqcoh X$ the corresponding parabolic sheaf. We have that $E_{a}$ is \fp for all $a \in P_{\QQ}\wt$ if and only if each $F[n]$ is \fp, because of the fact above. Furthermore whenever $m \mid n$ the morphism $\pi_{m}$ has a factorization $\infroot{X} \xarr{\pi_{m}}\radice {n}{X} \xarr{\pi^{m}_n} \radice{m}{X}$; the isomorphism $\pi^{m}_{n*}\pi_{m*}F \simeq \pi_{n*}F$ gives a map $\pi_{m*}F \arr \pi^{m}_{n*}\pi_{n*}F$; applying $\pi_{m}^{*}$ we obtain a morphism $F[m]\arr F[n]$; this defines an inductive system of sheaves $\{F[n]\}$ on $\infroot{X}$. The natural homomorphisms $F[n] \arr F$ induce a homomorphism $\indlim_{n} F[n] \arr F$.

\begin{proposition}
The homomorphism $\indlim_{n} F[n] \arr F$ is an isomorphism.
\end{proposition}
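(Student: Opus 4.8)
The plan is to reduce the statement to a concrete assertion about graded modules and then prove it by a direct ``absorption'' argument. Since being an isomorphism of quasi-coherent sheaves is local on $X$ in the \'etale topology, I may assume that $X = \spec R$ is affine and that the logarithmic structure comes from a chart $P \arr R$ with $P$ a sharp \fs monoid. By Corollary~\ref{cor:local-model-chart} we then have $\infroot X = [\spec A/G]$ with $A \eqdef R\otimes_{\ZZ[P]}\ZZ[P_{\QQ}]$ and $G \eqdef \mmu_{\infty}(P)$, and likewise $\radice n X = [\spec A_{n}/G_{n}]$ with $A_{n} \eqdef R\otimes_{\ZZ[P]}\ZZ[\tfrac1n P]$ and $G_{n} \eqdef \mmu_{n}(P)$. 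Writing $\Lambda \eqdef P\grq/P\gr = \widehat G$, Proposition~\ref{prop:qc-group-actions} identifies $F$ with a $\Lambda$-graded $A$-module $N = \bigoplus_{\lambda\in\Lambda}N_{\lambda}$.

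Next I would translate the two functors. Set $\Gamma_{n} \eqdef \ker(G\to G_{n}) = \mmu_{\infty}(\tfrac1n P)$; by Lemma~\ref{lem:right-quotients} (and the fact that quotients by diagonalizable groups commute with base change) its invariant ring is $A^{\Gamma_{n}} = A_{n}$, which in graded terms reads $A_{n} = \bigoplus_{\mu\in\Lambda[n]}A_{\mu}$, where $\Lambda[n] = \tfrac1n P\gr/P\gr$ is the $n$-torsion of $\Lambda$. As in the proof of Proposition~\ref{prop:coherent-pushforward}, the pushforward $\pi_{n*}F$ corresponds to the invariants $N^{(n)}\eqdef N^{\Gamma_{n}} = \bigoplus_{\lambda\in\Lambda[n]}N_{\lambda}$, and the pullback $\pi_{n}^{*}$ corresponds to $A\otimes_{A_{n}}(-)$; hence $F[n] = \pi_{n}^{*}\pi_{n*}F$ corresponds to $A\otimes_{A_{n}}N^{(n)}$, the counit $F[n]\to F$ to the multiplication map $\theta_{n}\colon A\otimes_{A_{n}}N^{(n)}\to N$, $a\otimes m\mapsto am$, and, for $m\mid n$, the transition map $F[m]\to F[n]$ to the natural map $A\otimes_{A_{m}}N^{(m)}\to A\otimes_{A_{n}}N^{(n)}$ sending $a\otimes m\mapsto a\otimes m$ (legitimate since $A_{m}\subseteq A_{n}$ and $N^{(m)}\subseteq N^{(n)}$). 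So it suffices to show that $\indlim_{n}\theta_{n}$ is an isomorphism.

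Surjectivity is immediate: since $\Lambda$ is torsion, $\Lambda = \bigcup_{n}\Lambda[n]$, so every homogeneous $m\in N_{\lambda}$ lies in $N^{(n)}$ once $n$ is divisible by the order of $\lambda$, and then $m = \theta_{n}(1\otimes m)$. Injectivity is the crux, and here is the key point. Take a homogeneous representative $\sum_{i} a_{i}\otimes m_{i}\in A\otimes_{A_{n}}N^{(n)}$, with $a_{i}\in A_{\mu_{i}}$ and $m_{i}$ homogeneous in $N^{(n)}$, lying in $\ker\theta_{n}$, i.e. $\sum_{i} a_{i} m_{i} = 0$ in $N$. Choose a multiple $n'$ of $n$ divisible by the orders of the finitely many $\mu_{i}$, so that every $\mu_{i}\in\Lambda[n']$ and hence $a_{i}\in A_{n'}$. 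In $A\otimes_{A_{n'}}N^{(n')}$ we may then absorb each coefficient across the tensor,
\[
a_{i}\otimes m_{i} = 1\otimes a_{i} m_{i},
\]
since $a_{i}\in A_{n'}$ and $a_{i}m_{i}\in N^{(n')}$; summing shows that the image of our element in $A\otimes_{A_{n'}}N^{(n')}$ equals $1\otimes\bigl(\sum_{i} a_{i} m_{i}\bigr) = 1\otimes 0 = 0$. Thus every element of $\ker\theta_{n}$ dies under the transition map to $F[n']$, so $\indlim_{n}\theta_{n}$ is injective.

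The main obstacle is not the final graded computation, which is short, but setting up the dictionary reliably: matching the stacky operations $\pi_{n*}$, $\pi_{n}^{*}$, the counit $F[n]\to F$, and the transition maps $F[m]\to F[n]$ with the module-level operations $N\mapsto N^{\Gamma_{n}}$, $M\mapsto A\otimes_{A_{n}}M$, so that the transition maps are \emph{literally} the inclusion-induced maps used in the absorption step. I should verify that these identifications are compatible with the structural maps of the groupoid presentations (in the spirit of Proposition~\ref{prop:limit-finitely-presented}), and in particular that $\pi_{n*}$ really computes $\Gamma_{n}$-invariants — which follows by flat base change along the atlas $\spec A_{n}\to\radice n X$ together with $A^{\Gamma_{n}}=A_{n}$. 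Once this bookkeeping is in place, the argument above completes the proof.
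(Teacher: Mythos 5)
Your proof is correct and takes essentially the same route as the paper's: localize étale-locally to an affine chart, translate everything into equivariant (equivalently, $P\grq/P\gr$-graded) modules over the presentation $[\spec A/\mmu_{\infty}(P)]$ of $\infroot{X}$, identify $F[n]$ with $A\otimes_{A_{n}}N^{(n)}$, and prove the colimit statement at the module level. The only difference is organizational: the paper factors the map through $\indlim_{n} M^{G_{n}}$ and gets injectivity formally, using the coaction to prove $\bigcup_{n} M^{G_{n}} = M$, whereas you check surjectivity and injectivity directly in graded terms --- and your ``absorption'' step is exactly the move the paper uses to prove surjectivity of $\indlim_{n} M^{G_{n}} \arr \indlim_{n}\bigl(M^{G_{n}}\otimes_{\ZZ[\frac{1}{n}P]}\ZZ[P_{\QQ}]\bigr)$.
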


\begin{proof}
The statement is local in the étale topology, hence we can assume that $X = \spec R$ is affine, and that the logarithmic structure is induced by a morphism $X \arr X_{P}$. Set $A \eqdef R\otimes_{\ZZ[P]}\ZZ[P_{\QQ}]$, $A_{n} \eqdef R\otimes_{\ZZ[P]}\ZZ[\frac{1}{n}P]$ and $G \eqdef \mmu_{\infty}(P)$; moreover call $G_{n}\eqdef \mmu_{\infty}(\frac{1}{n}P)\subseteq G$, i.e. the kernel of the projection $\mmu_{\infty}(P)\to \mmu_{n}(P)$.

Then $F$ corresponds to a $G$-equivariant $A$-module $M$, the pushforward $\pi_{n*}F$ corresponds to the ($\mmu_{n}(P)$-equivariant) $A_{n}$-module $M^{G_{n}}$, and $F[n]$ to the tensor product $M^{G_{n}} \otimes_{\ZZ[\frac{1}{n}P]} \ZZ[P_{\QQ}]$; so it is enough to check that the natural morphism $\indlim_{n}(M^{G_{n}} \otimes_{\ZZ[\frac{1}{n}P]} \ZZ[P_{\QQ}]) \arr M$ is an isomorphism. We have a factorization
   \[
   \bigcup_{n}M^{G_{n}} = \indlim_{n}M^{G_{n}} \arr \indlim_{n}(M^{G_{n}} \otimes_{\ZZ[\frac{1}{n}P]} 
   \ZZ[P_{\QQ}]) \arr M
   \]
of the embedding $\bigcup_{n}M^{G_{n}} \subseteq M$. Since $\ZZ[P_{\QQ}] = \bigcup_{n}\ZZ[\tfrac{1}{n}P]$, we have that the map $\indlim_{n}M^{G_{n}} \arr \indlim_{n}(M^{G_{n}} \otimes_{\ZZ[\frac{1}{n}P]}\ZZ[P_{\QQ}])$ is surjective; hence it is enough to show that $\bigcup_{n}M^{G_{n}} = M$. This is standard, using the coaction $\mu\colon M \arr M\otimes \ZZ[P\gr_\QQ/P\gr]$. We have that $\bigcup_n\ZZ[\frac{1}{n}P\gr/P\gr] = \ZZ[P\gr_\QQ/P\gr]$. If $m \in M$, write $\mu(m) = \sum_{i}m_{i}\otimes a_{i}$, and choose $n$ such that $a_{i} \in \ZZ[\frac{1}{n}P{\gr}/P\gr]$ for all $i$. Then $m \in M^{G_{n}}$.
\end{proof}

So if $F$ is \fp each of the $F[n]$ is \fp, and $F[n] = F$ for some $n$ (or, equivalently, for all sufficiently divisible $n$). It remains to translate this into a criterion for the parabolic sheaf $E$. For this we need a formula for $E[n]$.

Let $\catpar_{n}X$ be the category of parabolic sheaves on $X$ with weights in $\tfrac{1}{n}P$, as defined in \cite[Definition~5.6]{borne-vistoli1}; then we have an equivalence $\catpar_{n}X \simeq \catqcoh \radice{n}{X}$. The functor $\pi_{n*}\colon \catqcoh\infroot{X} \arr \radice{n}{X}$ corresponds to the restriction functor $\Res_{n}\colon \catpar X \arr \catpar_{n}X$ that sends each $E\colon P_{\QQ}\wt \arr \catqcoh X$ into its restriction to $\tfrac{1}{n}P\wt \arr \catqcoh X$. Let us define an induction functor $\Ind_{n}\colon \catpar_{n}X \arr \catpar X$.

Let $E'\colon \tfrac{1}{n}P\wt \arr \catqcoh X$ be a parabolic sheaf with coefficients in $\tfrac{1}{n}P$. For each $a \in P_{\QQ}$ consider the subset
   \[
   \tfrac{1}{n}P^{\leq a} \eqdef \{u \in \tfrac{1}{n}P \mid u \leq a\}
   \]
and set
   \[
   (\Ind_{n}E')_{a} = \colim_{u\in \frac{1}{n}P^{\leq a}}E'_{u}\,.
   \]
If $a \leq b$ the inclusion $\tfrac{1}{n}P^{\leq a} \subseteq \tfrac{1}{n}P^{\leq b}$ gives a homomorphism of \qc sheaves $(\Ind_{n}E')_{a} \arr (\Ind_{n}E')_{b}$; this defines a functor $\Ind_{n}E'\colon P_{\QQ}\wt \arr \catqcoh X$. If $p \in P$ then $\tfrac{1}{n}P^{\leq p+a} = p + \tfrac{1}{n}P^{\leq a}$; hence
   \[
   (\Ind_{n}E')_{p+a} = \colim_{u\in \frac{1}{n}P^{\leq a}}E'_{p+u}\,.
   \]
By taking the colimit, the isomorphisms $E'_{p+u} \simeq L_{p}\otimes E'_{u}$ induce a pseudo-period isomorphism $(\Ind_{n}E')_{p+a} \simeq L_{p}\otimes(\Ind_{n}E')_{a}$. We leave it to the reader to verify that this gives a structure of parabolic sheaf to $\Ind_{n}E'$, and defines a functor $\Ind_{n}\colon \catpar_{n}X \arr \catpar X$.

\begin{proposition}
The induction functor $\Ind_{n}\colon \catpar_{n}X \arr \catpar X$ is left adjoint to the restriction functor $\Res_{n}\colon \catpar X \arr \catpar_{n}X$.
\end{proposition}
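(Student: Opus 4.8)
The plan is to exhibit $\Ind_{n}$ as the pointwise left Kan extension of $E'$ along the fully faithful inclusion of posets $\iota\colon \tfrac1n P\wt \hookrightarrow P_{\QQ}\wt$, and then to upgrade the classical Kan-extension adjunction to the parabolic setting by checking compatibility with the pseudo-period isomorphisms. First I would record the two structural facts at the level of underlying functors into $\catqcoh X$: the functor $\Res_{n}$ is simply precomposition with $\iota$, while the formula $(\Ind_{n}E')_{a} = \colim_{u \in \frac1n P^{\leq a}}E'_{u}$ is exactly the pointwise formula for $\operatorname{Lan}_{\iota}E'$, the indexing poset $\tfrac1n P^{\leq a}$ being the comma category $(\iota \downarrow a)$. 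These colimits exist in $\catqcoh X$ (as is already implicit in the definition of $\Ind_{n}E'$). Since left Kan extension along $\iota$ is left adjoint to $\iota^{*}$, there is a natural bijection between natural transformations $\Ind_{n}E' \Rightarrow E$ and $E' \Rightarrow \Res_{n}E$ of plain functors, sending $\phi$ to $\Res_{n}\phi \circ \eta$, where $\eta\colon E' \Rightarrow \Res_{n}\Ind_{n}E'$ has component at $u$ the coprojection $c_{u}\colon E'_{u} \to (\Ind_{n}E')_{u}$ (legitimate since $u \in \tfrac1n P^{\leq u}$); its inverse sends $\psi$ to the map determined on each summand $E'_{u}$ of $(\Ind_{n}E')_{a}$ by $E'_{u}\xrightarrow{\psi_{u}} E_{u}\to E_{a}$.

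The real content is to show that this bijection restricts to \emph{morphisms of parabolic sheaves}, i.e. to natural transformations commuting with the pseudo-period isomorphisms $\rho_{p,\cdot}$ for $p \in P\gr$. Here I would use three facts: (i) tensoring by the invertible sheaf $L_{p}$ is exact and hence commutes with the colimits defining $\Ind_{n}E'$; (ii) the translation identity $\tfrac1n P^{\leq p+a} = p + \tfrac1n P^{\leq a}$ holds precisely for the integral shifts $p \in P\gr$; and (iii) by construction the pseudo-period of $\Ind_{n}E'$ is the composite $(\Ind_{n}E')_{p+a} \cong \colim_{u \leq a}E'_{p+u} \cong \colim_{u\leq a}(L_{p}\otimes E'_{u}) \cong L_{p}\otimes(\Ind_{n}E')_{a}$, assembled coprojection-by-coprojection from the periods of $E'$, so that it satisfies the defining relation $\rho^{\Ind_{n}E'}_{p,a}\circ c_{p+u} = (L_{p}\otimes c_{u})\circ \rho^{E'}_{p,u}$.

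Granting this, the pseudo-period square for a transformation $\phi\colon \Ind_{n}E' \Rightarrow E$ can be tested after precomposition with each coprojection $c_{p+u}\colon E'_{p+u} \to (\Ident)$, wait—after precomposition with each $c_{p+u}$; using the defining relation of (iii), precomposing turns the period square of $\phi$ at $(p,a)$ into the period square of the corresponding $\psi = \Res_{n}\phi\circ\eta$ at the object $u$, and conversely the inverse construction produces from a $\rho$-compatible $\psi$ a $\phi$ whose period square closes on every summand for the same reason. Hence $\phi$ is $\rho$-compatible if and only if $\psi$ is, giving the desired restricted bijection; naturality in $E'$ and $E$ is inherited from the underlying Kan-extension adjunction.

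I expect the main obstacle to be the bookkeeping of step (iii): one must verify that the pseudo-period isomorphism the text places on $\Ind_{n}E'$ is genuinely compatible, through the colimit coprojections and the reindexing $u \mapsto p+u$, with both the periods of $E'$ and those of $E$, so that a single diagram chase closes. The decisive point, which I would flag explicitly, is the matching of index ranges: morphisms of parabolic sheaves are required to respect $\rho_{p,\cdot}$ only for the integral lattice $p \in P\gr$, which is exactly the range in which $\tfrac1n P^{\leq p+a} = p + \tfrac1n P^{\leq a}$, so that the reindexing bijection of summands is available precisely when it is needed — this is the hinge on which the whole adjunction turns.
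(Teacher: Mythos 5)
Your proposal is correct, and it takes a genuinely different route from the paper's proof. The paper argues by exhibiting the adjunction data directly: the unit is the isomorphism $E' \simeq \Res_{n}\Ind_{n}E'$ (each $u$ is the largest element of its own index poset, so the coprojection $c_{u}$ is invertible), the counit is the obvious map $\colim_{u \in \frac{1}{n}P^{\leq a}}E_{u} \arr E_{a}$, and both the triangle identities and the compatibility with the pseudo-period isomorphisms are left to the reader. You instead recognize $\Res_{n}$ as restriction along the full inclusion $\iota\colon \tfrac{1}{n}P\wt \into P_{\QQ}\wt$ and $\Ind_{n}$ as the pointwise left Kan extension along $\iota$, so that the bijection on plain natural transformations is a standard categorical fact; all your effort then goes exactly where the paper is silent, namely into checking, coprojection by coprojection, that this bijection restricts to morphisms of parabolic sheaves. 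The two routes produce the same unit and counit, so the underlying content agrees; what yours buys is a clean separation between the formal part (the Kan-extension adjunction) and the genuinely parabolic part, and it supplies the verifications the paper omits, pivoting correctly on the two facts that matter: the componentwise description of $\rho^{\Ind_{n}E'}$ and the reindexing identity $\tfrac{1}{n}P^{\leq p+a} = p + \tfrac{1}{n}P^{\leq a}$ for the integral shifts $p \in P\gr$ occurring in the pseudo-periods.

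Three small repairs. First, your identification of $\tfrac{1}{n}P^{\leq a}$ with the comma poset $(\iota \downarrow a)$ tacitly reads it as $\{u \in \tfrac{1}{n}P\gr \mid u \leq a\}$, inside the weight lattice rather than the monoid; this reading is the correct (indeed forced) one, since with the monoid reading the translation identity fails for negative $p \in P\gr$ and $\Ind_{n}E'$ would not even admit pseudo-periods, but it deserves to be said explicitly. Second, tensoring with $L_{p}$ commutes with the colimits because it is a left adjoint (cocontinuous), not because it is exact. Third, the converse half of your chase (``for the same reason'') also uses the axiom that $\rho^{E}_{p,\cdot}$ is compatible with the structural maps $E_{u} \arr E_{a}$ (the second compatibility diagram in the definition of a parabolic sheaf), which should be cited; and the stray self-correction in your third paragraph should be deleted.
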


\begin{proof}
If $a \in \tfrac{1}{n}P$ we have $(\Ind_{n}E')_{a} = E'_{a}$, so that $E' \simeq \Res_{n}\Ind_{n} E'$. This gives an isomorphism of functors $\id_{\catpar_{n}X} \simeq \Res_{n} \circ \Ind_{n}$.

On the other hand if $E$ is a parabolic sheaf in $\catpar X$, there is an obvious homomorphism $\colim_{u \in \tfrac{1}{n}P^{\leq a}}E_{u} \arr E_{a}$, which induced a natural transformation $\Ind_{n} \circ \Res_{n} \arr \id_{\catpar X}$. We leave it to the reader to check that these are respectively the counit and the unit of an adjunction.
\end{proof}

Hence $\Ind_{n}\colon \catpar_{n}X \arr \catpar X$ corresponds to the functor $\pi_{n}^{*}\colon \catqcoh \radice{n}{X} \arr \catqcoh\infroot{X}$.

Putting all this together we get the following characterization of \fp parabolic sheaves on $X$.

\begin{theorem}\label{thm:char-fp-parabolic}
Let $E\colon P_{\QQ}\wt \arr \catqcoh X$ be a parabolic sheaf on $X$. Then $E$ corresponds to a \fp sheaf on $\catqcoh \infroot{X}$ if and only if the following two conditions are satisfied.

\begin{enumeratea}

\item $E_{a}$ is \fp as a \qc sheaf on $X$ for all $a \in P_{\QQ}$.

\item There exists a positive integer $n$ such that for all $a \in P_{\QQ}$ the natural homomorphism
   \[
   \colim_{u \in \frac{1}{n}P^{\leq a}}E_{u} \arr E_{a}
   \]
is an isomorphism.\qed
\end{enumeratea}
\end{theorem}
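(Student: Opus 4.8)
The plan is to deduce the statement directly from the machinery assembled in this subsection, by translating the condition ``$F$ is \fp'' across the equivalence $\catqcoh \infroot X \simeq \catpar X$ of Theorem~\ref{BV.rational} into the two stated conditions on the parabolic sheaf $E$. The only facts I would need are the preceding Proposition, that $\indlim_{n} F[n] \arr F$ is an isomorphism for every \qc sheaf $F$, and the observation recorded just above it, that $E_{a}$ is \fp for all $a \in P_{\QQ}\wt$ if and only if each $F[n]$ is \fp. The new ingredient is a dictionary matching condition~(b) with the sheaf-level identity $F[n]=F$.

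First I would set up that dictionary. Since $\pi_{n*}$ corresponds to $\Res_{n}$ and $\pi_{n}^{*}$ corresponds to $\Ind_{n}$, the sheaf $F[n] = \pi_{n}^{*}\pi_{n*}F$ corresponds under the equivalence to $\Ind_{n}\Res_{n}E$. By the explicit induction formula this gives $(E[n])_{a} = \colim_{u \in \frac{1}{n}P^{\leq a}}E_{u}$, and moreover the canonical map $F[n] \arr F$ (the counit of $\pi_{n}^{*} \dashv \pi_{n*}$) corresponds to the counit $\Ind_{n}\Res_{n}E \arr E$, whose component at $a$ is precisely the comparison map $\colim_{u \in \frac{1}{n}P^{\leq a}}E_{u} \arr E_{a}$ appearing in~(b). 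Consequently condition~(b) is exactly the assertion that $E[n]=E$, equivalently $F[n]=F$, for some positive integer $n$.

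With this dictionary in place both implications are short. For the forward direction, suppose $F$ is \fp. Then each $E_{a}=\pi_{*}(F\otimes \Lambda_{a})$ is \fp by \refall{prop:coherent-pushforward}{2}, which is~(a); and by the consequence of the preceding Proposition, $F[n]=F$ for some (sufficiently divisible) $n$, which by the dictionary is~(b). For the converse, assume~(a) and~(b). Condition~(b) yields $F[n]=F$ for some $n$, and condition~(a) says in particular that every $E_{u}$ with $u \in \frac{1}{n}P$ is \fp; since these are exactly the components of $\Res_{n}E$, the stated fact for finite root stacks shows that the corresponding sheaf $\pi_{n*}F$ on $\radice{n}X$ is \fp. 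Hence $F = F[n] = \pi_{n}^{*}\pi_{n*}F$ is the pullback of a \fp sheaf along $\pi_{n}$, and is therefore \fp.

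The one point I expect to require care is the counit identification in the second paragraph: one must verify that the isomorphism between $F[n]$ and the sheaf attached to $\Ind_{n}\Res_{n}E$ is compatible with the two natural maps to $F$ and to $E$, i.e.\ that the equivalence $\catqcoh \infroot X \simeq \catpar X$ intertwines $(\pi_{n}^{*},\pi_{n*})$ and $(\Ind_{n},\Res_{n})$ as adjoint pairs and not merely as functors. This is formal once the adjunctions are matched up, but it is the step where the explicit description of the counit $\colim_{u \in \frac{1}{n}P^{\leq a}}E_{u} \arr E_{a}$ and the earlier comparison of $\pi_{n*}$ with $\Res_{n}$ are genuinely used; everything else is bookkeeping across the equivalence.
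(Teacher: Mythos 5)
Your proposal is correct and follows essentially the same route as the paper, whose proof of this theorem is precisely the preceding discussion: condition (a) via Proposition~\refall{prop:coherent-pushforward}{2}, condition (b) via the identification of $F[n]=\pi_n^*\pi_{n*}F$ with $\Ind_n\Res_n E$ and the fact that $F$ \fp forces $F[n]=F$ for some $n$, and the converse via the finite root stack fact applied to $\pi_{n*}F$. Your explicit flagging of the need to match the adjunctions $(\pi_n^*,\pi_{n*})$ and $(\Ind_n,\Res_n)$ as adjoint pairs (so that counit corresponds to counit) is a point the paper leaves implicit, but the argument is the same.
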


\begin{remark}
It is an interesting open question to characterize the \fp parabolic sheaves $E$ on $X$ such that the corresponding \qc sheaf on $\infroot{X}$ is locally free. If $X$ is a regular scheme with the logarithmic structure induced by a divisor with normal crossings, then it is shown in \cite[Proposition~2]{borne} that this happens if and only if $E_{a}$ is locally free on $X$ for all $a$. In general this condition is neither necessary nor sufficient, and the exact characterization of these parabolic sheaves is probably subtle.
\end{remark}

\bibliographystyle{amsalpha}
\bibliography{irs}

\end{document}